\definecolor{qqqqff}{rgb}{0,0,1}
\definecolor{ffqqqq}{rgb}{1,0,0}
\definecolor{ffqqtt}{rgb}{1,0,0.2}
\definecolor{zzttqq}{rgb}{0.6,0.2,0}
\definecolor{ttttff}{rgb}{0.2,0.2,1}
\definecolor{zzqqqq}{rgb}{0.6,0,0}
\definecolor{qqccqq}{rgb}{0,0.8,0}
\newcommand{\reff}[1]{(\ref{#1})}
\theoremstyle{plain}
\newtheorem{theo}{Theorem}[section]
\newtheorem*{theo*}{Theorem}
\newtheorem{cor}[theo]{Corollary}
\newtheorem*{cor*}{Corollary}
\newtheorem{prop}[theo]{Proposition}
\newtheorem{lem}[theo]{Lemma}
\theoremstyle{remark}
\newtheorem{rem}[theo]{Remark}
\newcommand\Ne{\mathbb{N}^{*}}
\newcommand\pro{\mathbb{P}}            
\newcommand\1[1]{{\bf 1}_{\left\{#1\right\}}}                            
\newcommand\fp{\mathfrak{p}}
\newcommand\fF{\mathfrak{F}}
\newcommand\ff{{\rm f}}
\DeclareMathOperator{\Var}{Var}
\DeclareMathOperator{\Cov}{Cov}
\newcommand\cvloi[1]{\,\xrightarrow[#1\rightarrow\infty]{(d)}\,}     
\newcommand\cv[1]{\underset{#1 \rightarrow \infty}{\longrightarrow}}                               
\newcommand{\cuthere}{%
\noindent
\raisebox{-2.8pt}[0pt][0.95\baselineskip]{\ding{34}}
\unskip{\tiny\dotfill}
}
\newcolumntype{M}[1]{>{\centering\arraybackslash}m{#1}}
\newcommand{\ca}{{\mathcal A}}
\newcommand{\cb}{{\mathcal B}}
\newcommand{\cc}{{\mathcal C}}
\newcommand{\cd}{{\mathcal D}}
\newcommand{\cf}{{\mathcal F}}
\newcommand{\ch}{{\mathcal H}}
\newcommand{\cn}{{\mathcal N}}
\newcommand{\cm}{{\mathcal M}}
\newcommand{\cs}{{\mathcal S}}
\newcommand{\cu}{{\mathcal U}}
\newcommand{\cw}{{\mathcal W}}
\newcommand{\E}{{\mathbb E}}
\newcommand{\Id}{{\rm Id}}
\newcommand{\N}{{\mathbb N}}
\renewcommand{\P}{{\mathbb P}}
\newcommand{\R}{{\mathbb R}}
\newcommand{\rp}{{\mathfrak{p}}}
\newcommand{\rz}{{\mathfrak{z}}}
\newcommand{\rx}{{\rm x}}
\newcommand{\ry}{{\rm y}}
\newcommand{\re}{{\rm e}}
\newcommand{\ind}{{\bf 1}}
\newcommand{\Dw}{{D}}
\newcommand{\tti}{\tilde t_{\rm inj}}
\newcommand{\ti}{t_{\rm inj}}
\newcommand{\tid}{t_{\rm ind}}
\newcommand{\norm}[1]{\mathop{\parallel\! #1 \! \parallel}\nolimits}
\newcommand{\val}[1]{\mathop{\left| #1 \right|}\nolimits}
\newcommand{\inv}[1]{\mathop{\frac{1}{ #1}}\nolimits}
\newcommand{\rb}
\newcommand{\cdf}{\Pi}
\newcommand{\convset}{\mathcal{K}}
\newcommand{\conv}{K}
\newcommand{\verti}[1]{{\left\vert\kern-0.25ex\left\vert\kern-0.25ex\left\vert #1 
    \right\vert\kern-0.25ex\right\vert\kern-0.25ex\right\vert}}
\title[Asymptotics for random graphs]{Asymptotic  for the cumulative distribution function of
  the degrees and
  homomorphism densities for random graphs sampled from a
  graphon}
\date{\today}
\author{Jean-Fran\c{C}ois Delmas}
\address{
 Jean-Fran\c cois Delmas,
Universit\'{e} Paris-Est, Cermics (ENPC), F-77455 Marne-la-Vall\'{e}e.}
\email{delmas@cermics.enpc.fr}
\author{Jean-St\'{e}phane Dhersin}
\address{
Jean-St\'{e}phane Dhersin,
Universit\'{e} Paris 13, Sorbonne Paris Cit\'{e}, LAGA, CNRS (UMR 7539), 93430 Villetaneuse, France}
\email{dhersin@math.univ-paris13.fr}
\author{Marion Sciauveau}
\address{
Marion Sciauveau, 
Universit\'{e} Paris-Est, Cermics (ENPC), F-77455 Marne-la-Vall\'{e}e.}
\email{marion.sciauveau@enpc.fr}
\begin{document}

\thanks{This work is partially supported by DIM RDMath IdF 
and by Agence Nationale de la Recherche via the grant ANR-14-CE25-0014
\textquotedblleft GRAAL\textquotedblright}

\keywords{Graphon, dense graph, homomorphism density, partially labeled
  graph, cumulative distribution function of degrees, binomial
  distribution, random measure}

\subjclass[2010]{05C80, 05C07, 60F05, 60G57, 60C05}

\begin{abstract} 
  We give asymptotics for the cumulative distribution function (CDF) for
  degrees of large dense random  graphs sampled from a graphon.  The proof
  is based on precise asymptotics for binomial random variables.

  Replacing the  indicator function in  the empirical CDF by  a smoother
  function,  we  get  general  asymptotic  results  for  functionals  of
  homomorphism  densities for  partially  labeled  graphs with  smoother
  functions. This  general setting allows  to recover recent  results on
  asymptotics for homomorphism densities of sampled graphon.
\end{abstract}

\maketitle
\section{Introduction}

The Internet, social networks or  biological networks can be represented
by large  random graphs. Understanding  their structure is  an important
issue in Mathematics.   Degree sequences is one of the  key objects used
to get  informations about graphs.   The degree sequences of  real world
networks have attracted a lot of attention during the last years because
their distributions are significantly  different from the Poisson degree
distributions studied in  the classical models of random  graphs such as
the Erd\"{o}s-R\'{e}nyi  model. They followed a  power-law distribution,
see  for   instance,  Newmann  \cite{Newmann_2003_book},  Chung   et  al
\cite{Chung_2006_book},         Diaconis          and         Blitzstein
\cite{Diaconis_Blitzstein_2010_article} and  Newman, Barabasi  and Watts
\cite{Newman_Barabasi_Watts_2011_book}.   See   also  Molloy   and  Reed
\cite{Molloy_Reed_1995_article,Molloy_Reed_1998_article}   and   Newman,
Strogatz  and  Watts  \cite{Newman_Strogatz_Watts_2001_article}  in  the
framework of sparse graphs.
\\

In this  paper, we shall  consider the cumulative  distribution function
(CDF) of  degrees of  large dense random  graphs sampled  from a graphon,
extending     results      from     Bickel,     Chen      and     Levina
\cite{Bickel_Chen_Levina_2011_article}.  The theory of graphon or limits
of sequence  of dense  graphs was developped  by Lov\'{a}sz  and Szegedy
\cite{Lovasz_Szegedy_2006_article} and Borg, Chayes, Lov\'{a}sz, S\'{o}s
and   Vesztergombi   \cite{Borgs_Chayes_Lovasz_Sos_Vesztergombi}.    The
asymptotics on the empirical CDF of  degrees, see the theorem in Section
\ref{subsec:CV_CDF_degrees},  could be  used to  test if  a large  dense
graph is sampled from  a given graphon. This result is  a first step for
giving  a non-parametric  test for  identifying the degree  function of  a
large random graph in the spirit  of the Kolmogorov-Smirnov test for the
equality of   probability distribution from a
sample of independent identically distributed random variables.
\\

 If we  replace the indicator  function in the empirical CDF by  a
 smoother function,  we get general
 results on  the fluctuations for functionals  of homomorphism densities
 for  partially   labeled  graphs.    As  an
 application, when considering homomorphism densities for sampled graphon,
 we   recover  results   from  F\'{e}ray,   M\'{e}liot  and   Nikeghbali
 \cite{Feray_Meliot_Nikeghbali_2017_article}.

\subsection{Convergence of CDF of empirical degrees for large random graphs}
\label{subsec:CV_CDF_degrees}

We consider simple finite graphs, that is graph without  self-loops and  multiple edges
between any pair of vertices. We denote by $\cf$ the set of all simple  finite graphs.

There exists several equivalent notions  of convergence for sequences of
finite dense graphs  (that is graphs where the number  of edges is close
to  the maximal  number  of  edges), for  instance  in  terms of  metric
convergence (with  the cut distance) or  in terms of the  convergence of
subgraph    densities,        see 
\cite{Borgs_Chayes_Lovasz_Sos_Vesztergombi} or Lov\'{a}sz \cite{Lovasz_2012_book}.

When it exists, the limit of a sequence of dense  graphs can be represented by a graphon
i.e. a  symmetric, measurable function $W:[0,1]^2\to  [0,1]$, up to 
a measure preserving bijection.  A graphon
$W$ may be  thought of as the  weight matrix of an  infinite graph whose
 set of vertices   is the continuous  unit interval, so that  $W(x,y)$ represents
the weight of the edge between vertices  $x$ and $y$.

Moreover, it is possible to sample simple graphs, with a given number of
vertices,  from   a  graphon   $W$  (called  $W$-random   graphs).   Let
$X=(X_{i}:i\in\Ne)$  be  a  sequence  of  independent  random  variables
uniformly  distributed  on  the   interval  $[0,1]$.  To  construct  the
$W$-random  graph  with vertices  $[n]:=\{1,  \ldots,  n\}$, denoted  by
$G_n$, for each pair of distinct  vertices $i\neq j$, elements of $[n]$,
connect $i$ and $j$ with  probability $W(X_i,X_j)$, independently of all
other edges  (see also Section  \ref{sec:Gn-def}).  If needed,  we shall
stress the  dependence in  $W$ and  write $G_n(W)$  for $G_n$.   By this
construction, we get a sequence of random graphs $(G_{n}:n\in\Ne)$ which
converges
almost surely towards the graphon $W$, see for instance  Proposition
11.32 in 
\cite{Lovasz_2012_book}.
\\

We define the  degree function $D=(D(x):x\in[0,1])$ of the graphon $W$ by:
\begin{equation*}
 D(x)=\int_{0}^{1}W(x,y)dy.
\end{equation*}
And we consider the empirical CDF
$\cdf_{n}=(\cdf_{n}(y):y\in[0,1])$ 
of the normalized degrees of the graph $G_{n}$ defined by 
\begin{equation*}
 \cdf_{n}(y)=\frac{1}{n}\sum_{i=1}^{n}\1{D_{i}^{(n)}\le D(y)},
\end{equation*}
where $nD_{i}^{(n)}$ is the  degree of the vertex $i$ in $G_n$.\\

Bickel, Chen and  Levina \cite{Bickel_Chen_Levina_2011_article}, Theorem
5  (with  $m=1$),  proved  the   convergence  in  distribution  and  the
convergence  of the  second moments  of $\cdf_{n}(y)$  towards $y$.   We
improve  the  results given  in  \cite{Bickel_Chen_Levina_2011_article}:
under the condition that $D$ is  increasing\footnote{Since the
  graphon is defined up to a measure preserving one-to-one map on $[0,1]$,
  there exists an equivalent version of the graphon for which the degree
  function is non-decreasing. If the degree function is increasing, then
  this version is unique in $L^1$ and this is the  version which is considered
  in this section.}
on $[0,1]$, we have
the almost sure  convergence of $\cdf_{n}(y)$ towards  $y$, uniformly on
$[0, 1]$.  This is  a consequence  of the more  general result  given by
Theorem  \ref{Theo_LLN} (see  Subsection \ref{subsec:hom_densities}  and
Remark  \ref{rem:casD} for  more  details).  In  a different  direction,
Chatterjee   and  Diaconis   \cite{Chatterjee_Diaconis_Sly_2011_article}
considered  the convergence  of uniformly  chosen random  graphs with  a
given CDF  of degrees towards  an exponential graphon with  given degree
function.
\\
 
We also get the fluctuations  associated to the almost sure convergence
of $\Pi_n$.
If $W$ satisfies some  regularity conditions
given by \reff{eq:condi_W},  which in particular imply
that $D$ is of class
$\cc^1$,  then we  have  the   following  result on the
convergence in distribution  of  finite-dimensional
marginals      for         $\cdf_{n}$. 

\begin{theo*}[Theorem \ref{thm:CLT_indicator_degree_sequence}]
Assume that $W$ satisfies condition \reff{eq:condi_W}. Then 
 we have the following convergence of finite-dimensional distributions:
 \[
  \left(\sqrt{n}\left(\cdf_{n}(y)-y\right): y\in(0,1)\right) 
  \,\xrightarrow[n\rightarrow+\infty]{(fdd)}\, \chi,
 \]
where $(\chi_{y}:y\in(0,1))$ is a centered Gaussian process defined, for all $y\in(0,1)$ by:
\[
 \chi_{y}=\int_{0}^{1}(\rho(y,u)-\bar \rho(y))dB_{u},
\]
with $B=(B_u, u\geq 0)$ a standard Brownian motion, and $(\rho(y, u):u\in
[0, 1]) $ and
$\bar \rho(y) $ defined for $y\in (0, 1)$   by:
\[
 \rho (y,u)=\ind_{[0,y]}(u)-\frac{W(y,u)}{D'(y)} \quad \text{ and } \quad
 \bar \rho (y)=\int_{0}^{1}\rho(y,u)du.
\]
\end{theo*}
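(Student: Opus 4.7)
The plan is to split the fluctuation $\sqrt n(\cdf_n(y)-y)$ into the classical empirical fluctuation of the uniform variables $(X_i)$ plus a correction coming from the deviation of $D_i^{(n)}$ from $D(X_i)$, and to show that the correction is governed at leading order by the empirical process of $W(y,\cdot)$. Write
\begin{equation*}
\sqrt n\,(\cdf_n(y)-y) \;=\; A_n(y) + B_n(y),
\end{equation*}
with $A_n(y) = \tfrac{1}{\sqrt n}\sum_{i=1}^n(\ind_{\{X_i\le y\}}-y)$ and $B_n(y) = \tfrac{1}{\sqrt n}\sum_{i=1}^n\bigl(\ind_{\{D_i^{(n)}\le D(y)\}}-\ind_{\{X_i\le y\}}\bigr)$. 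Since $D$ is increasing, $\{X_i\le y\}=\{D(X_i)\le D(y)\}$, so $B_n(y)$ is precisely the price to pay for replacing the unobserved $D(X_i)$ by the observed $D_i^{(n)}$. The term $A_n(y)$ is the standard uniform empirical process: by the multivariate CLT, its finite-dimensional marginals converge to those of the centered Gaussian process $y\mapsto\int_0^1(\ind_{[0,y]}(u)-y)\,dB_u$, which is one of the two summands of $\chi_y$.

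The main work is the analysis of $B_n(y)$. Write $D_i^{(n)}-D(X_i)=\eta_i+\xi_i$ with
\begin{equation*}
\eta_i \;=\; \tfrac{1}{n}\sum_{j\ne i}\bigl(W(X_i,X_j)-D(X_i)\bigr), \qquad
\xi_i \;=\; D_i^{(n)} - \tfrac{1}{n}\sum_{j\ne i}W(X_i,X_j),
\end{equation*}
both of order $1/\sqrt n$: the first by the CLT conditionally on $X_i$, the second by the conditional CLT for Bernoullis given $(X_1,\dots,X_n)$. Under \reff{eq:condi_W}, $D$ is $\cc^1$ with $D'(y)>0$, so a Taylor expansion of $D^{-1}$ near $D(y)$ gives
\begin{equation*}
\ind_{\{D_i^{(n)}\le D(y)\}}\;=\;\ind_{\{X_i\le y-T_i+r_i\}},\qquad T_i=(\eta_i+\xi_i)/D'(y),\qquad r_i = O\bigl((\eta_i+\xi_i)^2\bigr),
\end{equation*}
and since $T_i=O(1/\sqrt n)$ the summand of $B_n(y)$ is nonzero only for the $O(\sqrt n)$ indices with $X_i$ in a window of width $O(1/\sqrt n)$ around $y$. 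This has two consequences. First, the conditional variance of $B_n(y)$ given $(X_i)$ is at most of order (window size)$/n = O(1/\sqrt n)\to 0$ (the shared-edge off-diagonal contributions are smaller still, via a standard influence-of-one-Bernoulli estimate), so the Bernoulli edge noise does not contribute at order $1$ and $\xi_i$ may be dropped. Second, the leading part of $B_n(y)$ equals
\begin{equation*}
-\frac{1}{\sqrt n\, D'(y)}\sum_{i=1}^n\eta_i^\star + o_P(1), \qquad \eta_i^\star \;=\; \tfrac{1}{n}\sum_{j\ne i}\bigl(W(y,X_j)-D(y)\bigr),
\end{equation*}
where $\eta_i$ has been replaced by its \emph{frozen} counterpart $\eta_i^\star$ (first argument set to $y$). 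This replacement is admissible because for $X_i=y+s/\sqrt n$ with $s=O(1)$ one has $\sqrt n(\eta_i-\eta_i^\star)=s\bigl(\tfrac{1}{n}\sum_j\partial_1W(y,X_j)-D'(y)\bigr)+o(1)=o(1)$ by the LLN, using the smoothness encoded in \reff{eq:condi_W}.

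The resulting sum $-\tfrac{n-1}{n D'(y)\sqrt n}\sum_{j=1}^n(W(y,X_j)-D(y))$ is a normalized sum of centered iid random variables, which by the CLT converges in distribution to $-\tfrac{1}{D'(y)}\int_0^1(W(y,u)-D(y))\,dB_u$, exactly the second summand of $\chi_y$. Since $A_n(y)$ and the leading part of $B_n(y)$ are both linear functionals of the same empirical measure of $(X_i)$, they are driven by the same underlying Brownian motion $B$; a joint application of the multivariate CLT (via Cram\'er--Wold) to the $i$-indexed vector-valued sum yields the finite-dimensional convergence of $\sqrt n(\cdf_n(y)-y)$ to $\chi_y=\int_0^1(\rho(y,u)-\bar\rho(y))\,dB_u$ simultaneously for finitely many values of $y$.

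The hard part is the analysis of $B_n(y)$: one must simultaneously argue that the indicator is essentially supported on a window of width $1/\sqrt n$ around $y$, that the quadratic remainder $r_i$ is negligible (using the regularity of $D^{-1}$ provided by \reff{eq:condi_W}), that the Bernoulli noise averages out within that window (via a local Berry--Esseen estimate for the binomial, in the spirit of the binomial asymptotics announced in the abstract), and that $\eta_i$ may be replaced by $\eta_i^\star$. An alternative, possibly cleaner route is to approximate $\ind_{\{\cdot\le D(y)\}}$ by a smooth function $\varphi_\varepsilon$, apply the CLT counterpart of Theorem \ref{Theo_LLN} for homomorphism densities with smooth test functions (the general theorem announced in the introduction), and then let $\varepsilon\to 0$, using the binomial local limit theorem to control the approximation error uniformly in $n$.
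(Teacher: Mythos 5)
Your decomposition is, at bottom, the same one the paper uses: your $A_n(y)$ plus the frozen linear part of $B_n(y)$ is exactly the H\`ajek projection $U_{n+1}(y)=\sum_i\E[\hat\cdf_{n+1}(y)\mid X_i]$ of Section \ref{sec:proof_CLT_CDF_degrees} (compare your two limiting summands with $H_n^\star(y,u)\to\ind_{\{u\le y\}}-y$ and $H_n(y,u)\to(D(y)-W(y,u))/D'(y)$ in Proposition \ref{prop:mean_phi_n}), and your identification of the limit $\chi_y$ is correct, including the observation that both pieces are driven by the same empirical measure of the $X_j$'s. Where you diverge is in how the remainder is shown to be negligible: the paper never manipulates the indicators pathwise but instead computes $\E[\hat\cdf_{n+1}(y)^2]$ and $\E[U_{n+1}(y)U_{n+1}(z)]$ exactly to leading order and invokes the Pythagoras identity for the projection, whereas you propose a direct pathwise linearization.

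That is where the genuine gap sits. Your two key reductions --- dropping $\xi_i$ and replacing $\eta_i$ by $\eta_i^\star$ \emph{inside an indicator} --- are justified only by per-index estimates ($\Var(B_n(y)\mid X)\to 0$ for the first, $\sqrt n(\eta_i-\eta_i^\star)=o(1)$ by the LLN for the second), but $B_n(y)$ is a sum of $\Theta(\sqrt n)$ correlated, non-smooth terms supported on the window, and individually small perturbations of the threshold can still flip $\Theta(\sqrt n)$ indicators; an $o(1)$ bound on each perturbation does not yield $o_{\P}(1)$ for the sum without quantitative anti-concentration. Concretely, two estimates are missing and are not routine: (i) a uniform Edgeworth-type expansion of $\P(D_i^{(n+1)}\le d\mid X)$ valid on the whole window, needed both to turn $\E[B_n(y)\mid X]$ into a smooth functional that can be Taylor-expanded in $\eta_i$ and to check that the continuity-correction terms (the $S(nd)$ and $(1-2d)/2$ terms of Proposition \ref{prop:c_{n}(y)}) only affect the centering at order $1/n$; this is the content of Appendix A. (ii) Control of the cross terms $\E\big[(\ind_{\{D_1^{(n+1)}\le d\}}-\ind_{\{X_1\le y\}})(\ind_{\{D_2^{(n+1)}\le d\}}-\ind_{\{X_2\le y\}})\big]$ at the exact order $1/n$ --- these do \emph{not} vanish (they produce $\Sigma_2$) and their evaluation requires a bivariate normal approximation for the joint law of two degrees sharing an edge, which the paper obtains from a Stein/Bentkus bound (Proposition \ref{prop:stein} and Lemma \ref{lem:ind6}). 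Your phrases ``standard influence-of-one-Bernoulli estimate'' and ``local Berry--Esseen estimate'' gesture at these but do not supply them, and note also that your bound on the off-diagonal conditional covariances only closes if you restrict to pairs with both indices in the window (the crude bound $O(1/n)$ per pair summed over all $n^2$ pairs gives $O(1)$, not $o(1)$). Your alternative smoothing route runs into the obstruction the paper flags explicitly before Section \ref{subsec:hom_densities}'s end: $\ind_{[0,D(y)]}$ is not $\cc^2$, and letting the smoothing parameter go to $0$ uniformly in $n$ again requires the same local limit estimates, so it is not a shortcut.
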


The   covariance  kernel   $\Sigma=\Sigma_1+\Sigma_2+\Sigma_3$  of   the
Gaussian   process    $\chi$   is   explicitly   given    by   Equations
\eqref{eq:def-s1},    \eqref{eq:def-s2}   and    \eqref{eq:def-s3} which
define respectively $\Sigma_1$, $\Sigma_2$ and $\Sigma_3$.   In
particular, we deduce that the variance  of $\chi(y)$, for $y\in (0, 1)$
is given by the elementary formula:
\begin{equation*}
 \Sigma(y,y)=y(1-y)+\frac{1}{D'(y)^2}\left(\int_{0}^{1}W(y,x)^2dx-D(y)^2\right)
                  +\frac{2}{D'(y)}\left(D(y)y-\int_{0}^{y}W(y,x)dx\right).
 \end{equation*}
 
 The proof  of this  result relies on  uniform Edgeworth  expansions for
 binomial    random     variables,    see    Bhattacharya     and    Rao
 \cite{Bhattacharya_Rao_2010_book}, and Stein method for binomial random
 vectors, see  Bentkus \cite{Bentkus_2003_article}.  The  convergence of
 the  process in  the Skorokhod  space could  certainly be  proved using
 similar but more involved arguments.  More generally, following van der
 Vaart \cite{Van_der_Vaart_1998_article}, Chapter $19$ on convergence of
 empirical CDF of independent  identically distributed random variables,
 if   would   be  natural   to   study   the  uniform   convergence   of
 $\frac{1}{n}\sum_{i=1}^{n}f(D_{i}^{(n)})$
 when $f$ belongs to a certain class of functions.\\

 The asymptotics on  the CDF of empirical degrees appear  formally as a
 limiting        case        of         the        asymptotics        of
 $\frac{1}{n}\sum_{i=1}^{n}f(D_{i}^{(n)})$  with  $f$ smooth.  This is
 developed in Section \ref{intro:cv-ti}. We  shall in fact adopt in this
 section  a more  general point  of view  as we  replace the  normalized
 degree sequence by  a sequence of homomorphism  densities for partially
 labeled graphs.

\subsection{Convergence of sequence of dense graphs towards graphons}
\label{subsec:hom_densities}

Recall that one of the equivalent notions of convergence of sequences of
dense  graphs is  given by  the convergence  of subgraph densities.   It is  the
latter one  that will  interest us.   We first  recall  the  notion of
homomorphism densities.  For  two  simple finite graphs $F$  and $G$ with
respectively $v(F)$  and $v(G)$  vertices, let  $\text{Inj}(F,G)$ denote
the set of injective homomorphisms (injective adjacency-preserving maps)
from $F$  to $G$  (see Subsection  \ref{subsec:graph_hom} for  a precise
definition). We  define the injective  homomorphism density from  $F$ to
$G$ by the following normalized quantity:
\begin{equation*}
\ti(F,G)=\frac{\left|\text{Inj}(F,G)\right|}{A_{v(G)}^{v(F)}},
\end{equation*}
where we  have for all  $n\ge k  \ge 1$, $A_{n}^{k}=n!/(n-k)!$.   In the
same way, we can define the  density of induced homomorphisms (which are
injective   homomorphisms  that   also   preserve  non-adjacency),   see
\reff{hom6}. Some authors study subgraph counts rather than homomorphism
densities,  but the  two quantities  are related,  see Bollob\'{a}s  and
Riordan  \cite{Bollobas_Riordan_2009_article},  Section   2.1,  so  that
results on  homomorphism densities  can be  translated into  results for
subgraph counts.\\
 
A sequence  of dense  simple finite  graphs $(H_{n}:n\in\Ne)$  is called
convergent if the sequence $(t_{\text{inj}}(F,H_n):n\in\Ne)$ has a limit
for every $F\in \cf$.  The  limit can be represented by a graphon,
say $W$ and we have that for every  $F\in \cf$:
\begin{equation*}
 \lim_{n\to\infty}\ti(F,H_n)=t(F,W),
\end{equation*}
where
\begin{equation*}
 t(F,W)=\int_{[0,1]^{V(F)}}\prod_{\{i,j\}\in E(F)} W(x_i,x_j) \prod_{k\in V(F)}dx_k .
\end{equation*}
According to \cite{Lovasz_2012_book}, Proposition 11.32,  the sequence of 
$W$-random graphs $(G_{n}:n\in\Ne)$ converges a.s. towards $W$, that
is  for all  $F\in \cf$, a.s.: 
\begin{equation}\label{eq:cv_W_graph}
 \lim_{n\to\infty} \ti(F,G_{n})=t(F,W).
\end{equation}

In  the  Erd\"{o}s-R\'{e}nyi  case,  that is  when  $W\equiv  \fp$ is constant,  the
fluctuations associated  to this  almost sure  convergence are  of order
$n$: for all $F\in \cf$ with $p$ vertices and $e$ edges, we
have the following convergence in distribution:
\[
    n\left(t_{\text{inj}}(F,G_{n}(\fp))-\fp^{e}\right) \cvloi{n} 
    \cn\left(0, 2e^{2}\fp^{2e-1}(1-\fp)\right),
\]
where $\mathcal{N}(m,\sigma^2)$ denotes a Gaussian random variable 
with mean $m$ and variance $\sigma^2$.
There  are  several  proofs  of this  central  limit  theorem.   Nowicki
\cite{Nowicki_1989_article}       and      Janson       and      Nowicki
\cite{Janson_Nowicki_1991_article}  used the  theory of  U-statistics to
prove the asymptotic  normality of subgraph counts  and induced subgraph
counts.  They  also obtained  the  asymptotic  normality of  vectors  of
subgraph counts and induced subgraph  counts.  In the particular case of
the joint distribution  of the count of edges,  triangles and two-stars,
Reinert   and  R\"{o}llin   \cite{Reinert_Rollin_2010_article}, Proposition  2,  obtained
bounds  on  the  approximation.   Using  discrete
Malliavin calculus,  Krokowski and  Th\"{a}le \cite{kt} generalized the  result of
 \cite{Reinert_Rollin_2010_article} (in a different probability metric)  and get the
rate of convergence associated to the multivariate central limit theorem
given  in \cite{Janson_Nowicki_1991_article}. See also   F\'{e}ray,  M\'{e}liot and  Nikeghbali
 \cite{Feray_Meliot_Nikeghbali_2016_book}, Section 10,   for   the    mod-Gaussian
 convergence of homomorphism densities.

 The asymptotics  of normalized subgraph  counts have also  been studied
 when the parameter  $\fp$ of the Erd\"{o}s-R\'{e}nyi  graphs depends on
 $n$, see for example Ruci\'{n}ski \cite{Rucinski_1988_article}, Nowicki
 and Wierman  \cite{Nowicki_Wierman_1988_article}, Barbour, Karo\'{n}ski
 and Ruci\'{n}ski  \cite{Barbour_Karonski_Rucinski_1989_article}, and Gilmer
 and Kopparty \cite{Gilmer_Kopparty_2016_article}.
 \\

In the  general framework of  graphon, the  speed of convergence  in the
invariance principle  is of order  $\sqrt{n}$, but for  degenerate cases
such  as  the  Erd\"{o}s-R\'{e}nyi  case.   This  result  was  given  by
F\'{e}ray,             M\'{e}liot             and             Nikeghbali
\cite{Feray_Meliot_Nikeghbali_2017_article},  Theorem  21: for  all
  $F\in \cf$,  we  have  the   following  convergence  in
distribution:
\begin{equation}\label{eq:FM_cv}
 \sqrt{n}\left(\ti(F,G_{n})-t(F,W)\right) \cvloi{n} 
    \cn\left(0, \sigma(F)^{2}\right),
\end{equation}
where, with $V(F)$ the set of vertices of $F$ and $v(F)$ its cardinal, 
\begin{equation*}
 \sigma(F)^{2}
 =\sum_{q,q'\in V(F)}t\big((F\bowtie F)(q,q'),W\big) - v(F)^2\,  t(F,W)^{2}
\end{equation*}
and $(F\bowtie F')(q,q')$  is the  disjoint union  of the  two 
simple finite graphs  $F$ and $F'$  where we identify  the vertices  $q\in  F$ and
$q'\in F'$ (see   point (iii) of Remark \ref{rem:variance_TCL}, for more
details).  Notice that in the Erd\"{o}s-R\'{e}nyi case, that is when $W$
is a constant graphon, the  asymptotic variance $\sigma(F)^{2}$ is equal
to $0$, which is consistent with  the previous paragraph since the speed
is of order $n$.

Using Stein's method,  Fang and R\"ollin \cite{Fang_Rollin_2015_article}
obtained   the  rate   of  convergence   for  the   multivariate  normal
approximation of the joint distribution of the normalized edge count and
the corrected and normalized $4$-cycle count. As a consequence, they get
a  confidence interval  to  test if  a  given graph  $G$  comes from  an
Erd\H{o}s-R\'{e}nyi random graph model  or a non constant graphon-random
graph     model.       Maugis,     Priebe,     Olhede      and     Wolfe
\cite{Maugis_Priebe_Olhede_Wolfe_2017_article}   gave a central   limit
theorem for subgraph  counts observed in a network  sample of $W$-random
graphs drawn from  the same graphon $W$ when the  number of observations
in  the  sample  increases  but  the  number  of  vertices  in  each  graph
observation remains  finite.  They also  get a central limit  theorem in
the  case  where  all  the  graph observations  may  be  generated  from
different graphons.   This allows  to test if the  graph observations
come from  a specified  model.  When  considering sequences  of graphons
which tend  to $0$,  then there  is a  Poisson approximation  of subgraph
counts. In    this     direction,    Coulson,    Gaunt     and    Reinert
\cite{Coulson_Gaunt_Reinert_2016_article}, Corollary 4.1,   used  the   Stein  method  to
establish an effective Poisson approximation for the distribution of the
number of  subgraphs in the graphon  model which are isomorphic  to some
fixed strictly balanced graph. \\

Motivated by those results, we present in the next section an invariance
principle for the distribution of
homomorphism densities of partially labeled graphs for $W$-random graphs
which can be seen as
a generalization of \reff{eq:FM_cv}.

\subsection{Asymptotics for homomorphism densities of partially labeled graphs for large random graphs}
\label{intro:cv-ti}

Let $n\in\Ne$  and $k\in[n]$. We  define the set  $\mathcal{S}_{n,k}$ of
all $[n]$-words of length $k$ such that all characters are distinct, see
\reff{Snp}. Notice  that $\left|\mathcal{S}_{n,k}\right|=A_{n}^{k}=n!/(n-k)!$. 

We generalize homomorphism densities for partially labeled graphs.
Let $F,G\in\mathcal{F}$ be two simple graphs with $V(F)=[p]$ and $V(G)=[n]$.
Assume  $n\geq p>k\geq 1$.  Let
 $\ell\in\mathcal{S}_{p,k}$ and  $\alpha\in\mathcal{S}_{n,k}$. We define
 $\text{Inj}(F^{\ell},G^{\alpha})$  the set  of injective  homomorphisms
 $\ff$ from  $F$ into  $G$ such  that $\ff(\ell_{i})=\alpha_{i}$
 for all $i\in[k]$, and its density:
\[
 \ti (F^{\ell},G^{\alpha})=\frac{|\text{Inj}(F^{\ell},G^{\alpha})|}{A_{n-k}^{p-k}}\cdot
\]

We define the random probability measure $\Gamma_n^{F,\ell}$ on
$([0,1],\mathcal{B}([0, 1]))$, with $\cb([0, 1])$ the Borel
$\sigma$-field on $[0,1]$,  by: for all measurable non-negative 
function $g$ defined on $[0,1]$, 
\begin{equation}
\label{eq:intro_def_gamma}
\Gamma_n^{F,\ell}(g)=\inv{|\cs_{n,k}|}\sum_{\alpha\in\mathcal{S}_{n,k}}
g\left(\ti  (F^{\ell},G_{n}^{\alpha})\right).
\end{equation}

We prove,  see Theorem \ref{Theo_LLN},  the almost sure  convergence for
the         weak         topology        of         the         sequence
$\left(\Gamma_{n}^{F,\ell}(dx)  :n\in\Ne\right)$  of random  probability
measure  on  $[0,1]$  towards   the  deterministic  probability  measure
$\Gamma^{F,\ell}(dx)=\E\left[\Gamma_{n}^{F,\ell}(dx)\right]$.

\begin{itemize}
   \item[-] If we take $g=\Id$ in \reff{eq:intro_def_gamma}, we recover the almost sure convergence
     given in \reff{eq:cv_W_graph} as according to \reff{eq:integ-tiFW0}:
\[
\ti(F, G_n)=\inv{|\cs_{n,k}|}\sum_{\alpha\in\mathcal{S}_{n,k}}
\ti  (F^{\ell},G_{n}^{\alpha}).
\]
 
\item[-] If we take $g=\ind_{[0,D(y)]}$ with $y\in(0,1)$ and $F=K_2$ 
(where $K_2$ denotes the complete graph with two vertices)
in \reff{eq:intro_def_gamma} and using 
the expression of $\Gamma^{F,\ell}$ given in  Remark \ref{rem:CP_Gamma},
(ii), we have, with $\bullet$ any vertex of $K_2$, that:
  \begin{equation*}
   \Gamma_{n}^{K_2,\bullet}(g)=\cdf_{n}(y)
    \quad \text{ and } \quad \Gamma^{K_2,\bullet}(g)=y.
  \end{equation*}
 Then, by Theorem \ref{Theo_LLN}, under the condition that $D$ is strictly increasing on $(0,1)$, 
 we have the almost sure 
 convergence of $\cdf_{n}(y)$ towards $y$, see Remark   \ref{rem:casD}.
\end{itemize}

We also have the fluctuations associated to this almost sure
convergence, see Theorem \ref{Theo_CLT} for a multidimensional
version.

\begin{theo*}
   Let  $W\in \cw$  be a  graphon.  Let $F\in  \cf$ be  a       simple   finite   graphs     with     $V(F)=[p]$,
   $\ell\in\mathcal{M}_{p}$, with $k=|\ell|$. 
 Then, for  all
  $g\in\mathcal{C}^{2}([0,1])$,  we have  the  following convergence  in
  distribution:
\[
\sqrt{n}\left(\Gamma^{F,\ell}_{n}(g)-\Gamma^{F,\ell}(g)\right)
  \cvloi{n} \mathcal{N}\left(0, \sigma^{F,\ell}(g)^{2}\right),
\]
  with $\sigma^{F,\ell}(g)^{2}
  =\Var(\cu_g^{F,\ell})$ and $\cu_g^{F,\ell}$ is defined in
  \reff{eq:varU}. 
 \end{theo*}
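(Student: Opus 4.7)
The plan is to reduce the problem to a central limit theorem for a sum of weakly dependent mean-zero linear contributions indexed by the vertex marks $X_1,\dots,X_n$ and the edge indicators of $G_n$. I proceed in three stages. First, a Taylor reduction: since $g\in\cc^2([0,1])$, write
\[
g(\ti(F^{\ell},G_n^{\alpha}))=g(\tau(X_{\alpha}))+g'(\tau(X_{\alpha}))\bigl(\ti(F^{\ell},G_n^{\alpha})-\tau(X_{\alpha})\bigr)+R_n(\alpha),
\]
where $\tau(x_1,\dots,x_k)=t(F^{\ell},W)(x_1,\dots,x_k)$ is the graphon analogue of $\ti(F^{\ell},\cdot)$ with the labeled vertices frozen at $x_1,\dots,x_k$, and $X_{\alpha}=(X_{\alpha_1},\dots,X_{\alpha_k})$. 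A second moment bound obtained by conditioning on $X=(X_i)_{i\in[n]}$ and splitting $\ti(F^{\ell},G_n^{\alpha})-\tau(X_{\alpha})$ into a Monte Carlo contribution (from averaging $\E[\,\cdot\mid X]$ over $(p-k)$-tuples in $[n]\setminus\alpha$) and a conditional Bernoulli contribution (from the edges given $X$) gives $\E\bigl[(\ti(F^{\ell},G_n^{\alpha})-\tau(X_{\alpha}))^{2}\bigr]=O(1/n)$; hence the averaged remainder $\sqrt{n}\,|\cs_{n,k}|^{-1}\sum_{\alpha}R_{n}(\alpha)$ converges to zero in probability.

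Second, I analyze the two surviving contributions. The ``frozen'' piece
\[
\sqrt{n}\,\Bigl(|\cs_{n,k}|^{-1}\sum_{\alpha\in\cs_{n,k}}g(\tau(X_{\alpha}))-\Gamma^{F,\ell}(g)\Bigr)
\]
is a permutation $U$-statistic of degree $k$ in i.i.d.\ uniforms on $[0,1]$: its Hoeffding decomposition extracts the leading Gaussian contribution through the first-order projection $y\mapsto\E[g(\tau(y,X_{2},\dots,X_{k}))]-\Gamma^{F,\ell}(g)$, while higher-order projections are $O(n^{-1})$. The ``cross'' piece
\[
\sqrt{n}\,|\cs_{n,k}|^{-1}\sum_{\alpha}g'(\tau(X_{\alpha}))\bigl(\ti(F^{\ell},G_n^{\alpha})-\tau(X_{\alpha})\bigr)
\]
splits, after conditioning on $X$, into a vertex-only piece (another permutation $U$-statistic, this time over $(p-k)$-tuples in $[n]\setminus\alpha$, again linearizable in the $X_i$'s) and a pure edge piece which, given $X$, is a sum of centred Bernoullis weighted by deterministic functions of $X$ and reorganizable as a sum over unordered pairs $\{i,j\}$ of $[n]$.

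Third, I assemble the three linear layers into a representation of the form
\[
\sqrt{n}\bigl(\Gamma_n^{F,\ell}(g)-\Gamma^{F,\ell}(g)\bigr)=\frac{1}{\sqrt{n}}\sum_{i=1}^{n}V_{i}+o_{\P}(1),
\]
where $V_{i}$ gathers, for each vertex $i$, the Hoeffding-linear contribution attached to $X_{i}$ together with the half-weight edge terms incident to $i$; a direct covariance calculation then matches the variance of the sum with $\Var(\cu_{g}^{F,\ell})$ from \reff{eq:varU}. The CLT for the resulting weakly dependent sum can be obtained via an exchangeable-pair or martingale argument, or by a Stein-type bound in the spirit of Bentkus \cite{Bentkus_2003_article} as used earlier in the paper. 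The main obstacle is the combinatorial bookkeeping for the covariances of $\ti(F^{\ell},G_n^{\alpha})$ and $\ti(F^{\ell},G_n^{\alpha'})$ across tuples $\alpha,\alpha'$ with varying overlap: one must verify that contributions from overlaps of size $\geq 2$ are $o(1/n)$ after $\sqrt{n}$-scaling, so that only the size-$1$ (Hoeffding-linear) layer survives in the limiting variance.
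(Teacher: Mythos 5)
Your overall strategy---linearize $g$, reduce to a degree-$p$ U-statistic in the vertex marks $X_i$, show the edge noise is negligible at scale $\sqrt{n}$, and read the limit off the first-order Hoeffding/H\'ajek projection---is exactly the paper's strategy, and it works as you describe whenever the labeled subgraph $F^{[\ell]}$ has no edges (in particular for $k\leq 1$). But your first step fails in general. You expand $g$ around $\tau(X_\alpha)=t_{X_\alpha}(F^{\ell},W)$ and claim $\E\bigl[(\ti(F^{\ell},G_n^{\alpha})-\tau(X_{\alpha}))^{2}\bigr]=O(1/n)$. This is false as soon as $E(F^{[\ell]})\neq\emptyset$: by \reff{eq:def-tti}, $\ti(F^{\ell},G_n^{\alpha})=\hat Y^{\alpha}\,\tti(F^{\ell},G_n^{\alpha})$, where $\hat Y^{\alpha}\in\{0,1\}$ is the indicator that the labeled vertices $\alpha$ already span the edges of $F^{[\ell]}$ in $G_n$. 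This single Bernoulli factor is common to \emph{all} $\beta\in\cs_{n,p}^{\ell,\alpha}$, so it does not average out over the $(p-k)$-tuples; its conditional variance $\hat t_{X_\alpha}(1-\hat t_{X_\alpha})$ is of order one, hence $\ti(F^{\ell},G_n^{\alpha})-t_{X_\alpha}(F^{\ell},W)$ is of order $1$, not $n^{-1/2}$, and the averaged Taylor remainder $\sqrt{n}\,|\cs_{n,k}|^{-1}\sum_{\alpha}R_n(\alpha)$ does not vanish. One can see the failure already at the level of centering: since the linear term has conditional mean zero given $X_\alpha$, your decomposition forces the limit of $\E[\Gamma_n^{F,\ell}(g)]$ to be $\int_{[0,1]^k} g\bigl(t_x(F^{\ell},W)\bigr)dx$, whereas the correct value is $\Gamma^{F,\ell}(g)=\int_{[0,1]^k}\hat t_x\,g(\tilde t_x)\,dx+(1-\hat t(F^{\ell},W))g(0)$; for $F=K_2$ with both vertices labeled these are $\int g(W(x_1,x_2))\,dx$ versus $g(1)t(K_2,W)+g(0)(1-t(K_2,W))$.

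The paper circumvents this by exploiting that $\hat Y^{\alpha}$ is $\{0,1\}$-valued: it uses the \emph{exact} identity $g(\hat Y^{\alpha}u)=\hat Y^{\alpha}g(u)+(1-\hat Y^{\alpha})g(0)$, Taylor-expands only in the continuous variable $\tti(F^{\ell},G_n^{\alpha})$ around $\tilde t_{X_\alpha}$ (where the $O(1/n)$ second-moment bound of Lemma \ref{lem:tt-tt} does hold), and then replaces $\hat Y^{\alpha}$ by $\hat t_{X_\alpha}$ via the overlap-counting covariance bound of Lemma \ref{lem:Delta-Ya}. This is precisely what produces the first sum, $\sum_{i=1}^k\int \hat t_{R_i(x,U)}\bigl(g(\tilde t_{R_i(x,U)})-g(0)\bigr)dx$, in \reff{eq:varU}; a first-order expansion at $t_x$ cannot produce it. The remaining ingredients of your proposal (negligibility of the conditional edge fluctuations after averaging over tuples with small overlap, the U-statistic CLT for the surviving kernel) do match the paper's Lemmas \ref{lem_1} and \ref{lem:TCL_U_stat}, but as written your argument proves the theorem only when $E(F^{[\ell]})=\emptyset$ and would give the wrong centering and variance otherwise.
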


 Notice $\sigma^{F,\ell}(g)^{2}$  is an  integral involving  $g$ and  $g'$. The
 asymptotic results are still true when we consider a family of $d\ge 1$
 simple graphs  $F=(F_{m}:1\le m  \le d)\in\mathcal{F}^d$ and  we define
 $\Gamma^{F,\ell}_{n}$  on $[0,1]^d$,  see  Theorems \ref{Theo_LLN}  and
 \ref{Theo_CLT}  for  the  muldimensional case.   The  case  $g=\rm{Id}$
 appears  already  in  \cite{Feray_Meliot_Nikeghbali_2017_article},  see
 Corollary \ref{cor:Theo_CLT_multi} for the  graphs indexed version.  We
 have the following convergence  of finite-dimensional distributions (or
 equivalently of the process since $\mathcal{F}$ is countable).
\begin{cor*}[Corollary \ref{cor:Theo_CLT_multi}]
 We have the following convergence of finite-dimensional distributions:
 \begin{equation*}
  \left(\sqrt{n}\left(\ti
      (F,G_{n})-t(F,W)\right): F\in\mathcal{F}\right)
  \,\xrightarrow[n\rightarrow\infty]{(fdd)}\, \Theta_{\rm{inj}} ,
 \end{equation*}
where $\Theta_{\rm{inj}}=(\Theta_{\rm{inj}}(F):  F\in\mathcal{F})$ is a centered Gaussian
process with covariance function $K_{\rm{inj}}$ given, for $F,F'\in\mathcal{F}$, by:
\[
    K_{\rm{inj}}(F,F')
=\sum_{q\in V(F)}\sum_{q'\in V(F')}t\left((F\bowtie F')(q,q'),W\right) -
v(F)v(F') \, t(F,W)t(F',W).
\]
\end{cor*}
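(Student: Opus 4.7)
The plan is to derive the corollary by specializing the multidimensional
version of Theorem \ref{Theo_CLT} to the test function $g=\Id$.
Fix a finite subfamily $F_{1},\dots,F_{d}\in\mathcal{F}$ with
$V(F_{m})=[p_{m}]$, and choose, for each $m$, a labelling
$\ell_{m}\in\mathcal{M}_{p_{m}}$; the final limit will not depend on this
choice.

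The first step is to observe, using identity \reff{eq:integ-tiFW0}, that
\[
 \ti(F_{m},G_{n})
 = \inv{|\cs_{n,k_{m}}|}\sum_{\alpha\in\cs_{n,k_{m}}}
 \ti(F_{m}^{\ell_{m}},G_{n}^{\alpha})
 = \Gamma^{F_{m},\ell_{m}}_{n}(\Id),
\]
and, taking expectations, $t(F_{m},W)=\Gamma^{F_{m},\ell_{m}}(\Id)$. The
multidimensional version of Theorem \ref{Theo_CLT} applied with each
$g_{m}=\Id$ thus yields the joint convergence
\[
 \bigl(\sqrt{n}(\ti(F_{m},G_{n})-t(F_{m},W))\bigr)_{1\le m\le d}
 \cvloi{n}\cn(0,\Sigma),
\]
with
$\Sigma_{m,m'}=\Cov\bigl(\cu_{\Id}^{F_{m},\ell_{m}},\cu_{\Id}^{F_{m'},\ell_{m'}}\bigr)$.

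The second, and main technical, step is to verify the identity
$\Sigma_{m,m'}=K_{\rm{inj}}(F_{m},F_{m'})$. For $g=\Id$, the random
variable $\cu_{\Id}^{F,\ell}$ from \reff{eq:varU} should expand into a
sum indexed by the vertices $q\in V(F)$ of conditional expectations
involving the sampling variables $(X_{i})$. Computing
$\Cov(\cu_{\Id}^{F,\ell},\cu_{\Id}^{F',\ell'})$ by Fubini then produces
a double sum over $(q,q')\in V(F)\times V(F')$ of integrals of products
of $W$ over $[0,1]^{V(F)\cup V(F')}$ in which exactly one vertex of $F$
is identified with exactly one vertex of $F'$. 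By the definition of the
glued graph $(F\bowtie F')(q,q')$ and of $t(\cdot,W)$, each such integral
equals $t\bigl((F\bowtie F')(q,q'),W\bigr)$; together with the centering
contribution $-v(F)v(F')\,t(F,W)t(F',W)$ this reproduces exactly the
stated formula for $K_{\rm{inj}}(F,F')$.

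Since $\mathcal{F}$ is countable, convergence of finite-dimensional
distributions of the full process
$(\sqrt{n}(\ti(F,G_{n})-t(F,W)))_{F\in\mathcal{F}}$ follows from
convergence along every finite subfamily, which the previous two steps
establish; the centered Gaussian limit $\Theta_{\rm{inj}}$ is then
characterized by the covariance kernel $K_{\rm{inj}}$. The main obstacle
is the combinatorial identification in the second step: one has to unfold
\reff{eq:varU} with $g=\Id$ and recognize that $W$-integrals over
$[0,1]^{V(F)\cup V(F')}$ with a single shared vertex are precisely
homomorphism densities of $(F\bowtie F')(q,q')$ in $W$; the rest is
bookkeeping.
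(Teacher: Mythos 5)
Your overall architecture is the paper's: reduce to Theorem \ref{Theo_CLT} via $\Gamma_n^{F,\ell}(\Id)=\ti(F,G_n)$, then identify the limiting covariance with the $\bowtie$-formula by the computation you sketch in your second step, which is exactly the paper's derivation of \reff{eq:s2=FF} (integrating $t_x(F^q,W)\,t_x({F'}^{q'},W)$ over $x$ glues the two graphs at $q\sim q'$). That part is fine.

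There is, however, a real gap in your first step. Theorem \ref{Theo_CLT} is a \emph{scalar} CLT: for a fixed family $F\in\cf^d$ with a \emph{single common} label $\ell$ satisfying Condition \reff{eq:Hyp-hat-E}, and a \emph{real-valued} $g\in\cc^2([0,1]^d)$, it gives convergence of the single random variable $\sqrt{n}(\Gamma_n^{F,\ell}(g)-\Gamma^{F,\ell}(g))$. Applying it ``with each $g_m=\Id$'' for separately chosen labels $\ell_m\in\cm_{p_m}$ only yields the $d$ one-dimensional marginal limits; it does not by itself yield the joint convergence $\cvloi{n}\cn(0,\Sigma)$ that you assert, and your per-graph labellings are not even a legal input to the theorem, which requires one $\ell$ for the whole $d$-tuple. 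The missing ingredient is the Cram\'er--Wold device: pad the $F_m$ with isolated vertices so that they share the vertex set $[p]$ (Remark \ref{rem:isolated}), take the single label $\ell=1$ with $k=1$ (so Condition \reff{eq:Hyp-hat-E} holds automatically, Remark \ref{rem:case-k=1}), and apply Theorem \ref{Theo_CLT} to the linear test functions $g(x)=\langle a,x\rangle$ for every $a\in\R^d$. This is precisely what the paper does through Remark \ref{rem:variance_TCL}~(ii), whose formula \reff{eq:var_quantum_graph} gives $\sigma^{F,\ell}(g)^2=\Var\bigl(\langle a,\sum_{q=1}^p t_U(F^q,W)\rangle\bigr)$ and hence the covariance kernel \reff{eq:K1}; the identity \reff{eq:K2} then follows by your (and the paper's) gluing computation. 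With that repair your argument closes, and the passage from finite subfamilies to the full countable family $\cf$ is as you say.
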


As  a consequence,  we get  the central  limit theorem  for homomorphism
densities from  quantum graphs,  see \reff{eq:cv_quantum_graph}  and for
induced  homomorphism densities,  see Corollary  \ref{cor:CLT_tind}.  In
the Erd\"{o}s-R\'{e}nyi case, the  one-dimensional limit distribution of
induced homomorphism densities is not necessarily normal: it's behaviour
depends on  the number of  edges, two-stars  and triangles in  the graph
$F$,            see           \cite{Nowicki_1989_article}            and
\cite{Janson_Nowicki_1991_article}.
\\

Notice   that    because   $g=\ind_{[0,D(y)]}$    is   not    of   class
$ \mathcal{C}^{2}([0,1])$, we can not apply Theorem \ref{Theo_CLT} (with
$F=K_2$ and  $k=1$) directly to  get the convergence in  distribution of
$\sqrt{n}(\Pi_n(y)   -y)$    towards   $\chi(y)$   given    in   Theorem
\ref{thm:CLT_indicator_degree_sequence}.   Nevertheless, the  asymptotic
variance can be formally obtained by computing $\sigma^{K_2,\bullet}(g)$
given   in   Theorem   \ref{Theo_CLT}   with   $g=\ind_{[0,D(y)]}$   and
$g'(z)dz=(D'(y))^{-1}\delta_{D(y)}(dz)$,  with  $\delta_{D(y)}(dz)$  the
Dirac  mass  at   $D(y)$.   However,   the   proofs  of  Theorems
\ref{Theo_CLT}   and  \ref{thm:CLT_indicator_degree_sequence}   require
different         approachs.       

  Similarly         to        Theorem
\ref{thm:CLT_indicator_degree_sequence} and in the spirit of Theorem
\ref{Theo_CLT}, it could be interesting to  consider  the
convergence of  CDF for  triangles or more  generally for simple  finite 
graphs $F$, $V(F)=[p]$, and $\ell\in \cs_{p,k}$:
\[
\left(\frac{1}{|\mathcal{S}_{n,k}|}\sum_{\alpha\in\mathcal{S}_{n,k}}
  \1{\ti(F^{\ell},G_{n}^{\alpha})\le t_{x}(F^{\ell},W)}:
  x\in(0,1)^{k}\right),
\] 
where  $t_{x}(F^{\ell},W)=\E\big[\ti\big(F^{\ell},G_{n}^{[k]}\big)\big|\,
(X_1, \ldots, X_k)=x\big]$, see \reff{hom5} and the second equality in
\reff{eq:sum-ht2}.

\subsection{Organization of the paper}

We recall the definitions of graph homomorphisms, gra\-phons, $W$-random
graphs in  Section \ref{sec:def}. We  present our main result  about the
almost  sure convergence  for  the random  measure $\Gamma_n^{F,  \ell}$
associated  to  homomorphism  densities of  sampling  partially  labeled
graphs  from  a  graphon  in  Section  \ref{sec:main-res},  see  Theorem
\ref{Theo_LLN}. The proof is  given in Section \ref{sec:proof_LLN} after
a  preliminary  result  given  in  Section  \ref{sec:prem_result}.   The
associated fluctuations are stated  in Theorem \ref{Theo_CLT} and proved
in  Section   \ref{sec:proof_CLT}.   Section   \ref{sec:CDF_degrees}  is
devoted to the asymptotics for the empirical CDF of degrees $\Pi_n$, see
Theorem  \ref{thm:CLT_indicator_degree_sequence}  for  the  fluctuations
corresponding  to the  almsot  sure convergence.   After some  ancillary
results given  in Section \ref{sec:preliminaries_CDF_degrees},  we prove
Theorem      \ref{thm:CLT_indicator_degree_sequence}     in      Section
\ref{sec:proof_CLT_CDF_degrees}.  We add a notation  index at the end of
the paper for the reader convenience. We postpone to the appendices some
technical  results on   precise uniform   asymptotics  for the  CDF of
binomial distributions, see Section \ref{sec:preliminaries_CDF_bin}, and
a proof of Proposition \ref{prop:stein}  on approximation for the CDF of
multivariate binomial distributions.

\section{Definitions}
\label{sec:def}

\subsection{First notations}

We denote by $|B|$  the cardinal of
the  set  $B$.  For $n\in \N^*$, we  set  $[n]=\{1,\dots,n\}$.   Let $\mathcal{A}$  be  a
non-empty  set   of  characters,   called  the  alphabet.    A  sequence
$\beta=\beta_{1}\dots \beta_{k}$, with $\beta_{i}\in\mathcal{A}$ for all
$1\le i  \le k$, is  called a  $\mathcal{A}$-word (or string)  of length
$\val{\beta}=k\in\Ne$.   The word  $\beta$ is  also identified  with the
vector  $(\beta_{1},\dots,\beta_{k})$,  and for $q\in \ca$, we write
$q\in \beta$ if $q$ belongs to $\{\beta_1, \ldots, \beta_k\}$.
The  concatenation of  two $\mathcal{A}$-words  $\alpha$ and  $\beta$ is
denoted by $\alpha\,\beta$.

We  now define  several other  operations on  words.  Let  $\beta$ be  a
$\mathcal{A}$-word  of   length  $p\in \N^*$  and  $k\in[p]$.   For  $\alpha$  a
$[p]$-word   of  length   $k$,   we   consider  the   $\mathcal{A}$-word
$\beta_{\alpha}$, defined by
\begin{equation}\label{select_characters}
  \beta_{\alpha}=\beta_{\alpha_1}\dots\beta_{\alpha_{k}}\cdot
 \end{equation}
The word $\beta_{[k]}=\beta_{1}\dots\beta_{k}$ corresponds to the first $k$ terms of $\beta$, where by convention, $[k]$ denote the $\Ne$-word 
$1\dots k$.
We define, for $i,j\in[p]$, the transposition word $\tau_{ij}(\beta)$ of $\beta$, obtained by exchanging the place of the
$i$th character with the $j$th character in the word $\beta$: for $u\in[p]$,
\begin{equation}\label{word_transposition}
 \tau_{ij}(\beta)_u=
 \begin{cases}
   \beta_{u}  &\mbox{if } u\notin \{i,j\}, \\
   \beta_{i}  &\mbox{if } u=j, \\
   \beta_{j}  &\mbox{if } u=i.
 \end{cases}
\end{equation}
Finally, for  $q\in\mathcal{A}$ and  $i\in[p]$, we  define the  new $\ca$-word
$R_{i}(\beta,q)$,  derived  from  $\beta$   by  substituting  its  $i$th
character with $q$: for $u\in[p]$,
\begin{equation}\label{word_substitution}
 R_{i}(\beta,q)_u=
 \begin{cases}
  \beta_{u} &\mbox{if } u\neq i, \\
   q        &\mbox{if } u=i.
 \end{cases}
\end{equation}

Let $n\in\Ne$  and $p\in[n]$. We  define the set  $\mathcal{S}_{n,p}$ of
all $[n]$-words of length $p$ such that all characters are distinct:
\begin{equation}\label{Snp}
\mathcal{S}_{n,p}=\left\{\beta=\beta_{1}\dots\beta_{p}: \beta_{i}\in
  [n]\text{ for all }  i \in[p] \text{ and } \beta_{1},\dots,\beta_{p}
  \text{ are all distinct} \right\}. 
\end{equation}
Notice                                                              that
$\left|\mathcal{S}_{n,p}\right|=A_{n}^{p}=n!/(n-p)!$,   and   that
$\mathcal{S}_{n,1}=[n]$.   Moreover, for  $n\in\Ne$, $\mathcal{S}_{n,n}$
is  simply the  set of  all  permutations of  $[n]$ which  will be  also
denoted by  $\mathcal{S}_{n}$.  With these notations,  for $n\in\Ne$, we
define the set $\mathcal{M}_{n}$ of  all $[n]$-words with all characters
distinct:
\begin{equation}\label{[n]_words}
 \mathcal{M}_{n}=\bigcup_{p\in[n]}\mathcal{S}_{n,p}.
\end{equation}
Let   $n\ge  p\ge   k\geq  1$   and
$\ell\in\mathcal{S}_{p,k}$.   For  $\alpha\in\mathcal{S}_{n,k}$,        we        define       the        set
$\mathcal{S}_{n,p}^{\ell,\alpha}$ of all $[n]$-words  of length $k$ such
that  all   characters  are   distinct  and   for  all   $i\in[k]$,  the
$\ell_{i}$-th character is equal to $\alpha_{i}$:
\begin{equation}\label{Snp_beta_ell}
 \mathcal{S}_{n,p}^{\ell,\alpha}=\left\{\beta\in\mathcal{S}_{n,p}: \beta_{\ell}=\alpha\right\}\cdot
\end{equation}
We have $|\cs_{n,p}^{\ell,\alpha}|=A_{n-k}^{p-k}$. 
As  $A_{n}^{p}=A_{n}^{k}A_{n-k}^{p-k}$,  that is
$|\cs_{n,p}|=|\cs_{n,k}|\, |\cs^{\alpha, \ell}_{n,p}|$ for any
$\alpha\in \cs_{n,k}$, we get
that for all real-valued function $f$
defined on $\mathcal{S}_{n,k}$:
\begin{equation}
\label{eq_snk}
 \inv{|\cs_{n,p}|}\sum_{\beta\in\mathcal{S}_{n,p}}
f\left(\beta_{\ell}\right)=\inv{|\cs_{n,k}|}
\sum_{\alpha\in\mathcal{S}_{n,k}}f(\alpha)\cdot    
\end{equation}

Let $d\in \N^*$. For $x, y\in \R^d$, we
denote by $\langle x,y \rangle$ the usual scalar product on $\R^d$ and
$|x|=\sqrt{\langle x,x \rangle} $ the Euclidean norm in $\R^d$. \\

We use the convention $\prod_\emptyset=1$. 

\subsection{Graph homomorphisms}
\label{subsec:graph_hom}

A simple  finite graph  $G$ is  an ordered pair  $(V(G),E(G))$ of  a set
$V(G)$ of  $v(G)$ vertices,  and a  subset $E(G)$  of the  collection of
$\binom{v(G)}{2}$ unordered pairs of vertices. We usually shall identify
$V(G)$ with  $[v(G)]$.  The elements of  $E(G)$ are called edges  and we
denote by $e(G)=|E(G)|$ the number of  edges in the graph $G$.  Recall a
graph $G$  is simple when  it has no  self-loops, and no  multiple edges
between any pair of vertices. Let $\cf$ be the set of all simple  finite graphs.

Let  $F, G\in  \cf$ be  two simple  finite graphs  and set  $p=v(F)$ and
$n=v(G)$.    A   homomorphism   $\ff$   from    $F$   to   $G$   is   an
adjacency-preserving map from  $V(F)=[p]$ to $V(G)=[n]$ i.e.  a map from
$V(F)$    to   $V(G)$    such   that    if   $\{i,j\}\in    E(F)$   then
$\{\ff(i),\ff(j)\}\in  E(G)$. Let  $\text{Hom}(F,G)$ denote  the
set of homomorphisms from $F$ to $G$.  The homomorphism density from $F$
to $G$ is the normalized quantity:
\begin{equation}
\label{hom1}
t(F,G)=\frac{|\text{Hom(F,G)}|}{n^p}\cdot
\end{equation}
It is the probability that a uniform random map from $V(F)$ to $V(G)$ is
a  homomorphism.   We have similar definition when  $\ff$  is
restricted to  being injective. 
Let $\text{Inj}(F,G)$ denote  the set of injective  homomorphisms of $F$
into $G$ and define its density as:
\begin{equation}\label{hom2}
\ti (F,G)=\frac{|\text{Inj}(F,G)|}{A_{n}^{p}}\cdot
\end{equation}
For $\beta\in \cs_{n,p}$, we set, with $V(F)=[p]$ and  $V(G)=[n]$:
\begin{equation}
   \label{eq:def-Yb}
Y^\beta(F,G)= \prod_{\{i,j\}\in E(F)}\1{\{\beta_i,\beta_j\}\in E(G)}.
\end{equation}
When there is no risk of confusion, we shall write $Y^\beta$ for
$Y^\beta(F,G)$, and thus we have:
\begin{equation}
\label{inj_beta_ell-0}
\ti(F, G)=\inv{|\cs_{n,p}|} \sum_{\beta\in \cs_{n,p} }
 Y^\beta.
\end{equation}

We recall from  Lov\'{a}sz \cite{Lovasz_2012_book},   Section 5.2.3,
that:
 \begin{equation}
\label{eq:t-ti}
  \left|\ti (F,G)-t(F,G)\right|\le \frac{1}{n}\binom{p}{2}.
 \end{equation}

 In the  same way, we  can define homomorphism densities  from partially
 labeled graphs.  See Figure \ref{fig:figure_hom_label} for an injective 
 homomorphism  of partially  labeled graphs.   Assume $p>k\geq  1$.  Let
 $\ell\in\mathcal{S}_{p,k}$ and  $\alpha\in\mathcal{S}_{n,k}$. We define
 $\text{Inj}(F^{\ell},G^{\alpha})$  the set  of injective  homomorphisms
 $\ff$ from  $F$ into  $G$ such  that $\ff(\ell_{i})=\alpha_{i}$
 for all $i\in[k]$, and its density:
\begin{equation}\label{hom3}
 \ti (F^{\ell},G^{\alpha})
=\frac{|\text{Inj}(F^{\ell},G^{\alpha})|}{A_{n-k}^{p-k}}
= \inv{|\cs_{n,p}^{\ell, \alpha}|} \sum_{\beta\in \cs_{n,p}^{\ell, \alpha} }
 Y^\beta.
\end{equation}

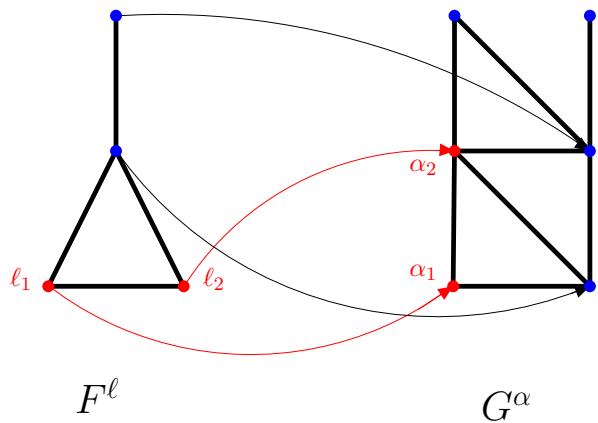
\begin{figure}[!ht]
\begin{center}
\scalebox{0.9}{
\begin{tikzpicture}[line cap=round,line join=round,>=triangle 45,x=1.0cm,y=1.0cm]
\clip(-10.51,-2.8) rectangle (0,5.02);
\draw [line width=2pt] (-9,0)-- (-8,2);
\draw [line width=2pt] (-8,2)-- (-7,0);
\draw [line width=2pt] (-9,0)-- (-7,0);
\draw [line width=2pt] (-8,4)-- (-8,2);
\draw [line width=2pt] (-3,4)-- (-3,2);
\draw [line width=2pt] (-3,2)-- (-3.02,0);
\draw [line width=2pt] (-3.02,0)-- (-1,0);
\draw [line width=2pt] (-1,2)-- (-1,0);
\draw [line width=2pt] (-3,2)-- (-1,0);
\draw [line width=2pt] (-3,2)-- (-1,2);
\draw [line width=2pt] (-1,2)-- (-1,4);
\draw [line width=2pt] (-3,4)-- (-1,2);
\draw [shift={(-3.34,-2.32)},color=ffqqqq]  plot[domain=1.49:2.58,variable=\t]({1*4.33*cos(\t r)+0*4.33*sin(\t r)},{0*4.33*cos(\t r)+1*4.33*sin(\t r)});
\draw [shift={(-6.01,3.89)},color=ffqqqq]  plot[domain=4.06:5.37,variable=\t]({1*4.9*cos(\t r)+0*4.9*sin(\t r)},{0*4.9*cos(\t r)+1*4.9*sin(\t r)});
\draw [->,color=ffqqqq] (-3.13,2.01) -- (-3,2);
\draw [->,color=ffqqqq] (-3.53,-0.34) -- (-3.02,0);
\draw (-8.74,-1.22) node[anchor=north west] {\LARGE $ F^{\ell} $};
\draw (-2.74,-1.42) node[anchor=north west] {\LARGE $ G^{\alpha}$};
\draw [color=ffqqqq](-9.7,0.4) node[anchor=north west] {$ \ell_{1} $};
\draw [color=ffqqqq](-6.85,0.4) node[anchor=north west] {$ \ell_{2} $};
\draw [color=ffqqqq](-3.8,2) node[anchor=north west] {$ \alpha_{2} $};
\draw [color=ffqqqq](-3.8,0.45) node[anchor=north west] {$ \alpha_{1} $};
\draw [shift={(-3.25,5.37)}] plot[domain=3.76:5.11,variable=\t]({1*5.82*cos(\t r)+0*5.82*sin(\t r)},{0*5.82*cos(\t r)+1*5.82*sin(\t r)});
\draw [shift={(-7.31,-6.84)}] plot[domain=0.95:1.63,variable=\t]({1*10.86*cos(\t r)+0*10.86*sin(\t r)},{0*10.86*cos(\t r)+1*10.86*sin(\t r)});
\draw [->] (-1.28,-0.11) -- (-1,0);
\draw [->] (-1.2,2.14) -- (-1,2);
\begin{scriptsize}
\fill [color=ffqqqq] (-9,0) circle (2.5pt);
\fill [color=qqqqff] (-8,2) circle (2.5pt);
\fill [color=ffqqqq] (-7,0) circle (2.5pt);
\fill [color=qqqqff] (-8,4) circle (2.5pt);
\fill [color=ffqqqq] (-3.02,0) circle (2.5pt);
\fill [color=qqqqff] (-1,0) circle (2.5pt);
\fill [color=ffqqqq] (-3,2) circle (2.5pt);
\fill [color=qqqqff] (-1,2) circle (2.5pt);
\fill [color=qqqqff] (-3,4) circle (2.5pt);
\fill [color=qqqqff] (-1,4) circle (2.5pt);
\end{scriptsize}
\end{tikzpicture}}
\end{center}
\caption{Example of an injective homomorphism from partially labeled graphs.}
\label{fig:figure_hom_label}
\end{figure}

Denote  $ F^{[\ell]}$  the labeled  sub-graph  of $F$   with  vertices
$\{\ell_1, \ldots,  \ell_k\}$ and edges:
\begin{equation}
   \label{eq:def-Eht} 
E(F^{[\ell]})=\{\{i,j\}\in E(F): i,j\in \ell\}. 
\end{equation}
For $\alpha\in \cs_{n,k}$, we set:
 \begin{equation}
\label{eq:def-Ya}
\hat Y^\alpha(F^\ell, G^\alpha)= Y^\alpha (F^{[\ell]}, G)= \prod_{\{i,j\}\in
 E(F^{[\ell]})}\1{\{\alpha_i,\alpha_j\}\in E(G)},
 \end{equation}
 For      $\beta\in      \cs^{\alpha,     \ell}_{n,p}$,      we      set
 $Y^\beta(F^\ell,  G^\alpha)=\hat{Y}^\alpha(F^\ell,  G^\alpha)\,  \tilde
 Y^\beta(F^\ell, G^\alpha)$ with:
 \begin{equation}
\label{eq:def-Yb-2}
\tilde Y^\beta(F^\ell, G^\alpha)=\!\!\!\!\prod_{\{i,j\}\in
  E(F)\backslash E(F^{[\ell]})}\!\!\!\!\ind_{\{\{\beta_i,\beta_j\}\in E(G)\}}.
 \end{equation}
Notice that $Y^\beta(F^\ell, G^\alpha)$ is equal to $Y^\beta$ defined in
\reff{eq:def-Yb}  for $\beta\in
\cs_{n,p}^{\ell,\alpha}$. 
When there is no risk of confusion, we shall write $\hat Y^\alpha,
\tilde Y^\beta$ and $Y^\beta$ for
$\hat Y^\alpha(F^\ell, G^\alpha),
\tilde Y^\beta(F^\ell, G^\alpha)$ and $Y^\beta(F^\ell,
G^\alpha)$. Remark that $\hat Y^\alpha$ is either
$0$ or $1$. By construction, we have:
\begin{equation}
   \label{eq:def-tti}
\ti(F^{\ell},G^{\alpha})=\hat Y^\alpha \, 
\tti(F^{\ell},G^{\alpha})
\quad\text{with}\quad
\tti(F^{\ell},G^{\alpha})=\inv{|\cs_{n,p}^{\ell, \alpha}|}
\, \sum_{\beta\in \cs_{n,p}^{\ell,\alpha}  } \,\, \tilde Y^\beta. 
\end{equation}

Summing \reff{hom3} over $\alpha\in \cs_{n,k}$, we get using
\reff{eq_snk} and  \reff{hom2} that:
\begin{equation}
   \label{eq:integ-tiFW0}
\inv{ |\cs_{n,k}|} \sum_{\alpha\in\mathcal{S}_{n,k}}\ti 
(F^{\ell},G^{\alpha })
 =\ti  (F, G). 
\end{equation} 
We  can generalize  this  formula  as follows.  Let $n\geq p>k>k'\geq
1$, $\ell\in \cs_{p,k}$, $\gamma\in\cs_{k,
  k'}$  and $\alpha'\in
\cs_{n,k'}$. We easily get:
\begin{equation}
   \label{eq:integ-tiFW}
\inv{|\cs_{n,k}^{\gamma, \alpha'}|}
\sum_{\alpha\in\mathcal{S}_{n,k}^{\gamma, \alpha'}} 
\ti (F^{\ell},G^{\alpha })
 =\ti  (F^{\ell_\gamma}, G^{\alpha'}). 
\end{equation}

\begin{rem}
  Let $K_{2}$ (resp. $K_{2}^{\bullet}$) denote  the complete
 graph with two vertices (resp. one of them being labeled). 
 Let $G\in\mathcal{F}$ with $n$ vertices.
 We define the degree sequence $(D_{i}(G):i\in[n])$ of the graph $G$ by,
 for $i\in[n]$:
 \begin{equation}
\label{eq:def_degree_seq}
D_i(G) = \ti\left(K_{2}^{\bullet},G^{i}\right)=\inv{n-1}\sum_{j\in
  [n]\backslash\{i\} }\1{\{i,j\}\in E(G)}.
 \end{equation}
\end{rem}

\begin{rem}
   \label{rem:isolated}
   Let   $F\in   \cf$   be   a  simple finite  graph   with   $V(F)=[p]$.    Let
   $\ell\in \cs_{p,k}$  for some  $k\in [p]$.   Assume $F_0$  is obtained
   from  $F$ by  adding $p'$  isolated vertices  numbered from  $p+1$ to
   $p+p'$, and label $\ell'$ of  those isolated vertices so that $\ell'$
   is a $\{p+1,  \ldots, p+p'\}$-word of length  say $k'=|\ell'|\leq p'$
   and $\ell\ell'\in \cs_{p+p', k+k'}$.  By convention $k'=0$ means none
   of the  added isolated vertices  is labeled. Assume $n\geq  p+p'$ and
   let $\alpha\in  \cs_{n,k}$ and  $\alpha'$ be  a $[n]$-word  such that
   $\alpha\alpha'\in \cs_{n,  k+k'}$.  Then,  it is elementary  to check
   that:
\[
\ti (F_0^{\ell\ell'},G^{\alpha\alpha'})=\ti (F^{\ell},G^{\alpha}).
\]
as well as, with $\delta_x$ the Dirac mass at $x$:
\[
\inv{|\cs_{n, k+k'}|} \sum_{\alpha\alpha'\in \cs_{n,
     k+k'}} \delta _{\ti (F_0^{\ell\ell'},G^{\alpha\alpha'})}
=
\inv{|\cs_{n,k}|} \sum_{\alpha\in \cs_{n,
     k}} \delta _{\ti (F^{\ell},G^{\alpha})}.
\]
In conclusion adding isolated vertices (labeled or non labeled) does not
change the homomorphism densities. 
\end{rem}

Finally,  we recall  an  induced  homomorphism from  $F$  to  $G$ is  an
injective  homomorphism which  preserves non-adjacency,  that is: an injective maps $\ff$ 
from $V(F)$ to $V(G)$ is an induced   homomorphism if  $\{i,j\}\in E(F)$ if and only if
$\{\ff(i),\ff(j)\}\in E(G)$.  See
Figure \ref{fig:example_inj_ind}  for an injective  homomorphism which
is not an induced homomorphism.  Let $\text{Ind}(F,G)$ denote the set of
induced homomorphisms; we denote its density by:
\begin{equation}\label{hom6}
 \tid (F,G)=\frac{|\text{Ind}(F,G)|}{A_{n}^{p}}\cdot
\end{equation}

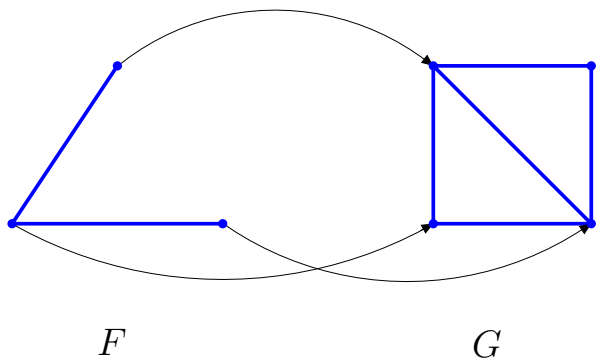
\begin{figure}[!ht]
\begin{center}
\scalebox{0.7}{
\begin{tikzpicture}[line cap=round,line join=round,>=triangle 45,x=1cm,y=1cm]
\clip(-13.7,-2.7) rectangle (0.37,5.43);
\draw [line width=2pt,color=qqqqff] (-10,3)-- (-12,0);
 \draw [line width=2pt,color=qqqqff] (-8,0)-- (-12,0);
 \draw [line width=2pt,color=qqqqff] (-4,3)-- (-4,0);
 \draw [line width=2pt,color=qqqqff] (-1,3)-- (-4,3);
 \draw [line width=2pt,color=qqqqff] (-1,0)-- (-1,3);
 \draw [line width=2pt,color=qqqqff] (-4,0)-- (-1,0);
 \draw (-10.52,-1.87) node[anchor=north west] { \huge $ F $};
 \draw (-3.4,-1.9) node[anchor=north west] { \huge $ G $};
 \draw [line width=2pt,color=qqqqff] (-4,3)-- (-1,0);
 \draw [shift={(-7,-0.69)}] plot[domain=0.89:2.25,variable=\t]({1*4.75*cos(\t r)+0*4.75*sin(\t r)},{0*4.75*cos(\t r)+1*4.75*sin(\t r)});
 \draw [shift={(-4.5,4.99)}] plot[domain=4.1:5.32,variable=\t]({1*6.09*cos(\t r)+0*6.09*sin(\t r)},{0*6.09*cos(\t r)+1*6.09*sin(\t r)});
 \draw [shift={(-8,7.05)}] plot[domain=4.2:5.23,variable=\t]({1*8.11*cos(\t r)+0*8.11*sin(\t r)},{0*8.11*cos(\t r)+1*8.11*sin(\t r)});
 \draw [->] (-4.18,3.14) -- (-4,3);
 \draw [->] (-4.24,-0.13) -- (-4,0);
 \draw [->] (-1.18,-0.12) -- (-1,0);
 \begin{scriptsize}
 \fill [color=qqqqff] (-12,0) circle (2.5pt);  
 \fill [color=qqqqff] (-10,3) circle (2.5pt);
 \fill [color=qqqqff] (-8,0) circle (2.5pt);
 \fill [color=qqqqff] (-4,3) circle (2.5pt);
 \fill [color=qqqqff] (-1,3) circle (2.5pt);
 \fill [color=qqqqff] (-1,0) circle (2.5pt);
 \fill [color=qqqqff] (-4,0) circle (2.5pt);
 \end{scriptsize}
\end{tikzpicture}}
\end{center}
\caption{An example of an  injective homomorphism but not an induced homomorphism.}
\label{fig:example_inj_ind}
\end{figure}

We  recall results  from  \cite{Lovasz_2012_book},  see Section  5.2.3.,
which  gives  relations  between   injective  and  induced  homomorphism
densities.
\begin{prop}
\label{prop:relation_t_inf}
 For $F, G\in \cf$, two simple finite graphs, we have:
\begin{equation}
   \label{eq:ti-tid}
\ti (F,G)=\sum_{F'\geq F} \tid (F',G)
\quad\text{and}\quad
\tid (F,G)=\sum_{F'\geq F} (-1)^{e(F')-e(F)}\ti (F',G),
\end{equation}
where $F'\geq   F$ means that $V(F)=V(F')$ and $E(F)\subset E(F')$,
that is $F'$ ranges over all simple graphs obtained from $F$ by adding
edges. 
\end{prop}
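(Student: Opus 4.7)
The plan is to prove the first identity by a bijective partition of injective homomorphisms into induced homomorphisms for larger graphs, and then to derive the second identity from the first by Möbius inversion on the Boolean lattice of edge supersets.

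For the first identity, I would introduce, for each $\ff\in\text{Inj}(F,G)$, the graph $F^*=F^*(\ff)$ with vertex set $V(F)=[p]$ defined by
\[
\{i,j\}\in E(F^*)\iff \{\ff(i),\ff(j)\}\in E(G).
\]
This is well defined because $\ff$ is injective. Since $\ff$ is a homomorphism of $F$ into $G$, every edge of $F$ satisfies the right-hand side, so $E(F)\subseteq E(F^*)$, i.e. $F^*\geq F$. By construction, $\ff$ is an induced homomorphism from $F^*$ to $G$. Conversely, any $\ff\in\text{Ind}(F',G)$ with $F'\geq F$ is an injective homomorphism from $F$ to $G$ and satisfies $F^*(\ff)=F'$. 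This yields the disjoint partition
\[
\text{Inj}(F,G)=\bigsqcup_{F'\geq F}\text{Ind}(F',G),
\]
and taking cardinalities then dividing both sides by $A_n^p$ (which is the same normalization since $v(F')=v(F)=p$) gives the first identity.

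For the second identity, I would invoke Möbius inversion on the poset $\{F':F'\geq F\}$ of supergraphs of $F$ on the vertex set $[p]$, ordered by edge inclusion. This poset is isomorphic to the Boolean lattice of subsets of $\binom{[p]}{2}\setminus E(F)$, whose Möbius function is $\mu(F',F'')=(-1)^{e(F'')-e(F')}$ for $F''\geq F'$. Applied to the relation $\ti(F,G)=\sum_{F'\geq F}\tid(F',G)$ (a finite sum since only finitely many $F'\geq F$ exist), Möbius inversion yields
\[
\tid(F,G)=\sum_{F'\geq F}(-1)^{e(F')-e(F)}\ti(F',G),
\]
which is the second identity. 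Alternatively, one can verify this second identity directly by plugging the right-hand side of the first identity into the claimed formula and noting that for each fixed $F''\geq F$ one has $\sum_{F\leq F'\leq F''}(-1)^{e(F')-e(F)}=\ind_{\{F''=F\}}$, by the binomial identity on the interval $[F,F'']$ of the Boolean lattice.

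The main technical point is the verification that the map $\ff\mapsto F^*(\ff)$ is well defined and establishes the claimed bijection between $\text{Inj}(F,G)$ and the disjoint union of the $\text{Ind}(F',G)$; once this is in place, both identities follow cleanly, either by direct summation or by the standard Möbius inversion on the Boolean lattice.
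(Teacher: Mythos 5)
Your proof is correct: the partition $\text{Inj}(F,G)=\bigsqcup_{F'\geq F}\text{Ind}(F',G)$ via the pullback graph $F^*(\ff)$ gives the first identity, and M\"obius inversion on the Boolean lattice of added edges (or the direct binomial cancellation you sketch) gives the second. The paper does not reprove this proposition but simply cites Lov\'asz \cite{Lovasz_2012_book}, Section 5.2.3, where exactly this standard argument appears, so your write-up supplies the omitted proof in essentially the canonical way.
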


\subsection{Graphons}

A graphon is a symmetric, measurable function $W:[0,1]^{2} \to [0,1]$. Denote the space of all graphons by $\mathcal{W}$. 
Homomorphim densities from graphs can be extented to graphons. For every
simple finite graph $F$ and every graphon $W\in\mathcal{W}$, we define
\begin{equation}\label{hom4}
 t(F,W)=\ti (F,W)=\int_{[0,1]^{V(F)}}\prod_{\{i,j\}\in E(F)} W(x_i,x_j) \prod_{k\in V(F)}dx_k
\end{equation}
and 
\begin{equation}\label{hom7}
 \tid (F,W)=\int_{[0,1]^{V(F)}}\prod_{\{i,j\}\in E(F)} W(x_i,x_j)\prod_{\{i,j\}\notin E(F)} (1-W(x_i,x_j))\prod_{k\in V(F)}dx_k .
\end{equation}

A sequence  of simple finite graphs  $(H_{n}:n\in\Ne)$ is called  convergent if
the sequence  $(t(F,H_n):n\in\Ne)$ has  a limit  for every  simple
finite graph
$F$.       Lov\'{a}sz    and     Szegedy
\cite{Lovasz_Szegedy_2006_article}  proved  that  the limit  of a  convergent
graphs  sequence can  be represented  as  a graphon, up to a measure preserving
bijection.   In particular,  a
sequence of  graphs $(G_{n}:n\in\Ne)$ is  said to converge to  a graphon
$W$ if for every simple finite graph $F$, we have
\begin{equation*}
 \lim_{n\to \infty} t(F,H_{n})=t(F,W).
\end{equation*}

As an extension, we can define homomorphism densities from a $k$-labeled
simple finite  graph $F$  to a graphon $W$ which are  defined by not integrating
the variables corresponding to labeled vertices. Let $F\in \cf$ be a
simple finite 
graph, set $p=v(F)$ and identify $V(F)$  with $[p]$. Let $p\geq k\geq 1$
and  $\ell\in\mathcal{S}_{n,k}$.  Recall  $E(F^{[\ell]})$  defined  in
\reff{eq:def-Eht}.  We set for $y=(y_1, \ldots, y_p)\in [0, 1]^p$:
\begin{equation}
   \label{eq:def-Zy}
\tilde Z(y)=\prod_{\{i,j\}\in E(F)\backslash E(F^{[\ell]})
} W(y_i,y_j)  
\end{equation}
and  for $x=(x_{1},\dots,x_{k})\in [0, 1]^k$ we consider the average of
$\tilde Z(y)$ over $y$ restricted to $y_\ell=x$:
 \begin{equation}
\label{eq:def-tildet}
\tilde t_x(F^\ell, W)= \int_{[0,1]^{p}} \tilde Z(y)  \prod_{m\in
  [p]\setminus \ell }dy_{m} \prod_{m'\in [k]
}\delta_{x_{m'}}(dy_{\ell_{m'}}), 
 \end{equation}
as well as the analogue of $\hat Y^\alpha$  for the graphon:
 \begin{equation}
\label{eq:def-hatt}
\hat t_x(F^\ell, W)= \prod_{\{\ell_i,\ell_j\}\in
   E(F^{[\ell]})}W(x_i, x_j)
\quad\text{and}\quad
\hat t(F^\ell, W)= \int_{[0,1]^k}\hat t_{x}(F^{\ell},W)\, dx=  t(F^{[\ell]}, W).
\end{equation}
Similarly to \reff{eq:def-tti}, we set for $\ell \in \cs_{n,k}$ and $x\in [0, 1]^k$:
\begin{equation}\label{hom5}
 t_{x}(F^{\ell},W)=\hat t_x(F^\ell, W) \, \tilde  t_x(F^\ell, W).
\end{equation}
Let $\beta$ and $\beta'$ be $[k]$-words such that $\beta\beta'\in
\cs_k$, with $k'=|\beta'|$ and $1\leq k'<k$. We easily get:
\begin{equation}
   \label{eq:integ-txFW}
\int_{[0,1]^{k'}} t_x(F^\ell, W)\, dx_{\beta'}=t_{x_\beta}
  (F^{\ell_\beta}, W). 
\end{equation} 
The result also holds for $k'=k$ with the convention $t_{x_\beta}
  (F^{\ell_\beta}, W)= t
  (F, W)$ when $\beta=\emptyset$. 

\begin{rem}
The Erd\"{o}s-R\'{e}nyi case corresponds to $W\equiv \fp$ with
$0<\fp<1$, and in this case we have $t(F,W)=t_{x}(F^{\ell},W)=\fp^{e(F)}$  for all $x\in[0,1]^{k}$.
\end{rem}

\begin{rem}
The normalized  degree function $\Dw$ of the graphon $W$   is
 defined by, for all $x\in[0,1]$:
\begin{equation}
   \label{eq:def-DW}
\Dw(x)=\int_{0}^{1}W(x,y)dy.
\end{equation}
 We have for $W\in\mathcal{W}$ and $x\in [0,1]$:
 \begin{equation*}
 t_{x}(K_{2}^{\bullet},W)=\int_{0}^{1}W(x,y)dy=\Dw(x)
\quad\text{and}\quad  t(K_{2},W)=\int_{0}^{1}\Dw(x) dx.
 \end{equation*}
\end{rem}

\subsection{$W$-random graphs}
\label{sec:Gn-def}
To  complete the  identification  of  graphons as  the  limit object  of
convergent  sequences, it  has  been proved  by  Lov\'{a}sz and  Szegedy
\cite{Lovasz_Szegedy_2006_article} that we can always find a sequence of
graphs,  given by  a sampling  method, whose  limit is  a given  graphon
function.

Let $W\in\mathcal{W}$.  We can  generate a  $W$-random graph  $G_n$ with
vertex  set  $[n]$ from  the  given  graphon  $W$,  by first  taking  an
independent  sequence  $X=(X_i:i\in\Ne)$  with uniform  distribution  on
$[0,1]$,  and   then,  given  this  sequence,   letting  $\{i,j\}$  with
$i,j\in[n]$ be an edge in $G_{n}$ with probability $W(X_i,X_j)$. When we
need to stress the dependence in $W$, we shall write $G_n(W)$ for $G_n$.
For  a given  sequence $X$,  this is  done independently  for all  pairs
$(i,j)\in [n]^2$ with $i<j$.

The  random graphs  $G_{n}(W)$ thus  generalize the  Erd\"{o}s-R\'{e}nyi
random  graphs  $G_{n}(\fp)$  obtained  by  taking  $W\equiv  \fp$  with
$0<\fp<1$  constant.  (We  recall  that  the Erd\"{o}s-R\'{e}nyi  random
graph $G_{n}(\fp)$ is a random graph  defined on the finite set $[n]$ of
vertices  whose  edges occur  independently  with  the same  probability
$\fp$, $0<\fp<1$.)  Moreover, $(G_n:n\in\Ne)$ converges a.s. towards the
graphon  $W$,  see  for  instance  \cite{Lovasz_2012_book},  Proposition
11.32.

\begin{rem}
We provide elementary  computations which motivate the introduction  in
the previous section of
$\hat t_x(F^\ell,W) $ and $\tilde t_x(F^\ell,W)$. 
 Recall that $X_\gamma=(X_{\gamma_1}, \ldots, X_{\gamma_r})$
with $\gamma$ a $\N^*$-word of length $|\gamma|=r$. Let $n\geq p\geq 1$
and $F\in \cf$ with $V(F)=[p]$ and $\ell \in \cs_{p,k}$. We set for
$x=(x_1, \ldots, x_p)\in [0, 1]^p$:
\[
Z(x)=\prod_{\{i,j\}\in E(F)} W(x_i, x_j).
\]
Let $\alpha\in
\cs_{p,k}$ and $\beta\in \cs_{n,p}^{\ell, \alpha}$. 
By construction, we have:
\[
Z(X_\beta)=\E\left[Y^\beta(F, G_n)\,|\,  X \right]
=\E\left[Y^\beta(F^\ell, G_n^\alpha)\,|\,  X \right].
\]
By definition of $\hat t_x(F^\ell, W)$ and $\tilde t_x(F^\ell, W)$, we
get:
\begin{align}
   \label{eq:line-ht}
\hat t_{X_\alpha}(F^\ell, W)
&=\E\left[\hat Y^\alpha(F^\ell,  G_n^\alpha)\,|\,  X \right]
=\E\left[ Y^\alpha(F^{[\ell]},  G_n^\alpha)\,|\,  X \right]\\
   \label{eq:line-ht2}
\tilde t_{X_\alpha}(F^\ell, W)
&=\E\left[ \tilde Z(X_\beta)\,|\,  X_\alpha \right]
=\E\left[\tilde Y^\beta(F^\ell,  G_n^\alpha)\,|\,  X_\alpha \right]\\
   \label{eq:line-ht3}
 t_{X_\alpha}(F^\ell, W)
&=\E\left[  Z(X_\beta)\,|\,  X_\alpha \right]
=\E\left[ Y^\beta(F^\ell,  G_n^\alpha)\,|\,  X_\alpha \right].
\end{align}
By summing \reff{eq:line-ht2} and \reff{eq:line-ht3} over $\beta\in
\cs_{n,p}^{\ell, \alpha}$, we get, using \reff{eq:def-tti}
and \reff{eq:def-Eht}, that
\begin{equation}
   \label{eq:sum-ht2}
\tilde t_{X_\alpha}(F^\ell, W)= 
\E\left[\tti (F^\ell,  G_n^\alpha)\,|\,  X_\alpha \right]
\quad\text{and}\quad
 t_{X_\alpha}(F^\ell, W)= 
\E\left[\ti (F^\ell,  G_n^\alpha)\,|\,  X_\alpha \right].
\end{equation}
Taking the expectation in the second equality of \reff{eq:sum-ht2}, we
deduce that:
\[
t(F,W)=\int_{[0, 1]^k}  t_x(F^\ell, W)\, dx
=\E\left[t_{X_\alpha}(F^\ell, W)\right]
= \E\left[\ti (F^\ell,  G_n^\alpha)\right].
\]
Thanks to \reff{eq:integ-tiFW0}, we recover that
 \[
 t(F,W)= \E\left[\ti (F,G_{n})\right],
 \]
see also \cite{Lovasz_2012_book},  Proposition  11.32  or
  \cite{Maugis_Priebe_Olhede_Wolfe_2017_article} Proposition A.1. We
  also have:
\[
t(F, W)=\E\left[Z(X_\beta)\right]= \E\left[Y^\beta(F, W)\right].
\]
By definition of $\hat t(F^\ell, W)$, we get:
\[
\hat t(F^\ell,W)
=\E\left[\hat t_{X_\alpha}(F^\ell, W)\right]
= \E\left[\hat Y^\alpha(F^\ell, G_n^\alpha)\right]
= \E\left[ Y^\alpha(F^{[\ell]}, G_n)\right]
=t(F^{[\ell]}, W).
\]
Since $\hat Y^\alpha(F^\ell, G_n^\alpha)$ and $\tilde Y^\beta(F^\ell,
G_n^\alpha)$ are, conditionally on $X$ or $X_\beta$ or $X_\alpha$,
independent, we deduce that:
\begin{align*}
   t_{X_\alpha}(F^\ell, W)
&
= \E\left[\hat Y^\alpha(F^\ell, G_n^\alpha)\tilde Y^\beta(F^\ell,
  G_n^\alpha)\,|\,X_\alpha\right] \\
&=\E\left[\hat Y^\alpha(F^\ell, G_n^\alpha)\,|\,X_\alpha\right] \E\left[\tilde Y^\beta(F^\ell,
  G_n^\alpha)\,|\,X_\alpha\right] \\
&=  \hat t_{X_\alpha}(F^\ell, W) \,\, \tilde   t_{X_\alpha}(F^\ell, W).
\end{align*}
This latter equality gives an other interpretation of \reff{hom5}. 
\end{rem}


\section{Asymptotics for homomorphism densities of sampling partially labeled graphs from a graphon}

\subsection{Random measures associated to a graphon}

Let  $d\geq  1$   and  $I=[0,  1]^d$.  We   denote  by  $\mathcal{B}(I)$
(resp.  $\mathcal{B}^{+}(I)$)  the set  of  all  real-valued (resp.  non
negative)   measurable  functions   defined   on  $I$.   We  denote   by
$\mathcal{C}(I)$  (resp. $\mathcal{C}_{b}(I)$)  the  set of  real-valued
(resp.   bounded)    continuous   functions   defined   on    $I$.   For
$f\in\mathcal{B}(I)$  we  denote  by   $\lVert  f  \rVert_{\infty}$  the
supremum norm of $f$ on $I$.   We denote by $\mathcal{C}^{k}(I)$ the set
of real-valued functions  $f$ defined on $I$ with continuous $k$-th  derivative.
For   $f\in\mathcal{C}^{1}(I)$,    its   derivative   is    denoted   by
$\nabla   f   =(\nabla_1   f,   \ldots,   \nabla   _df)$   and   we   set
$\norm{\nabla f}_\infty =\sum_{i=1}^d \norm {\nabla_i f }_\infty $.
\\

Let $F=(F_{m}:1\le  m \le d)\in  \cf^d$ be  a finite sequence  of
simple  finite graphs. Using  Remark \ref{rem:isolated}, if necessary, we can
complete  the graphs  $F_m$ with  isolated  vertices such  that for  all
$m\in [d]$,  we have $v(F_m)=p$ for  some $p\in \N^*$ and  consider that
$V(F_m)=[p]$.  We  shall  write  $p=v(F)$  Let  $\ell\in\mathcal{M}_{p}$
(where  $\mathcal{M}_{p}$  is  the  set  of  all  $[p]$-words  with  all
characters distinct, given by  \reff{[n]_words}) and set $k=|\ell|$. We
denote:
\[
\ti (F^{\ell},G^{\alpha}_n)= \left(\ti (F_m^{\ell},G_{n}^{\alpha})  :
  m\in [d]
\right)\in [0,1]^d,
\]
and similarly for $\tti (F^{\ell},G^{\alpha}_n)$. 
 Let  $W$ be a
graphon and $x\in [0, 1]^k$. Similarly, we define $t_x (F^{\ell},W)$,
and $\tilde t_x (F^{\ell},W)$, so for example:
\[
t_x (F^{\ell},W)= \left(t_x (F_m^{\ell},W)  : m\in [d] \right)\in [0,1]^d.
\]
Notice  that relabeling  $F_m$ if  necessary,  we get  all the  possible
combinations of density  of labeled injective homomorphism  of $F_m$ into
$G$ for all  $ m\in [d]$ (we  could even take $\ell=[k]$).\\

Recall  $ F_m^{[\ell]}$ is  the labeled  sub-graph  of $F_m$   with  vertices
$\{\ell_1, \ldots,  \ell_k\}$ and set of edges $ E
(F_m^{[\ell]})=\{\{i,j\}\in E(F): i,j\in \ell\}$ see  \reff{eq:def-Eht}. For simplicity, we
shall assume the following condition which states that $ F_m^{[\ell]}$ 
does not depend on $m$:
\begin{equation}\label{eq:Hyp-hat-E}
\text{For $m,m'\in [d]$, $i,i'\in \ell$, we have:
$\{i,i'\}\in E(F_m) \Longleftrightarrow \{i,i'\}\in E(F_{m'})$.}
\end{equation}
This condition can be removed when stating the main results from Section
\ref{sec:main-res} at the cost of  very involved notations. Therefore,
we shall leave this extension to the very interested reader. \\

Let  $G_{n}=G_{n}(W)$  be  the  associated $W$-random  graphs  with  $n$
vertices constructed  from $W$  and the sequence  $X=(X_{i}:i\in\Ne)$ of
independent  uniform  random  variables  on  $[0,1]$.   Under  Condition
\reff{eq:Hyp-hat-E}, for  $\alpha\in \cs_{n,k}$ and $x\in  [0, 1]^k$, we
have    that     $\hat    Y^\alpha    (F^\ell_m,     G_n^\alpha)$    and
$\hat  t_x  (F^\ell_m,  W)$  do  not   depend  on  $m\in  [d]$.  We  set
$\hat Y^\alpha (F^\ell, G_n^\alpha)$ and $\hat  t_x (F^\ell, W)$ for the common
values.  When  there is  no  confusion,  we  write $\hat  Y^\alpha$  for
$\hat   Y^\alpha  (F^\ell,   G_n^\alpha)$.  In   particular,  we   deduce  from
\reff{eq:def-tti} that:
\begin{equation}
   \label{eq:def-ttid}
\ti (F^{\ell},G^{\alpha}_n)=\hat Y^\alpha \,\, \tti (F^{\ell},G^{\alpha}_n)
\quad\text{with}\quad
\tti (F^{\ell},G^{\alpha}_n)= \left(\tti (F_m^{\ell},G_{n}^{\alpha})  :
  m\in [d]
\right).
\end{equation}

\begin{rem}
   \label{rem:case-k=1}
If $|\ell|=k=1$, then Condition \reff{eq:Hyp-hat-E} is automatically satisfied
and we have by convention that $\hat Y^\alpha=\hat t_x (F^\ell,
W)=1$ for  $\alpha\in \cs_{n,k}$
and $x\in [0, 1]^k$. 
If $d=1$, then, Condition \reff{eq:Hyp-hat-E} is also automatically
     satisfied. 
\end{rem}

We define the random probability measure $\Gamma_n^{F,\ell}$ on
$([0,1]^d,\mathcal{B}([0, 1]^d))$ by, for $g\in\mathcal{B}^{+}([0,1]^d)$:
\begin{align}
\label{me1}
\Gamma_n^{F,\ell}(g)
&=\inv{|\cs_{n,k}|} \sum_{\alpha\in\mathcal{S}_{n,k}}
g\left(\ti  (F^{\ell},G_{n}^{\alpha})\right)\\
\nonumber
&=\inv{|\cs_{n,k}|} \sum_{\alpha\in\mathcal{S}_{n,k}}
\hat Y^\alpha g\left(\tti (F^{\ell},G_{n}^{\alpha})\right) + (1-\hat Y^\alpha) g(0),
\end{align}
where we used \reff{eq:def-ttid} and the fact that $\hat Y^\alpha$ takes
values  in $\{0,1\}$  for the  second  equality.  For  $k\in\Ne$ and $\alpha$ an
$\Ne$-word    of   length   $k$,  we   recall   the   notation
$X_{\alpha}=\left(X_{\alpha_1},\dots,X_{\alpha_k}\right)$            and
$X_{[k]}=(X_{1},\dots,X_{k})$.     Recall    \reff{eq:def-tildet}    and
\reff{eq:def-hatt}.  We define the  auxiliary random probability measure
$\hat{\Gamma}^{F,\ell}_{n}$       on       $[0,1]^d$       by,       for
$g\in\mathcal{B}^{+}([0,1]^d)$:
\begin{equation}
\label{me2}
\hat{\Gamma}^{F,\ell}_{n}(g)
=\inv{|\cs_{n,k}|}  \sum_{\alpha\in\mathcal{S}_{n,k}}
\hat t_{X_\alpha}(F^\ell, W) \, g\left(  \tilde
    t_{X_{\alpha}}(F^{\ell},W)\right) + \left(1 -\hat
    t_{X_\alpha}(F^\ell, W)  \right) g(0).
\end{equation}
and the deterministic probability measure
$\Gamma^{F,\ell}$,  by, for all
$g\in\mathcal{B}^{+}([0,1]^d)$: 
\begin{align}
\label{me3}
\Gamma^{F,\ell}(g)
&=\E\left[\hat{\Gamma}_n^{F,\ell}(g)\right]\\
\nonumber
&=\int_{[0,1]^k}\hat t_x(F^\ell, W)\, g\left(  \tilde
    t_x(F^{\ell},W)\right)\, dx + \Big(1- \hat t(F^\ell, W)\Big)
  \, g(0).
\end{align}

\begin{rem}\label{rem:CP_Gamma} $ $
\begin{itemize}
   \item[(i)]  If $d=1$ and  $g=\Id$, then we have thanks to
    \reff{eq:integ-tiFW0} that:
 \[
   \Gamma_n^{F,\ell}(\Id)
 =\inv{|\cs_{n,k}|} \sum_{\alpha\in\mathcal{S}_{n,k}}\ti
 (F^{\ell},G_{n}^{\alpha}) 
 =\ti (F,G_n)
\]
and, thanks to \reff{hom5} and \reff{eq:integ-txFW}:
\[
 \Gamma^{F,\ell}(\Id)=\int_{[0,1]^k}t_{x}(F^{\ell},W)dx=t(F,W).
\]
Notice that $\Gamma_n^{F,\ell}(\Id)$ and $\Gamma^{F,\ell}(\Id)$ do not depend on $\ell$.

\item[(ii)] If $|\ell|=1$, then according to Remark \ref{rem:case-k=1}, we
  get:
\[
\Gamma^{F,\ell}(g)
= \int_{[0,1]}  g\left( 
    t_x(F^{\ell},W)\right)\, dx.
\]
\end{itemize}
\end{rem}

\subsection{Invariance principle and its fluctuations}
\label{sec:main-res}

We first state  the  invariance principle for the random probability
meausure $\Gamma_{n}^{F,\ell}$. The  proof of the next  theorem is given
in Section \ref{sec:proof_LLN}.
\begin{theo}
\label{Theo_LLN}
Let $W\in \cw$ be a graphon. Let $F\in \cf^d$ be a sequence of $d\geq 1$
 simple finite  graphs  with  $V(F)=[p]$, $\ell\in\mathcal{M}_{p}$.  Assume
that Condition  \reff{eq:Hyp-hat-E} holds. Then, the  sequence of random
probability           measures          on           $[0,          1]^d$,
$\left(\Gamma_{n}^{F,\ell}:n\in\Ne\right)$ converges  a.s. for  the weak
topology towards $\Gamma^{F,\ell}$.
\end{theo}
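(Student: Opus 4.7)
The weak topology on probability measures over the compact space $[0,1]^d$ is metrizable and $C([0,1]^d)$ is separable (take, e.g., polynomials with rational coefficients as a countable dense family $\mathcal{D}$). Hence it suffices to prove $\Gamma_n^{F,\ell}(g)\to\Gamma^{F,\ell}(g)$ almost surely for each fixed $g\in C([0,1]^d)$, and then to take a countable intersection of full-probability events. I would decompose
\[
\Gamma_n^{F,\ell}(g) - \Gamma^{F,\ell}(g) = \big(\Gamma_n^{F,\ell}(g) - \hat\Gamma_n^{F,\ell}(g)\big) + \big(\hat\Gamma_n^{F,\ell}(g) - \Gamma^{F,\ell}(g)\big)
\]
and treat the two differences separately.

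The second difference depends only on the i.i.d.\ sequence $X_1,\dots,X_n$: from \reff{me2},
\[
\hat\Gamma_n^{F,\ell}(g) = \inv{|\mathcal{S}_{n,k}|}\sum_{\alpha\in\mathcal{S}_{n,k}} h(X_\alpha),
\]
where $h(x) = \hat t_x(F^\ell,W)\,g(\tilde t_x(F^\ell,W)) + (1-\hat t_x(F^\ell,W))\,g(0)$ is a bounded measurable function on $[0,1]^k$, so $\hat\Gamma_n^{F,\ell}(g)$ is a U-statistic of order $k$ with bounded kernel (symmetrized in $x_1,\ldots,x_k$). Since $\Gamma^{F,\ell}(g)=\E[h(X_{[k]})]$ by \reff{me3}, Hoeffding's strong law of large numbers for bounded U-statistics yields $\hat\Gamma_n^{F,\ell}(g)\to\Gamma^{F,\ell}(g)$ a.s.

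For the first difference, using $\ti(F^\ell,G_n^\alpha)=\hat Y^\alpha\,\tti(F^\ell,G_n^\alpha)$ from \reff{eq:def-ttid}, I would write $\Gamma_n^{F,\ell}(g) - \hat\Gamma_n^{F,\ell}(g) = A_n + B_n$ with
\begin{align*}
A_n &= \inv{|\mathcal{S}_{n,k}|}\sum_{\alpha} (\hat Y^\alpha - \hat t_{X_\alpha}(F^\ell,W))\,[g(\tti(F^\ell,G_n^\alpha)) - g(0)],\\
B_n &= \inv{|\mathcal{S}_{n,k}|}\sum_{\alpha} \hat t_{X_\alpha}(F^\ell,W)\,[g(\tti(F^\ell,G_n^\alpha)) - g(\tilde t_{X_\alpha}(F^\ell,W))].
\end{align*}
The key structural remark, valid under \reff{eq:Hyp-hat-E}, is that the edges used to build $\hat Y^\alpha$ (those in $E(F^{[\ell]})$ pulled back by $\alpha$, hence with both endpoints in $\{\alpha_1,\dots,\alpha_k\}$) are disjoint from those used by $\tti(F^\ell,G_n^\alpha)$ (which involve at least one endpoint outside $\alpha$); so conditionally on $X$, $\hat Y^\alpha$ and $\tti(F^\ell,G_n^\alpha)$ are independent. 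This gives $\E[A_n\mid X]=0$, and a careful combinatorial count of word-pairs $(\alpha,\alpha')$ sharing characters yields $\E[A_n^2]=O(1/n)$. For $B_n$, uniform continuity of $g$ on $[0,1]^d$ combined with a truncation on the event $\{|\tti(F^\ell,G_n^\alpha)-\tilde t_{X_\alpha}(F^\ell,W)|>\varepsilon\}$ reduces the task to establishing $L^2$ convergence to zero of $\inv{|\mathcal{S}_{n,k}|}\sum_\alpha|\tti(F^\ell,G_n^\alpha)-\tilde t_{X_\alpha}(F^\ell,W)|$, which follows from a conditional-variance estimate for $\tti(F^\ell,G_n^\alpha)$ (an average of $|\mathcal{S}_{n,p}^{\ell,\alpha}|$ Bernoulli products with conditional mean $\tilde Z(X_\beta)$) together with the LLN-type convergence $\inv{|\mathcal{S}_{n,p}^{\ell,\alpha}|}\sum_\beta \tilde Z(X_\beta)\to\tilde t_{X_\alpha}(F^\ell,W)$. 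These $L^2$ bounds are upgraded to almost sure convergence by Borel--Cantelli along a polynomial subsequence plus a monotonicity/interpolation argument using the uniform bound $\|g\|_\infty$.

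The main technical obstacle is this combinatorial correlation analysis for $A_n$ and for the inner sum governing $B_n$: the random variables indexed by $\alpha\in\mathcal{S}_{n,k}$ are not pairwise independent once the words share characters, and one must verify that the overlapping pairs form a fraction of order $1/n$ among all $|\mathcal{S}_{n,k}|^2$ pairs in order to extract the $O(1/n)$ variance bound that drives the whole argument. Once this is done, the proof is completed by the countable-intersection step over $g\in\mathcal{D}$.
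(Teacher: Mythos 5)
Your overall architecture coincides with the paper's: split off $\hat\Gamma_n^{F,\ell}$, handle $\hat\Gamma_n^{F,\ell}(g)-\Gamma^{F,\ell}(g)$ by the strong law for bounded U-statistics, control $\Gamma_n^{F,\ell}(g)-\hat\Gamma_n^{F,\ell}(g)$ by second-moment bounds, upgrade to a.s.\ convergence via Borel--Cantelli along a polynomial subsequence plus interpolation, and finish with a countable convergence-determining class. All of that is exactly what the paper does (Lemmas 4.2, 4.3, 5.1 and the proof of Theorem 3.3).

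There is, however, one step where your argument as stated does not go through: the bound $\E[A_n^2]=O(1/n)$ for
\[
A_n=\inv{|\cs_{n,k}|}\sum_{\alpha}\big(\hat Y^\alpha-\hat t_{X_\alpha}(F^\ell,W)\big)\,M_\alpha,
\qquad M_\alpha=g\big(\tti(F^\ell,G_n^\alpha)\big)-g(0).
\]
You justify it by counting the pairs $(\alpha,\alpha')$ that share at least two characters, which implicitly assumes that the cross term $\E\big[(\hat Y^\alpha-\hat t_{X_\alpha})M_\alpha(\hat Y^{\alpha'}-\hat t_{X_{\alpha'}})M_{\alpha'}\,\big|\,X\big]$ vanishes whenever $|\alpha\cap\alpha'|\le 1$. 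This is true when the $M_\alpha$ are $\sigma(X)$-measurable (that is the content of Lemma 4.2 of the paper), but your $M_\alpha$ depends on the edges of $G_n$: for $k\ge 2$ and disjoint $\alpha,\alpha'$, the average $\tti(F^{\ell},G_n^{\alpha'})$ runs over words $\beta'$ some of which place two unlabeled vertices of $F$ on $\alpha_1,\alpha_2$, so $M_{\alpha'}$ genuinely depends on the very edge indicators inside $\{\alpha_1,\dots,\alpha_k\}$ that drive $\hat Y^\alpha$, and the cross term is generically nonzero. It is only $O(1/n^2)$ per pair (the influence of any fixed edge on $\tti(F^{\ell},G_n^{\alpha'})$ is of that order, provided $g$ is Lipschitz), so the conclusion $\E[A_n^2]=O(1/n)$ survives, but this extra influence estimate is a genuinely missing ingredient in your plan. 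The cleaner repair — and the paper's choice — is to center against the $\sigma(X)$-measurable quantity instead, i.e.\ take $M_\alpha=g\big(\tilde t_{X_\alpha}(F^{\ell},W)\big)$ in the $\hat Y$-fluctuation term and absorb the difference $g(\tti(F^{\ell},G_n^{\alpha}))-g(\tilde t_{X_\alpha}(F^{\ell},W))$ into your $B_n$, which is then handled exactly by your conditional-variance estimate (Lemma 4.4 of the paper). With that modification the proof is complete.
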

The convergence of $\left(\Gamma_{n}^{F,\ell}(\Id) :n\in\Ne\right)$,
with 
 $d=1$,    can also be found in \cite{Lovasz_2012_book}, see
Proposition 11.32.

\begin{rem}
  By  Portmanteau Theorem,  we  have that   a.s.  for  all
  bounded    measurable   function    $g$   on    $[0,1]^d$   such    that
  $\Gamma^{F,\ell}(\cd_{g})=0$ where  $\cd_{g}$ is the set  of discontinuity
  points             of             $g$, 
  $\lim_{n\to\infty}\Gamma_{n}^{F,\ell}(g)=\Gamma^{F,\ell}(g)$.

  For  simplicity, consider  the case  $d=1$ and $W\equiv \fp$  with
  $0<\fp<1$. Let $\hat e(F)$ denote  the cardinal
  of $ E(F^{[\ell]})$. Because  $\Gamma^{F,\ell}=\fp^{\hat e(F)}
  \delta_{\fp^{e(F)- \hat e(F)}} + (1- \fp^{\hat e(F)})
  \delta_0$, with $k=|\ell|$,  then if $g$  is continuous at
  $\fp^{e(F)-\hat e (F)}$
  and at 0, we get that 
  a.s. $\lim_{n\to\infty}\Gamma_{n}^{F,\ell}(g)=\Gamma^{F,\ell}(g)$.
\end{rem}

The next theorem,  whose proof is given  in Section \ref{sec:proof_CLT},
gives  the fluctuations  corresponding to  the invariance  principle of
Theorem  \ref{Theo_LLN}.   Notice  the   speed  of  convergence  in  the
invariance  principle is  of  order  $\sqrt{n}$.  

For $\mu\in \R$ and $\sigma\geq 0$, we denote by $\cn(\mu, \sigma^2)$ the
Gaussian distribution with mean $\mu$ and variance $\sigma^2$. 

\begin{theo}
\label{Theo_CLT}
   Let  $W\in \cw$  be a  graphon.  Let $F\in  \cf^d$ be  a sequence  of
   $d\geq     1$        simple     finite  graphs     with     $V(F)=[p]$,
   $\ell\in\mathcal{M}_{p}$, with $k=|\ell|$. Assume
that Condition  \reff{eq:Hyp-hat-E} holds. Then, for  all
  $g\in\mathcal{C}^{2}([0,1]^d)$,  we have  the  following convergence  in
  distribution:
\[
\sqrt{n}\left(\Gamma^{F,\ell}_{n}(g)-\Gamma^{F,\ell}(g)\right)
  \cvloi{n} \mathcal{N}\left(0, \sigma^{F,\ell}(g)^{2}\right),
\]
  with $\sigma^{F,\ell}(g)^{2}
  =\Var(\cu_g^{F,\ell})$ and 
\begin{multline}
\label{eq:varU}
  \cu_g^{F,\ell}=\sum_{i=1}^{k}\int_{[0,1]^k} \hat t_{R_{i}(x,U)}(F^\ell,
  W)\, \left(g\big(\tilde t_{R_{i}(x,U)}(F^{\ell},W)\big)-g(0)\right)\,dx
\\
  + \sum_{q\in [p]\backslash \ell}\int_{[0,1]^k}dx \, 
 \langle t_{xU}(F^{\ell q},W), \nabla g\left(\tilde t_{x}(F^{\ell},W)\right) \rangle,
\end{multline}
where $U$ is a uniform random variable on $[0,1]$, and
$[p]\backslash
\ell=\{1,\dots,p\}\backslash\{\ell_{1},\dots,\ell_{k}\}$. 
 \end{theo}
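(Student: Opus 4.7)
The plan is to reduce $\sqrt n(\Gamma^{F,\ell}_n(g)-\Gamma^{F,\ell}(g))$ to a first-order H\'ajek projection in the i.i.d.\ sample $X_1,\dots,X_n$. I split
\[
\sqrt n(\Gamma^{F,\ell}_n(g)-\Gamma^{F,\ell}(g)) = A_n + B_n,\quad A_n=\sqrt{n}(\Gamma^{F,\ell}_{n}(g)-\hat\Gamma^{F,\ell}_{n}(g)),\ B_n=\sqrt{n}(\hat\Gamma^{F,\ell}_{n}(g)-\Gamma^{F,\ell}(g)),
\]
and aim to show that $A_n+B_n = \frac{1}{\sqrt n}\sum_{j=1}^n (h(X_j)-\E[h(X_1)])+o_\P(1)$ for a function $h$ whose value at a uniform point $U$ equals $\cu_g^{F,\ell}$ (up to an additive constant); the i.i.d.\ CLT then gives the $\cn(0,\Var(\cu_g^{F,\ell}))$ limit. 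Concretely, $B_n$ will contribute the first sum in \eqref{eq:varU} (indexed by $i\in[k]$) and $A_n$ the second (indexed by $q\in[p]\setminus\ell$).

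For $B_n$, observe from \eqref{me2} that $\hat\Gamma^{F,\ell}_n(g)$ is a deterministic function of $X_1,\dots,X_n$: setting $\psi(x)=\hat t_x(F^\ell,W)g(\tilde t_x(F^\ell,W))+(1-\hat t_x(F^\ell,W))g(0)$ for $x\in[0,1]^k$, one has $\hat\Gamma^{F,\ell}_n(g)=\frac{1}{|\cs_{n,k}|}\sum_{\alpha\in\cs_{n,k}}\psi(X_\alpha)$ and $\Gamma^{F,\ell}(g)=\int_{[0,1]^k}\psi(x)\,dx$, a (non-symmetric) $U$-statistic of order $k$. Its H\'ajek projection onto $\sigma(X_1),\dots,\sigma(X_n)$ gives
\[
B_n = \frac{1}{\sqrt n}\sum_{j=1}^n\sum_{i=1}^k\bigl(\psi_i(X_j)-\E[\psi_i(X_1)]\bigr)+o_\P(1),\quad \psi_i(u)=\int_{[0,1]^{k}}\psi(R_i(x,u))\,dx,
\]
the $o_\P(1)$ remainder being controlled by the standard second-order Hoeffding variance bound; one then verifies that $\sum_{i=1}^k\psi_i(U)$ equals the first sum of $\cu_g^{F,\ell}$ at $U=u$ up to an additive constant.

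For $A_n$, the identity $g(\ti)=\hat Y^\alpha g(\tti)+(1-\hat Y^\alpha)g(0)$ yields
\[
\Gamma^{F,\ell}_n(g)-\hat\Gamma^{F,\ell}_n(g)=\frac{1}{|\cs_{n,k}|}\sum_\alpha\bigl[(\hat Y^\alpha-\hat t_{X_\alpha})(g(\tilde t_{X_\alpha})-g(0))+\hat Y^\alpha(g(\tti)-g(\tilde t_{X_\alpha}))\bigr].
\]
The centered Bernoulli fluctuations $\hat Y^\alpha-\hat t_{X_\alpha}$ in the first sum create non-trivial correlations only across pairs $(\alpha,\alpha')\in\cs_{n,k}^2$ sharing $\geq 2$ labeled indices; counting these pairs gives a conditional variance of order $1/n^2$, so the overall contribution to $A_n$ is $o_\P(1)$. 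For the second sum, a first-order Taylor expansion of $g$ at $\tilde t_{X_\alpha}$ together with $\norm{\nabla^2 g}_\infty<\infty$ and $\Var(\tti\mid X)=O(1/n)$ absorbs the quadratic Taylor remainder into $o_\P(1)$ and, after rewriting the double sum over $(\alpha,\beta\in\cs_{n,p}^{\ell,\alpha})$ as a single sum over $\beta\in\cs_{n,p}$ via $|\cs_{n,k}|\cdot|\cs_{n,p}^{\ell,\alpha}|=|\cs_{n,p}|$, yields
\[
A_n=\sqrt n\,\frac{1}{|\cs_{n,p}|}\sum_{\beta\in\cs_{n,p}}\hat t_{X_{\beta_\ell}}\bigl\langle\nabla g(\tilde t_{X_{\beta_\ell}}),\,\tilde Y^\beta-\tilde t_{X_{\beta_\ell}}\bigr\rangle+o_\P(1).
\]
Splitting $\tilde Y^\beta-\tilde t_{X_{\beta_\ell}}=(\tilde Y^\beta-\tilde Z(X_\beta))+(\tilde Z(X_\beta)-\tilde t_{X_{\beta_\ell}})$ isolates the pure edge-noise (whose variance, given $X$, is $O(1/n^2)$ by the usual pair-sharing count, hence negligible after the $\sqrt n$ scaling) from the $X$-noise $\tilde Z(X_\beta)-\tilde t_{X_{\beta_\ell}}$, which is a centered $U$-statistic of order $p$ in $X_1,\dots,X_n$. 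Its H\'ajek projection onto single $X_j$'s picks up exactly one contribution per unlabeled vertex $q\in[p]\setminus\ell$ of the form $\int_{[0,1]^k}\langle t_{xU}(F^{\ell q},W),\nabla g(\tilde t_x(F^\ell,W))\rangle\,dx$, matching the second sum in \eqref{eq:varU}.

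The main obstacle is the combinatorial bookkeeping needed to control the H\'ajek remainders in both decompositions (the order-$k$ $U$-statistic in $\hat\Gamma_n$ and the order-$p$ one reached from $A_n$): one must count pairs of words $(\alpha,\alpha')$ or $(\beta,\beta')$ sharing a prescribed number of common indices and verify that only the first-order projected terms survive the $\sqrt n$ scaling, with analogous edge-overlap counts to control the pure edge-noise term. The $\cc^2$ hypothesis on $g$ enters both for the uniformly bounded second-derivative Taylor remainder and for the smoothness of the integrated kernel $h$, ensuring the final i.i.d.\ CLT applies to produce the Gaussian limit with variance $\Var(\cu_g^{F,\ell})$.
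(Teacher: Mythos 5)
Your proposal is correct and follows essentially the same route as the paper: Taylor-linearize $g$, kill the conditional edge-noise and the $\hat Y^\alpha-\hat t_{X_\alpha}$ fluctuations by counting pairs of words sharing at least two indices, and apply the H\'ajek projection to the resulting U-statistics in the $X_i$'s — the paper merely packages your two projections (the order-$k$ one from $\hat\Gamma^{F,\ell}_n$ and the order-$p$ one from the gradient term) into a single U-statistic with kernel $\Phi_2$ and invokes the U-statistic CLT once. The only step needing an extra line is your replacement of $\hat Y^{\beta_\ell}$ by $\hat t_{X_{\beta_\ell}}$ in front of $\tilde Y^\beta-\tilde t_{X_{\beta_\ell}}$: that multiplier is not $\sigma(X)$-measurable, so the pair-counting covariance bound does not apply directly to the product, but expanding $(\hat Y^{\beta_\ell}-\hat t_{X_{\beta_\ell}})(\tilde Y^\beta-\tilde t_{X_{\beta_\ell}})$ into centered-given-$X$ pieces with $\sigma(X)$-measurable weights (equivalently, working with $Y^\beta-Z^\beta$ as the paper does) repairs it with the tools you already have.
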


\begin{rem}\label{rem:variance_TCL}
Let $U$ be  a uniform random variable on $[0,1]$.
 \begin{itemize}
  \item[(i)] Assume that $|\ell|=k=1$. Using Remark \ref{rem:case-k=1},
    we get for $\ell\in[p]$:
  \begin{equation}\label{var_k=1}
   \sigma^{F,\ell}(g)^{2}=\Var\left(g\left(t_{U}(F^{\ell},W)\right)+
     \sum_{q\in [p]\backslash {\ell}}\int_{[0,1]}\langle t_{xU}(F^{\ell
       q},W), \nabla g\big(t_{x}(F^{\ell},W)\big) \rangle\, dx \right).
  \end{equation}

\item[(ii)] Let $F\in \cf^d$ with $p=v(F)$. Take $\ell=1$
 with $k=|\ell|=1$.  
 Let  $a\in\R^d$ and consider  $g(x)=\langle a,x \rangle$
for $x\in \R^d$. We deduce from \reff{var_k=1} and  \reff{eq:integ-txFW} that:
\begin{equation}
\label{eq:var_quantum_graph}
  \sigma^{F,\ell}(g)^{2}
  =\Var\left( \langle a,  \sum_{q=1}^{p} t_{U}\left(F^{q},W\right)
\rangle\right).
\end{equation}

\item[(iii)] In  the case  $d=1$, $F\in \cf$,  and $g=\Id$,  the central
  limit           theorem          appears           already          in
  \cite{Feray_Meliot_Nikeghbali_2017_article}.   In this  case, we  have
  $\Gamma^{F,\ell}_{n}(\Id)=\ti (F,G_n)$, $\Gamma^{F,\ell}(\Id)=t(F, W)$
  and, thanks to \reff{eq:var_quantum_graph} (with $d=1$ and $a=1$):
\begin{equation}
\label{eq:var_d=1Id}
  \sigma^{F,\ell}(\Id)^{2}
  =\Var\left(\sum_{q=1}^{p} t_{U}\left(F^{q},W\right)
\right).
\end{equation}

Let $F, F'\in \cf$ be two simple finite graphs, let $i\in V(F)$ and $i'\in V(F')$.
We define a new graph $(F \bowtie F')(i,i')=(F\sqcup F')/\{i\sim i'\}$ which
is the  disjoint union of  $F$ and $F'$ followed  by a quotient  where we
identify the vertex  $i$ in $V(F)$ with the vertex  $i'$ in $V(F')$, see
Figure \ref{fig:graphs_connected_vertices}.

\begin{figure}[ht!]
\begin{center}
\scalebox{0.8}{
\begin{tikzpicture}[line cap=round,line join=round,>=triangle 45,x=1.0cm,y=1.0cm]
\clip(-11.2,-1.5) rectangle (3.5,5.38);
\draw (-10,0)-- (-9,2);
\draw (-5,4)-- (-5,2);
\draw (-8,0)-- (-9,2);
\draw (-10,0)-- (-8,0);
\draw (-6,0)-- (-4,0);
\draw (-4,0)-- (-5,2);
\draw (-5,2)-- (-6,0);
\draw (-2,0)-- (-1,2);
\draw (0,0)-- (-1,2);
\draw (0,0)-- (-2,0);
\draw (0,0)-- (1,2);
\draw (2,0)-- (1,2);
\draw (1,4)-- (1,2);
\draw (0,0)-- (2,0);
\draw (-2,0)-- (0,0);
\draw (-9.4,-0.26) node[anchor=north west] {\LARGE $ F $};
\draw (-5.3,-0.34) node[anchor=north west] {\LARGE $ G $};
\draw (-1.8,-0.3) node[anchor=north west] {\LARGE $ (F \bowtie G)(2,4) $};
\draw (-9.2,2.7) node[anchor=north west] {$ 1 $};
\draw (-7.86,0.4) node[anchor=north west] {$ 2 $};
\draw (-10.6,0.4) node[anchor=north west] {$ 3 $};
\draw (-5.2,4.7) node[anchor=north west] {$ 1 $};
\draw (-4.8,2.3) node[anchor=north west] {$ 2 $};
\draw (-3.82,0.4) node[anchor=north west] {$ 3 $};
\draw (-6.6,0.4) node[anchor=north west] {$ 4 $};
\begin{scriptsize}
\fill [color=qqqqff] (-10,0) circle (2.5pt);
\fill [color=qqqqff] (-9,2) circle (2.5pt);
\fill [color=ffqqqq] (-8,0) circle (2.5pt);
\fill [color=qqqqff] (-5,2) circle (2.5pt);
\fill [color=qqqqff] (-4,0) circle (2.5pt);
\fill [color=ffqqqq] (-6,0) circle (2.5pt);
\fill [color=qqqqff] (-5,4) circle (2.5pt);
\fill [color=qqqqff] (-1,2) circle (2.5pt);
\fill [color=ffqqqq] (0,0) circle (2.5pt);
\fill [color=qqqqff] (-2,0) circle (2.5pt);
\fill [color=qqqqff] (1,2) circle (2.5pt);
\fill [color=qqqqff] (2,0) circle (2.5pt);
\fill [color=qqqqff] (1,4) circle (2.5pt);
\end{scriptsize}
\end{tikzpicture}}
\end{center}
\caption{Example of two graphs connected by two vertices.}
\label{fig:graphs_connected_vertices}
\end{figure}
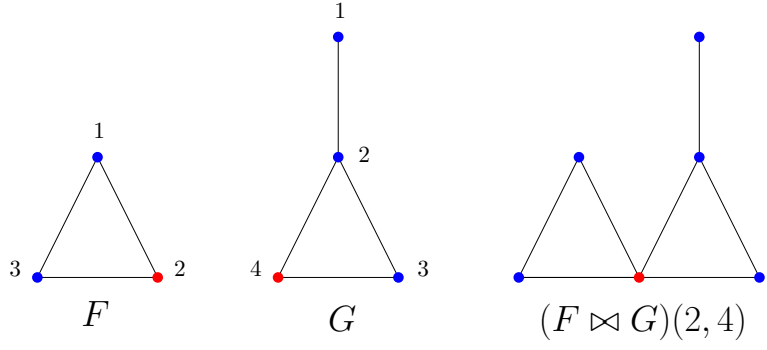

With this notation, we have:
  \begin{align}
\nonumber
\sigma^{F,\ell}(\Id)^{2}
   &=\E\left[\left(\sum_{q=1}^{p}t_{U}(F^{q},W)\right)^2\right]
-\E\left[\sum_{q=1}^{p}t_{U}(F^{q},W)\right]^2\\
\nonumber
   &=\sum_{q,q'=1}^{p}\int_{0}^{1}t_{x}(F^{q},W)t_{x}(F^{q'},W)dx 
-\left(\sum_{q=1}^{p}\int_{0}^{1}t_{x}(F^{q},W)dx\right)^2\\
   &=\sum_{q,q'=1}^{p}t\big((F\bowtie F)(q,q'),W\big) - p^2\,  t(F,W)^{2}.
\label{eq:s2=FF}
  \end{align}
Thus, we recover the  limiting variance given  in \cite{Feray_Meliot_Nikeghbali_2017_article}.

\item[(iv)] Let $d=1$. We consider the two degenerate cases where no vertex
is labelled ($k=0$) or all vertices are labelled ($k=p$):
 \begin{enumerate}
  \item[(a)] for $k=0$, we apply the $\delta$-method to \reff{eq:var_d=1Id}, to get that
  \begin{equation*}
   \sqrt{n}\left[g(\ti(F,G_n))-g(t(F,W))\right]\cvloi{n}\mathcal{N}\left(0,\sigma^{F}(g)^2\right),
  \end{equation*}
  where 
  \begin{equation}\label{eq:var_k=0}
   \sigma^{F}(g)^2=g'(t(F,W))^2\sigma^{F,\ell}(\text{Id})^2.
  \end{equation}

  \item[(b)] for $k=p$, we have $\Gamma_{n}^{F,\ell}(g)=(g(1)-g(0))\ti(F,G_n)+g(0)$,
  $\Gamma^{F,\ell}(g)=(g(1)-g(0))t(F^{\ell},W)dx+g(0)$ and 
  \begin{equation}\label{eq:var_k=p}
   \sigma^{F,\ell}(g)^2=(g(1)-g(0))^2\sigma^{F,\ell}(\text{Id})^2.
  \end{equation}
 \end{enumerate}
   \item[(v)] Let $d=1$ and $F=K_{2}$. We have $\Gamma_n^{K_2,
       \ell}(\text{Id})=t(K_2, G_n)$ and we deduce from \reff{eq:var_d=1Id}
that  $\sigma^{K_2,\ell}(\text{Id})^2=4\Var\left(\Dw(U)\right)$.

\begin{enumerate}
  \item[(a)]    If  $k=1$, then we have $\Gamma_n^{K_2^{\bullet},
       \ell}(g)=\inv{n}\sum_{i=1}^n g(D_i^{(n)})$ with
     $D_i^{(n)}=D_i(G_n)$  the normalized degree of $i$ in $G_n$, see
     \reff{eq:def_degree_seq}. We deduce from \reff{var_k=1} that:
  \begin{equation*}
   \sigma^{K_{2}^{\bullet},\ell}(g)^{2}
=\Var\left(g(\Dw(U))+\int_{0}^{1}W(x,U)g'(\Dw(x))\,dx\right).  
  \end{equation*}

\item[(b)] If $k=2$, using (iv)-(2), we get from \reff{eq:var_k=p} that: 
  \begin{equation*}
     \sigma^{K_{2}^{\bullet\bullet},\ell}(g)^{2}
=4(g(1)-g(0))^2\Var\left(\Dw(U)\right),
  \end{equation*}
 where $K_{2}^{\bullet\bullet}$ denotes the complete graph $K_2$ with two labeled vertices.

\item[(c)]   Finally, if $k=0$, using (iv)-(1),  we get from \reff{eq:var_k=0} that
  \begin{equation*}
   \sigma^{K_{2}}(g)^{2}
=4\,g'\left(\int_{0}^{1}\Dw(x)dx\right)^2\Var\left(\Dw(U)\right).
  \end{equation*}
\end{enumerate}
 \item[(vi)]  Thanks to \reff{eq:t-ti}, we get that Theorems \ref{Theo_LLN} and
  \ref{Theo_CLT}  also hold with $\ti$ replaced by $t$. 
\item[(vii)] It is left to reader to check that theorem \ref{Theo_CLT} is
  degenerated,    that    is    $\sigma^{F,    \ell}(g)=0$,    in    the
  Erd\"{o}s-R\'{e}nyi   case,   that   is   $W\equiv   \fp$   for   some
  $0\leq \fp\leq 1$, or when $\int_{[0, 1]^k} \hat t_x(F,W)\, dx =0$ and
  in particular when $t(F,W)=0$.
 \end{itemize}
\end{rem}

The next 
 corollary gives the limiting Gaussian process for
the fluctuations of $(\ti (F, G_n): F\in \cf)$. 
\begin{cor}\label{cor:Theo_CLT_multi}
 We have the following convergence of finite-dimensional distributions:
 \begin{equation*}
  \left(\sqrt{n}\left(\ti
      (F,G_{n})-t(F,W)\right): F\in\mathcal{F}\right)
  \,\xrightarrow[n\rightarrow+\infty]{(fdd)}\, \Theta_{\rm{inj}} ,
 \end{equation*}
where $\Theta_{\rm{inj}}=(\Theta_{\rm{inj}}(F):  F\in\mathcal{F})$ is a centered Gaussian
process with covariance function $K_{\rm{inj}}$ given, for $F,F'\in\mathcal{F}$, with $V(F)=[p]$ and $V(F')=[p']$, by:
\begin{align}
\label{eq:K1}
    K_{\rm{inj}}(F,F')
&=\Cov\left(\sum_{q=1}^{p}t_{U}(F^{q},W),\, \sum_{q=1}^{p'}t_{U}\big({F'}^{q},W\big)\right)\\
\label{eq:K2}
&=\sum_{q=1}^{p}\sum_{q'=1}^{p'}t\left((F\bowtie F')(q,q'),W\right) -
pp'\, \, t(F,W)t(F',W).
\end{align}
\end{cor}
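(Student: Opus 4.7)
The plan is to derive this corollary directly from Theorem \ref{Theo_CLT} applied with $\ell$ of length $k=1$ and a linear test function $g$, and then to invoke the Cram\'er-Wold device to recover joint convergence.

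\textbf{Step 1: Reduction to a common vertex set.} Fix a finite family of graphs $F_1,\dots,F_d \in \cf$. By Remark \ref{rem:isolated}, we may add isolated vertices to each $F_m$ without changing $\ti(F_m, G_n)$, $t(F_m, W)$, or the quantities $t_x(F_m^q, W)$ for non-isolated $q$ (for isolated $q$, $t_x(F_m^q, W)$ is a constant equal to $t(F_m,W)$). We thus assume $V(F_m)=[p]$ for a common $p$ and set $F=(F_m)_{m\in[d]}\in\cf^d$.

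\textbf{Step 2: Applying Theorem \ref{Theo_CLT}.} Take $\ell=1\in\cs_{p,1}$, so $k=|\ell|=1$; Condition \eqref{eq:Hyp-hat-E} then holds automatically by Remark \ref{rem:case-k=1}. For a fixed $a\in\R^d$, take $g(x)=\langle a,x\rangle$, which is in $\mathcal{C}^2([0,1]^d)$. From the vector-valued analog of Remark \ref{rem:CP_Gamma}(i), combined with \eqref{eq:integ-tiFW0}, we have $\Gamma^{F,\ell}_n(g)=\langle a,\ti(F,G_n)\rangle$ and $\Gamma^{F,\ell}(g)=\langle a,t(F,W)\rangle$ where $\ti(F,G_n)$ and $t(F,W)$ are the $\R^d$-valued vectors with components $\ti(F_m,G_n)$ and $t(F_m,W)$. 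Theorem \ref{Theo_CLT} together with formula \eqref{eq:var_quantum_graph} then gives
\[
\sqrt{n}\,\big\langle a,\,\ti(F,G_n)-t(F,W)\big\rangle \cvloi{n} \cn\!\left(0,\,\Var\!\left(\Big\langle a, \sum_{q=1}^{p} t_U(F^q,W)\Big\rangle\right)\right),
\]
where $U$ is uniform on $[0,1]$.

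\textbf{Step 3: Cram\'er-Wold and identification of \eqref{eq:K1}.} The variance on the right equals $a^\top \Sigma a$, where $\Sigma$ is the $d\times d$ matrix with entries $\Sigma_{m,m'} = \Cov\bigl(\sum_{q=1}^{p} t_U(F_m^q, W), \sum_{q'=1}^{p} t_U(F_{m'}^{q'}, W)\bigr)$. Since this convergence holds for every $a\in \R^d$, the Cram\'er-Wold device yields the joint convergence of $\sqrt{n}(\ti(F_m,G_n)-t(F_m,W))_{m\in[d]}$ to a centered Gaussian vector with covariance $\Sigma$. As $\cf$ is countable, this gives the finite-dimensional convergence claimed in the corollary with $K_{\rm inj}$ given by \eqref{eq:K1}. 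Isolated vertices $q$ contribute a constant $t(F,W)$ to $\sum_q t_U(F^q, W)$ and thus do not affect the covariance, showing that the sum indices $q\in V(F)$ and $q'\in V(F')$ may be taken with respect to the original (unaugmented) vertex sets.

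\textbf{Step 4: From \eqref{eq:K1} to \eqref{eq:K2}.} Expanding the covariance gives $K_{\rm inj}(F,F') = \sum_{q,q'} \bigl( \E[t_U(F^q,W)\,t_U({F'}^{q'},W)] - \E[t_U(F^q,W)]\,\E[t_U({F'}^{q'},W)] \bigr)$. By \eqref{eq:integ-txFW}, $\E[t_U(F^q,W)] = \int_0^1 t_x(F^q,W)\,dx = t(F,W)$, producing the term $-v(F)v(F')\,t(F,W)t(F',W)$. For the mixed moment, the definition \eqref{hom5}-\eqref{eq:def-tildet}-\eqref{eq:def-hatt} of $t_x(F^q,W)$ as the graphon integral over $V(F)$ with the $q$-th coordinate fixed at $x$ gives
\[
\int_0^1 t_x(F^q, W)\,t_x({F'}^{q'},W)\, dx = t\!\left((F\bowtie F')(q,q'),\,W\right),
\]
since the right-hand side is by definition the graphon integral over the disjoint union $F\sqcup F'$ with vertices $q$ and $q'$ merged, where the shared coordinate is precisely $x$. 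Summing over $(q,q')$ yields \eqref{eq:K2}. The main technical point is this last identity, which is essentially a substitution of variables and a direct unfolding of the definition of $t(\cdot,W)$ for the glued graph $(F\bowtie F')(q,q')$.
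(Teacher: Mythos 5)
Your proof is correct and follows essentially the same route as the paper: the authors also deduce the result from Theorem \ref{Theo_CLT} applied with $k=1$ and linear $g$ (via \reff{eq:var_quantum_graph}), pass to joint convergence by standard Gaussian/Cram\'er--Wold arguments, and obtain \reff{eq:K2} from \reff{eq:K1} by the same gluing computation as in \reff{eq:s2=FF}. You simply spell out the details (the isolated-vertex reduction and the identity $\int_0^1 t_x(F^q,W)\,t_x({F'}^{q'},W)\,dx = t((F\bowtie F')(q,q'),W)$) that the paper leaves implicit.
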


\begin{proof}
We deduce from \reff{eq:var_quantum_graph} and standard results on
Gaussian vectors, the  convergence,
for the finite-dimensional  distributions towards the Gaussian process with covariance function
given by \reff{eq:K1}. Formula \reff{eq:K2} can be derived similarly to  \reff{eq:s2=FF}. 
\end{proof}

 \begin{rem}
   In  particular, Corollary  \ref{cor:Theo_CLT_multi} proves  a central
   limit     theorem    for     quantum    graphs     (see    Lov\'{a}sz
   \cite{Lovasz_2012_book},  Section 6.1).   A simple  quantum graph  is
   defined as a  formal linear combination of a finite  number of simple
   finite graphs with real coefficients.  This  definition makes it possible to
   study   linear    combination   of   homomorphism    densities.   For
   $F=(F_m:  m\in  [d])\in \cf^d$  and  $a=(a_m:m\in  [d])\in \R^d$,  we
   define the homomorphism  density of $\fF=\sum_{m=1}^d a_m  F_m$ for a
   graph      $G$     and      a     graphon      $W\in     \cw$      as
   $\ti(\fF,    G)=    \langle    a,     \ti(F,    G)    \rangle$    and
   $t(\fF, W)=  \langle a,  t(F, W) \rangle$.  We deduce  from Corollary
   \ref{cor:Theo_CLT_multi} the following convergence in distribution:
  \begin{equation}\label{eq:cv_quantum_graph}
   \sqrt{n}\left(\ti (\fF,G_{n})-t (\fF,W)\right)
   \cvloi{n} \mathcal{N}\left(0, \sigma(\fF)^{2}\right),
 \end{equation}
 where $\sigma(\fF)^{2}$ is given by \reff{eq:var_quantum_graph} or
 equivalently $\sigma(\fF)^{2}=\sum_{m,m'\in [d]} a_ma_{m'}\,   K_{\rm{inj}}(F_m, F_{m'})$. 
\end{rem}

We also have the limiting Gaussian process for the fluctuations of 
$(\tid(F,G_n):F\in\cf)$.
\begin{cor}\label{cor:CLT_tind}
We have the following convergence of finite-dimensional distributions:
 \begin{equation*}
  \left(\sqrt{n}\left(\tid
      (F,G_{n})-\tid(F,W)\right): F\in\mathcal{F}\right)
  \,\xrightarrow[n\rightarrow+\infty]{(fdd)}\, \Theta_{\rm{ind}} ,
 \end{equation*}
 where $\Theta_{\rm{ind}}=(\Theta_{\rm{ind}}(F):  F\in\mathcal{F})$ is a
 centered  Gaussian  process  with  covariance  function  $K_{\rm{ind}}$
 given,    for    $F_1,F_2\in\mathcal{F}$,   with    $V(F_1)=[p_1]$    and
 $ V(F_2)=[p_2]$, by:
\begin{multline}
\label{eq:K_ind}
 K_{\rm{ind}}(F_1,F_2)\\
\begin{aligned}
&=\Cov\left(\sum_{F'_1\geq F_1}(-1)^{e(F'_1)} \sum_{q=1}^{p_1}t_{U}\left((F'_1)^{q},W\right),\, 
\sum_{F'_2\geq F_2}(-1)^{e(F'_2)} \sum_{q=1}^{p_2}t_{U}\left((F'_2)^{q},W\right)\right) \\
&=\sum_{F'_1\geq F_1, \, F'_2\geq F_2}(-1)^{e(F'_1)+e(F'_2)}
\left(\sum_{q=1}^{p_1}\sum_{q'=1}^{p_2}t\left((F'_1\bowtie F'_2)(q,q'),W\right) -p_1p_2\, \, t(F'_1,W)t(F'_2,W)\right),
\end{aligned}
\end{multline}
where $F'\geq  F$ means that $V(F)=V(F')$ and $E(F)\subset E(F')$,
that is $F'$ ranges over all simple graphs obtained from $F$ by adding
edges. 
\end{cor}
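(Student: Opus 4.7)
The plan is to reduce the statement directly to Corollary \ref{cor:Theo_CLT_multi} via the Möbius inversion between injective and induced homomorphism densities. The first step is to check that Proposition \ref{prop:relation_t_inf} extends to graphons: expanding the product $\prod_{\{i,j\}\notin E(F)}(1-W(x_i,x_j))$ in the definition \reff{hom7} of $\tid(F,W)$ and applying inclusion--exclusion gives
\[
\tid(F,W)=\sum_{F'\geq F}(-1)^{e(F')-e(F)}\,t(F',W).
\]
Combined with the second identity in \reff{eq:ti-tid} applied to $G=G_n$, this yields the key finite decomposition
\[
\sqrt{n}\bigl(\tid(F,G_n)-\tid(F,W)\bigr)=\sum_{F'\geq F}(-1)^{e(F')-e(F)}\,\sqrt{n}\bigl(\ti(F',G_n)-t(F',W)\bigr),
\]
with at most $2^{\binom{v(F)}{2}}$ terms.

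For any finite family $F_1,\dots,F_r\in\cf$, the set of all $F'\geq F_j$ arising in the decompositions above is a finite subset of $\cf$. Corollary \ref{cor:Theo_CLT_multi} gives the joint convergence in distribution of $\sqrt{n}(\ti(F',G_n)-t(F',W))$ over this finite subset towards the corresponding restriction of the centered Gaussian process $\Theta_{\text{inj}}$. Since a finite linear transformation of a Gaussian vector is Gaussian, the continuous mapping theorem yields the joint convergence of $(\sqrt{n}(\tid(F_j,G_n)-\tid(F_j,W)):j\in[r])$ to the centered Gaussian vector with coordinates
\[
\Theta_{\text{ind}}(F_j)=\sum_{F'\geq F_j}(-1)^{e(F')-e(F_j)}\,\Theta_{\text{inj}}(F').
\]
This is exactly the finite-dimensional convergence claimed in the corollary.

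It remains to identify the covariance. By bilinearity,
\[
K_{\text{ind}}(F_1,F_2)=\sum_{F'_1\geq F_1,\,F'_2\geq F_2}(-1)^{e(F'_1)-e(F_1)+e(F'_2)-e(F_2)}\,K_{\text{inj}}(F'_1,F'_2),
\]
and inserting the two formulas \reff{eq:K1} and \reff{eq:K2} for $K_{\text{inj}}$ (noting that $v(F'_j)=v(F_j)=p_j$ since adding edges does not change the vertex set) reproduces the two equalities of \reff{eq:K_ind}. There is essentially no analytic obstacle here: the proof is a purely algebraic reorganisation of Corollary \ref{cor:Theo_CLT_multi}, and the only small step requiring care is the graphon version of Möbius inversion, which is an elementary inclusion--exclusion on the defining integral \reff{hom7}.
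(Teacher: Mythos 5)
Your proof is correct and follows essentially the same route as the paper: M\"obius inversion (Proposition \ref{prop:relation_t_inf}, which you rightly extend to graphons by expanding the product in \reff{hom7}) turns $\sqrt{n}(\tid(F,G_n)-\tid(F,W))$ into a fixed finite linear combination of the injective fluctuations, and Corollary \ref{cor:Theo_CLT_multi} together with linearity of Gaussian vectors finishes the argument. Note only that your (correct) bilinearity computation carries the sign $(-1)^{e(F'_j)-e(F_j)}$ and therefore agrees with the displayed formula \reff{eq:K_ind} only up to the constant factor $(-1)^{e(F_1)+e(F_2)}$, so it does not literally ``reproduce'' \reff{eq:K_ind} as written --- but that discrepancy sits in the paper's display rather than in your argument.
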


\begin{proof}
  Notice  that $\tid  (F,G_{n})$  is a  linear  combination of  subgraph
  counts  by  Proposition  \ref{prop:relation_t_inf}.   We  deduce  from
  \reff{eq:var_quantum_graph} and standard  results on Gaussian vectors,
  the convergence, for the  finite-dimensional distributions towards the
  Gaussian process with covariance function  given by the first equality
  in  \reff{eq:K_ind}  which  is  derived from  the  second  formula  of
  \reff{eq:ti-tid}     and     \reff{eq:var_quantum_graph}.    The second  equality  of
  \reff{eq:K_ind}  
  can  be derived  similarly  to  \reff{eq:K2}. 
\end{proof}

\section{A preliminary result}\label{sec:prem_result}
Let  $F=(F_m: m\in  [d])\in \cf^d$  be a  sequence of  $d\geq 1$  simple
finite  graphs with  $p=v(F)$, $\ell\in\mathcal{M}_{p}$  with $|\ell|=k$
such  that Condition  \reff{eq:Hyp-hat-E} holds.   Let $W\in  \cw$ be  a
graphon  and $X=(X_{i}:i\in\Ne)$  be a  sequence of  independent uniform
random  variables on  $[0,1]$.  Let  $n\in\Ne$  such that  $n> p$.   Let
$G_{n}=G_{n}(W)$  be  the  associated $W$-random  graphs  with  vertices
$[n]$,    see    Section     \ref{sec:Gn-def}.     Recall    definitions
\reff{eq:def-Yb}    of    $Y^\beta(F,     G)$,    \reff{eq:def-Ya}    of
$\hat    Y^\alpha(F^\ell,   G^\alpha)$    and   \reff{eq:def-Yb-2}    of
$\tilde  Y^\beta(F^\ell, G^\alpha)$  for  a simple finite  graph  $F$.  We  set
$Y^\beta=(Y^\beta(F_m,  G_n):m\in [d])$  for $\beta\in\mathcal{S}_{n,p}$
and $\tilde Y^\beta=(\tilde Y^\beta(F_m^\ell, G_n^\alpha):m\in [d])$ as
well as $Y^\alpha=Y^\alpha(F^\ell_m, G^\alpha_n)$ (which does not depend on
$m\in [d]$) for
$\beta\in  \cs_{n,p}^{\ell,\alpha}$ and  $\alpha\in \cs_{p,k}$.   Notice
that for  $\alpha\in \cs_{p,k}$ and  $\beta\in \cs_{n,p}^{\ell,\alpha}$,
we have that,  conditionally to $X$,  $\hat Y^\alpha$
and $\tilde Y^\beta$ are independent, $\hat Y^\alpha$ is  a Bernoulli random variable and:
\[
Y^\beta=\hat Y^\alpha  \, \tilde Y^\beta.
\]
Recall that  $\ti \left(F^{\ell},G_n^{\alpha}\right)=\left(\ti
  \left(F_m^{\ell},G_n^{\alpha}\right): m\in [d]\right)$.
With these notations, we get from equation \reff{eq:def-tti} that for $\ell\in\mathcal{M}_{p}$ with $|\ell|=k$ and  $\alpha\in\mathcal{S}_{n,k}$:
\begin{equation}
\label{t_inj_y_beta}
 \ti \left(F^{\ell},G_n^{\alpha}\right)
=\inv{|\cs_{n,p}^{\ell, \alpha}|} 
   \,\sum_{\beta\in\mathcal{S}_{n,p}^{\ell,\alpha}}
    \, Y^{\beta}
=\hat Y^\alpha  \,\,  \tti \left(F^{\ell},G_n^{\alpha}\right),
\end{equation}
with
\begin{equation}
   \label{eq:def-tilde-tinj}
 \tti \left(F^{\ell},G_n^{\alpha}\right)
=\inv{|\cs_{n,p}^{\ell, \alpha}|} \,
 \,  \sum_{\beta\in\mathcal{S}_{n,p}^{\ell,\alpha}}
    \, \tilde Y^{\beta}.
\end{equation}

 We also set $Z^\beta=\E[Y^\beta|X]$ and $\tilde Z^\beta=\E[\tilde
Y^\beta |X]$. Recall \reff{eq:def-Zy}. We have, for  $Z^\beta=(Z^\beta_m:m\in [d])$ and $\tilde
Z^\beta=(\tilde Z^\beta_m:m\in [d])$ 
that  for $m\in
[d]$:
\[
Z_m^{\beta}=\prod_{\{i,j\}\in
  E(F_m)}W(X_{\beta_i},X_{\beta_{j}})
\quad\text{and}\quad
\tilde Z_m^{\beta}=\prod_{\{i,j\}\in
  \tilde E(F_m^\ell)}W(X_{\beta_i},X_{\beta_{j}})=\tilde Z_m(X_\beta). 
\]
We recall that
$\hat t_{X_\alpha}(F^\ell, W)=\E\left[\hat Y^\alpha |\,
  X\right]=\E\left[\hat Y^\alpha |\, X_\alpha\right]$, see
\reff{eq:line-ht}, to deduce that:
\begin{equation}
   \label{eq:Zb=ttZb}
Z^\beta = \hat t_{X_\alpha}(F^\ell, W)\, \tilde Z^\beta . 
\end{equation}

\begin{lem}\label{lem_1}
Let $F\in \cf^d$ be a sequence  of $d\geq 1$  simple  finite graphs with
$p=v(F)$, $\ell\in\mathcal{M}_{p}$ and $W\in\cw$ be  a graphon.
Let $(M_{\beta}:\beta\in\mathcal{S}_{n,p})$ be a sequence of
$\sigma\left(X\right)$-measurable $\R^d$-valued random variables and $n>
p$. Assume Condition \reff{eq:Hyp-hat-E} holds and  that there exists a 
finite constant $K$ such that for all $\beta\in\mathcal{S}_{n,p}$, we
have $\E\left[| M_{\beta} |^2 \right]\le K$. 
Then we have:
 \[\E\left[\left(\inv{|\cs_{n,p}|}
  \,\sum_{\beta\in\mathcal{S}_{n,p}}\, 
\langle Y^{\beta}-Z^{\beta},\, M_{\beta} \rangle \right)^2\right]\le d K
\frac{p(p-1)}{8n(n-1)}\cdot 
 \]
\end{lem}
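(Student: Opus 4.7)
The plan is to exploit the conditional independence of the edges of the $W$-random graph $G_n$ given the sequence $X$. Set $N = \sum_{\beta \in \cs_{n,p}} \langle Y^\beta - Z^\beta, M_\beta \rangle$, so that the quantity of interest equals $|\cs_{n,p}|^{-2} \E[N^2]$. Since each $M_\beta$ is $\sigma(X)$-measurable and $Z^\beta = \E[Y^\beta \mid X]$, we have $\E[N \mid X] = 0$, hence $\E[N^2] = \E[\Var(N \mid X)]$. This reduces the problem to controlling a conditional variance in which the $M_\beta$'s are frozen and only the independent Bernoulli edges of $G_n$ fluctuate.

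I would then expand
\[
\Var(N\mid X) = \sum_{\beta,\beta',m,m'} (M_\beta)_m (M_{\beta'})_{m'}\, \Cov_X(Y^\beta_m, Y^{\beta'}_{m'})
\]
and compute the conditional covariance explicitly. Writing $E^\beta_m = \{\{\beta_i,\beta_j\} : \{i,j\}\in E(F_m)\}$, $w_e = W(X_u,X_v)$ for $e=\{u,v\}$, $I = E^\beta_m\cap E^{\beta'}_{m'}$, $A = E^\beta_m\setminus I$ and $B = E^{\beta'}_{m'}\setminus I$, a short calculation yields
\[
\Cov_X(Y^\beta_m, Y^{\beta'}_{m'}) = \Bigl(\prod_{e\in A}w_e\Bigr)\Bigl(\prod_{e\in B}w_e\Bigr)\Bigl(\prod_{e\in I}w_e\Bigr)\Bigl(1-\prod_{e\in I}w_e\Bigr),
\]
which vanishes when $I=\emptyset$ and is bounded by $1/4$ otherwise via the Bernoulli variance inequality $x(1-x)\le 1/4$. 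Applying $|(M_\beta)_m(M_{\beta'})_{m'}|\le \tfrac12((M_\beta)_m^2 + (M_{\beta'})_{m'}^2)$ together with the symmetry $(\beta,m)\leftrightarrow(\beta',m')$ reduces the estimate to
\[
\E[N^2] \le \tfrac14 \sum_{\beta,m} \E[(M_\beta)_m^2]\cdot \#\bigl\{(\beta',m') : E^\beta_m\cap E^{\beta'}_{m'}\neq\emptyset\bigr\}.
\]

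The last step is a combinatorial count. For fixed positions $\{i,j\}\in E(F_m)$, $\{i',j'\}\in E(F_{m'})$ and fixed unordered vertex pair $\{v_1,v_2\}\subseteq[n]$, the number of $(\beta,\beta')\in\cs_{n,p}^2$ with $\{\beta_i,\beta_j\} = \{v_1,v_2\} = \{\beta'_{i'},\beta'_{j'}\}$ equals $4(A_{n-2}^{p-2})^2$. Summing over the $\binom{n}{2}$ vertex pairs, using $\sum_m \E[(M_\beta)_m^2] = \E[|M_\beta|^2]\le K$ together with $e(F_m)\le\binom{p}{2}$ and the identity $|\cs_{n,p}|^2 = (n(n-1)A_{n-2}^{p-2})^2$, yields a bound of the claimed order $dKp(p-1)/(8n(n-1))$. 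The main difficulty will be obtaining the sharp constant $1/8$: the naive union bound $\ind_{E^\beta_m\cap E^{\beta'}_{m'}\neq\emptyset} \le \sum_{(\{i,j\},\{i',j'\})} \ind_{\{\beta_i,\beta_j\}=\{\beta'_{i'},\beta'_{j'}\}}$ overcounts $\beta'$ that have multiple shared edges by a factor of $e(F_m)$, so one should either fix the shared image edge $\{v_1,v_2\}$ \emph{before} summing over positions, or work with the orthogonal Hoeffding-type decomposition $Y^\beta_m - Z^\beta_m = \sum_{\emptyset\neq S\subseteq E^\beta_m}\bigl(\prod_{e\in E^\beta_m\setminus S}w_e\bigr)\bigl(\prod_{e\in S}\xi_e\bigr)$ with $\xi_e = \ind_{e\in E(G_n)} - w_e$, in which the $|S|=1$ contribution already delivers the dominant term and higher-order terms are controlled by $(1/4)^{|S|}$.
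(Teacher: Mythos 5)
Your strategy is the same as the paper's: observe that the sum is conditionally centred given $X$, expand the conditional variance, note that $\Cov(Y^{\beta}_m,Y^{\gamma}_{m'}\mid X)$ vanishes unless the two injections share an image edge (the paper uses the cruder sufficient condition ``at most one common character'' for conditional independence), bound the surviving Bernoulli covariances by $1/4$, and count. Your explicit covariance formula, the symmetrisation $|ab|\le \tfrac12(a^2+b^2)$, and the count $4(A_{n-2}^{p-2})^2$ per fixed image edge and fixed position pairs are all correct, and carried to the end your argument yields $\E[(\cdot)^2]\le dK\,p^2(p-1)^2/\big(8n(n-1)\big)$.

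The ``difficulty with the sharp constant'' you flag at the end is not a defect of your proof: the constant in the statement cannot be reached. The paper bounds the number of pairs $(\beta,\gamma)\in\cs_{n,p}^2$ sharing at least two characters by $A_n^p\binom{p}{2}A_{n-2}^{p-2}$, which omits the up to $p(p-1)$ ordered choices of which two characters of $\beta$ reappear in $\gamma$; restoring that factor gives exactly your $p^2(p-1)^2$. Moreover the stated inequality fails as written: take $d=1$, $F=K_2$, $M_\beta\equiv 1$ and $W\equiv 1/2$; each unordered pair of vertices is hit by two words $\beta$, so the left-hand side equals $\tfrac{1}{2n(n-1)}$, whereas the right-hand side is $\tfrac{p(p-1)}{8n(n-1)}=\tfrac{1}{4n(n-1)}$. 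Consequently neither of your proposed repairs can recover $p(p-1)/8$: fixing the image edge first still leaves the sum over the $e(F_m)e(F_{m'})$ pairs of edge positions, and the Hoeffding decomposition is exact in the example above and reproduces $\tfrac{1}{2n(n-1)}$. Since the lemma is only ever invoked for the $O\big(1/(n(n-1))\big)$ rate, your bound with the constant $p^2(p-1)^2/8$ is sufficient for every downstream use; state it that way rather than chasing the paper's constant.
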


 \begin{proof}
We first  assume that  $d=1$.  We denote  by
   $\Cov(\left..\right|X)$  the conditional  covariance  given $X$.   We
   have:
\begin{multline*}
\E\left[\left(\inv{|\cs_{n,p}|}
  \,\sum_{\beta\in\mathcal{S}_{n,p}}\, 
    \left(Y^{\beta}-Z^{\beta}\right)M_{\beta}\right)^2\right]\\
\begin{aligned}
 &=\inv{|\cs_{n,p}|^2}\,\sum_{\beta\in\mathcal{S}_{n,p}}
 \sum_{\gamma\in\mathcal{S}_{n,p}}
 \E\left[\E\left[\big(Y^{\beta}-\E[Y^{\beta}
   |X]\big)\big(Y^{\gamma}-\E[Y^{\gamma}
   |X]\big)M_{\beta}M_{\gamma}\Big|\, X\right]\right]\\
&=\inv{|\cs_{n,p}|^2}\,\sum_{\beta\in\mathcal{S}_{n,p}}
 \sum_{\gamma\in\mathcal{S}_{n,p}}
 \E\left[M_{\beta}M_{\gamma}\Cov(Y^{\beta},Y^{\gamma}|\,X)\right]\\
 &\le \inv{|\cs_{n,p}|^2}\,
 \sum_{\beta\in\mathcal{S}_{n,p}}
 \sum_{\gamma\in\mathcal{S}_{n,p}}
\E\left[\lvert M_{\beta}   M_{\gamma} \rvert
 \, \lvert\Cov(Y^{\beta},Y^{\gamma} |\, X)\rvert \right].
\end{aligned}
\end{multline*}

If the $[n]$-words $\beta$  and $\gamma$ have at  most one character in  common, that is
$|\beta\bigcap \gamma|\leq 1$, then, by
construction,  $Y^{\beta}$    and
$Y^{\gamma}$    are  conditionally  on $X$ independent. This implies
then that $\Cov(Y^{\beta},Y^{\gamma}|\, X)=0$. 
If $|\beta\bigcap \gamma|>1$, then as $Y^{\beta}$ and  $Y^{\gamma}$  are
Bernoulli random  variables and   we have the upper bound
$\lvert\Cov(Y^{\beta},Y^{\gamma}|\, X)\rvert\le
1/4$. The number of possible choices for $\beta,\gamma\in \cs_{n,p} $
such that $|\beta\bigcap \gamma|>1$ is bounded from above by
$A_{n}^{p}\binom{p}{2}A_{n-2}^{p-2}$. We deduce that:
\begin{align*}
 \E\left[\left(\inv{|\cs_{n,p}|}
  \,\sum_{\beta\in\mathcal{S}_{n,p}}\, 
    \left(Y^{\beta}-Z^{\beta}\right)M_{\beta}\right)^2\right]
 &\le\frac{1}{4 (A_{n}^{p})^2}\,A_{n}^{p}\,\binom{p}{2}A_{n-2}^{p-2}\,\E\left[\lvert
   M_{\beta}  M_{\gamma} \rvert\right] \\
 &\le K \frac{p(p-1)}{8n(n-1)}, 
\end{align*}
where we used  the Cauchy-Schwarz inequality for the  last inequality to
get
$\E\left[\lvert M_{\beta} M_{\gamma} \rvert\right] \leq
K$.\\

In the case $d\geq 1$, the term $\E\left[\lvert
   M_{\beta} M_{\gamma} \rvert\right] $ in the above
 inequalities has to be replaced by $\E\left[\lvert
  M_{\beta}\rvert_1\, \lvert M_{\gamma} \rvert_1\right] $, where $\lvert
\cdot\rvert_1$ is the $L^1$ norm in $ \R^d$. Then,  use 
that $|x|_1^2\leq  d |x|^2$ and thus $\E\left[\lvert
  M_{\beta}\rvert_1\, \lvert M_{\gamma} \rvert_1\right] \leq  d K$ to conclude.
 \end{proof}

The proof of the next Lemma is similar and left to the reader (notice
the next lemma is in fact Lemma \ref{lem_1} stated for $d=1$ and the graph
$F^{[\ell]}_m$ of the labeled vertices, see Section
\ref{subsec:graph_hom} for the definition of $F^{[\ell]}_m$,
which thanks to condition \reff{eq:Hyp-hat-E},
does not depend on $m\in [d]$). We recall that 
$\hat t_{X_\alpha}(F^\ell, W)=\E\left[\hat Y^\alpha |\,
  X\right]=\E\left[\hat Y^\alpha |\, X_\alpha\right]$, see
\reff{eq:line-ht}.

\begin{lem}
\label{lem:Delta-Ya}
Let $F\in \cf^d$ be a sequence  of $d\geq 1$ simple   finite  graphs with
$p=v(F)$  and $W\in\cw$ be  a graphon.
Let  $k\in [p]$ and $(M_{\alpha}:\alpha\in\mathcal{S}_{n,k})$ be a sequence of
$\sigma\left(X\right)$-measurable $\R^d$-valued random variables and $n>
p$. Assume  Condition \reff{eq:Hyp-hat-E} holds and that there exists a 
finite constant $K$ such that for all $\alpha\in\mathcal{S}_{n,k}$, we
have $\E\left[| M_{\alpha} |^2 \right]\le K$. 
Then we have:
 \[\E\left[\left(\inv{|\cs_{n,k}|} 
  \,\sum_{\alpha\in\mathcal{S}_{n,k}}\, 
\langle \hat Y^{\alpha}-\hat t_{X_\alpha}(F^\ell, W) , \,M_{\alpha} \rangle \right)^2\right]\le d K
\frac{k(k-1)}{8n(n-1)}\cdot 
 \]
\end{lem}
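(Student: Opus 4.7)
The plan is to mimic step by step the proof of Lemma \ref{lem_1}, replacing $(Y^{\beta},Z^{\beta})$ indexed by $\beta\in \cs_{n,p}$ by $(\hat Y^{\alpha},\hat t_{X_{\alpha}}(F^{\ell},W))$ indexed by $\alpha\in \cs_{n,k}$, and replacing the original graphs $F_{m}$ by the common labeled subgraph $F_{m}^{[\ell]}$ on the vertex set $\ell$, which under Condition \reff{eq:Hyp-hat-E} does not depend on $m\in[d]$. I will first treat $d=1$ and then deduce the general $d\ge 1$ case.

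First I expand the square and pull the expectation inside: since $M_{\alpha}$ is $\sigma(X)$-measurable, and $\hat t_{X_{\alpha}}(F^{\ell},W)=\E[\hat Y^{\alpha}\mid X]$ by \reff{eq:line-ht}, conditioning on $X$ transforms the diagonal and off-diagonal terms into the conditional covariance $\Cov(\hat Y^{\alpha},\hat Y^{\alpha'}\mid X)$. Thus
\[
\E\!\left[\Big(\inv{|\cs_{n,k}|}\sum_{\alpha}(\hat Y^{\alpha}-\hat t_{X_{\alpha}}(F^{\ell},W))\,M_{\alpha}\Big)^{\!2}\right]
=\inv{|\cs_{n,k}|^{2}}\sum_{\alpha,\alpha'}\E\!\left[M_{\alpha}M_{\alpha'}\Cov(\hat Y^{\alpha},\hat Y^{\alpha'}\mid X)\right].
\]

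The key combinatorial observation is the same as in Lemma \ref{lem_1}: conditionally on $X$, the edges of $G_{n}$ are independent Bernoulli, so $\hat Y^{\alpha}$ and $\hat Y^{\alpha'}$ (which are products of indicators of edges among the vertices $\{\alpha_{i}\}$ and $\{\alpha'_{i}\}$ respectively) are conditionally independent whenever $|\{\alpha_{1},\ldots,\alpha_{k}\}\cap\{\alpha'_{1},\ldots,\alpha'_{k}\}|\le 1$. So only pairs $\alpha,\alpha'$ sharing at least two vertices contribute, and since $\hat Y^{\alpha},\hat Y^{\alpha'}$ are Bernoulli random variables, $|\Cov(\hat Y^{\alpha},\hat Y^{\alpha'}\mid X)|\le 1/4$. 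The count of such pairs in $\cs_{n,k}^{2}$ is bounded by $A_{n}^{k}\binom{k}{2}A_{n-2}^{k-2}$ (choose $\alpha$, then the $2$ positions in $\alpha'$ that must coincide with characters of $\alpha$, and finally complete $\alpha'$). Together with the Cauchy--Schwarz bound $\E[|M_{\alpha}M_{\alpha'}|]\le K$, this yields the announced bound
\[
\frac{1}{4|\cs_{n,k}|^{2}}\cdot A_{n}^{k}\binom{k}{2}A_{n-2}^{k-2}\cdot K = K\,\frac{k(k-1)}{8n(n-1)}.
\]

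Finally, for $d\ge 1$ I replace $|M_{\alpha}M_{\alpha'}|$ by $|M_{\alpha}|_{1}|M_{\alpha'}|_{1}$ and use $|x|_{1}^{2}\le d|x|^{2}$ to get $\E[|M_{\alpha}|_{1}|M_{\alpha'}|_{1}]\le dK$ via Cauchy--Schwarz, which multiplies the previous bound by $d$. There is no real obstacle here: the only mild point to check is the conditional independence statement underlying the vanishing of $\Cov(\hat Y^{\alpha},\hat Y^{\alpha'}\mid X)$ when $|\alpha\cap\alpha'|\le 1$, which holds because $\hat Y^{\alpha}$ depends only on edges of $G_{n}$ with both endpoints in $\{\alpha_{1},\ldots,\alpha_{k}\}$ (and similarly for $\alpha'$), and distinct edges of $G_{n}$ are conditionally independent given $X$.
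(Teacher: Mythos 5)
Your proof is correct and follows exactly the route the paper intends: the authors explicitly leave this proof to the reader, noting it is Lemma \ref{lem_1} applied to the common labeled subgraph $F^{[\ell]}$ on $k$ vertices, which is precisely your argument (conditional independence when $|\alpha\cap\alpha'|\le 1$, the $1/4$ covariance bound for Bernoulli variables, the count $A_{n}^{k}\binom{k}{2}A_{n-2}^{k-2}$, Cauchy--Schwarz, and the factor $d$ from $|x|_{1}^{2}\le d|x|^{2}$). Nothing is missing.
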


We  also state  a variant of  Lemma \ref{lem_1}, when working  conditionally on $X_{\alpha}$ for
some  $\alpha\in\mathcal{S}_{n,k}$. 

The next  result is a key ingredient in the proof of  Theorems \ref{Theo_LLN} and \ref{Theo_CLT}.
Recall $\tilde t_x\left(F^{\ell},W\right)= \left(\tilde
  t_x\left(F^{\ell}_m,W\right): m\in [d]\right)$ with $\tilde t_x$
defined in \reff{eq:def-tildet}. Notice that for all $\beta\in
\cs_{n,p}^{\ell,\alpha}$:
\[
\tilde t_{X_{\alpha}}\left(F^{\ell},W\right)=\E\left[\tilde
  Z^\beta|X_\alpha\right]=\E\left[
  \tti (F^\ell, G_n)|X_\alpha\right].
\]

\begin{lem}
\label{lem:tt-tt}
Let $F\in \cf^d$ be a sequence  of $d\geq 1$  simple  finite graphs with
$p=v(F)$, $\ell\in\mathcal{M}_{p}$ with $k=|\ell|$,
$\alpha\in\mathcal{S}_{n,k}$  and $W\in\mathcal{W}$ be a graphon.
Assume  Condition \reff{eq:Hyp-hat-E} holds. Then, we have: 
\[
\E\left[\Big|
\tti \left(F^{\ell},G_n^{\alpha}\right)
- \tilde
t_{X_{\alpha}}\left(F^{\ell},W\right)\Big|^2\, \Big|\, X_\alpha, \hat Y^\alpha \right]
\le 
d \frac{(p-k)}{4(n-k)}\cdot
\]
\end{lem}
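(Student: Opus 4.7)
The plan is to adapt the conditional covariance method of Lemma \ref{lem_1} (see also Lemma \ref{lem:Delta-Ya}), now conditioning on $X_\alpha$ rather than on the whole $X$, and exploiting the independence between the labeled and unlabeled parts of the $W$-random graph.

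The first step is to remove $\hat Y^\alpha$ from the conditioning. Conditionally on $X_\alpha$, $\hat Y^\alpha$ is a function of the edge-indicators between vertices of $\alpha$, whereas each $\tilde Y^\beta$ (for $\beta\in\cs_{n,p}^{\ell,\alpha}$) depends only on $(X_v)_{v\in[n]\setminus\alpha}$ (independent of $X_\alpha$) and on edge-indicators for edges with at least one endpoint in $[n]\setminus\alpha$. These two pools of randomness are independent, so $\hat Y^\alpha$ is conditionally independent of the family $(\tilde Y^\beta)_{\beta}$ given $X_\alpha$, and the conditioning on $\hat Y^\alpha$ can therefore be dropped. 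It thus suffices to bound $\E[|\tti(F^\ell,G_n^\alpha)-\tilde t_{X_\alpha}(F^\ell,W)|^2\mid X_\alpha]$. Expanding the squared Euclidean norm coordinate by coordinate and using the identity $\tilde t_{X_\alpha}(F_m^\ell,W)=\E[\tilde Y_m^\beta\mid X_\alpha]$ (valid for any $\beta\in\cs_{n,p}^{\ell,\alpha}$), this rewrites as
\begin{equation*}
\sum_{m=1}^d\frac{1}{|\cs_{n,p}^{\ell,\alpha}|^2}\sum_{\beta,\gamma\in\cs_{n,p}^{\ell,\alpha}}\Cov(\tilde Y_m^\beta,\tilde Y_m^\gamma\mid X_\alpha),
\end{equation*}
so the factor $d$ emerges from the outer sum and I may work at a fixed $m$.

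The combinatorial heart of the proof is to identify which pairs $(\beta,\gamma)$ contribute. Writing $\beta'=\beta_{[p]\setminus\ell}$ and $\gamma'=\gamma_{[p]\setminus\ell}$ for the ``free'' parts, I would show that $\beta'\cap\gamma'=\emptyset$ forces $\Cov(\tilde Y_m^\beta,\tilde Y_m^\gamma\mid X_\alpha)=0$: indeed $X_{\beta'}$ and $X_{\gamma'}$ are then independent given $X_\alpha$, and every edge of $E(F_m)\setminus E(F_m^{[\ell]})$ transported by $\beta$ has at least one endpoint in $\beta'$, so the edge-indicator families defining $\tilde Y_m^\beta$ and $\tilde Y_m^\gamma$ are disjoint; together these yield full conditional independence. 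This is the exact analogue of the observation in the proof of Lemma \ref{lem_1} that $|\beta\cap\gamma|\le 1$ suffices for conditional independence given $X$, but the threshold is now $|\beta'\cap\gamma'|=0$ because we condition on less.

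For the remaining pairs one uses the crude bound $|\Cov(\tilde Y_m^\beta,\tilde Y_m^\gamma\mid X_\alpha)|\le 1/4$ (both random variables are $\{0,1\}$-valued), together with the fact that, for fixed $\beta$, the number of $\gamma\in\cs_{n,p}^{\ell,\alpha}$ with $\gamma'\cap\beta'\neq\emptyset$ equals $A_{n-k}^{p-k}-A_{n-p}^{p-k}$. The problem then reduces to estimating the ratio $(A_{n-k}^{p-k}-A_{n-p}^{p-k})/A_{n-k}^{p-k}=1-\prod_{j=0}^{p-k-1}(n-p-j)/(n-k-j)$. The hardest part will be carrying out this last step sharply enough to produce the factor $(p-k)/(n-k)$ stated in the lemma: a direct union bound over the $(p-k)^2$ pairs of positions at which $\beta'$ and $\gamma'$ could overlap yields only $(p-k)^2/(n-k)$, so a refinement will be needed, presumably via the law of total covariance decomposition $\Cov(\tilde Y^\beta,\tilde Y^\gamma\mid X_\alpha)=\Cov(\tilde Z^\beta,\tilde Z^\gamma\mid X_\alpha)+\E[\Cov(\tilde Y^\beta,\tilde Y^\gamma\mid X)\mid X_\alpha]$, treating each term separately by exploiting the finer structure of $E(F_m)\setminus E(F_m^{[\ell]})$.
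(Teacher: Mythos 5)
Your proposal is the paper's own argument: drop $\hat Y^\alpha$ by conditional independence, expand the conditional variance of the average of the $\tilde Y^\beta$ over $\cs_{n,p}^{\ell,\alpha}$ coordinate by coordinate, kill the pairs whose free parts are disjoint, and bound the remaining conditional covariances by $1/4$ times the proportion of intersecting pairs. The scruple you raise in the last step is the only point of divergence, and it is well founded --- but not in the way you expect. The paper asserts that the number of pairs $(\beta,\gamma)$ with $|\beta\cap\gamma|>|\alpha|$ is at most $A_{n-k}^{p-k}\,(p-k)\,A_{n-k-1}^{p-k-1}$; the correct union bound (over the $p-k$ positions of $\gamma$ at which a shared character could sit, times the $p-k$ characters of $\beta$'s free part that could occupy it) is $A_{n-k}^{p-k}\,(p-k)^2\,A_{n-k-1}^{p-k-1}$, and your exact count $A_{n-k}^{p-k}-A_{n-p}^{p-k}$ shows the paper's figure is an undercount: for $p-k=2$ and $n-k=10$ there are $90-56=34$ intersecting $\gamma$'s per $\beta$, versus the paper's claimed $2\cdot 9=18$. (The analogous count in Lemma \ref{lem_1} has the same defect.) So the crude $1/4$ covariance bound honestly yields $d(p-k)^2/4(n-k)$, exactly as you computed, and no sharper counting will restore the missing factor $p-k$.

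I would not pursue the total-covariance refinement you sketch: to recover the stated constant it would have to show that the conditional covariances of pairs sharing exactly one free vertex are $O(1/(p-k))$ on average, which is a graph- and graphon-dependent statement and is certainly not what the paper's two-line counting argument does. The right fix is simply to state the lemma with the constant $d(p-k)^2/4(n-k)$. This costs nothing downstream: the lemma is only invoked (in Lemmas \ref{lem2} and \ref{lem:cv-0P}) to show the relevant quantity is $O(1/n)$ with $p$ and $k$ fixed, so the extra factor $p-k$ is immaterial. With that adjustment your proof is complete; in particular your verification of the two conditional-independence claims --- that $\hat Y^\alpha$ decouples from the family $(\tilde Y^\beta)$ given $X_\alpha$, and that disjoint free parts force vanishing conditional covariance because both the vertex variables and the relevant edge variables are then disjoint --- is exactly what the paper's proof uses implicitly.
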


\begin{proof}
We consider the case  $d=1$. 
Recall the definition of $\tti \left(F^{\ell},G_n^{\alpha}\right)$ given
in \reff{eq:def-tilde-tinj}. Set:
\[
\ca=\E\left[\inv{|\cs_{n,p}^{\ell, \alpha}|^2}\Bigg(
  \,\sum_{\beta\in\mathcal{S}_{n,p}^{\ell,\alpha}}\, 
    \left(\tilde Y^{\beta}-\tilde
t_{X_{\alpha}}\left(F^{\ell},W\right)\right)\Bigg)^2\,\Big |\, X_\alpha,
\hat Y^\alpha\right].
\]
Following the proof of Lemma \ref{lem_1} with
   $M_\beta=1$, and using also that
$\E\left[\tilde Y^\beta \, \big|\, X_\alpha, \hat Y^\alpha\right]
= \tilde
t_{X_{\alpha}}\left(F^{\ell},W\right)$, and  that $\tilde Y^\beta$ and $\tilde Y^\gamma$ are
   conditionally on $X_\alpha$ independent of $\hat Y^\alpha$ for
   $\beta, \gamma\in \cs_{n,p}^{\ell,\alpha}$, we get:
\[
\ca
\le \inv{|\cs_{n,p}^{\ell, \alpha}|^2}
 \sum_{\beta\in\mathcal{S}_{n,p}^{\ell,\alpha}}
 \sum_{\gamma\in\mathcal{S}_{n,p}^{\ell,\alpha}}
\lvert\Cov(\tilde Y^{\beta},\tilde Y^{\gamma} |\, X_\alpha)\rvert.
\]
If  $\beta$ and
$\gamma$   have   no   more   than   $\alpha$   in   common,   that   is
$\beta\bigcap     \gamma=\alpha$,    then     $\tilde Y^{\beta}$    and
$\tilde Y^{\gamma}$ are  conditionally on $X_\alpha$   independent and thus 
$\Cov( \tilde Y^{\beta},\tilde Y^{\gamma} |\, X)=0$. 
 
If $|\beta\bigcap \gamma|>|\alpha|$, then as $\tilde Y^{\beta}$ and
$\tilde Y^{\gamma}$  are
Bernoulli random  variables, we have the upper bound
$\lvert\Cov(\tilde Y^{\beta},\tilde Y^{\gamma}|\, X)\rvert\le
1/4$. The number of possible choices for $\beta,\gamma\in
\cs_{n,p}^{\ell,\alpha} $
such that $|\beta\bigcap \gamma|>|\alpha|$ is bounded from above by
$A_{n-k}^{p-k}(p-k) A_{n-k-1}^{p-k-1}$. We deduce that:
\[
\ca\le \frac{1}{4(A_{n-k}^{p-k})^2} A_{n-k}^{p-k}(p-k) A_{n-k-1}^{p-k-1}
\leq  \frac{(p-k)}{4(n-k)}\cdot
\]
The extension to $d\geq 1$ is direct. 
\end{proof}

\section{Proof of Theorem \ref{Theo_LLN}}\label{sec:proof_LLN}

We first state a preliminary lemma.
 \begin{lem}\label{lem2}
 Let $F\in \cf^d$ be a sequence  of $d\geq 1$   simple finite graphs with
$p=v(F)$, $\ell\in\mathcal{M}_{p}$ with $k=|\ell|$, and
$W\in\mathcal{W}$ be a graphon.
Assume  Condition \reff{eq:Hyp-hat-E} holds. Then, for all $n> k$ and
   $g\in\cc^{1}([0,1])$, we have:
\[
\E\left[\left|\Gamma^{F,\ell}_{n}(g)-\hat{\Gamma}^{F,\ell}_{n}(g)\right|\right]\le
d\norm{g}_\infty  \sqrt{\frac{ k(k-1)}{2n(n-1)}} +
\frac{1}{2} \norm{\nabla g}_\infty \sqrt{\frac{p-k}{n-k}}\cdot
\] 
 \end{lem}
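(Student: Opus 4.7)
The plan is to decompose $\Gamma^{F,\ell}_n(g) - \hat\Gamma^{F,\ell}_n(g)$ into two pieces, each controlled by one of the preliminary lemmas of Section~\ref{sec:prem_result}. Rewriting each summand in the definitions of $\Gamma_n^{F,\ell}(g)$ and $\hat\Gamma_n^{F,\ell}(g)$ as $g(0)+\hat Y^\alpha(g(\tti(F^\ell,G_n^\alpha))-g(0))$ and $g(0)+\hat t_{X_\alpha}(F^\ell,W)(g(\tilde t_{X_\alpha}(F^\ell,W))-g(0))$ respectively, and then adding and subtracting $\hat Y^\alpha(g(\tilde t_{X_\alpha}(F^\ell,W))-g(0))$ inside the sum, one obtains $\Gamma^{F,\ell}_n(g) - \hat\Gamma^{F,\ell}_n(g) = A_n + B_n$, with
\begin{align*}
A_n &= \frac{1}{|\cs_{n,k}|}\sum_{\alpha\in\cs_{n,k}}\big(\hat Y^\alpha - \hat t_{X_\alpha}(F^\ell,W)\big)\big(g(\tilde t_{X_\alpha}(F^\ell,W)) - g(0)\big),\\
B_n &= \frac{1}{|\cs_{n,k}|}\sum_{\alpha\in\cs_{n,k}} \hat Y^\alpha \big(g(\tti(F^\ell, G_n^\alpha)) - g(\tilde t_{X_\alpha}(F^\ell,W))\big).
\end{align*}

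The virtue of this decomposition is that the factor $M_\alpha := g(\tilde t_{X_\alpha}(F^\ell,W)) - g(0)$ in $A_n$ is $\sigma(X)$-measurable (being a deterministic function of $X_\alpha$) and bounded by $2\|g\|_\infty$, so Lemma~\ref{lem:Delta-Ya} applied in the scalar case with $K=4\|g\|_\infty^2$ together with Jensen's inequality yields $\E[|A_n|] \leq \sqrt{\E[A_n^2]} \leq \|g\|_\infty\sqrt{k(k-1)/(2n(n-1))}$, which is bounded above by the first term of the target estimate since $d\geq 1$. For $B_n$, using $\hat Y^\alpha\in\{0,1\}$ and the componentwise mean-value inequality $|g(x)-g(y)|\leq \sum_{m=1}^d \|\nabla_m g\|_\infty|x_m-y_m|$, we have
\begin{equation*}
|B_n| \leq \sum_{m=1}^d \|\nabla_m g\|_\infty \cdot \frac{1}{|\cs_{n,k}|}\sum_{\alpha\in\cs_{n,k}} \big|\tti(F_m^\ell, G_n^\alpha) - \tilde t_{X_\alpha}(F_m^\ell, W)\big|,
\end{equation*}
and Cauchy--Schwarz inside each summand combined with Lemma~\ref{lem:tt-tt} (in its $d=1$ form, applied to each $F_m$ separately, giving the conditional second moment bound $(p-k)/(4(n-k))$) produces $\E[|B_n|]\leq \tfrac12\|\nabla g\|_\infty\sqrt{(p-k)/(n-k)}$. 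Summing the two bounds gives the claim.

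The main subtlety lies in the choice of decomposition: the naive splitting that would keep $(\hat Y^\alpha - \hat t_{X_\alpha}(F^\ell,W))$ multiplied by $g(\tti(F^\ell,G_n^\alpha))-g(0)$ is unusable because the latter factor fails to be $\sigma(X)$-measurable, preventing any direct application of Lemma~\ref{lem:Delta-Ya}. Replacing $\tti$ by its conditional mean $\tilde t_{X_\alpha}(F^\ell, W)$ inside that factor cures the issue and produces the correction term $B_n$, which is precisely of the form handled by Lemma~\ref{lem:tt-tt}.
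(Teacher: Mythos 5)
Your proof is correct and follows essentially the same route as the paper's: the same splitting into a term $(\hat Y^\alpha-\hat t_{X_\alpha})\cdot(\sigma(X)\text{-measurable factor})$ handled by Lemma~\ref{lem:Delta-Ya} and a term $\hat Y^\alpha\,(g(\tti)-g(\tilde t_{X_\alpha}))$ handled by Lemma~\ref{lem:tt-tt}, with the same Jensen/Cauchy--Schwarz steps. The only (cosmetic) difference is that you absorb the $g(0)$ normalization directly into the algebraic decomposition, whereas the paper first treats $g(0)=0$ and then reduces the general case via $\bar g=g-g(0)$ using that both measures are probabilities.
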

 
\begin{proof}
We first consider the case $d=1$. Let $g\in\cc^{1}([0,1])$. We first
assume   that $g(0)=0$. 
Then, we deduce from the definition \reff{me1} of $\Gamma_n^{F,\ell}$
and from \reff{t_inj_y_beta} and \reff{eq:def-tilde-tinj}, as $\hat
Y^\alpha\in \{0, 1\}$, that:
\[
\Gamma_n^{F,\ell}(g)=\inv{|\cs_{n,k}|}\sum_{\alpha\in\mathcal{S}_{n,k}}
\hat Y^\alpha \, g\left( \tti \left(F^{\ell},G_n^{\alpha}\right)\right).
\]
And thus, using definition \reff{me2} 
of $\hat \Gamma_n^{F, \ell}$, we get  $\big| \Gamma_n^{F,\ell}(g) - \hat{\Gamma}^{F,\ell}_{n}(g)\big|
\leq  B_1+B_2$ with
\[
B_1=\inv{|\cs_{n,k}|} \Big| \sum_{\alpha\in \cs_{n,k}} \left(\hat Y^\alpha - \hat
t_{X_\alpha}(F^\ell, W)\right)\, 
g\left( \tilde t_{X_\alpha}(F^\ell, W)\right) \Big| 
\]
 and 
\[
B_2= \inv{|\cs_{n,k}|}  \sum_{\alpha\in \cs_{n,k}} 
\hat Y^\alpha \Big |g\left( \tti (F^\ell, G_n^\alpha)\right)-
g\left( \tilde t_{X_\alpha}(F^\ell, W)\right) \Big| .
\]
Thanks       to       Lemma       \ref{lem:Delta-Ya},       we       get 
$\E[B_1^2]\leq  \norm{g}_\infty^2  k(k-1)/8n(n-1)$.    Thanks  to  Lemma
\ref{lem:tt-tt}, we get using Jensen inequality that 
$\E[B_2^2]\leq \norm{g'}_\infty^2 (p-k)/4(n-k)$.
This gives the result when $g(0)=0$, except there is a $1/2$ in front of
$\norm{g}_\infty $  in the upper  bound of  the Lemma.  In  general, use
that $\Gamma_n^{F, \ell}$ and  $\hat \Gamma_n^{F, \ell}$ are probability
measures, so that $
\left(\Gamma_n^{F, \ell}- \hat \Gamma_n^{F, \ell}\right)(g)=
\left(\Gamma_n^{F, \ell}- \hat \Gamma_n^{F, \ell}\right)(\bar g)$,
with $\bar g=g-g(0)$. Then use that 
  and  $\norm{\bar g }_\infty \leq  2  \norm{g}_\infty  $  to
conclude. The case $d\geq 1$ is similar.
\end{proof}

We can now prove Theorem \ref{Theo_LLN}.
\begin{proof}[Proof of Theorem \ref{Theo_LLN}]
  We first  consider the case $d=1$.   Let $g\in\mathcal{C}^{1}([0,1])$.
  Using  Lemma  \ref{lem2}  and   Borel-Cantelli  lemma,  we  get                   that                       a.s.
  $\lim_{n\rightarrow\infty                                           }
  \left(\Gamma_{\phi(n)}^{F,\ell}(g)-\hat{\Gamma}_{\phi(n)}^{F,\ell}(g)\right)=0$,  with
  $\phi(n)=n^{4}$.
  We notice  that $\hat{\Gamma}_{n}^{F,\ell}(g)$ is a  U-statistics with
  kernel $\Phi_1(X_{[k]})$ where for $x\in  [0, 1]^k$:
\[
\Phi_1(x)=\hat t_x\,  g\left( \tilde t_x\right) +
\left(1-\hat t_x  \right) g(0),
\]
with $t_x=t_x(F^\ell, W)$ and the obvious variants for $\tilde t_x$ and
$\hat t_x$. 

Morover,  because $g$  is  uniformly  bounded on  $[0,1]$,  we get  that
$\Var\left(\Phi_1(X_{[k]})\right)<+\infty$  and we  can apply  the law  of
large    numbers    for    U-statistics     to    obtain    that    a.s.
$\lim_{n\rightarrow\infty                                             }
\hat{\Gamma}_{n}^{F,\ell}(g)=\E[\Phi(X_{[k]})]=\Gamma^{F,\ell}(g)$.
We                   deduce                  that                   a.s.
$\lim_{n\rightarrow\infty                                             }
\Gamma_{\phi(n)}^{F,\ell}(g)=\Gamma^{F,\ell}(g)$.\\

Let  $n'\geq n>k$. We have $\cs_{n, k}\subset \cs_{n', k}$ and $\cs_{n,
  p}^{\ell,\alpha}\subset \cs_{n', p}^{\ell,\alpha}$ for
$\alpha\in \cs_{n,k}$. Recall $|\cs_{n,k}^{\ell,
  \alpha}|=A_{n-k}^{p-k}$. We deduce that for $\alpha\in \cs_{n,k}$: 
\begin{align*}
\Big | \ti (F^\ell, G^\alpha_n) -\ti (F^\ell, G^\alpha_{n'}) \Big|
&\leq 
\inv{A_{n'-k}^{p-k}} \big| A_{n'-k}^{p-k} - A_{n-k}^{p-k}\big| \, 
+
 \Big| \inv{A_{n'-k}^{p-k}} -\inv{A_{n-k}^{p-k}}\Big|\,  A_{n-k}^{p-k}\\
& = 2 \left(1- \frac{A_{n-k}^{p-k}}{A_{n'-k}^{p-k}}
\right)\\
&\leq 2 \left(1- \left(\frac{n-p}{n'-p}\right)^{p-k}
\right).
\end{align*}
We deduce that:
\begin{align*}
\big| \Gamma_{n}^{F,\ell}(g)- \Gamma_{n'}^{F,\ell}(g)\big|
&\leq  \inv{A_{n'}^k} \big| A_{n'}^k - A_n^k\big| \, \norm{g}_\infty 
+  \Big| \inv{A_{n'}^k} -\inv{A_{n}^k}\Big|\,  A_n^k \, \norm{g}_\infty
\\
&\hspace{3cm}+ 
 \inv{|\cs_{n,k}|} \sum_{\alpha\in \cs_{n,k}}
\Big| g\left(\ti (F^\ell, G^\alpha_n)\right) -  g\left(\ti (F^\ell,
  G^\alpha_{n'})\right)\Big|\\
&\leq  2\norm{g}_\infty \left(1- \left(\frac{n-k}{n'-k}\right)^{k}
\right) +  2\norm{g'}_\infty \left(1- \left(\frac{n-p}{n'-p}\right)^{p-k}
\right).
\end{align*}
This implies that a.s. $\lim_{n\rightarrow\infty } \sup_{n'\in
  \{\phi(n), \ldots, \phi(n+1)\}} \big| \Gamma_{\phi(n)}^{F,\ell}(g)-
\Gamma_{n'}^{F,\ell}(g)\big|=0$. \\

With  the   first  part   of  the   proof,  we   deduce  that   for  all
$g\in \cc^1([0, 1])$, a.s. $\lim_{n\rightarrow\infty }
\Gamma_{n}^{F,\ell}(g)=\Gamma^{F,\ell}(g)$. Since there exists a
convergence determining countable subset of $\cc^1([0, 1])$, we get that
a.s. $\lim_{n\rightarrow\infty }
\Gamma_{n}^{F,\ell}=\Gamma^{F,\ell}$ for the  weak 
 convergence of the measures on $[0,1]$. 

The proof for $d\geq 1$ is
 straightforward.
\end{proof}

\section{Proof of Theorem \ref{Theo_CLT}}
\label{sec:proof_CLT}
Let $\ell\in \cm_p$ with $k=|\ell|$. 
We assume Condition \reff{eq:Hyp-hat-E} holds.

Recall the random probabilities measures $\Gamma^{F,\ell}_n$,
$\hat{\Gamma}_{n}^{F,\ell}$ and $\Gamma^{F,\ell}$ are defined in
\reff{me1}, \reff{me2} and \reff{me3}.
Let $g\in \cc^2([0, 1]^d)$. We define the U-statistic
\begin{equation}
   \label{eq:U-stat2}
U_n(g)=\inv{|\cs_{n,p}|} \sum_{\beta\in \cs_{n, p}} \Phi_2 (X_\beta),
\end{equation}
 with kernel $\Phi_2(X_{[p]})$ given by,
for $x\in [0, 1]^p$:
\begin{equation}
   \label{eq:def-Phi}
\Phi_2(x)=\hat t_{x_{\ell}} \, g \big(\tilde t_{x_\ell}\big) 
 + \big(1- \hat t_{x_\ell}\big) \, g(0)+ 
\hat t_{x_\ell} \, \langle \nabla g \big(\tilde t_{x_\ell}
\big), \tilde Z(x) - \tilde t_{x_\ell}\rangle,
\end{equation}
with $\hat t_{y}=\hat t_y (F^\ell, W)$, $\tilde t_{y}=\tilde t_y
(F^\ell, W)$ for $y\in [0, 1]^k$ and $ \tilde Z(x)$ defined in
\reff{eq:def-Zy}. Notice that:
\begin{equation}
   \label{eq:meanU}
\E[U_n(g)]=\Gamma^{F,\ell}(g).
\end{equation}
We define the random signed measure
$\Lambda_{n}^{F,\ell}=\sqrt{n}\left[\Gamma^{F,\ell}_{n}
  -\Gamma^{F,\ell}\right]$. 

\begin{lem}
   \label{lem:cv-0P}
   Let  $W\in \cw$  be a  graphon.  Let $F\in  \cf^d$ be  a sequence  of
   $d\geq     1$        simple    finite   graphs     with     $p=v(F)$,
   $\ell\in\mathcal{M}_{p}$, with $k=|\ell|$. Assume Condition \reff{eq:Hyp-hat-E}
   holds.   Let  $g\in  \cc^2([0,   1]^d)$.  Then,  we  have  that
   $\lim_{n\rightarrow\infty   }   \Lambda_{n}^{F,\ell}(g)  -   \sqrt{n}
   \left(U_n(g) - \E[U_n(g)]\right) =0$ in $L^1(\P)$.
\end{lem}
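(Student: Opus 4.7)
\medskip

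The plan is to show that $\sqrt{n}\bigl[\Gamma_n^{F,\ell}(g) - U_n(g)\bigr]\to 0$ in $L^1(\P)$, which, together with \reff{eq:meanU}, gives the claim. The driving idea is a second-order Taylor expansion of $g$ around the deterministic point $\tilde t_{X_\alpha}(F^\ell,W)$: the zero-th and first-order terms of this expansion will be shown to match, up to a negligible error, the defining terms of the U-statistic $U_n(g)$, while the remainder is controlled by Lemma \ref{lem:tt-tt}.

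First I would use $\hat Y^\alpha\in\{0,1\}$ and $\ti(F^\ell,G_n^\alpha)=\hat Y^\alpha\,\tti(F^\ell,G_n^\alpha)$ to write
\[
g\bigl(\ti(F^\ell,G_n^\alpha)\bigr)=\hat Y^\alpha g(\tti) + (1-\hat Y^\alpha)g(0),
\]
and then Taylor-expand $g(\tti)$ around $\tilde t_{X_\alpha}:=\tilde t_{X_\alpha}(F^\ell,W)$ to get
$g(\tti)=g(\tilde t_{X_\alpha})+\langle\nabla g(\tilde t_{X_\alpha}),\tti-\tilde t_{X_\alpha}\rangle+R_\alpha$
with $|R_\alpha|\le C\|g\|_{\cc^2}\,|\tti-\tilde t_{X_\alpha}|^2$. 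Summing over $\alpha\in\cs_{n,k}$ this decomposes $\Gamma_n^{F,\ell}(g)=A+B+C$, where
\[
A=\inv{|\cs_{n,k}|}\sum_{\alpha}\!\bigl[\hat Y^\alpha g(\tilde t_{X_\alpha})+(1-\hat Y^\alpha)g(0)\bigr],\quad
B=\inv{|\cs_{n,p}|}\sum_{\beta}\hat Y^{\beta_\ell}\langle \nabla g(\tilde t_{X_{\beta_\ell}}),\tilde Y^\beta-\tilde t_{X_{\beta_\ell}}\rangle,
\]
and $C=\inv{|\cs_{n,k}|}\sum_\alpha\hat Y^\alpha R_\alpha$, after using $\tti=|\cs_{n,p}^{\ell,\alpha}|^{-1}\sum_{\beta\in\cs_{n,p}^{\ell,\alpha}}\tilde Y^\beta$ and \reff{eq_snk}. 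Analogously, splitting $\cs_{n,p}=\bigsqcup_\alpha\cs_{n,p}^{\ell,\alpha}$ in \reff{eq:U-stat2} gives $U_n(g)=A'+B'$ with
\[
A'=\inv{|\cs_{n,k}|}\sum_\alpha\bigl[\hat t_{X_\alpha} g(\tilde t_{X_\alpha})+(1-\hat t_{X_\alpha})g(0)\bigr],\quad
B'=\inv{|\cs_{n,p}|}\sum_\beta\hat t_{X_{\beta_\ell}}\langle \nabla g(\tilde t_{X_{\beta_\ell}}),\tilde Z(X_\beta)-\tilde t_{X_{\beta_\ell}}\rangle.
\]

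Next I would estimate each difference. For the remainder $C$, Lemma \ref{lem:tt-tt} yields $\E[|\tti-\tilde t_{X_\alpha}|^2]=O(1/n)$ uniformly in $\alpha$, hence $\E[|C|]=O(1/n)$ and $\sqrt n\,C\to 0$ in $L^1$. For $A-A'$, rewriting
\[
A-A'=\inv{|\cs_{n,k}|}\sum_\alpha(\hat Y^\alpha-\hat t_{X_\alpha})\bigl(g(\tilde t_{X_\alpha})-g(0)\bigr)
\]
and applying Lemma \ref{lem:Delta-Ya} with $M_\alpha=g(\tilde t_{X_\alpha})-g(0)$ (which is $\sigma(X)$-measurable and uniformly bounded) gives $\E[(A-A')^2]=O(1/n^2)$, so $\sqrt n(A-A')\to 0$ in $L^2$. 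For $B-B'$, I would use $Y^\beta=\hat Y^{\beta_\ell}\tilde Y^\beta$ and the identity $Z^\beta=\hat t_{X_{\beta_\ell}}\tilde Z(X_\beta)$ from \reff{eq:Zb=ttZb} to obtain the clean decomposition
\[
B-B'=\inv{|\cs_{n,p}|}\sum_\beta\langle\nabla g(\tilde t_{X_{\beta_\ell}}),Y^\beta-Z^\beta\rangle
-\inv{|\cs_{n,k}|}\sum_\alpha(\hat Y^\alpha-\hat t_{X_\alpha})\langle\nabla g(\tilde t_{X_\alpha}),\tilde t_{X_\alpha}\rangle.
\]
Both pieces fall directly under Lemmas \ref{lem_1} and \ref{lem:Delta-Ya} respectively, with $\sigma(X)$-measurable and uniformly bounded weights, giving $L^2$ norms of order $O(1/n)$.

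Combining all four bounds gives $\sqrt n\,\E\bigl[|\Gamma_n^{F,\ell}(g)-U_n(g)|\bigr]=O(1/\sqrt n)\to 0$, which is the conclusion. The main obstacle is algebraic rather than analytic: correctly identifying the first-order fluctuations of $\Gamma_n^{F,\ell}(g)$ around $\hat\Gamma_n^{F,\ell}(g)$ and pairing them with the linear correction term in $\Phi_2$ so that the difference reduces to the already-controlled statistics $\inv{|\cs_{n,p}|}\sum_\beta(Y^\beta-Z^\beta)M_\beta$ and $\inv{|\cs_{n,k}|}\sum_\alpha(\hat Y^\alpha-\hat t_{X_\alpha})M_\alpha$, which rely on the conditional independence (given $X$) of the edges in $E(F^{[\ell]})$ and those in $E(F)\setminus E(F^{[\ell]})$ guaranteed by Condition \reff{eq:Hyp-hat-E}.
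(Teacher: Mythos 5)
Your proposal is correct and follows essentially the same route as the paper: a second-order Taylor expansion of $g$ around $\tilde t_{X_\alpha}(F^\ell,W)$ whose remainder is controlled by Lemma \ref{lem:tt-tt}, with the zeroth- and first-order mismatches reduced to the statistics $\inv{|\cs_{n,p}|}\sum_\beta(Y^\beta-Z^\beta)M_\beta$ and $\inv{|\cs_{n,k}|}\sum_\alpha(\hat Y^\alpha-\hat t_{X_\alpha})M_\alpha$ handled by Lemmas \ref{lem_1} and \ref{lem:Delta-Ya}. The only cosmetic difference is that the paper merges your two applications of Lemma \ref{lem:Delta-Ya} (the $A-A'$ term and the second piece of $B-B'$) into a single term $R_2(n)$ with weight $H_2(\alpha)=g(\tilde t_{X_\alpha})-g(0)-\langle\tilde t_{X_\alpha},\nabla g(\tilde t_{X_\alpha})\rangle$.
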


\begin{proof}
Recall \reff{eq:def-tilde-tinj}. We write:
\begin{equation}
   \label{eq:L=U+R}
\Lambda_{n}^{F,\ell}(g)
-\sqrt{n} \left(U_n(g) - \E[U_n(g)]\right) = R_1(n)+R_2(n)+ R_3(n)
\end{equation}
with
\begin{align*}
R_1(n)
&=\frac{\sqrt{n}}{|\cs_{n,k}|} \sum_{\alpha\in \cs_{n,k}}\hat Y^\alpha\,
  H_1(\alpha),    \\ 
R_2(n)
&=\frac{\sqrt{n}}{|\cs_{n,k}|} \sum_{\alpha\in \cs_{n,k}}  \big (\hat
  Y^\alpha- \hat t_{X_\alpha}\big) \, H_2(\alpha),\\
R_3(n)
&=  \frac{\sqrt{n}}{|\cs_{n,p}|} \sum_{\beta\in \cs_{n,p}} \langle  Y^\beta - Z^\beta,\,
\nabla  g(\tilde t _{X_{\beta_\ell}}) \rangle\\
&= \frac{\sqrt{n}}{|\cs_{n,k}|} \sum_{\alpha\in \cs_{n, k}} \hat
  Y^\alpha \, 
\langle \tti  (F^{\ell},G_n^{\alpha}),  \nabla  g(\tilde t
  _{X_{\beta_\ell}}) \rangle
- \frac{\sqrt{n}}{|\cs_{n,p}|} \sum_{\beta\in \cs_{n,p}} \hat
  t_{X_{\beta_\ell}} \langle \tilde Z(X_\beta),  \nabla  g(\tilde t
  _{X_{\beta_\ell}})  \rangle, 
\end{align*}
(where we used \reff{t_inj_y_beta} and \reff{eq:Zb=ttZb} for the last
equality) 
and
\begin{align*}
   H_1(\alpha)
&=
 g\big(\tti (F^{\ell},G_n^{\alpha}) \big)
- g\big(\tilde t_{X_\alpha}\big) - \langle  \tti
(F^{\ell},G_n^{\alpha}) - \tilde t_{X_\alpha},\, \nabla g 
\big(\tilde t_{X_\alpha}\big)
 \rangle, \\
H_2(\alpha)
&=
 g\big(\tilde t_{X_\alpha} \big)  -g(0) 
- \langle \tilde t_{X_\alpha}, \nabla g\big(\tilde t_{X_\alpha}\big) \rangle.
\end{align*}

According to Lemma \ref{lem_1}, we get that $\lim_{n\rightarrow \infty }
R_3(n)=0$ in $L^2(\P)$. 
According to Lemma \ref{lem:Delta-Ya}, and since $|H_2(\alpha)|\leq
2\norm{g}_\infty + \norm{\nabla g}_\infty $, we get that $\lim_{n\rightarrow \infty }
R_2(n)=0$ in $L^2(\P)$. 
Since $g\in \cc^2([0, 1]^d)$, 
by Taylor-Lagrange inequality, we have that for all $x,y\in\R$,
\[
  |g(x)-g(y) - \langle x-y, \nabla g(x) \rangle |\leq \inv{2} \norm{\nabla^2 g}_\infty  \, |x-y|^2.
\]
This gives $|H_1(\alpha)|\leq   \inv{2} \norm{\nabla^2 g}_\infty  \,|
\tti (F^{\ell},G_n^{\alpha}) - \tilde t_{X_\alpha}|^2$. 
According to Lemma \ref{lem:tt-tt}, we get that $\lim_{n\rightarrow \infty }
R_1(n)=0$ in $L^1(\P)$. 
This ends the proof. 
\end{proof}

We give a
central limit theorem for the U-statistic $U_{n}$ defined in
\reff{eq:U-stat2}.

\begin{lem}
\label{lem:TCL_U_stat}
 Under the same hypothesis as in Lemma \ref{lem:cv-0P},  we have  the  following convergence  in
distribution:
\[
\sqrt{n}\left(U_{n}(g)-\Gamma^{F,\ell}(g)\right)\cvloi{n} 
 \cn\left(0,\sigma^{F,\ell}(g)^{2}\right),\]
 with $ \sigma^{F,\ell}(g)^{2}=\Var(\cu)$ and,   $U$ being  a uniform random variable on $[0,1]$:
\begin{multline*}
\cu= \sum_{i=1}^k  \int_{[0,1]^{k}} \hat t_{R_i(x, U)}(F^\ell, W)\,  \Big(
g
\big(\tilde t_{R_i(x, U)}(F^\ell, W) \big) -g(0)\Big) \, dx \\
+ \sum_{q \in [p]\backslash \ell}\int_{[0,1]^k}   \langle \nabla g \big(\tilde
t_{x} (F^\ell, W)
\big),  t_{xU} (F^{\ell q}, W) \rangle\, dx.
\end{multline*}
\end{lem}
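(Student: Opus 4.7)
The plan is to apply the classical central limit theorem for symmetric U-statistics to $U_n(g)$.

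\textbf{First step (symmetric U-statistic and CLT).} I would rewrite $U_n(g)$ in symmetric form. Setting
\[
\bar\Phi_2(x_1,\ldots,x_p) \;=\; \inv{p!}\sum_{\sigma\in S_p}\Phi_2(x_{\sigma(1)},\ldots,x_{\sigma(p)}),
\]
and using the bijection between $\cs_{n,p}$ and pairs $(S,\sigma)$ with $S\subset[n]$, $|S|=p$, and $\sigma$ an ordering of $S$, one gets $U_n(g)=\binom{n}{p}^{-1}\sum_{S\subset[n],\,|S|=p}\bar\Phi_2(X_S)$. Since $g\in\cc^2([0,1]^d)$ is bounded together with its first derivative, and $\hat t$, $\tilde t$, $\tilde Z$ all take values in $[0,1]$, the kernel $\Phi_2$ is bounded. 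Hoeffding's CLT for U-statistics then yields
\[
\sqrt{n}\bigl(U_n(g)-\E[\Phi_2(X_{[p]})]\bigr) \cvloi{n} \cn\bigl(0,\, p^2\,\Var(\psi_1(U))\bigr),
\]
where $\psi_1(x)=\E[\bar\Phi_2(X_{[p]})\mid X_1=x]$ and $U$ is uniform on $[0,1]$.

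\textbf{Second step (mean and projection).} By \reff{eq:def-tildet} one has $\E[\tilde Z(X_{[p]})\mid X_\ell]=\tilde t_{X_\ell}(F^\ell,W)$, so the third summand of $\Phi_2$ in \reff{eq:def-Phi} is centered conditionally on $X_\ell$; the first two summands integrate to $\Gamma^{F,\ell}(g)$ by \reff{me3}, confirming \reff{eq:meanU}. By symmetry of $\bar\Phi_2$ and the i.i.d.\ property of the $X_i$'s,
\[
p\,\psi_1(x) \;=\; \sum_{i=1}^p h_i(x),\qquad h_i(x)=\E[\Phi_2(X_{[p]})\mid X_i=x].
\]
Since adding constants does not affect the variance, it then suffices to identify $\sum_i h_i(U)$ with $\cu$ up to an additive constant.

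\textbf{Third step (explicit computation of $h_i$).} I would split into two cases. If $i=\ell_j$ for some $j\in[k]$, the third summand of $\Phi_2$ remains conditionally centered, and averaging the first two over the coordinates $X_{\ell_{j'}}$ for $j'\neq j$, with $X_{\ell_j}=x$ fixed, produces (with $R_j(y,x)$ the substitution notation of \reff{word_substitution})
\[
h_{\ell_j}(x) \;=\; \int_{[0,1]^k} \hat t_{R_j(y,x)}(F^\ell,W)\,\bigl(g(\tilde t_{R_j(y,x)}(F^\ell,W)) - g(0)\bigr)\,dy + g(0),
\]
which is the $j$-th term of the first sum in $\cu$ (with $x$ in the role of $U$; the integration over the redundant coordinate $y_j$ costs nothing). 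If $i=q\in [p]\setminus \ell$, the first two summands of $\Phi_2$ depend only on $X_\ell$, hence contribute a constant to $h_q$; the third summand, after conditioning on $(X_\ell,X_q=x)$ and integrating out the remaining $p-k-1$ coordinates, becomes
\[
h_q(x) \;=\; \int_{[0,1]^k} \langle \nabla g(\tilde t_y(F^\ell,W)),\, t_{yx}(F^{\ell q},W)\rangle\,dy \,+\, \text{const},
\]
matching the $q$-th term of the second sum in $\cu$. Summing then gives $p^2\Var(\psi_1(U))=\Var(\cu)=\sigma^{F,\ell}(g)^2$.

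\textbf{Main obstacle.} The principal computational point is the identity
\[
\hat t_{X_\ell}(F^\ell,W)\,\E\bigl[\tilde Z(X_{[p]})\mid X_\ell,\, X_q=x\bigr] \;=\; t_{X_\ell x}(F^{\ell q},W),
\]
which underlies the formula for $h_q$. Its verification requires partitioning $E(F)\setminus E(F^{[\ell]})$ into edges lying in $E(F^{[\ell q]})\setminus E(F^{[\ell]})$ (those joining $q$ to a vertex in $\ell$) and edges with at least one endpoint outside $\ell\cup\{q\}$, recognising the first product as $\hat t_{X_\ell x}(F^{\ell q},W)/\hat t_{X_\ell}(F^\ell,W)$ and the conditional expectation of the second as $\tilde t_{X_\ell x}(F^{\ell q},W)$, and then reassembling via \reff{hom5}.
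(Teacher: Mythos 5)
Your proposal is correct and follows essentially the same route as the paper: a U-statistics CLT for the bounded kernel $\Phi_2$, followed by an explicit computation of the first-order projection split into the cases $i\in\ell$ and $i\in[p]\setminus\ell$, with the same key identity $\hat t_{x}(F^\ell,W)\,\tilde t_{xu}(F^{\ell q},W)=t_{xu}(F^{\ell q},W)$ doing the work for the unlabeled vertices. The paper writes the projection as $\cu'=\sum_{q=1}^p\E[\Phi_2(\tau_{1q}(X))\mid X_1]$ rather than symmetrizing the kernel first, but by exchangeability this coincides with your $\sum_i h_i(U)=p\,\psi_1(U)$, so the two presentations are equivalent.
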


\begin{proof}
The random variable $U_{n}(g)$ is a U-statistic with bounded
kernel. Since $\E[U_n(g)]=\Gamma^{F, \ell}(g)$, we deduce from standard
results on U-statistics, see \cite{h:sad}, that
$\sqrt{n}\left(U_{n}(g)-\Gamma^{F,\ell}(g)\right)$ converges in
distribution towards a centered Gaussian random variable with variance
$\Var(\cu')$ and $\cu'=\sum_{q=1}^p
\E\left[\Phi_2(\tau_{1q}(X))|\, X_1\right]$, and $\Phi_2$ given by \reff{eq:def-Phi}. 
We first compute $\E\left[\Phi_2(\tau_{1q}(X))|\, X_1\right]$ for $q\in [p]$. 
We  distinguish according to $q\not \in \ell$ and $q\in\ell$.

\subsection*{The case $q\not \in 
  \left\{\ell_{1},\dots,\ell_{k}\right\}$} 
Noticing that $\tau_{1q}(X)_{\ell}$ does not depend on $X_1$, we deduce that:
\begin{align*}
\E\left[\Phi_2(\tau_{1q}(X))|\, X_1\right]
&=\E\left[\hat t_{\tau_{1q}(X)_{\ell}} \, g \big(\tilde t_{\tau_{1q}(X)_{\ell}}\big) 
 + \big(1- \hat t_{\tau_{1q}(X)_{\ell}}\big) \, g(0)|\, X_1\right]\\
&\hspace{2cm} + \E\left[
\hat t_{\tau_{1q}(X)_{\ell}}  \,\langle \nabla g \big(\tilde t_{\tau_{1q}(X)_{\ell}}
\big), \tilde Z(\tau_{1q}(X)_{[p]}) - \tilde
  t_{\tau_{1q}(X)_{\ell}}\rangle|\, X_1\right]\\
&=C+\int_{[0,1]^k} \hat t_{x}  \langle \nabla g \big(\tilde t_{x}
\big), \tilde t_{xX_1} (F^{\ell q}, W) \rangle\, dx\\
&=C+\int_{[0,1]^k}   \langle \nabla g \big(\tilde t_{x}
\big),  t_{xX_1} (F^{\ell q}, W) \rangle\, dx, 
\end{align*}
where $C$ is a constant not depending on $X_1$ (which therefore will
disappear when computing the variance of $\cu'$). 

\subsection*{The case $q\in
  \left\{\ell_{1},\dots,\ell_{k}\right\}$} 
Let $q=\ell_{i}$ for some  $i\in[k]$. 
Since  $\E\left[\tilde Z(\tau_{1q}(X)_{[p]})| \,
  \tau_{1q}(X)_{\ell}\right]= \tilde
  t_{\tau_{1q}(X)_{\ell}}$, we deduce that:
\begin{align*}
\E\left[\Phi_2(\tau_{1q}(X))|\, X_1\right]
&=\E\left[\hat t_{\tau_{1q}(X)_{\ell}} \, g \big(\tilde t_{\tau_{1q}(X)_{\ell}}\big) 
 + \big(1- \hat t_{\tau_{1q}(X)_{\ell}}\big) \, g(0)|\, X_1\right]\\
&=g(0)+ \int_{[0,1]^{k}} \hat t_{R_i(x, X_1)}\, \Big( g \big(\tilde t_{R_i(x, X_1)}\big) 
- g(0)\Big)\, dx.
\end{align*}
Thus, we obtain that $\cu'=\cu+C'$ for some constant $C'$ and:
\[
\cu
= \sum_{i=1}^k  \int_{[0,1]^{k}} \hat t_{R_i(x, X_1)}\,  \Big( g \big(\tilde t_{R_i(x, X_1)}\big) 
 - g(0)\Big) \, dx
+ \sum_{q\not \in \ell}\int_{[0,1]^k}   \langle \nabla g \big(\tilde t_{x}
\big),  t_{xX_1} (F^{\ell q}, W) \rangle\, dx.
\]
This gives the result. 
\end{proof}

The proof of  Theorem \ref{Theo_CLT} is then a direct consequence of
Lemmas \ref{lem:cv-0P}
and \ref{lem:TCL_U_stat} and \reff{eq:meanU}.

\section{Asymptotics for the empirical degrees cumulative distribution function}
\label{sec:CDF_degrees}

Let $W$ be a graphon on $[0,1]$ and $n\in\Ne$.  Recall the definition of
the  normalized  degree  function  $D$  of  the  graphon  $W$  given  in
\reff{eq:def-DW}, $D(x)=\int_{[0, 1]} W(x,y)  dy = t_x(K_2^\bullet, W)$.
From Section \ref{sec:Gn-def}, recall $G_{n}=G_{n}(W)$ is the associated
$W$-random  graphs  with  $n$  vertices constructed  from  $W$  and  the
sequence $X=(X_{i}: i\in\Ne)$ of independent uniform random variables on
$[0,1]$.  Recall the (normalized) degree sequence of a graph defined in
\reff{eq:def_degree_seq}, and set
\[
D_{i}^{(n)}=D_i(G_n)=\ti(K_{2}^{\bullet},G_{n}^{i})
\]
the normalized degree of the vertex $i\in [n]$ in $G_n$.  By construction of
$G_n$,  we  get that  conditionally  on  $X_i$  , $(n-1)D_{i}^{(n)}$  is  for
$n\geq i$ a binomial random variable with parameters $(n-1, D(X_i))$.
We    define   the    empirical    cumulative   distribution    function
$\cdf_{n}=(\cdf_{n}(y):  y\in  [0, 1])$  of  the  degrees of  the  graph
$G_{n}$ by, for $y\in  [0, 1]$:
\begin{equation}\label{eq:def_cdf}
 \cdf_{n}(y)=\frac{1}{n}\sum_{i=1}^{n}\1{D_{i}^{(n)}\le D(y)}. 
\end{equation}

\begin{rem}
   \label{rem:casD}
If  we   take  $g=\ind_{[0,D(y)]}$  with  $y\in[0,1]$   and  $F=K_2$  in
\reff{eq:intro_def_gamma} and using the expression of $\Gamma^{F,\ell}$
given    in   Remark    \ref{rem:CP_Gamma},   (ii),    we   have    that
$              \cdf_{n}(y)=\Gamma_{n}^{K_2,\bullet}(g)$              and
$\Gamma^{K_2,\bullet}(g)=y$.     If    $D$     is    increasing,    then
$\Gamma^{K_2,\bullet}$, which is the distribution of $D(U)$, with $U$
uniform on  $[0, 1]$, has  no atoms and Theorem  \ref{Theo_LLN} implies
that   a.s.   $\lim_{n\rightarrow\infty   }  \cdf_{n}(y)=y$   for   all
$y\in [0, 1]$.   Using Dini's theorem, we get that  if $D$ is increasing
 on  $[0,1]$, then the function  $\cdf_n$ converges almost
surely towards $\Id$, the  identity map on $[0,1]$,  with respect  to the uniform
norm.
\end{rem}

To get the corresponding
fluctuations, we shall consider the following conditions:
\begin{equation}\label{eq:condi_W}
 W\in\mathcal{C}^{3}([0,1]^2), \,  D'>0,\, W\leq 1-\varepsilon_0
 \text{ and } D\geq  \varepsilon_0 
 \text{ for some } \varepsilon_{0}\in\left(0,1/2\right) .
\end{equation}
If \reff{eq:condi_W} holds, then we have $D\in \cc^1([0, 1])$ and $D([0,
1])\subset [\varepsilon_0, 1-\varepsilon_0]$. Notice that even if
\reff{eq:condi_W} holds, the set $\{W=0\}$ might have positive Lebesgue
measure; but the regularity conditions on $W$ rules out bipartite
graphons (but not tripartite graphons). 

\begin{theo}\label{thm:CLT_indicator_degree_sequence}
Assume that $W$ satisfies condition \reff{eq:condi_W}. Then 
 we have the following convergence of finite-dimensional distributions:
 \begin{equation*}
  \left(\sqrt{n}\left(\cdf_{n}(y)-y\right): y\in(0,1)\right) 
  \,\xrightarrow[n\rightarrow+\infty]{(fdd)}\, \chi,
 \end{equation*}
where $\chi=(\chi_{y}:y\in(0,1))$ is a centered Gaussian process defined, for all $y\in(0,1)$ by:
\begin{equation}\label{eq:def_gaussian_proc_ind}
 \chi_{y}=\int_{0}^{1}(\rho(y,u)-\bar \rho(y))dB_{u},
\end{equation}
with $B=(B_u, u\geq 0)$ a standard Brownian motion, and  $(\rho(y, u):u\in
[0, 1]) $  and
$\bar \rho(y)$ defined for $y\in (0, 1)$ by:
\begin{equation*}
 \rho(y,u)=\ind_{[0,y]}(u)-\frac{W(y,u)}{D'(y)} \quad \text{ and } \quad
 \bar \rho(y)=\int_{0}^{1}\rho(y,u)du.
\end{equation*}
\end{theo}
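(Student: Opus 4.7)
My plan is to condition on $X=(X_i)_{i\in[n]}$, isolate the edge randomness and reduce the question to fluctuations of functions of the i.i.d.\ sample $X$. Setting $P_i(y) = \P(D_i^{(n)}\leq D(y)\mid X)$ and $\bar\Pi_n(y) = \frac{1}{n}\sum_{i=1}^n P_i(y)$, I write
$$\sqrt{n}\bigl(\Pi_n(y) - y\bigr) = \sqrt{n}\bigl(\Pi_n(y) - \bar\Pi_n(y)\bigr) + \sqrt{n}\bigl(\bar\Pi_n(y) - y\bigr).$$
The first (binomial-noise) term tends to $0$ in $L^2$: conditionally on $X$ its summands are centered, $P_i(y)(1-P_i(y))$ is exponentially small unless $|X_i - y| = O(n^{-1/2})$ (only $O(\sqrt n)$ such indices), and for $i\neq j$ the degrees share only the edge $\{i,j\}$, giving a conditional covariance $W(X_i,X_j)(1-W(X_i,X_j))\, p_i p_j$ where each $p_i$ is a Poisson--binomial point mass of order $n^{-1/2}$; both contributions vanish after multiplication by $n$. (Proposition~\ref{prop:stein} supplies the sharper multivariate Stein bound if desired.)

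For the second term I invoke the uniform Edgeworth expansion for the Poisson--binomial CDF (Bhattacharya--Rao, quantified in the appendix on CDFs of binomials) to get, uniformly in $i$ and $y$,
$$P_i(y) = \Phi\bigl(-\sqrt{n-1}\,(\bar p_i - D(y))/\sigma_i\bigr) + o(1/\sqrt n),$$
with $\bar p_i = \frac{1}{n-1}\sum_{j\neq i} W(X_i,X_j)$ and $\sigma_i^2 = \frac{1}{n-1}\sum_{j\neq i} W(X_i,X_j)(1-W(X_i,X_j))$; condition \reff{eq:condi_W} is exactly what ensures the non-degeneracy and smoothness this uniform expansion requires. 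Since $D$ is increasing, $\mathbf{1}\{X_i\leq y\}=\mathbf{1}\{D(X_i)\leq D(y)\}$, and I split
$$\sqrt{n}(\bar\Pi_n(y)-y) = \sqrt{n}\Big(\frac{1}{n}\sum_i \mathbf{1}\{X_i\leq y\} - y\Big) + \frac{1}{\sqrt n}\sum_i \bigl(P_i(y) - \mathbf{1}\{X_i\leq y\}\bigr),$$
whose first piece converges (fdd) to the Brownian bridge $B_y - y B_1$ by the classical empirical process CLT, driven by a standard Brownian motion $B$.

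For the second piece, the summand is exponentially small outside the window $|X_i-y|\leq K/\sqrt n$. Inside, with $U_i = \sqrt{n-1}(X_i - y)$ and $V_i = \sqrt{n-1}(\bar p_i - D(X_i))$, the argument of $\Phi$ becomes $-\sigma(y)^{-1}(V_i + D'(y) U_i) + o(1)$, where $\sigma(y)^2 = \int_0^1 W(y,u)(1-W(y,u))\,du$, and the regularity of $W$ lets me replace each local $V_i$ by the single global quantity $V^* := n^{-1/2}\sum_j(W(y,X_j) - D(y))$ (the difference having conditional variance $O(U_i^2/n)$). Changing variable $u=\sqrt n(X_i-y)$ and using the elementary identity
$$\int_\R \bigl[\Phi(-v) - \mathbf{1}\{v\leq a\}\bigr]\,dv = -a, \qquad a\in\R,$$
one identifies the conditional mean given $V^*$ as $-V^*/D'(y)$, and a second-moment bound on a sum of $O(\sqrt n)$ bounded terms yields $\frac{1}{\sqrt n}\sum_i(P_i(y)-\mathbf{1}\{X_i\leq y\}) = -V^*/D'(y) + o_{\P}(1)$. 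Applying the empirical CLT jointly to $f=\mathbf{1}_{[0,y]}$ and $f=W(y,\cdot)$ for varying $y$ then produces joint convergence driven by the same Brownian motion $B$, with $V^*\to \int_0^1 (W(y,u)-D(y))\,dB_u$; adding the two limits gives precisely $\chi_y = \int_0^1 (\rho(y,u)-\bar\rho(y))\,dB_u$.

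The main obstacle is the fine treatment of $\frac{1}{\sqrt n}\sum_i(P_i(y)-\mathbf{1}\{X_i\leq y\})$: obtaining the uniform Edgeworth expansion of the Poisson--binomial CDF with remainder $o(1/\sqrt n)$ in $(i,y)$ (where the smoothness and non-degeneracy built into \reff{eq:condi_W} enter crucially), and justifying the replacement of each local $V_i$ by the single $V^*$ inside the shrinking window of size $n^{-1/2}$ --- it is this coupling that ties together the Gaussian limits at different $y$'s through a single underlying Brownian motion.
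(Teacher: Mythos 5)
Your decomposition is genuinely different from the paper's. The paper centers $\cdf_{n+1}$ at $c_n(y)=\P(D_1^{(n+1)}\le D(y))$ and compares it in $L^2$ with its H\'ajek projection $U_{n+1}(y)=\sum_j\E[\hat{\cdf}_{n+1}(y)\mid X_j]$; conditioning on a single $X_j$ keeps the degree an exact binomial, for which the uniform lattice expansions (Propositions \ref{prop:CFD_approxi} and \ref{prop:ind8}) apply directly, and the $L^2$ closeness $\E[\hat{\cdf}_{n+1}(y)^2]\to\Sigma(y,y)$ is what forces the bivariate Stein bound of Proposition \ref{prop:stein}. You instead condition on the whole sample $X$, so the degree becomes a Poisson--binomial, and you identify the influence term explicitly as $-V^*/D'(y)$. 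Your limit identification is correct: adding $\int_0^1(\1{u\le y}-y)dB_u$ and $-\frac{1}{D'(y)}\int_0^1(W(y,u)-D(y))dB_u$ gives exactly $\int_0^1(\rho(y,u)-\bar\rho(y))dB_u$, since $\bar\rho(y)=y-D(y)/D'(y)$; and your route makes the single driving Brownian motion and the joint convergence over several $y$'s more transparent than the paper's triangular-array CLT for $U_{n+1}$.

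There remain concrete gaps. First, the expansion $P_i(y)=\Phi(\cdots)+o(1/\sqrt n)$ is false for a lattice (Poisson--binomial) variable: the first-order term contains the non-vanishing oscillating correction $S(\cdot)$ of order $1/\sqrt n$, cf.\ \reff{eq:thm_NCZ}. You do not actually need $o(1/\sqrt n)$: a Berry--Esseen bound $O(1/\sqrt n)$ on the $O(\sqrt{n\log n})$ indices with $X_i$ in a window of width $O(\sqrt{\log n/n})$ around $y$, plus exponential tail bounds outside it, already yields $\frac{1}{\sqrt n}\sum_i|P_i(y)-\Phi(\cdots)|\to 0$ --- but the argument must be phrased that way. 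The same windowing is indispensable in your first step: the off-diagonal conditional covariances are only $O(p_ip_j)=O(1/n)$ uniformly, and over $n^2$ pairs divided by $n$ this gives $O(1)$, not $o(1)$; you must also use that $p_j$ is non-negligible only when $X_j$ lies in the window, so that only $O(n\log n)$ pairs contribute. Second, and most substantially, the passage from the conditional-mean heuristic to $\frac{1}{\sqrt n}\sum_i\bigl(\Phi(\cdots)-\1{X_i\le y}\bigr)=-V^*/D'(y)+o_\P(1)$ is not a routine law of large numbers, because $V^*$ and the $X_i$'s are dependent; it requires an honest second-moment expansion (diagonal, off-diagonal, and cross terms with $V^*$), which is the analogue of the paper's Lemma \ref{lem:ind6} and Proposition \ref{prop:ind10} and is where the bulk of the technical work lives. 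So the architecture and the limit are right, but the proof is not complete as written.
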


\begin{rem}
   \label{rem:Sigma}
The covariance kernel  of the
   Gaussian process $\chi$ can also be written as
   $\Sigma=\Sigma_1+\Sigma_2+\Sigma_3$, where for $y,z\in(0,1)$:
\begin{align}
\label{eq:def-s1}
  \Sigma_{1}(y,z)&=y\wedge z - yz,\\
\label{eq:def-s2}
  \Sigma_{2}(y,z)&=\frac{1}{D'(y)D'(z)}\left(\int_{0}^{1}W(y,x)W(z,x)dx
                   - D(y)D(z)\right),\\
\label{eq:def-s3}
  \Sigma_{3}(y,z)&
=\frac{1}{D'(y)}\left(D(y)z-\int_{0}^{z}W(y,x)dx\right)
        + \frac{1}{D'(z)}\left(D(z)y-\int_{0}^{y}W(z,x)dx\right).
\end{align}
Thus, for $y\in(0,1)$ the  variance of $\chi(y)$ is:
 \begin{equation*}
 \Sigma(y,y)=y(1-y)+\frac{1}{D'(y)^2}\left(\int_{0}^{1}W(y,x)^2dx-D(y)^2\right)
                  +\frac{2}{D'(y)}\left(D(y)y-\int_{0}^{y}W(y,x)dx\right).
  \end{equation*}
\end{rem}

\begin{rem}
   \label{rem:conj-fct-cv}
We conjecture that the convergence of Theorem \ref{thm:CLT_indicator_degree_sequence}
holds for the process in the Skorokhod space. However, the techniques used to prove
this theorem are not strong enough to get such result.
\end{rem}

\section{Preliminary results for the empirical cdf of the degrees}
\label{sec:preliminaries_CDF_degrees}

\subsection{Estimates for the first moment of the empirical cdf}
Recall $X=(X_n:n\in\Ne)$ is a sequence of independent random variables
uniformly distributed on $[0,1]$ used to construct the sequence of $W$-random graphs 
$(G_n:n\in\Ne)$. Recall $\cdf_{n}(y)$ is given in \eqref{eq:def_cdf}.

For all $y\in(0,1)$, we set $c_{n}(y)=\E\left[\cdf_{n+1}(y)\right]$ that is
\begin{equation}\label{eq:c_{n}(y)}
c_{n}(y)=\pro\left(D_{1}^{(n+1)}\le D(y)\right),
\end{equation}
where $D_1^{(n+1)}$ is a binomial random variable with parameter $(n,D(X_1))$.  
We set:
\begin{equation}\label{eq:def_sigma}
 \sigma_{(x)}^2=x(1-x) \quad \text{for $ x\in[0,1]$},
\end{equation}
and with $\lceil x \rceil$ the unique integer such that $\lceil x \rceil-1<x\le \lceil x \rceil$,
\begin{equation}\label{eq:def_S}
 S(x)=\lceil x \rceil -x -\frac{1}{2} \quad \text{for $ x\in\R$}.
\end{equation}
The next
proposition gives precise asymptotics of $c_n$. 

\begin{prop}\label{prop:c_{n}(y)}
  Assume  that  $W$  satisfies  condition  \reff{eq:condi_W}.   For  all
  $y\in(0,1)$,  there  exists  a  constant   $C>0$  such  that  for  all
  $n\in\Ne$, we have with $d=D(y)$,
\begin{equation*}
  n\left(c_{n}(y)-y\right)
  =-\frac{D''(y)}{D'(y)^3}\frac{\sigma_{(d)}^2}{2} +\inv{ D'(y)}
  \left(\frac{1-2d}{2} + S(nd)  \right)+R_{n}^{\ref{prop:c_{n}(y)}},
\end{equation*}
with
\begin{equation*}
 \left|R_{n}^{\ref{prop:c_{n}(y)}}\right|\le C\,n^{-\frac{1}{4}}.
\end{equation*}
\end{prop}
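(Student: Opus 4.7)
My plan is to bypass the appendix's uniform Edgeworth expansion of the binomial cdf and instead exploit exact beta--binomial moment identities. Write $d=D(y)$, $F_n(p,k)=\P(\mathrm{Bin}(n,p)\le k)$, and $g(p)=1/D'(D^{-1}(p))$. Under \reff{eq:condi_W}, $D$ is a $\mathcal{C}^{3}$-diffeomorphism from $[0,1]$ onto $[D(0),D(1)]\subset[\varepsilon_{0},1-\varepsilon_{0}]$, so $g\in\mathcal{C}^{2}$ near $d\in (D(0),D(1))$ with $g(d)=1/D'(y)$ and $g'(d)=-D''(y)/D'(y)^{3}$. Since $nD_{1}^{(n+1)}\,\big|\,X_{1}\sim\mathrm{Bin}(n,D(X_{1}))$ and $X_{1}$ is uniform on $[0,1]$, conditioning and then changing variables $p=D(x)$ give
\[
c_{n}(y)=\int_{D(0)}^{D(1)}F_{n}(p,\lfloor nd\rfloor)\,g(p)\,dp
\quad\text{and}\quad
c_{n}(y)-y=\int_{D(0)}^{D(1)}g(p)\bigl[F_{n}(p,\lfloor nd\rfloor)-\ind_{p<d}\bigr]dp,
\]
since $\int_{D(0)}^{d}g\,dp=y$.

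By Hoeffding's inequality, $|F_{n}(p,\lfloor nd\rfloor)-\ind_{p<d}|\le\mathrm{e}^{-2n(p-d)^{2}}$, so the integrand is effectively concentrated in an $O(n^{-1/2})$ window around $p=d$. Extending $g$ smoothly to a bounded $\tilde g\in\mathcal{C}^{2}([0,1])$ therefore alters the integral by an exponentially small error coming from the slabs $[0,D(0)]$ and $[D(1),1]$. The key analytic ingredient is the exact identity
\[
\int_{0}^{1}p^{m}F_{n}(p,k)\,dp=\frac{1}{(n+1)\cdots(n+m+1)}\sum_{j=0}^{k}\frac{(j+m)!}{j!}\qquad (m\in\N),
\]
obtained by swapping the finite sum with the Beta integral. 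Specialising to $m=0,1$ with $k=\lfloor nd\rfloor=nd+S(nd)-1/2$ and comparing with the moments of $\ind_{p<d}$ produces, by direct algebra,
\[
\int_{0}^{1}\bigl[F_{n}-\ind_{p<d}\bigr]dp=\frac{S(nd)+(1-2d)/2}{n+1},\qquad
\int_{0}^{1}(p-d)\bigl[F_{n}-\ind_{p<d}\bigr]dp=\frac{\sigma_{(d)}^{2}}{2n}+O(n^{-2}).
\]

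Finally, I Taylor expand $\tilde g(p)=g(d)+g'(d)(p-d)+O((p-d)^{2})$ and substitute. Multiplied by $n$, the constant and linear pieces match exactly the two explicit terms in the statement, with coefficients $1/D'(y)$ and $-D''(y)/D'(y)^{3}$. The Taylor remainder is bounded by $\|\tilde g''\|_{\infty}\int(p-d)^{2}|F_{n}-\ind_{p<d}|\,dp$; since $(p-d)^{2}=O(n^{-1})$ on the effective window of width $O(n^{-1/2})$ and the tails are exponentially small, this is $O(n^{-3/2})$. Together with the $O(n^{-1})$ error from approximating $1/(n+1)$ by $1/n$ and the exponentially small boundary contributions, the total remainder is $O(n^{-1/2})$, comfortably better than the claimed $O(n^{-1/4})$.

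The main obstacle is the bookkeeping of the sawtooth $S(nd)$: as $nd$ crosses an integer, both $\lfloor nd\rfloor$ and $S(nd)$ are discontinuous, and both jumps must be carried consistently on both sides of each identity. Using $\lfloor nd\rfloor=nd+S(nd)-1/2$ throughout and working with the \emph{exact} beta--binomial moment identities, rather than with an Edgeworth approximation, makes this cancellation automatic. An alternative, closer to the paper's appendix, would invoke a precise Edgeworth expansion of $F_{n}(p,\lfloor nd\rfloor)$ uniform in $p\in[\varepsilon_{0},1-\varepsilon_{0}]$ and then localise to a window of size $n^{-1/4}$ before performing the Gaussian integrals; the resulting balance of Taylor and Edgeworth errors explains the $n^{-1/4}$ in the stated bound.
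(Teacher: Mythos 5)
Your argument is correct and takes a genuinely different route from the paper. The paper obtains this proposition as the special case $G\equiv 1$, $\delta=0$ of Proposition \ref{prop:ind8}, whose proof rests on the uniform Edgeworth expansion \reff{eq:thm_NCZ} of Nagaev--Chebotarev--Zolotukhin together with the localisation to the window $\{\sqrt{n}|D(x)-d|\le A\}$ carried out in Lemmas \ref{lem:ind10}--\ref{lem:ind40}; the $n^{-1/4}$ in the remainder is an artefact of balancing the errors in that scheme. You instead compute the zeroth and first moments of $F_n(\cdot,\lfloor nd\rfloor)-\ind_{\{\cdot<d\}}$ \emph{exactly} from the beta--binomial identity (and indeed the sawtooth contribution cancels in the first moment, leaving $\sigma_{(d)}^{2}/2n+O(n^{-2})$, as a direct expansion of $(k+1)(k+2)/2(n+1)(n+2)-dK/(n+1)+d^{2}/2$ confirms), then Taylor-expand the Jacobian $g=1/(D'\circ D^{-1})$ to second order, using Hoeffding only to control the quadratic remainder and the boundary slabs. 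This is more elementary and self-contained, and it genuinely improves the remainder to $O(n^{-1/2})$; it would even extend to the general $C^{2}$ weights $G$ of Proposition \ref{prop:ind8} by Taylor-expanding $G\cdot g$ in the same way. What the paper's heavier machinery buys is precisely that reusability: Proposition \ref{prop:c_{n}(y)} then falls out of the same toolbox needed later for $H_n(y,u)$. One caveat you correctly flag but do not fully resolve: the substitution $\lfloor nd\rfloor=nd+S(nd)-1/2$ fails by one unit exactly when $nd\in\Z$ (there $S(nd)=-1/2$ and the right-hand side equals $nd-1$), which would shift the constant term by $1/D'(y)$ at those $n$; since the paper's own formula inherits the same jump-point convention from the quoted expansion \reff{eq:thm_NCZ}, this is a shared boundary convention rather than a defect of your method, but it deserves an explicit sentence if the result is claimed for all $n$ and rational $d$.
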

In  particular,  because  $\left|S(x)\right|\le  \frac{1}{2}$,  for  all
$x\in\R$, we have that for all $y\in(0,1)$:
\begin{equation}\label{eq:lim_c_n}
 c_{n}(y)-y=O\left(n^{-1} \right).
\end{equation}

\begin{proof}
Let $y\in(0,1)$. Recall the definition of $\ch$ in \reff{eq_FDR_bin}.  We have:
 \begin{equation*}
  c_{n}(y)-y=\int_{0}^{1}\left(\mathcal{H}_{n,d,0}(D(x))-\1{x\le y}\right)dx.
 \end{equation*}
By Proposition \ref{prop:ind8} applied with $G(x)=1$ and $\delta=0$, we obtain that:
\begin{equation*}
 n\int_{0}^{1}\left(\mathcal{H}_{n,d,0}\left(D(x)\right)-\1{x\le
     y}\right)dx
  =-\frac{D''(y)}{D'(y)^3}\frac{\sigma_{(d)}^{2}}{2}
+\inv{D'(y)} \left(\frac{1-2d}{2}+ S(nd)
\right)+R_{n}^{\ref{prop:c_{n}(y)}},
\end{equation*}
with $R_{n}^{\ref{prop:c_{n}(y)}}=R_{n}^{\ref{prop:ind8}}(1)$ and
$\left|R_{n}^{\ref{prop:c_{n}(y)}}\right| \le C n^{-\frac{1}{4}}$. 
\end{proof}

For $y\in(0,1)$ and $u\in[0,1]$, we set, with $d=D(y)$,
\begin{equation}\label{eq:H_n}
 H_{n}(y,u)=n\left(\E\left[\left.\1{D_{1}^{(n+1)}\le d}\right|X_{2}=u\right]-c_{n}(y)\right)
\end{equation}
and
\begin{equation}\label{eq:H_n_star}
 H_{n}^{\star}(y,u)=\E\left[\left.\1{D_{1}^{(n+1)}\le d}\right|X_{1}=u\right]-c_{n}(y).
\end{equation}

\begin{prop}\label{prop:mean_phi_n}
Assume that $W$ satisfies condition \reff{eq:condi_W}.
 For all $y\in(0,1)$, there exists a positive constant $C$ such that 
 for all $n\geq 2$ and $u\in[0,1]$, we have with $d=D(y)$:
 \begin{equation*}
 H_{n}(y,u)
 =\frac{1}{D'(y)}\left(d-W(y,u)\right)+R_{n}^{\ref{prop:mean_phi_n}}(u),
 \end{equation*}
 with
 \begin{equation}\label{eq:H_n_rest}
 \left|R_{n}^{\ref{prop:mean_phi_n}}(u)\right|\le C\,n^{-\frac{1}{4}}.
\end{equation}
For all  $y\in(0,1)$ and $u\in[0,1]$ such that $u\neq y$, we have
\begin{equation}
\label{eq:H_n_star_limit}
 \left|H_{n}^{\star}(y,u)\right|\le 1 
\quad \text{for all $n\geq 2$,   and }\quad
 \lim_{n\to\infty} H_{n}^{\star}(y,u)=\1{u\le y}-y.
\end{equation}
\end{prop}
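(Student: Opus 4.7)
I plan to handle the two assertions of Proposition \ref{prop:mean_phi_n} separately.

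For the bound and limit of $H_n^\star$, note that conditionally on $X_1 = u$, $nD_1^{(n+1)}$ is binomial with parameters $(n, D(u))$, so $\mathbb{P}(D_1^{(n+1)} \le d \mid X_1 = u) \in [0,1]$; combined with $c_n(y) \in [0,1]$, this yields $|H_n^\star(y,u)| \le 1$. For the limit, the strict monotonicity of $D$ from condition \eqref{eq:condi_W} forces $D(u) \ne d$ whenever $u \ne y$, so the weak law of large numbers for binomials gives $\mathbb{P}(D_1^{(n+1)} \le d \mid X_1 = u) \to \mathbf{1}_{\{D(u) \le d\}} = \mathbf{1}_{\{u \le y\}}$. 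Combining with $c_n(y) \to y$ (which follows from \eqref{eq:lim_c_n}) yields \eqref{eq:H_n_star_limit}.

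The bulk of the work is the expansion of $H_n$. The plan is first to derive an explicit integral representation exploiting the conditional independence structure of $G_{n+1}$, and then to apply the uniform binomial asymptotics of Proposition \ref{prop:ind8}. Given $X_1$ and $X_2 = u$, I decompose
\[
nD_1^{(n+1)} = \mathbf{1}_{\{\{1,2\} \in E(G_{n+1})\}} + \tilde D_n, \qquad \tilde D_n = \sum_{j=3}^{n+1} \mathbf{1}_{\{\{1,j\} \in E(G_{n+1})\}};
\]
conditionally on $X_1$, $\tilde D_n$ is $\mathrm{Bin}(n-1, D(X_1))$ and is independent of the $\mathrm{Bernoulli}(W(X_1, u))$ indicator $\mathbf{1}_{\{\{1,2\} \in E(G_{n+1})\}}$. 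Setting $k_n = \lfloor nd \rfloor$ and $b_{m,p}(k) = \binom{m}{k} p^k (1-p)^{m-k}$, splitting on the value of the latter indicator gives
\[
\mathbb{E}\bigl[\mathbf{1}_{\{D_1^{(n+1)} \le d\}} \bigm| X_1 = x,\, X_2 = u\bigr] = \mathbb{P}(\tilde D_n \le k_n \mid X_1 = x) - W(x, u)\, b_{n-1, D(x)}(k_n),
\]
and an identical calculation marginalizing over $X_2$ replaces $W(x, u)$ with $D(x)$. Subtracting and integrating in $x$ (using independence of $X_1$ from $X_2$) produces
\[
H_n(y, u) = n \int_0^1 (D(x) - W(x, u))\, b_{n-1, D(x)}(k_n)\, dx.
\]

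To extract the leading asymptotics, I write the binomial pmf as a telescoping difference
\[
b_{n-1, D(x)}(k_n) = \mathcal{H}_{n-1,\, k_n/(n-1),\, 0}(D(x)) - \mathcal{H}_{n-1,\, (k_n-1)/(n-1),\, 0}(D(x))
\]
and apply Proposition \ref{prop:ind8} to each term with the smooth weight $G(x) = D(x) - W(x, u)$, whose $\mathcal{C}^3$-norm in $x$ is bounded uniformly in $u \in [0,1]$ by \eqref{eq:condi_W}. The bulk contributions from the two CDF expansions cancel up to a term of order $1/n$ (coming from the difference $\mathbf{1}_{\{x \le \phi(k_n/(n-1))\}} - \mathbf{1}_{\{x \le \phi((k_n-1)/(n-1))\}}$ integrated against $G$, where $\phi = D^{-1}$), and this surviving local contribution evaluates to $G(y)/D'(y) = (d - W(y, u))/D'(y)$, yielding the claimed leading term. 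The principal technical obstacle is carrying out this cancellation cleanly: the oscillatory terms $S(nd')$ and $S(nd'')$ have arguments differing by $1$, so their difference must be bounded carefully, and the constants in the remainder of Proposition \ref{prop:ind8} must be tracked as a function of the shift and of the $u$-dependence of the weight $G$; once this bookkeeping is done, the uniform estimate $|R_n^{\ref{prop:mean_phi_n}}(u)| \le C n^{-1/4}$ follows directly.
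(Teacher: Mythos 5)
Your treatment of $H_n^\star$ is the same as the paper's. For $H_n$, your argument is correct in substance but takes a genuinely different route. You first derive the exact identity $H_n(y,u)=n\int_0^1\bigl(D(x)-W(x,u)\bigr)\,b_{n-1,D(x)}(k_n)\,dx$ with $k_n=\lfloor nd\rfloor$, which builds in from the start the cancellation between the conditional CDF and $c_n(y)$, and then you extract the asymptotics by telescoping the pmf as a difference of two binomial CDFs centered at $d'=k_n/(n-1)$ and $d''=(k_n-1)/(n-1)$ and applying Proposition \ref{prop:ind8} at the two $n$-dependent points $y'_n=D^{-1}(d')$, $y''_n=D^{-1}(d'')$. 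The paper instead conditions on $(X_1,X_2)$, writes $\varphi_n(x,u)$ as the $W(x,u)$-weighted mixture of $\mathcal{H}_{n-1,d,d-1}(D(x))-\1{x\le y}$ and $\mathcal{H}_{n-1,d,d}(D(x))-\1{x\le y}$, applies Proposition \ref{prop:ind8} twice at the \emph{same} $y$ with the two shifts $\delta=d-1$ and $\delta=d$ (whose difference in the term $\frac{G(y)}{D'(y)}[\cdots+\delta+S(nd+\delta)]$ produces exactly the $-W(y,u)/D'(y)$ contribution), and then subtracts the expansion of $n(c_n(y)-y)$ from Proposition \ref{prop:c_{n}(y)}. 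What your route buys is an exact, transparent formula for $H_n$ and no separate appeal to Proposition \ref{prop:c_{n}(y)}; also, your worry about the oscillatory terms is unfounded, since $S((n-1)d')=S(k_n)=-1/2=S(k_n-1)$ exactly. What the paper's route buys is that it never moves the centering point: Proposition \ref{prop:ind8} is stated for a fixed $y$ with a constant $C=C(y)$, so your application at the varying points $y'_n,y''_n$ requires checking (by going back into Lemmas \ref{lem:ind10}--\ref{lem:ind40} and Proposition \ref{prop:CFD_approxi}) that the constants are uniform for $y$ in a compact neighbourhood of the limit point. This does hold under \reff{eq:condi_W}, but it is a real piece of bookkeeping that your proposal only gestures at, whereas the $\delta$-uniformity needed by the paper is already part of the statement of Proposition \ref{prop:ind8}.
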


\begin{proof}
  In  what follows,  $C$ denotes  a positive  constant which  depends on
  $\varepsilon_0$, $W$  and  $y\in(0,1)$, and it may vary from line to line. 
Recall that $X_{[2]}=(X_1,X_2)$. 
We define the function  $\varphi_{n}$ by:
\begin{equation}\label{eq:phi_n}
 \varphi_{n}(x, u)=\pro\left(D_{1}^{(n+1)}\le
     d\,\big|\,X_{[2]}=(x,u)\right)-\1{x\le y}\quad \text{for $x,u\in
     [0, 1 ]$}. 
\end{equation}
Then we have for $u\in [0, 1]$:
\begin{equation}\label{eq:relation_H_n_varphi}
 H_{n}(y,u)=n\E\left[\varphi_{n}(X_1, u)\right]-n(c_{n}(y)-y).
\end{equation}

Conditionally on $\{X_{[2]}=(x,u)\}$, $D_1^{(n+1)}$ is distributed as
$Y_{12} + \tilde B^{(n)}$, where $Y_{12}$ and $\tilde B$ are independent,
$Y_{12}$ is Bernoulli $W(x, u)$ and $\tilde B$ is binomial with
parameter $(n-1, D(x))$. Thus, we have:
\begin{align}
 \varphi_{n}(x,u)
 &=\P\left(Y_{12}+\tilde{B}\le nd \right)-\1{x\le y}\nonumber\\
 &=W(x,u)\left[\P\left(\tilde{B}\le nd-1 \right)-\1{x\le
   y} \right]+(1-W(x,u))\left[\P\left(\tilde{B}\le nd
   \right)-\1{x\le y} \right]\nonumber\\ 
 &=W(x,u)\left[\mathcal{H}_{n-1,d,d-1}(D(x))-\1{x\le y}\right]
+   (1-W(x,u))\left[\mathcal{H}_{n-1,d,d}(D(x))-\1{x\le
   y}\right]. \label{eq:ind1} 
\end{align}
Let $W_1(x,u)$ denote $\partial W(x,u)/\partial x$. We apply Proposition
\ref{prop:ind8} with $G(x)=W(x,u)$, $\delta=d-1$ and $n$ replaced by
$n-1$ to get that: 
\begin{multline}
\label{eq:ind2}
(n-1)\E\left[W(X_{1},u)\left[\mathcal{H}_{n-1,d,d-1}(D(X_{1}))-\1{X_{1}\le
      y}\right]\right] \\
=\frac{\sigma_{(d)}^2}{2D'(y)^2}\left[W_1(y,u) 
 - \frac{W(y,u)D''(y)}{D'(y)}\right]\\
 +\frac{W(y,u)}{D'(y)}\left(-\frac{1}{2}+S(nd-1)\right)
 +R_{n-1}^{\ref{prop:ind8}}(W(.,u)), 
\end{multline}
and with $G(x)=1-W(x,u)$, $\delta=d$ and $n$ replaced by $n-1$, to get that:
\begin{multline}
\label{eq:ind3}
(n-1)\E\left[(1-W(X_{1},u))\left[\mathcal{H}_{n-1,d,d}(D(X_{1}))-\1{X_{1}\le
      y}\right]\right]\\ 
=\frac{\sigma_{(d)}^2}{2D'(y)^2}\left[-W_1(y,u) - \frac{(1-W(y,u))D''(y)}{D'(y)}\right]\\
 +\frac{1-W(y,u)}{D'(y)}\left(\frac{1}{2}+S(nd)\right) 
+R_{n-1}^{\ref{prop:ind8}}(1-W(.,u)).
\end{multline}
By equations \reff{eq:ind1}, \reff{eq:ind2} and \reff{eq:ind3} and since
$S(nd-1)=S(nd)$, we get that: 
\begin{equation*}
 (n-1)\E\left[\varphi_{n}(X_1, u)\right]
 =-\frac{\sigma_{(d)}^2}{2}\frac{D''(y)}{D'(y)^3}+\frac{1}{D'(y)}\left(\inv{2}-
   W(y,u)+S(nd)\right)+R_{n}^{(1)}(u),
\end{equation*}
where
$R_{n}^{(1)}(u)=R_{n-1}^{\ref{prop:ind8}}
(W(.,u))+R_{n-1}^{\ref{prop:ind8}}(1-W(.,u))$.
Because  $W$  satifies  condition   \reff{eq:condi_W},  we  deduce  from
\reff{eq:prop:ind8}  that   $\left|R_{n}^{(1)}(u)\right|\le  Cn^{-1/4}$
for some finite constant $C$ which does not depend on $n$ and $u\in [0, 1]$.
Using Proposition \ref{prop:c_{n}(y)}, we get that
\begin{equation}\label{eq:varphi_egality}
 (n-1)\E\left[\varphi_{n}(X_1, u)\right]
 -(n-1)(c_{n}(y)-y)=\frac{d-W(y,u)}{D'(y)}+R_{n}^{(2)}(u), 
\end{equation}
where $R_{n}^{(2)}(u)=R_{n}^{(1)}(u)+R_{n}^{\ref{prop:c_{n}(y)}}+(c_n(y)-y)$ and $\left|R_{n}^{(2)}(u)\right|\le Cn^{-1/4}$ because of 
\reff{eq:lim_c_n}.
By equations \reff{eq:relation_H_n_varphi} and \reff{eq:varphi_egality},
we deduce that: 
\[
  H_{n}(y,u)
 =\frac{n}{n-1} \frac{d-W(y,u)}{D'(y)}+\frac{n}{n-1}
 R_{n}^{(2)}(u)
=
\frac{d-W(y,u)}{D'(y)}+R_{n}^{\ref{prop:mean_phi_n}}(u),
\]
with  $\left|R_{n}^{\ref{prop:mean_phi_n}}(y,u)\right|\le
Cn^{-\frac{1}{4}}$. This gives \reff{eq:H_n_rest}.\\

For the second assertion \reff{eq:H_n_star_limit}, we notice that
\begin{equation*}
  H_{n}^{\star}(y,u)=\mathcal{H}_{n,d,0}(D(u))-c_{n}(y),
\end{equation*}
with $\mathcal{H}_{n,d,0}(D(u))\in[0,1]$ and $c_{n}(y)\in[0,1]$.
By the strong law of large numbers, we have for $u\neq y$:
\begin{equation*}
 \lim_{n\rightarrow\infty } \mathcal{H}_{n,d,0}(D(u)) =\ind_{\{u\le y\}}.
\end{equation*}
Using \reff{eq:lim_c_n}, we get the expected result.
\end{proof}

\subsection{Estimates for the second moment of the empirical cdf}

For $\ry=(y_1, y_2)\in [0, 1]^2$, let  $M(\ry)$
 be the  covariance matrix of a couple $(Y_1, Y_2)$ of Bernoulli random
 variables such that 
 $\P(Y_i=1)=D(y_i)$ for $i\in \{1,2\}$ and $\P(Y_1=Y_2=1)=\int_{[0, 1]} W(y_1,z)
 W(y_2, z)\, dz$.

Let  $\convset$ be the set of all convex sets in $\R^2$.
For $\conv\in\convset$, we define its sum with a vector $\rx$ in $\R^2$ as 
\[
 \conv+\rx=\{k+\rx: k\in \conv\}
\]
and its product with a real matrix $M$ of size $2\times 2$ as
\[
M\conv=\{Mk : k\in \conv\}. 
\]

Recall  that for $\rx\in\R^2$, $|\rx|$ is the Euclidian norm of $\rx$ in
$\R^2$. 
Recall          $X_{[2]}=(X_1,         X_2)$.          We         define
$\hat   D^{(n+1)}=(\hat    D_1^{(n+1)},   \hat   D_2^{(n+1)})$,    where   for
$i\in \{1, 2\}$, $\hat D_i^{(n+1)}$ is the  number of edges from the
vertex $i$
to the  vertices $\{k, 3\leq k\leq  n+1\}$ of $G_{n+1}$; it  is equal to
$n D_i^{(n+1)}$ if the edge $\{1, 2\}$ does not belong to $G_{n+1}$ and to
$n D_i^{(n+1)} -1$ otherwise. The proof of the next proposition is
postponed to section \ref{sec:proof-prop-stein}. 

\begin{prop}
   \label{prop:stein}
Assume that $W$ satisfies condition \reff{eq:condi_W}. There exists a
finite constant $C_0$ such that for all $\rx=(x_1, x_2)\in [0, 1]^2$ with
$x_1\neq x_2$, we get for all $n\geq 2$: 
\[
 \sup_{\conv \in\convset}\left|\P\left(\hat D^{n+1}
     \in \conv |\, X_{[2]}=\rx \right)
 - \P\left(Z\in
   \frac{M(\rx)^{-\frac{1}{2}}}{\sqrt{n-1}}(\conv-\mu(\rx))
 \right)\right|\le \frac{C_0}{\sqrt{n}}, 
 \]
 where $\mu(\rx)=(n-1)(D(x_1),D(x_{2}))$ 
and  $Z$ is a standard $2$-dimensional Gaussian vector.
\end{prop}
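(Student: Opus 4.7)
The plan is to recognize $\hat D^{(n+1)}$, conditional on $X_{[2]} = \rx$, as a sum of iid bounded random vectors in $\R^2$ and then invoke the multivariate Berry-Esseen inequality over convex sets due to Bentkus \cite{Bentkus_2003_article}, already cited in the paper. First I would unravel the conditional distribution. By the construction of $G_{n+1}$, for each $k \in \{3, \ldots, n+1\}$ set $Y_k = (Y_{1,k}, Y_{2,k})$ with $Y_{i,k}$ the indicator that the edge $\{i,k\}$ is present in $G_{n+1}$. Conditionally on $X_{[2]} = \rx$ and on $X_k$, the two coordinates $Y_{1,k}$ and $Y_{2,k}$ are independent Bernoulli with parameters $W(x_1, X_k)$ and $W(x_2, X_k)$, and the $Y_k$ are mutually independent across $k$. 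Hence $\hat D^{(n+1)} = \sum_{k=3}^{n+1} Y_k$, and conditionally on $X_{[2]} = \rx$ the vectors $Y_3,\dots,Y_{n+1}$ are iid with mean $(D(x_1), D(x_2))$ and common covariance matrix $M(\rx)$, as a short variance-decomposition check via conditioning on $X_k$ confirms.

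Second I would verify the uniform non-degeneracy of $M(\rx)$. From condition \reff{eq:condi_W} we have $\varepsilon_0 \leq D \leq 1 - \varepsilon_0$, so $\Var(Y_{i,k}) = D(x_i)(1-D(x_i)) \geq \varepsilon_0(1-\varepsilon_0)$. A short computation using $W \leq 1 - \varepsilon_0$ yields $\int_0^1 W(x_i, z)(1 - W(x_i, z))\,dz \geq \varepsilon_0 D(x_i) \geq \varepsilon_0^2$, and hence $\Var(W(x_i, X_3)) \leq \Var(Y_{i,k}) - \varepsilon_0^2$. Combined with the Cauchy-Schwarz estimate $\Cov(Y_{1,k}, Y_{2,k})^2 \leq \Var(W(x_1, X_3))\,\Var(W(x_2, X_3))$, this gives a uniform lower bound $\det M(\rx) \geq c_\ast > 0$ depending only on $\varepsilon_0$, and consequently a uniform upper bound on the operator norm of $M(\rx)^{-1}$.

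Finally I would apply Bentkus's theorem in dimension two, which asserts a universal constant $C$ such that for iid centered random vectors $\xi_k$ in $\R^2$ with positive definite covariance $M$,
\[
\sup_{\conv \in \convset} \left| \P\Big(\textstyle \sum_{k=1}^{m} \xi_k \in \conv \Big) - \P\left( \sqrt{m}\, M^{1/2} Z \in \conv\right) \right| \leq \frac{C\, \beta_3}{\sqrt{m}},
\]
with $\beta_3 = \E[\|M^{-1/2} \xi_1\|^3]$ and $Z$ standard Gaussian. Taking $m = n-1$ and $\xi_k = Y_{k+2} - \E[Y_{k+2}]$, the bound $|\xi_k| \leq 2$ together with the uniform control of $\|M(\rx)^{-1}\|$ from the previous step shows that $\beta_3$ is bounded uniformly in $\rx$; a translation by $\mu(\rx) = (n-1)(D(x_1),D(x_2))$ then converts the inequality into the statement of the proposition. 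The main obstacle is the uniform non-degeneracy step: once $\det M(\rx)$ is bounded away from zero via condition \reff{eq:condi_W}, the remainder is a direct citation of Bentkus's multivariate Berry-Esseen inequality.
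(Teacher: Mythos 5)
Your proposal is correct, and its overall architecture --- writing $\hat D^{(n+1)}$, conditionally on $X_{[2]}=\rx$, as a sum of $n-1$ i.i.d.\ bounded random vectors with mean $(D(x_1),D(x_2))$ and covariance $M(\rx)$, and then invoking the multivariate Berry--Esseen bound over convex sets of Bentkus --- is exactly the paper's (the paper cites Bentkus's Theorem~1.1, or equivalently a theorem of Chen and Fang, and bounds the third moment $\gamma$ using $\lvert Y-\E[Y]\rvert\le\sqrt 2$ and matrix-norm estimates). The genuine divergence is in the uniform non-degeneracy of $M(\rx)$, which is the only non-routine ingredient. The paper isolates this as Lemma~\ref{lem:minor-det} and proves it by showing that $M(\ry)$ stays at matrix-norm distance at least $\varepsilon_0^2/4$ from the set of singular covariance matrices of Bernoulli pairs, via a three-case analysis together with the extremal estimate of Lemma~\ref{lem:majo-intfg}, and then converts distance from the singular locus into a determinant bound by continuity. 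You replace all of this with the law of total variance and Cauchy--Schwarz: since $\Cov(Y_1,Y_2)=\Cov\big(W(x_1,X_3),W(x_2,X_3)\big)$ and
\[
\Var\big(W(x_i,X_3)\big)=\Var(Y_i)-\int_0^1 W(x_i,z)\big(1-W(x_i,z)\big)\,dz\le \Var(Y_i)-\varepsilon_0^2,
\]
one gets $\det M(\rx)\ge \varepsilon_0^2\big(\Var(Y_1)+\Var(Y_2)-\varepsilon_0^2\big)\ge \varepsilon_0^3(2-3\varepsilon_0)>0$ for $\varepsilon_0<1/2$. This is shorter, fully explicit, and sidesteps the continuity/compactness step; the paper's route, in exchange, produces quantitative information on the distance of $M(\ry)$ to the entire singular set, which is more than the proposition needs. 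Both arguments use condition \reff{eq:condi_W} only through $W\le 1-\varepsilon_0$ and $D\ge\varepsilon_0$, so the conclusions are equivalent in strength.
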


For $y_{1},y_{2}\in(0,1)$, with $d_1=D(y_1)$ and $d_{2}=D(y_2)$, we set
with $\delta\in \R$ and $\rx=(x_1, x_2)\in [0, 1]^2$ such that $x_1\neq x_2$:
\[
 \Psi_{n,\delta}\left(\rx\right)
 =\E\left[\prod_{i\in \{1, 2\}} \left(\ind_{\{\hat D_i^{(n+1)} \leq
       nd_i+\delta\}}-\ind_{\{X_i\le y_i\}}\right) 
        \Big|\,X_{[2]}=\rx\right]. 
\]
Recall $\Sigma_2$ defined in \reff{eq:def-s2} and that $X_{[2]}=(X_1,
X_2)$. 
\begin{lem}\label{lem:ind6} 
Assume that $W$ satisfies condition \reff{eq:condi_W}. For all
$\ry=(y_{1},y_{2})\in(0,1)^2$, $\delta\in [-1,0]$ and $G\in\mathcal{C}^{1}([0,1]^2)$,
we have:
 \begin{equation*}
  \lim_{n\to \infty}n\E\left[G\left(X_{[2]}\right)\Psi_{n,\delta}\left(X_{[2]}\right)\right]
  =G(\ry)\Sigma_{2}(\ry).
 \end{equation*}
\end{lem}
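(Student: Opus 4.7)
The plan is to isolate the contribution of the shared randomness in $(\hat D_1^{(n+1)}, \hat D_2^{(n+1)})$ by decomposing $\Psi_{n,\delta}$ into a conditional covariance plus a product of centered one-dimensional terms, handle the product term with the one-dimensional estimates from Section~\ref{sec:preliminaries_CDF_degrees}, and then apply Proposition~\ref{prop:stein} combined with a $1/\sqrt{n}$ rescaling to extract the Gaussian limit. Write $A_i = \ind_{\{\hat D_i^{(n+1)} \le nd_i + \delta\}}$ and $B_i(\rx) = \ind_{\{x_i \le y_i\}}$. Since $B_i$ is $X_{[2]}$-measurable,
\[
\Psi_{n,\delta}(\rx) = \Cov\bigl(A_1, A_2 \,\big|\, X_{[2]} = \rx\bigr) + h_{n,1}(x_1)\,h_{n,2}(x_2),
\]
where $h_{n,i}(x_i) = \E[A_i \mid X_i = x_i] - \ind_{\{x_i \le y_i\}}$. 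Conditionally on $X_i = x_i$, the random variable $\hat D_i^{(n+1)}$ is $\text{Bin}(n-1, D(x_i))$, so $h_{n,i}$ is exactly a function of the type analyzed by Proposition~\ref{prop:ind8}.

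For the product term, by Fubini $\E[G(X_{[2]}) h_{n,1}(X_1) h_{n,2}(X_2)] = \int h_{n,2}(x_2) K_n(x_2)\,dx_2$ with $K_n(x_2) = \int G(x_1, x_2) h_{n,1}(x_1)\, dx_1$. Proposition~\ref{prop:ind8} (applied in $x_1$) gives $K_n(x_2) = O(1/n)$ uniformly, with a $C^1$ dependence on $x_2$; applying the same estimate in $x_2$ yields $n\,\E[G\, h_{n,1} h_{n,2}] = O(1/n) \to 0$. So the whole limit must come from the conditional covariance term.

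For the covariance term, note that conditionally on $X_{[2]}=\rx$ with $x_1\neq x_2$, the vector $\hat D^{(n+1)}$ is a sum of $n-1$ i.i.d.\ bivariate Bernoulli vectors with covariance $M(\rx)$. Proposition~\ref{prop:stein}, applied once to the quadrant $(-\infty, nd_1+\delta] \times (-\infty, nd_2+\delta]$ and to the two half-planes giving the marginals, yields, with $v_i(\rx) = (nd_i + \delta - (n-1)D(x_i))/\sqrt{(n-1)\, m_{ii}(\rx)}$ and $\rho(\rx) = m_{12}(\rx)/\sqrt{m_{11}(\rx)\,m_{22}(\rx)}$,
\[
\Cov(A_1, A_2\,|\,\rx) = \Phi_2\bigl(v_1(\rx), v_2(\rx);\, \rho(\rx)\bigr) - \Phi(v_1(\rx))\,\Phi(v_2(\rx)) + O(n^{-1/2}),
\]
uniformly on $\{x_1 \neq x_2\}$. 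Performing the change of variables $x_i = y_i + z_i/\sqrt{n}$, whose Jacobian $n^{-1}$ cancels the prefactor $n$, one checks that $v_i(\rx) \to \tilde v_i(z_i) := -D'(y_i)\,z_i/\sqrt{m_{ii}(\ry)}$ and $\rho(\rx) \to \rho(\ry)$. Using Plackett's identity $\partial_\rho \Phi_2(w_1,w_2;\rho) = \phi_2(w_1,w_2;\rho)$, which gives $\int_{\R^2}[\Phi_2(w_1,w_2;\rho) - \Phi(w_1)\Phi(w_2)]\,dw_1 dw_2 = \rho$, a further change of variable $w_i = \tilde v_i(z_i)$ produces a Jacobian factor $\sqrt{m_{11}(\ry) m_{22}(\ry)}/(D'(y_1) D'(y_2))$, so the limit equals $G(\ry)\,m_{12}(\ry)/(D'(y_1)D'(y_2)) = G(\ry)\,\Sigma_2(\ry)$.

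The main technical obstacle is that the pointwise Stein error $O(n^{-1/2})$, integrated against $G$ on $[0,1]^2$ and multiplied by $n$, gives $O(\sqrt{n})$, which is far too large. To circumvent this, I would split the $\rx$-integral into a window $|\rx - \ry| \le M/\sqrt{n}$ and its complement. On the window of area $O(1/n)$, the Stein contribution is $n \cdot (1/n) \cdot O(n^{-1/2}) = O(n^{-1/2}) \to 0$, and the main Gaussian expression converges by dominated convergence (using that $\Phi_2 - \Phi\,\Phi$ is integrable on $\R^2$). On the complement, one uses Chernoff bounds for the lower/upper tails of $\text{Bin}(n-1, D(x_i))$, valid because condition \reff{eq:condi_W} keeps $D$ bounded away from $\{0,1\}$, to conclude that both $\E[A_i A_j | \rx]$ and $\E[A_i|\rx]\E[A_j|\rx]$, and hence the covariance, decay like $\exp(-c n\,(D(x_i)-d_i)^2)$, making the far-field integral $e^{-cM^2}$-small. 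Letting $M \to \infty$ after $n \to \infty$ completes the argument.
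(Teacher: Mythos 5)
Your argument is correct in substance but follows a genuinely different route from the paper. You split $\Psi_{n,\delta}(\rx)$ exactly into $\Cov(A_1,A_2\,|\,X_{[2]}=\rx)+h_{n,1}(x_1)h_{n,2}(x_2)$, kill the product of the centered marginals with the one-dimensional Edgeworth estimates, and compute the limit of the covariance term via the Hoeffding/Plackett identity $\int_{\R^2}[\Phi_2(w;\rho)-\Phi(w_1)\Phi(w_2)]\,dw=\rho$. The paper never performs this decomposition: it keeps the full product $\prod_i(A_i-\ind_{\{x_i\le y_i\}})$, observes that after the change of variables $z_i=\sqrt{n-1}(D(x_i)-d_i)$ this product is, on each of the four sign-quadrants of $(z_1,z_2)$, equal to $\pm\ind_{\{\hat D^{(n+1)}\in \tilde C_n^{(i)}\}}$, applies Proposition~\ref{prop:stein} to each of the four resulting joint probabilities, and assembles the limit as $\E[\tilde Z_1\tilde Z_2]$ from the four terms $\pm\E[\tilde Z_1^{\pm}\tilde Z_2^{\pm}]$. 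The paper also localizes with a growing window $A=4\sqrt{\log n}$ (so the Stein error contributes $O(\log n/\sqrt n)$ in a single limit), whereas you use a fixed window $M/\sqrt n$ and let $M\to\infty$ afterwards; both work. Your route makes it transparent that only the conditional covariance of the joint degrees survives, and the Plackett computation is cleaner than the quadrant bookkeeping; the paper's route avoids invoking Proposition~\ref{prop:ind8} inside this lemma and so needs less regularity of $G$ here.

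Three loose ends you should tighten. First, Proposition~\ref{prop:ind8} requires $G(\cdot,x_2)\in\cc^2$, while the lemma only assumes $G\in\cc^1([0,1]^2)$; in the actual application ($G=W$ or $1-W$ under \reff{eq:condi_W}) this is harmless, but as stated your product-term bound overshoots the hypotheses. Second, ``applying the same estimate in $x_2$'' to $\int h_{n,2}K_n$ would require controlling $\lVert K_n\rVert_{3,\infty}$; the simpler and correct bound is $n|\E[G\,h_{n,1}h_{n,2}]|\le n\,\sup_{x_2}|K_n(x_2)|\int_0^1|h_{n,2}|=n\cdot O(1/n)\cdot o(1)$, since $\int|h_{n,2}|\to 0$ by dominated convergence. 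Third, off the window it is not true that $\E[A_1A_2\,|\,\rx]$ decays like $\exp(-cn(D(x_i)-d_i)^2)$ (it is close to $1$ when both $x_i$ are well below $y_i$); what decays is the covariance, via $|\Cov(A_1,A_2\,|\,\rx)|\le\sqrt{\Var(A_1\,|\,\rx)\Var(A_2\,|\,\rx)}$ together with the Chernoff bound on each variance, and you need this product form to make the far-field integral in $(z_1,z_2)$ summable when only one coordinate is large.
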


\begin{proof}
Let $A=4\sqrt{\log(n-1)}$. For $n\geq 2$ and $\delta\in[-1,0]$,  
we set:
\[
 \Psi_{n,\delta}^{(1)}(\rx)
 =\Psi_{n,\delta}(\rx)\prod_{i=1}^{2}\ind_{\{\sqrt{n-1}|D(x_i)-d_i|\leq
   A\}}
\quad\text{and}\quad \Psi_{n,\delta}^{(2)}(\rx)=\Psi_{n,\delta}(\rx)-\Psi_{n,\delta}^{(1)}(\rx).
\]
Then we have
\begin{equation}
\label{eq:mean_Psi_kappa}
 \E\left[G\left(X_{[2]}\right)\Psi_{n,\delta}\left(X_{[2]}\right)\right]
 =\sum_{i\in \{1, 2\}}\E\left[G\left(X_{[2]}\right)\Psi_{n,\delta}^{(i)}\left(X_{[2]}\right)\right].
\end{equation}

\subsection*{Study of $\E\left[G\left(X_{[2]}\right)
    \Psi_{n,\delta}^{(2)}\left(X_{[2]}\right)\right]$}
Recall   that  for   $i\in  \{1,   2\}$,  conditionally   on  $X_i=x_i$,
$\hat D_i^{(n+1)} $  is distributed as a Bernoulli  random variable with
parameter $(n-1, d_i)$. We get that:
\begin{align*}
  \left|\Psi_{n,\delta}^{(2)}(\rx) \right|
&\leq  2 \sum_{i\in \{1,2 \}}
\E\left[\left|\ind_{\{\hat D_i^{(n+1)} \leq
       nd_i+\delta\}}-\ind_{\{X_i\le y_i\}}\right|\ind_{\{\sqrt{n-1}\, |D(x_i)- d_i|\geq A\}}
        \Big|\,X_{[2]}=\rx\right]\\
&  =  2 \sum_{i\in \{1,2 \}}
\left|\mathcal{H}_{n-1,d_i,\delta+d_i}(x_i)-\ind_{\{x_i\le
  y_{i}\}}\right|\ind_{\{\sqrt{n-1}\,\left|D(x_i)-d_i\right|\ge A\}} . 
\end{align*}
By Lemma  \ref{lem:ind40} (with $n$ replaced by $n-1$), we deduce that:
\begin{equation}
\label{eq:mean_G_Psi_2}
 \lim_{n\to\infty} n\E\left[G\left(X_{[2]}\right)
\Psi_{n,\delta}^{(2)}\left(X_{[2]}\right)\right]=0. 
\end{equation}

\subsection*{Study of $\E\left[G\left(X_{[2]}\right)
    \Psi_{n,\delta}^{(1)}\left(X_{[2]}\right)\right]$}
This part is more delicate. 
For $\rz=(z_1,z_2)\in[0,1]^2$, set
$H(\rz)=\frac{G(\rz)}{D'(z_1)D'(z_2)}$  and 
 $\mathfrak{t}_{n}(\rz)=(t_{n}(z_1),t_{n}(z_2))$:
\begin{equation*}
 t_{n}(z_i)=D^{-1}\left(d_i+\frac{z_i}{\sqrt{n-1}}\right) \quad\text{for $i\in \{1, 2\}$}. 
\end{equation*}
Using  the change  of  variable  $z_{i}=\sqrt{n-1}(D(x_i)-d_i)$ for  
$i\in\{1,2\}$ with $\rx=(x_1, x_2)$, we get:
\begin{align}
\nonumber
(n-1) \E\left[G\left(X_{[2]}\right)\Psi_{n,\delta}^{(1)}\left(X_{[2]}\right)\right]
&=(n-1) \int_{[0,1]^2}G(\rx)\Psi_{n,\delta}(\rx)\prod_{i\in \{1, 2\}}
  \ind_{\{\sqrt{n-1}\left|D(x_i)-d_i\right|\le A\}}d\rx\\
\label{eq:ind7}
 &=\int_{[-A,A]^2}H\left(\mathfrak{t}_{n}(\rz)\right)
                 \Psi_{n,\delta}\left(\mathfrak{t}_{n}(\rz)\right)d\rz.
\end{align}
Notice that:
\[
\Psi_{n,\delta}\left(\mathfrak{t}_{n}(\rz)\right)
=\E\left[\prod_{i\in \{1, 2\}} \left(\ind_{\{\hat D_i^{(n+1)} \leq
       nd_i+\delta\}}-\ind_{\{z_i\le 0\}}\right) 
        \Big|\,X_{[2]}=\mathfrak{t}_{n}(\rz)\right]. 
\]

Set $\tilde \delta=(\delta, \delta)$ and $\hat{D}^{(n+1)}= (\hat
D_1^{(n+1)}, \hat D_2^{(n+1)})$. We define the  sets for
$\rz$ and  $\hat D^{(n+1)}$:
\begin{align*}
 I^{(1)}=[0, A]^2 \quad&\text{and}\quad
 \tilde{C}_{n}^{(1)}=\tilde \delta+n\, (-\infty ,
d_1]\times (-\infty ,d_2],\\
 I^{(2)}=[0, A]\times  [-A, 0)  \quad&\text{and}\quad
 \tilde{C}_{n}^{(2)}=\tilde \delta+n\, (-\infty ,
d_1]\times (d_2,+\infty),\\
 I^{(3)}=[-A, 0)\times  [0,A]  \quad&\text{and}\quad
 \tilde{C}_{n}^{(3)}=\tilde \delta+n\, (
d_1,+\infty )\times (-\infty ,d_2],\\
 I^{(4)}=[-A,0)^2  \quad&\text{and}\quad
 \tilde{C}_{n}^{(4)}=\tilde \delta+n\, (
d_1,+\infty )\times (d_2,+\infty ).
\end{align*}
For $1\leq i\leq 4$, we set:
\begin{equation*}
 Q^{(i)}_{n}(\rz)
 =\P\left(\hat D^{(n+1)} \in
   \tilde{C}_{n}^{(i)}\big|X_{[2]}=\mathfrak{t}_{n}(\rz)\right)
\quad\text{and}\quad \Delta_n^{(i)} = \int_{I^{(i)}}
H\left(\mathfrak{t}_{n}(\rz)\right) Q^{(i)}_{n}(\rz)\,  d\rz. 
\end{equation*}
By construction, we have:
\begin{equation}
\label{eq:sum-Di}
  (n-1)\E\left[G\left(X_{[2]}\right)\Psi_{n,\delta}^{(1)}\left(X_{[2]}\right)\right]
  =\sum_{i=1}^4\Delta_n^{(i)}. 
\end{equation}

We now study $\Delta_n^{(1)}$. 
By Proposition \ref{prop:stein}, we get that
\[
\Delta_n^{(1)}
 =\int_{[0,A]^2}H\left(\mathfrak{t}_{n}(\rz)\right)\, 
  \P\left(Z\in
    \frac{M\left(\mathfrak{t}_{n}(\rz)\right)^{-1/2}}{\sqrt{n-1}}\,
    \left(\tilde C_{n}^{(1)}- \mu(\mathfrak{t}_{n}(\rz))\right)\right)d\rz+R_{n}^{(1)}\\
\]
where 
$\mu(\rx)=(n-1) (D(x_1), D(x_2))$
and $|R_{n}^{(1)}|\leq  \norm{H}_\infty  8 C_0 \sqrt{\log(n)/n}$ so that 
 $\lim_{n\rightarrow\infty } R_{n}^{(1)}=0$. 
Set $\tilde d=(d_1,d_2)$.
Since $\mathfrak{t}_{n}(\rz))$ converges towards $\ry$, we get:
\[
\lim_{n\rightarrow\infty } H(\mathfrak{t}_{n}(\rz))= H(\ry)
\quad\text{and}\quad
\lim_{n\rightarrow\infty } M\left(\mathfrak{t}_{n}(\rz)\right)= M(\ry)
\]
and, with $J(\rz)=(-\infty  , -z_1]\times  (-\infty ,  -z_2]$, 
\begin{equation}
   \label{eq:=cnz}
\inv{\sqrt{n-1}}\,
    \left(\tilde C_{n}^{(1)}- \mu(\mathfrak{t}_{n}(\rz))\right)
= (n-1)^{-1/2} (\tilde \delta+\tilde d) + J(\rz).
\end{equation}

Since $\lim_{n\rightarrow\infty }
M\left(\mathfrak{t}_{n}(\rz)\right)=M(\ry)$ and   $M(\ry)$ is positive definite, we  deduce
that $d\rx$-a.e.:
\[
\lim_{n\rightarrow\infty }
\ind_{\frac{M\left(\mathfrak{t}_{n}(\rz)\right)^{-1/2}}{\sqrt{n-1}}\,
  \left(\tilde  C_{n}^{(1)}-  \mu(\mathfrak{t}_{n}(\rz))\right)}  (\rx)
= 
\ind_{M(\ry)^{-1/2} J(\rz)}(\rx)
\]
and thus (by dominated convergence):
\[
\lim_{n\rightarrow\infty }
\P\left(Z\in {\frac{M\left(\mathfrak{t}_{n}(\rz)\right)^{-1/2}}{\sqrt{n-1}}\,
  \left(\tilde  C_{n}^{(1)}-  \mu(\mathfrak{t}_{n}(\rz))\right)} \right)
= \P\left(M(\ry)^{1/2}Z\in  J(\rz)\right).
\]

For $\rz\in [2, +\infty )^2$ and $n\geq 2$, we have:
\begin{align*}
  \left\{Z\in {\frac{M\left(\mathfrak{t}_{n}(\rz)\right)^{-1/2}}{\sqrt{n-1}}\,
  \left(\tilde  C_{n}^{(1)}-  \mu(\mathfrak{t}_{n}(\rz))\right)} 
\right\}
& \subset
  \left\{2  M\left(\mathfrak{t}_{n}(\rz)\right)^{1/2} Z\in 
J(\rz)
\right\}\\
& \subset
  \left\{2^{3/2} |Z|\geq  |\rz|\right\},
\end{align*}
where we used \reff{eq:=cnz} and that for all $i\in\{1,2\}$, $|(n-1)^{-1/2} (\delta+d_i)|\leq z_i/2$
 for the first inclusion and for the second that  $|M\rx|\leq
 \sqrt{2}\, \norm{M}_\infty \, |\rx|$,  $\norm{M ^{1/2}}_\infty \leq
\sqrt{2}\norm{M}_\infty ^{1/2}$ and $ \norm
  {M\left(\rx\right)}_\infty \leq 1/2$ for all $\rx\in [0, 1]^2$ so that
  $|M\left(\mathfrak{t}_{n}(\rz)\right)^{1/2} Z|\leq \sqrt{2}\,  |Z|$.  
 Since $\int_{\R} \P(2^{3/2} |Z|\geq |\rz|)\, d\rz$ is finite  and $H$ is bounded, we deduce
 from dominated convergence that:
\begin{multline*}
\lim_{n\rightarrow\infty } \int_{[0,A]^2}H\left(\mathfrak{t}_{n}(\rz)\right)\, 
  \P\left(Z\in
    \frac{M\left(\mathfrak{t}_{n}(\rz)\right)^{-1/2}}{\sqrt{n-1}}\,
    \left(\tilde C_{n}^{(1)}-
      \mu(\mathfrak{t}_{n}(\rz))\right)\right)d\rz \\
= H(\ry)\int_{[0,+\infty )^2} \P\left(M(\ry)^{1/2}Z\in  J(\rz)\right)\, d\rz.
\end{multline*}
Recall $x^+=\max(0, x)$ and $x^-=\max(0, -x)$ denote the positive and
negative part of $x\in \R$.  With $\tilde Z=M(\ry)^{1/2}Z=(\tilde Z_1, \tilde Z_2)$, we get:
\[
\int_{[0,+\infty )^2} \P\left(M(\ry)^{1/2}Z\in  J(\rz)\right)\, d\rz
= \E\left[\tilde Z_1^- \tilde Z_2^-\right]
\]
and thus
\[
\lim_{n\rightarrow\infty } \Delta_n^{(1)}= H(\ry) \E\left[\tilde Z_1^-
  \tilde Z_2^-\right].
\]

Similarly, we obtain:
\begin{align*}
   \lim_{n\rightarrow\infty } \Delta^{(2)}_n 
&=-H(\ry) \E\left[\tilde Z_1^-  \tilde Z_2^+\right]\\
   \lim_{n\rightarrow\infty } \Delta^{(3)}_n 
&=  -H(\ry) \E\left[\tilde Z_1^+  \tilde Z_2^-\right]\\
   \lim_{n\rightarrow\infty } \Delta^{(4)}_n 
&= H(\ry) \E\left[\tilde Z_1^+  \tilde Z_2^+\right].
\end{align*}
Using the definition of $\Sigma_2(\ry)$ and $M(\ry)$, notice that
$D'(y_1)D'(y_2)\Sigma_2(\ry)$ is the covariance of $\tilde Z_1$ and $\tilde
Z_2$. Thus, we obtain:
\begin{equation}
   \label{eq:sumDi}
 \lim_{n\rightarrow \infty}\sum_{i=1}^4 \Delta_n^{(i)}
 =H(\ry)\E\left[\left(\tilde Z_{1}^{+}-\tilde
   Z_{1}^{-}\right)\left(\tilde Z_{2}^{+}-\tilde Z_{2}^{-}\right)\right] 
=H(\ry) \E\left[\tilde Z_{1}\tilde Z_{2}\right]
=G(\ry)\Sigma_2(\ry).
\end{equation}

\subsection*{Conclusion}
Use
\reff{eq:mean_Psi_kappa}, \reff{eq:mean_G_Psi_2}, 
 \reff{eq:sum-Di} and  \reff{eq:sumDi} to get the result. 
\end{proof}

The next proposition, main result of this section, is a consequence of
Lemma \ref{lem:ind6}. 
\begin{prop}\label{prop:ind10}
 Assume that $W$ satisfies condition \reff{eq:condi_W}. For all $y_1,y_2\in(0,1)$, we have with $d_1=D(y_1)$ and $d_2=D(y_2)$:
 \begin{equation*}
  \lim_{n\to\infty}n\,\E\left[\left(\1{D_{1}^{(n+1)}\le d_1}
      -\1{X_{1}\le y_1}\right)\left(\1{D_{2}^{(n+1)}\le d_2}-\1{X_{2}\le
        y_2}\right)\right] 
  =\Sigma_{2}(y_1,y_2).
 \end{equation*}
\end{prop}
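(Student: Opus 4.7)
My plan is to reduce Proposition \ref{prop:ind10} to Lemma \ref{lem:ind6} via a conditioning argument that separates off the contribution of the edge between vertices $1$ and $2$. The random variables $\hat D_i^{(n+1)}$ that appear in $\Psi_{n,\delta}$ and the normalized degrees $D_i^{(n+1)}$ that appear in the statement differ by exactly the Bernoulli variable $Y_{12}:=\1{\{1,2\}\in E(G_{n+1})}$. Namely $nD_i^{(n+1)}=\hat D_i^{(n+1)}+Y_{12}$ for $i\in\{1,2\}$, so that
\[
\1{D_i^{(n+1)}\le d_i} = \1{\hat D_i^{(n+1)} \le n d_i - Y_{12}}.
\]

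I would then condition on $X_{[2]}=(X_1,X_2)$. Given $X_{[2]}$, the variable $Y_{12}$ is Bernoulli with parameter $W(X_1,X_2)$ and is independent of the pair $(\hat D_1^{(n+1)},\hat D_2^{(n+1)})$, which depends only on $X_3,\ldots,X_{n+1}$ and on the edges from $\{1,2\}$ to $\{3,\ldots,n+1\}$. Splitting the conditional expectation according to the two values of $Y_{12}$ and using the very definition of $\Psi_{n,\delta}$ given just before Lemma \ref{lem:ind6} yields
\[
\E\!\left[\prod_{i\in\{1,2\}}\!\bigl(\1{D_i^{(n+1)}\le d_i}-\1{X_i\le y_i}\bigr)\,\Big|\, X_{[2]}\right] = W(X_1,X_2)\,\Psi_{n,-1}(X_{[2]}) + \bigl(1-W(X_1,X_2)\bigr)\Psi_{n,0}(X_{[2]}).
\]

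Taking expectations and multiplying by $n$, I would then apply Lemma \ref{lem:ind6} twice: once with $G(\rx)=W(x_1,x_2)$ and $\delta=-1$, and once with $G(\rx)=1-W(x_1,x_2)$ and $\delta=0$. Both choices are admissible since condition \reff{eq:condi_W} gives $W\in\cc^3([0,1]^2)\subset\cc^1([0,1]^2)$, and both values of $\delta$ lie in $[-1,0]$. The two resulting limits sum to
\[
\bigl[W(y_1,y_2)+(1-W(y_1,y_2))\bigr]\,\Sigma_2(y_1,y_2)=\Sigma_2(y_1,y_2),
\]
which is the desired equality.

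I do not anticipate any real difficulty in this reduction: the decomposition via $Y_{12}$ is purely combinatorial, and the substantive analytic work --- the precise second-moment asymptotics for the pair of binomial-like variables $(\hat D_1^{(n+1)},\hat D_2^{(n+1)})$ conditioned on $X_{[2]}$ --- is already packaged in Lemma \ref{lem:ind6}, which itself leans on the Stein-type bivariate Gaussian approximation of Proposition \ref{prop:stein}. The only minor subtlety is that both that proposition and $\Psi_{n,\delta}(\rx)$ require $x_1\ne x_2$, but this is inconsequential for the expectation since $\{X_1=X_2\}$ has Lebesgue measure zero, so $\Psi_{n,\delta}(X_{[2]})$ is almost surely well defined and the integrals arising after conditioning are unaffected.
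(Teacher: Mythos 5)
Your proposal is correct and follows essentially the same route as the paper: the paper also decomposes via the edge indicator between vertices $1$ and $2$ (this is exactly the role of $\hat D^{(n+1)}$ introduced before Proposition \ref{prop:stein}), writes the conditional expectation as $W(X_{[2]})\Psi_{n,-1}(X_{[2]})+(1-W(X_{[2]}))\Psi_{n,0}(X_{[2]})$, and then applies Lemma \ref{lem:ind6} twice with $G=W$ and $G=1-W$. Your added remark about the null set $\{X_1=X_2\}$ is a harmless extra precaution that the paper leaves implicit.
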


\begin{proof}
Using the comment before Proposition \ref{prop:stein}, we get:
 \begin{multline*}
\E\Bigg[\prod_{i\in \{1, 2\}} \left(\ind_{\{D_{i}^{(n+1)}\le
    d_i}-\1{X_{i}\le y_i}\right)\Bigg]\\
=\E\left[W(X_{[2]})\Psi_{n,-1}(X_{[2]})\right]
   + \E\left[(1-W(X_{[2]}))\Psi_{n,0}(X_{[2]})\right]. 
 \end{multline*}
We apply Lemma  \ref{lem:ind6} twice with $G=W$ and $G=1-W$ to get the result.
\end{proof}

\section{Proof of Theorem \ref{thm:CLT_indicator_degree_sequence}}
\label{sec:proof_CLT_CDF_degrees}

Recall the definitions of $\cdf_{n+1}$ and $c_{n}(y)$ given in \reff{eq:def_cdf} and \reff{eq:c_{n}(y)}.
We define the normalized and  centered random process $\hat{\cdf}_{n+1}=
(\hat{\cdf}_{n+1}(y):\, y\in(0,1)) $ by:
\begin{equation}\label{eq:def_hat_cdf}
 \hat{\cdf}_{n+1}(y)=\sqrt{n+1}\left[\cdf_{n+1}(y)-c_{n}(y)\right].
\end{equation}
Let $U_{n+1}=(U_{n+1}(y):\, y\in(0,1)) $ be the H\`ajek projection  of $\hat{\cdf}_{n+1}$:
\begin{equation}\label{eq:haj_proj}
 U_{n+1}(y)=\sum_{i=1}^{n+1}\E\left[\left.\hat{\cdf}_{n+1}(y)\right|X_{i}\right].
\end{equation}

Recall $\Sigma$ defined in Remark \ref{rem:Sigma}.
\begin{lem}\label{lem:mean_theo_2}
For all $y,z\in(0,1)$, we have:
\begin{equation*}
\lim_{n\rightarrow\infty }\E\left[U_{n+1}(y)U_{n+1}(z)\right]=\Sigma(y,z).
\end{equation*}
\end{lem}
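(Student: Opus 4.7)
The plan is to write the H\'ajek projection $U_{n+1}(y)$ explicitly in terms of the quantities $H_n$ and $H_n^*$ from Proposition \ref{prop:mean_phi_n}, use independence of the $X_i$'s to reduce the covariance to a one-variable expectation, and conclude via dominated convergence.

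First I would compute $\E[\1{D_j^{(n+1)}\le D(y)}-c_n(y)\,|\,X_i]$ for each pair $(i,j)$ with $i,j\in[n+1]$. For $j=i$ this is exactly $H_n^*(y,X_i)$ by the definition \reff{eq:H_n_star}. For $j\neq i$, the exchangeability of the vertex labels (relabel $(j,i)\leftrightarrow(1,2)$), combined with integration over the uniform variable of the non-conditioned vertex, gives via \reff{eq:relation_H_n_varphi} from the proof of Proposition \ref{prop:mean_phi_n} that this expectation equals $\frac{1}{n}H_n(y,X_i)$. Summing the $n+1$ centered contributions of $\hat{\cdf}_{n+1}(y)$ yields
\[
U_{n+1}(y)=\frac{1}{\sqrt{n+1}}\sum_{i=1}^{n+1}\bigl[H_n^*(y,X_i)+H_n(y,X_i)\bigr].
\]

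Next, since the $X_i$'s are i.i.d.\ and both $\E[H_n^*(y,X_1)]$ and $\E[H_n(y,X_1)]$ vanish (the outer expectation collapses the inner conditioning to $c_n(y)-c_n(y)=0$), the cross-indices $i\neq i'$ in the double sum for $U_{n+1}(y)U_{n+1}(z)$ contribute zero, so
\[
\E[U_{n+1}(y)U_{n+1}(z)]=\E\bigl[(H_n^*(y,X_1)+H_n(y,X_1))(H_n^*(z,X_1)+H_n(z,X_1))\bigr].
\]

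Finally I would expand this product into four pieces and pass to the limit in each via dominated convergence, using the uniform bound $|H_n^*|\le 1$ from \reff{eq:H_n_star_limit} and the uniform estimate $H_n(y,u)=(D(y)-W(y,u))/D'(y)+O(n^{-1/4})$ from Proposition \ref{prop:mean_phi_n}. The expectation of $H_n^*(y,X_1)H_n^*(z,X_1)$ converges to $\E[(\1{X_1\le y}-y)(\1{X_1\le z}-z)]=y\wedge z-yz=\Sigma_1(y,z)$; the expectation of $H_n(y,X_1)H_n(z,X_1)$ converges by a direct computation to $\frac{1}{D'(y)D'(z)}\bigl(\int_0^1 W(y,x)W(z,x)\,dx-D(y)D(z)\bigr)=\Sigma_2(y,z)$; and the two cross-expectations, after evaluating $\E[(\1{X_1\le y}-y)(D(z)-W(z,X_1))/D'(z)]=(yD(z)-\int_0^y W(z,x)\,dx)/D'(z)$ and its symmetric analogue, combine to $\Sigma_3(y,z)$. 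Summing the three contributions gives $\Sigma(y,z)$ as in Remark \ref{rem:Sigma}. The only genuinely delicate step is the exchangeability identification in the first paragraph; once the H\'ajek projection has the displayed form, the remainder is a routine application of dominated convergence with explicit uniform bounds.
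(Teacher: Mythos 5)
Your proposal follows essentially the same route as the paper's proof: the same identification of the conditional expectations with $H_n^*(y,X_i)$ and $\tfrac{1}{n}H_n(y,X_i)$, the same representation $U_{n+1}(y)=(n+1)^{-1/2}\sum_{j}[H_n^*(y,X_j)+H_n(y,X_j)]$, the same reduction to a single-variable expectation via independence and centering, and the same four-term expansion converging to $\Sigma_1+\Sigma_2+\Sigma_3$ by dominated convergence and Proposition \ref{prop:mean_phi_n}. The argument is correct as stated.
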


\begin{proof}
Recall \reff{eq:H_n} and \reff{eq:H_n_star}. With $d=D(y)$, we notice that for $y\in(0,1)$:
\begin{equation*}
 \pro\left(\left.D_{i}^{(n+1)}\le d\right|X_{j}\right)-c_{n}(y)
 =\begin{cases}
   \frac{1}{n}H_{n}(y,X_j) &\mbox{ if } i\neq j, \\
   H_{n}^{\star}(y,X_j)    &\mbox{ if } i=j.
  \end{cases}
\end{equation*}
We have:
\begin{align}
U_{n+1}(y)&=(n+1)^{-\frac{1}{2}}\sum_{i=1}^{n+1}\sum_{j=1}^{n+1}
            \left[\pro\left(\left.D_{i}^{(n+1)}\le
            d\right|X_{j}\right)-c_{n}(y)\right]
\nonumber\\ 
           &=(n+1)^{-\frac{1}{2}}\sum_{j=1}^{n+1}
             \left[H_{n}^{\star}(y,X_j)+H_{n}(y,X_j)\right] \label{eq:U_n_H_n}.  
\end{align}

Let $y, z\in (0, 1)$. 
Since $H_{n}$ and $H_{n}^{\star}$ are centered  and $(X_i:i\in\Ne)$
are independent, using \reff{eq:U_n_H_n}, we obtain that:
\begin{multline*}
 \E\left[U_{n+1}(y)U_{n+1}(z)\right]
=\E\left[H_{n}^{\star}(y,X_1)H_{n}^{\star}(z,X_1)\right]
      +\E\left[H_{n}(y,X_1)H_{n}(z,X_1)\right]\\
+ \E\left[H_{n}^{\star}(y,X_1)H_{n}(z,X_1)\right]
+\E\left[H_{n}^{\star}(z,X_1)H_{n}(y,X_1)\right].
\end{multline*}
Recall $\Sigma=\Sigma_{1}+\Sigma_{2}+\Sigma_{3}$  defined in Remark 
\ref{rem:Sigma}.

\subsection*{Study of $\E\left[H_{n}^{\star}(y,X_1)H_{n}^{\star}(z,X_1)\right]$}
By Proposition \ref{prop:mean_phi_n}, see \reff{eq:H_n_star_limit},  and
by dominated convergence, we get that: 
\begin{equation}\label{eq:term_1}
 \lim_{n\to\infty}\E\left[H_{n}^{\star}(y,X_1)H_{n}^{\star}(z,X_1)\right]
 =\E\left[\left(\1{X_1\le y}-y\right)\left(\1{X_2\le z}-z\right)\right]
 =\Sigma_{1}(y,z).
\end{equation}

\subsection*{Study of $\E\left[H_{n}(y,X_1)H_{n}(z,X_1)\right]$}
By Proposition \ref{prop:mean_phi_n}, we have:
\begin{multline*}
 \E\left[H_{n}(y,X_1)H_{n}(z,X_1)\right]\\
\begin{aligned}
 &=\E\left[\left(\frac{D(y)-W(y,X_1)}{D'(y)}+R_{n}^{\ref{prop:mean_phi_n}}(y,X_1)\right)
           \left(\frac{D(z)-W(z,X_1)}{D'(z)}+R_{n}^{\ref{prop:mean_phi_n}}(z,X_1)\right)\right]\\
 &=\frac{1}{D'(y)}\frac{1}{D'(z)}\E\left[(D(y)-W(y,X_1))(D(z)-W(z,X_1))\right]+R_{n}^{(1)}\\
 &=\Sigma_{2}(y_1,y_2)+R_{n}^{(1)}
\end{aligned}
\end{multline*}
where, because of \reff{eq:H_n_rest},  $\left|R_{n}^{(1)}\right|\le
Cn^{-\frac{1}{4}}$ for some finite constant $C$.
We obtain that
\begin{equation}\label{eq:term_2}
\lim_{n\rightarrow\infty } \E\left[H_{n}(y,X_1)H_{n}(z,X_1)\right] = \Sigma_{2}(y,z).
\end{equation}

\subsection*{Study of $\E\left[H_{n}^{\star}(y,X_1)H_{n}(z,X_1)\right]+\E\left[H_{n}^{\star}(z,X_1)H_{n}(y,X_1)\right]$}
By Proposition \ref{prop:mean_phi_n}, we have that:
\begin{equation*}
 \E\left[H_{n}^{\star}(y,X_1)H_{n}(z,X_1)\right]
 =\E\left[H_{n}^{\star}(y,X_1)\frac{1}{D'(z)}(D(z)-W(z,X_1))
 +H_{n}^{\star}(y,X_1)R_{n}^{\ref{prop:mean_phi_n}}(z,X_1)\right].
\end{equation*}

Thanks to \reff{eq:H_n_rest} and \reff{eq:H_n_star_limit}, we have 
$\left|H_{n}^{\star}(y,X_1)\right|\le 1$ and $\E\left[\left|R_{n}^{\ref{prop:mean_phi_n}}(z,X_1)\right|\right]=O\left(n^{-1/4}\right)$. 
We deduce from Proposition \ref{prop:mean_phi_n} and dominated convergence, that:
\begin{align*}
 \lim_{n\to\infty}\E\left[H_{n}^{\star}(y,X_1)H_{n}(z,X_1)\right]
  &=\E\left[\left(\1{X_1 \le y}-y\right)\frac{1}{D'(z)}(D(z)-W(z,X_1))\right]\\
  &=\frac{1}{D'(z)}\left(yD(z)-\int_{0}^{y}W(z,x)dx\right).
\end{align*}
By symmetry, we finally obtain that
\begin{equation}\label{eq:term_3}
 \lim_{n\rightarrow\infty }\E\left[H_{n}^{\star}(y,X_1)H_{n}(z,X_1)\right]+\E\left[H_{n}^{\star}(z,X_1)H_{n}(y,X_1)\right] = \Sigma_{3}(y,z).
\end{equation}

\subsection*{Conclusion}
Combining \reff{eq:term_1}, \reff{eq:term_2} and \reff{eq:term_3}, we get that
\begin{equation*}
 \lim_{n\rightarrow\infty } \E\left[U_{n+1}(y)U_{n+1}(z)\right] = \Sigma(y,z).
\end{equation*}
\end{proof}

\begin{lem}\label{mlem:mean_theo_U_stat}
We have the following convergence of finite-dimensional distributions:
 \begin{equation*}
\left(U_{n+1}(y):y\in(0,1)\right)
  \,\xrightarrow[n\rightarrow+\infty]{(fdd)}\, \chi ,
 \end{equation*}
where $\chi=(\chi(y): y\in(0,1))$ is a centered Gaussian
process with covariance function $\Sigma$ given in Remark
\ref{rem:Sigma}. 
\end{lem}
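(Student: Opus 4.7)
The plan is to apply the Cramér-Wold device together with the classical CLT for i.i.d.\ triangular arrays, since the representation \reff{eq:U_n_H_n} exhibits $U_{n+1}(y)$ as a centered sum of $n+1$ i.i.d.\ terms. Fix $k\geq 1$, points $y_1,\ldots,y_k\in(0,1)$ and coefficients $a_1,\ldots,a_k\in\R$. It suffices to show that
\[
S_n:=\sum_{i=1}^k a_i\, U_{n+1}(y_i)
\]
converges in distribution to a centered Gaussian with variance $\sum_{i,j}a_ia_j\Sigma(y_i,y_j)$.

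First, I would use \reff{eq:U_n_H_n} to write $S_n=(n+1)^{-1/2}\sum_{j=1}^{n+1}\xi_{n,j}$ where
\[
\xi_{n,j}=\sum_{i=1}^k a_i\bigl[H_n^{\star}(y_i,X_j)+H_n(y_i,X_j)\bigr].
\]
Since $X_1,\dots,X_{n+1}$ are i.i.d., so are $\xi_{n,1},\dots,\xi_{n,n+1}$. They are centered: $\E[H_n^\star(y_i,X_j)]=\E[H_n(y_i,X_j)]=0$ by construction (both are of the form ``conditional probability minus its expectation''). By \reff{eq:H_n_star_limit} we have $|H_n^\star(y_i,\cdot)|\le 1$ uniformly, and by Proposition \ref{prop:mean_phi_n} the function $H_n(y_i,\cdot)$ equals $(D(y_i)-W(y_i,\cdot))/D'(y_i)$ up to a remainder of order $n^{-1/4}$, hence is uniformly bounded in $n$ and in the argument. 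Consequently there is a finite constant $C'$ (depending on $k$, the $a_i$ and the $y_i$) such that $|\xi_{n,j}|\le C'$ for every $n$ and $j$.

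Next, I would compute the limiting variance: bilinearity and Lemma~\ref{lem:mean_theo_2} give
\[
\Var(S_n)=\sum_{i,i'=1}^k a_ia_{i'}\,\E\bigl[U_{n+1}(y_i)U_{n+1}(y_{i'})\bigr]\;\xrightarrow[n\to\infty]{}\;\sum_{i,i'=1}^k a_ia_{i'}\Sigma(y_i,y_{i'})=:v.
\]
Since the $\xi_{n,j}$ are i.i.d., this is the same as $\E[\xi_{n,1}^2]\to v$. Now I would verify the Lindeberg condition: for any $\varepsilon>0$, as soon as $\varepsilon\sqrt{n+1}>C'$ the indicator $\ind_{\{|\xi_{n,1}|>\varepsilon\sqrt{n+1}\}}$ vanishes, whence
\[
\frac{1}{n+1}\sum_{j=1}^{n+1}\E\bigl[\xi_{n,j}^{\,2}\ind_{\{|\xi_{n,j}|>\varepsilon\sqrt{n+1}\}}\bigr]
=\E\bigl[\xi_{n,1}^{\,2}\ind_{\{|\xi_{n,1}|>\varepsilon\sqrt{n+1}\}}\bigr]\;\xrightarrow[n\to\infty]{}\;0.
\]
The Lindeberg--Feller CLT then yields $S_n\cvloi{n}\cn(0,v)$, and the Cramér--Wold device gives the claimed convergence of finite-dimensional distributions of $(U_{n+1}(y):y\in(0,1))$ towards a centered Gaussian process with covariance $\Sigma$.

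There is no real obstacle in this lemma: the delicate analytic work was done in Lemma~\ref{lem:mean_theo_2} (to identify the covariance) and in Proposition~\ref{prop:mean_phi_n} (to obtain uniform boundedness of $H_n$ and $H_n^\star$). Once those are in hand, the CLT step is routine because the Hájek projection is an i.i.d.\ sum of uniformly bounded centered variables, so Lindeberg's condition is automatic.
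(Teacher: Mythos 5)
Your proof is correct and follows essentially the same route as the paper: both exhibit $U_{n+1}$ via \reff{eq:U_n_H_n} as a normalized i.i.d.\ sum of uniformly bounded centered terms, identify the limiting covariance through Lemma \ref{lem:mean_theo_2}, and invoke a Lindeberg--Feller central limit theorem (the paper applies the multivariate triangular-array version directly, while you reduce to one dimension via Cramér--Wold, which is an immaterial difference).
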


\begin{proof}
  Let  $k\in\Ne$  and  $(y_{1},\dots,y_{k})\in(0,1)^k$.  We  define  the
  random  vector $U_{n+1}^{(k)}=(U_{n+1}(y_i):  \, i\in  [k])$. For  all
  $y\in(0,1)$          and          $j\in[n+1]$,         we          set
  $ g_{n}(y,X_j)=\left[H_{n}^{\star}(y,X_j)+H_{n}(y,X_j)\right]$.  Using
  \reff{eq:U_n_H_n}, we have:
\begin{equation*}
U_{n+1}^{(k)}=(n+1)^{-\frac{1}{2}}\sum_{j=1}^{n+1}Z_{j}^{(n+1)},
\end{equation*}
where   $Z_{j}^{(n+1)}=(g_{n}(y_i,X_j):\, i\in [k])$.    Notice
$(Z_{j}^{(n+1)}:j\in[n+1])$  is  a  sequence of  independent,  uniformly
bounded   (see   Proposition  \ref{prop:mean_phi_n})   and   identically
distributed   random  vectors   with     mean   zero  and   common
positive-definite                    covariance                   matrix
$V_{n+1}=\Cov\left(Z_{1}^{(n+1)}\right)$. According to Lemma
\ref{lem:mean_theo_2}, we have that $\lim_{n\rightarrow\infty } V_{n+1}
=\Sigma^{(k)}$, with $\Sigma^{(k)}=(\Sigma(y_i, y_j):\,  i, j\in [k])$. The      multidimensional
Lindeberg-Feller  condition is  trivially  satisfied as
$(Z_{j}^{(n+1)}:j\in[n+1])$  are bounded (uniformly in $n$) with the
same distribution. We deduce from the 
multidimensional central  limit theorem for triangular  arrays of random
variables, see
\cite{Bhattacharya_Rao_2010_book} Corollary 18.2,  that 
$(U_{n+1}^{(k)}: n\geq 0)$ converges in distribution towards the
Gaussian random vector with distribution 
$\mathcal{N}(0,\Sigma^{(k)})$. This gives the result. 
\end{proof}

Recall $\hat{\cdf}_{n}(y)$ defined in \eqref{eq:def_hat_cdf}.
In view of Lemma \ref{mlem:mean_theo_U_stat} and since $c_n(y)=y+
O(1/n)$, in order to prove Theorem \ref{thm:CLT_indicator_degree_sequence}, 
it is enough to prove  that for all $y\in(0,1)$:
\begin{equation}\label{eq:cv_L2_cdf_U_n}
 \hat{\cdf}_{n+1}(y)-U_{n+1}(y)\,\xrightarrow[n\rightarrow+\infty]{L^{2}}\, 0.
\end{equation}
Because $\hat{\cdf}_{n+1}$ and $U_{n+1}$ are centered, we deduce from
\reff{eq:haj_proj} that:
\[
 \E\left[\left(\hat{\cdf}_{n+1}(y)-U_{n+1}(y)\right)^2\right]
=\E\left[\hat{\cdf}_{n+1}(y)^2\right]-\E\left[U_{n+1}(y)^2\right].
\]
By Lemma \ref{lem:mean_theo_2}, we have
$\E\left[U_{n+1}(y)^2\right]\cv{n} \Sigma(y,y)$. So we deduce that the
proof of Theorem \ref{thm:CLT_indicator_degree_sequence} is a complete
as soon as the next lemma is proved.

\begin{lem}\label{lem:mean_theo_1}
 For all $y\in(0,1)$, we have 
\begin{equation*}
 \lim_{n\rightarrow\infty }\E\left[\hat{\cdf}_{n+1}(y)^2\right]=\Sigma(y,y).
\end{equation*}
\end{lem}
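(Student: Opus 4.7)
The plan is to compute $\E[\hat{\Pi}_{n+1}(y)^2]$ directly by exploiting the exchangeability in the labels $i\in[n+1]$ of the $W$-random graph construction, which reduces the variance to a pair-wise sum, and then match the three pieces to $\Sigma_1$, $\Sigma_2$, $\Sigma_3$ using the asymptotic estimates already established.

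More precisely, writing $d=D(y)$ and using that $\hat{\Pi}_{n+1}(y) = \frac{1}{\sqrt{n+1}}\sum_{i=1}^{n+1}\bigl(\1{D_i^{(n+1)}\le d}-c_n(y)\bigr)$ together with the symmetry of the joint law of $(D_i^{(n+1)}:i\in[n+1])$, I would express
\[
\E[\hat{\Pi}_{n+1}(y)^2] = \Var\bigl(\1{D_1^{(n+1)}\le d}\bigr) + n\,\Cov\bigl(\1{D_1^{(n+1)}\le d},\1{D_2^{(n+1)}\le d}\bigr).
\]
The first term equals $c_n(y)(1-c_n(y))$ which, by \reff{eq:lim_c_n}, tends to $y(1-y)=\Sigma_1(y,y)$.

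For the covariance term, I would decompose $\1{D_i^{(n+1)}\le d} = A_i+B_i$ with $A_i = \1{D_i^{(n+1)}\le d}-\1{X_i\le y}$ and $B_i=\1{X_i\le y}$, and analyse the four resulting covariances. The term $\Cov(B_1,B_2)$ is zero by independence of $X_1,X_2$. For $n\Cov(A_1,A_2)$, Proposition \ref{prop:ind10} applied with $y_1=y_2=y$ yields $n\E[A_1A_2]\to \Sigma_2(y,y)$, while $\E[A_i]=c_n(y)-y=O(1/n)$ by \reff{eq:lim_c_n}, so $n\Cov(A_1,A_2)\to\Sigma_2(y,y)$. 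For the cross term, conditioning on $X_2$ and recalling \reff{eq:H_n},
\[
\E[A_1 B_2] = \E\bigl[\1{X_2\le y}\bigl(c_n(y)-y + \tfrac{1}{n}H_n(y,X_2)\bigr)\bigr],
\]
hence $n\Cov(A_1,B_2) = \E[\1{X_2\le y}H_n(y,X_2)]$. By the uniform estimate of Proposition \ref{prop:mean_phi_n} and dominated convergence, this tends to $D'(y)^{-1}\bigl(yD(y)-\int_0^y W(y,x)\,dx\bigr)$, and the same bound applies to $n\Cov(B_1,A_2)$; summed, they give $\Sigma_3(y,y)$.

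Adding the three contributions produces $\Sigma_1(y,y)+\Sigma_2(y,y)+\Sigma_3(y,y)=\Sigma(y,y)$, which is the claim. There is no genuine obstacle since both Propositions \ref{prop:ind10} and \ref{prop:mean_phi_n} have already done the delicate work; the main care is bookkeeping in the decomposition and verifying that the lower-order terms ($\E[A_i]^2$ and the $R_n^{\ref{prop:mean_phi_n}}$ remainder) vanish at the right rate so they do not contribute to the limit.
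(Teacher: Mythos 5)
Your proposal is correct and follows essentially the same route as the paper: the exchangeability reduction to a diagonal term plus $n$ times a pairwise term, the decomposition $\1{D_i^{(n+1)}\le d}=(\1{D_i^{(n+1)}\le d}-\1{X_i\le y})+\1{X_i\le y}$, Proposition \ref{prop:ind10} for the $\Sigma_2$ contribution, Proposition \ref{prop:mean_phi_n} for the cross terms giving $\Sigma_3$, and \reff{eq:lim_c_n} to kill the $O(1/n)$ remainders. The only difference is cosmetic bookkeeping (covariances versus raw expectations), so nothing further is needed.
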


\begin{proof}
Let $y\in(0,1)$ and $d=D(y)$.
We have
\begin{align*}
 \E\left[\hat{\cdf}_{n+1}(y)^2\right]
 &=\frac{1}{n+1}\sum_{i,j=1}^{n+1}\E\left[\left(\1{D_{i}^{(n+1)}\le d}-c_{n}(y)\right)\left(\1{D_{j}^{(n+1)}\le d}-c_{n}(y)\right)\right]\\
 &=\E\left[\1{D_{1}^{(n+1)}\le d}\right]-c_{n}(y)^2
  +n\left\{\E\left[\1{D_{1}^{(n+1)}\le d}\1{D_{2}^{(n+1)}\le d}\right]-c_{n}(y)^2\right\}\\
 &=c_{n}(y)-(n+1)c_{n}(y)^2+n\,\E\left[\1{D_{1}^{(n+1)}\le d}\1{D_{2}^{(n+1)}\le d}\right].
\end{align*}
So we get that
\begin{equation*}
 \E\left[\hat{\cdf}_{n+1}(y)^2\right]
 =B_{n}^{(1)}+B_{n}^{(2)}+B_{n}^{(3)}+B_{n}^{(4)},
\end{equation*}
where
\begin{align*}
 B_{n}^{(1)}
&=c_{n}(y)-c_{n}(y)^2,\\
 B_{n}^{(2)}
&=-n(c_{n}(y)-y)^2,\\
 B_{n}^{(3)}
&=n\,\E\left[\left(\1{D_{1}^{(n+1)}\le d}-\1{X_{1}\le y}\right)
  \left(\1{D_{2}^{(n+1)}\le d}-\1{X_{2}\le y}\right)\right],\\ 
B_{n}^{(4)}
&=2n\,\E\left[\1{X_1\le y}\left(\1{D_{2}^{(n+1)}\le d}-c_{n}(y)\right)\right].
\end{align*}

By Equation \reff{eq:lim_c_n}, we get  $ \lim_{n\rightarrow\infty
}B_{n}^{(1)}=  \Sigma_{1}(y,y)$
and $\lim_{n\rightarrow \infty }  B_{n}^{(2)}=0$. 
By Proposition \ref{prop:ind10}, we get $\lim_{n\rightarrow\infty
}  B_{n}^{(3)}=  \Sigma_{2}(y,y)$. 
Using \reff{eq:H_n}, we get $ B_{n}^{(4)}=2\,\E\left[\1{X_{1}\le y}H_{n}(y,X_1)\right]$.
By Proposition \ref{prop:mean_phi_n} and dominated convergence, we get that:
\begin{align*}
 \E\left[\1{X_{1}\le y}H_{n}(y,X_1)\right]
 &= \E\left[\1{X_{1}\le y}\left(\frac{1}{D'(y)}\left(D(y)-W(y,X_1)\right)+R_{n}^{\ref{prop:mean_phi_n}}(y,X_{1})\right)\right]\\
&\xrightarrow[n\rightarrow\infty]{}\,
  \frac{1}{D'(y)}\left(yD(y)-\int_{0}^{y}W(y,x)dx\right).
\end{align*}
This gives $\lim_{n\rightarrow\infty }
 B_{n}^{(4)}= \Sigma_{3}(y,y)$. 
Then, we get that $\lim_{n\rightarrow\infty }
\E\left[\hat{\cdf}_{n+1}(y)^2\right]=\Sigma(y,y)$. 
\end{proof}

 \bibliographystyle{abbrv}
 \bibliography{biblio_article_graphon}

\cuthere \\
\begin{center}
 \textbf{Notation Index}
\end{center}
\vspace{1em}

\setlength{\columnseprule}{1pt} 
\begin{multicols}{2}
\noindent
\hrulefill
\begin{itemize}[leftmargin=*,itemsep=0.5em]
\item[] $|A|$ cardinal of set $A$
  \item[] $[n]=\{1,\dots,n\}$ 
\item[] $|\beta|$ length of $[n]$-word $\beta$
  \item[] $\mathcal{M}_{n}$ set of $[n]$-words with all characters distinct  
  \item[] $\mathcal{S}_{n,p}=\{\beta\in \cm_n: |\beta|=p\}$
  \item[] $|\mathcal{S}_{n,p}|=A_{n}^{p}=n!/(n-p)!$ 
\item[] $\beta_{\ell}=\beta_{\ell_1}\dots\beta_{\ell_k}$ for $\ell\in
\cs_{p,k}$ and $\beta\in \cs_{n,p}$
  \item[]
    $\mathcal{S}_{n,k}^{\ell,\alpha}=\left\{\beta\in\mathcal{S}_{n,k}:
      \beta_{\ell}=\alpha\right\}$ for $\alpha\in \cs_{n,k}$
\item[] $|\mathcal{S}_{n,k}^{\ell,\alpha}|=A_{n-k}^{p-k}=(n-k)!/(n-p)!$
\end{itemize}
\hrulefill
\begin{itemize}[leftmargin=*,itemsep=0.5em]
\item[] $\cf$ set of  simple  finite graphs
\item[] $F$  a  simple finite graph (and a finite sequence of simple
  graphs in Sections 3 to 6 satisfying condition \reff{eq:Hyp-hat-E})
  \item[] $E(F)$ set of edges of $F$
  \item[] $V(F)$ set of vertices of $F$
\item[] $v(F)=|V(F)|$ number of vertices of $F$
  \item[]  $G_n=G_{n}(W)$ W-random graph with $n$ vertices associated to
    the sequence 
    $X=(X_k, k\in \N^*)$ of i.i.d. uniform random variables on $[0, 1]$
  \end{itemize}
  \hrulefill
\begin{itemize}[leftmargin=*,itemsep=0.5em]
  \item[] $t(F,G)$ density of hom. from $F$ to $G$
  \item[] $\ti(F,G)$ density of injective hom.
  \item[] $\tid(F,G)$ density  of embeddings   
  \item[] $Y^\beta(F,G)= \prod_{\{i,j\}\in E(F)}\1{\{\beta_i,\beta_j\}\in E(G)}$
  \item[] $\ti (F,G)={|\mathcal{S}_{n,p}|}^{-1}\, \sum_{\beta\in
      \cs_{n,p} }Y^\beta(F, G)$
  \end{itemize}
  \hrulefill
\begin{itemize}[leftmargin=*,itemsep=0.5em]
\item[] \begin{center}
\vspace{.2cm}
$\boxed{\text{$\ell \in \cm_p$ and  $\alpha\in \cs_{p,k}$ with $k=|\ell|$}}$
\vspace{.2cm}
\end{center}
  \item[] $\ti(F^{\ell},G^{\alpha})$ density of injective hom. such
    that the  labelled vertices $\ell$ of $F$, with $V(F)=[p]$,  are sent on the  labelled
    vertices $\alpha$ of $G$, with $V(G)=[n]$
\item[] $F^{[\ell]}$ sub-graph of the labeled vertices $\ell$ of $F$
  \item[] $Y^\beta(F^\ell, G^\alpha)=Y^\beta(F,G)$ for $\beta\in \cs_{n,p}^{\ell,\alpha}$
  \item[] $\hat Y^\alpha(F^\ell, G^\alpha)= \prod_{\{i,j\}\in
      E(F^{[\ell]})}\1{\{\alpha_i,\alpha_j\}\in E(G)}$ 
  \item[] $Y^\beta(F^\ell, G^\alpha)=\hat{Y}^\alpha(F^\ell, G^\alpha)\,
    \tilde Y^\beta(F^\ell, G^\alpha)$ i.e.:
\begin{center}
\vspace{.2cm}
$\boxed{Y^\beta=\hat{Y}^\alpha\,
    \tilde Y^\beta}$
\vspace{.2cm}
\end{center}
  \item[] $\tti(F^{\ell},G^{\alpha})={|\cs_{n,p}^{\ell,\alpha}|}^{-1} \sum_{\beta\in \cs_{n,p}^{\ell,\alpha}  } \,\, \tilde Y^\beta$
  \item[] \begin{center}
   $\boxed{\ti(F^{\ell},G^{\alpha})=\hat Y^\alpha \,\,\tti(F^{\ell},G^{\alpha})}$
  \end{center}
  \end{itemize}
 \hrulefill
\begin{itemize}[leftmargin=*,itemsep=0.5em]
  \item[] $t(F,W)$ hom. densities for graphon  $W$
\item[]  $\tid(F,W)$  density  of embeddings 
\item[]   $X_\alpha=(X_{\alpha_1}, \ldots, X_{\alpha_k})$ and simil. for $X_\beta$
\item[] $Z^\beta=Z(X_\beta)=\E[Y^\beta(F^\ell, G_n^\alpha)|X]$ 
\item[] $\tilde Z^\beta=\E[\tilde Y^\beta(F^\ell, G^\alpha_n)|X]$ 
\item[] $t_{x}=t_{x}(F^\ell, W)=\E[Z^\beta| X_\alpha=x]$ and
\item[] $t_{x}=\E[\ti(F^\ell, G_n^\alpha)|X_\alpha=x]$
\item[] $\tilde t_{x}=\tilde t_{x}(F^\ell, W)=\E[\tilde Z^\beta|
  X_\alpha=x]$ and
\item[] $\tilde t_{x}=\E[\tti(F^\ell, G_n^\alpha)|X_\alpha=x]$
\item[] $\hat t_{x}=\hat t_{x}(F^\ell, W)=\E[\hat Y^\alpha(F^\ell, G_n^\alpha)| X_\alpha=x]$
\begin{center}
\vspace{.2cm}
$\boxed{t_{x}=\hat t_{x} \, \tilde t_{x}}$ $ $ for $x\in [0, 1]^k$
\vspace{.2cm}
\end{center}
   \item[] $ t(F^\ell, W)= \int_{[0,1]^k} t_{x}\, dx=\E[\ti(F, G_n)]$
   \item[] $\hat t(F^\ell, W)= \int_{[0,1]^k}\hat t_{x}\,
     dx=\E[\ti(F^{[\ell]}, G_n)]$
and $\hat t(F^\ell, W)= t(F^{[\ell]}, W)$

  \end{itemize}
  \hrulefill
\begin{itemize}[leftmargin=*,itemsep=0.5em]
  \item[] $\Gamma_{n}^{F,\ell}$ random probability measure: 
\[\Gamma_{n}^{F,\ell}(g)={|\cs_{n,k}|}^{-1}
        \sum_{\alpha\in\mathcal{S}_{n,k}}g\left(\ti
          (F^{\ell},G_{n}^{\alpha})\right)\]
  \item[] $\Gamma^{F,\ell}(dx)=\E\left[\Gamma_{n}^{F,\ell}(dx)\right]$ 
  \item[] $\sigma^{F,\ell}(g)^2$ asymptotic variance of
\[\sqrt{n}\left(\Gamma_{n}^{F,\ell}(g) - \Gamma^{F,\ell}(g)\right)\]
  \end{itemize}
  \hrulefill
\begin{itemize}[leftmargin=*,itemsep=0.5em]
\setlength\itemsep{0.5em}
  \item[] $nD_{i}^{(n)}$  degree of  $i$ in $G_n$
  \item[] $\Pi_{n}$ empirical CDF of the degrees of $G_{n}$
  \item[] $D(x)=\int_{[0, 1]} W(x,y) dy$ degree funct. of $W$
\end{itemize}
\hrulefill
\begin{itemize}[leftmargin=*,itemsep=0.5em]
  \item[] $\mathcal{H}_{n,d,\delta}(\rp)=\pro(X\le nd+\delta)$ for $X\sim\mathcal{B}(n,\rp)$
  \item[] $\sigma_{(x)}^2=x(1-x)$
  \item[] $S(x)=\lceil x \rceil -x -\frac{1}{2}$
  \item[] $\Phi$  the CDF  of $\cn(0, 1)$
\item[]  $\varphi$ probability distribution density of $\cn(01)$
            
\end{itemize}
\hrulefill
\begin{itemize}[leftmargin=*,itemsep=0.5em]
   \item[] \begin{center}
   $\boxed{d=D(y)}$\end{center}
 \item[] \begin{center}
   $\boxed{c_{n}(y)=\pro(D_{1}^{(n+1)}\le d)}$
 \end{center}
  \item[] $ H_{n}^{\star}(y,u)=\P(D_{1}^{(n+1)}\le
          d\,|\,X_{1}=u)- c_{n}(y)$
  \item[] $ \frac{H_{n} (y,u)}{n}=\P(D_{1}^{(n+1)}\le
          d\,|\,X_{2}=u)- c_{n}(y)$
\end{itemize}
\hrulefill
\end{multicols}
\cuthere


\section{Appendix A: Preliminary results for the CDF of binomial
  distributions} 
\label{sec:preliminaries_CDF_bin}

In this section, we study uniform asymptotics for the CDF of binomial
distributions.
Let $n\in\Ne$, $d\in[0,1]$, $\delta\in \R$ and $\rp\in(0,1)$.
We  consider the CDF:
\begin{equation}\label{eq_FDR_bin}
 \mathcal{H}_{n,d,\delta}(\rp)=\pro\left(X\le nd+\delta\right),
\end{equation}
where $X$ a binomial random variable with paramaters $(n,\rp)$.
We denote by $\Phi$ the cumulative distribution function of the standard Gaussian distribution
and by $\varphi$ the probability distribution density of the standard Gaussian
distribution.
We recall \reff{eq:def_sigma} and \reff{eq:def_S}: 
$ \sigma_{(x)}^2=x(1-x)$ for  $ x\in[0,1]$, and $S(x)=\lceil x \rceil
-x -\frac{1}{2}$ for  $ x\in\R$.
\\

We recall a result from
\cite{Nagaev_Chebotarev_Zolotukhin_2016_article}, see also 
\cite{Uspensky_1937_article}, Chapter VII: for all $x\in\R$, for all $\rp\in(0,1)$ and 
$n\in\Ne$ such that $n\sigma_{(\rp)}^2\ge 25$, we have:
\begin{equation}\label{eq:thm_NCZ}
 \pro\left(X\le n\rp+\sqrt{n}\,\sigma_{(\rp)}x\right)
 = \Phi(x)+\frac{1}{\sqrt{n}}\mathcal{Q}(\rp,x)+\frac{1}{\sqrt{n}\,\sigma_{(\rp)}}S(n\rp+x\sqrt{n}\,\sigma_{(\rp)})\varphi(x)
  +U_{n}(\rp,x),
\end{equation}
where
\begin{equation*}
 \mathcal{Q}(\rp,x)=\frac{2\rp-1}{6\sigma_{(\rp)}}\varphi''(x)=\frac{2\rp-1}{6\sigma_{(\rp)}}(x^{2}-1)\varphi(x),
\end{equation*}
and
\begin{equation}
\label{eq:bd-Un}
 \left|U_{n}(\rp,x)\right|\le \frac{0.2+0.3\left|2\rp-1\right|}{n\sigma_{(\rp)}^2}+\exp\left(-\frac{3\sqrt{n}\,\sigma_{(\rp)}}{2}\right).
\end{equation}
We use this result to give an approximation of
$\mathcal{H}_{n,d,\delta}\left(d+\frac{s}{\sqrt{n}}\right)$. 

\begin{prop}\label{prop:CFD_approxi}
  Let               $\varepsilon_{0}\in(0,\frac{1}{2})$              and
  $K_{0}=[\varepsilon_{0},1-\varepsilon_{0}]$.  Let  $\alpha>0$.   There
  exists  a  positive  constant  $C$   such  that  for  all  $n\geq  2$,
  $s\in[-\alpha\sqrt{\log ( n)},\alpha\sqrt{\log (n)}]$,  $\delta\in[-1,1]$
  and $d\in K_0$ such that $d+\frac{s}{\sqrt{n}}\in K_{0}$, we have:
 \begin{equation*}
  \mathcal{H}_{n,d,\delta}\left(d+\frac{s}{\sqrt{n}}\right)
  =\Phi(y_{s})+\frac{1}{\sqrt{n}}\frac{\varphi(y_{s})}{\sigma_{(d)}}\pi(s,n,d,\delta)+R^{\ref{prop:CFD_approxi}}(s,n,d,\delta),
 \end{equation*}
 where 
 \begin{equation}\label{eq:def_y_s}
 y_{s}=\frac{-s}{\sigma_{(d)}} \quad\text{and}\quad \pi(s,n,d,\delta)=
 \frac{1-2d}{6}( 1 +2 y_{s}^2)+S(nd+\delta)+\delta
 \end{equation}
and 
 \begin{equation*}
  \left|R^{\ref{prop:CFD_approxi}}(s,n,d,\delta)\right|\le C\frac{\log(n)^{2}}{n}\cdot
 \end{equation*}
\end{prop}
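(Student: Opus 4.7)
The plan is to apply the Nagaev--Chebotarev--Zolotukhin expansion \reff{eq:thm_NCZ} with $\rp = d + s/\sqrt{n}$, evaluated at the specific point
\[
  x_n = \frac{nd + \delta - n\rp}{\sqrt{n}\,\sigma_{(\rp)}} = -\frac{s}{\sigma_{(\rp)}} + \frac{\delta}{\sqrt{n}\,\sigma_{(\rp)}},
\]
chosen precisely so that $n\rp + \sqrt{n}\,\sigma_{(\rp)}\,x_n = nd+\delta$. With this choice, the summand $S(n\rp + x_n\sqrt{n}\,\sigma_{(\rp)})$ in \reff{eq:thm_NCZ} is exactly $S(nd+\delta)$, matching one of the three pieces of $\pi$. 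The hypothesis $n\sigma_{(\rp)}^2 \geq 25$ holds once $n$ exceeds a constant depending only on $\varepsilon_0$ (smaller $n$ can be absorbed into $C$), since $d, d+s/\sqrt{n} \in K_0$ forces $\sigma_{(\rp)}^2 \geq \varepsilon_0(1-\varepsilon_0)$; under the same lower bound, the remainder $U_n(\rp, x_n)$ from \reff{eq:bd-Un} is $O(1/n)$.

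I would then linearize $\sigma_{(\rp)}$ around $\sigma_{(d)}$ using $\sigma_{(\rp)}^2 - \sigma_{(d)}^2 = (1-2d)s/\sqrt{n} - s^2/n$, producing
\[
  x_n - y_s = \frac{(1-2d)\,s^2}{2\sigma_{(d)}^3\sqrt{n}} + \frac{\delta}{\sigma_{(d)}\sqrt{n}} + O\!\left(\frac{(\log n)^{3/2}}{n}\right).
\]
In particular $x_n - y_s = O((\log n)/\sqrt{n})$, so that the uniform bound on $\varphi'$ combined with Taylor's formula yields $\Phi(x_n) = \Phi(y_s) + \varphi(y_s)(x_n - y_s) + O((\log n)^2/n)$. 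Since $\mathcal{Q}(\rp, x) = \frac{2\rp-1}{6\sigma_{(\rp)}}(x^2-1)\varphi(x)$ and $\varphi(x)/\sigma_{(\rp)}$ are Lipschitz in $(\rp,x)$ with absolute constants on the relevant domain (the Gaussian factor controls any polynomial blow-up from $|y_s| = O(\sqrt{\log n})$), the same type of linearization gives
\[
  \frac{\mathcal{Q}(\rp, x_n)}{\sqrt{n}} = \frac{(2d-1)(y_s^2-1)\,\varphi(y_s)}{6\sigma_{(d)}\sqrt{n}} + O\!\left(\frac{\log n}{n}\right),
\]
\[
  \frac{S(nd+\delta)\,\varphi(x_n)}{\sqrt{n}\,\sigma_{(\rp)}} = \frac{S(nd+\delta)\,\varphi(y_s)}{\sigma_{(d)}\sqrt{n}} + O\!\left(\frac{\log n}{n}\right),
\]
where the second line exploits $|S| \leq 1/2$.

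Summing the three main terms, substituting $s^2 = \sigma_{(d)}^2 y_s^2$ in the Taylor expansion of $\Phi$, and collecting the $1/\sqrt{n}$ coefficients, I obtain $\varphi(y_s)/(\sigma_{(d)}\sqrt{n})$ multiplied by
\[
  \frac{(1-2d)\,y_s^2}{2} + \delta + \frac{(2d-1)(y_s^2-1)}{6} + S(nd+\delta).
\]
The algebraic identity $\tfrac{(2d-1)(y_s^2-1)}{6} + \tfrac{(1-2d)\,y_s^2}{2} = \tfrac{(1-2d)(1+2y_s^2)}{6}$ then recovers $\pi$ exactly, and the accumulated error is $O((\log n)^2/n)$.

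The main obstacle is purely bookkeeping: several small parameters (two powers of $1/\sqrt{n}$, $|\delta| \leq 1$, $|s| \leq \alpha\sqrt{\log n}$) and the possibly large factor $|y_s| = O(\sqrt{\log n})$ are entangled in the linearizations, and it must be checked that no combination exceeds the target error $(\log n)^2/n$. The one genuinely non-trivial point is that the $s^2$-term produced by expanding $1/\sigma_{(\rp)}$ inside $x_n$ is essential: without its contribution to $\Phi(x_n) - \Phi(y_s)$, the $\mathcal{Q}$-term alone would yield a $(y_s^2-1)/6$ coefficient instead of the correct $(1+2y_s^2)/6$ that appears in $\pi$.
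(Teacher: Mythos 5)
Your proposal is correct and follows essentially the same route as the paper's proof: both apply the Nagaev--Chebotarev--Zolotukhin expansion \reff{eq:thm_NCZ} with $\rp=d+s/\sqrt{n}$ at the shifted point $x_n=x_s(1/\sqrt{n})=(-s+\delta/\sqrt{n})/\sigma_{(\rp)}$, then Taylor-expand $\Phi$, the $\mathcal{Q}$-term and the $S$-term around $y_s$ with the same $O(\theta^2|\log\theta|^2)$ and $O(\theta|\log\theta|)$ error bookkeeping. Your observation that the quadratic contribution $\tfrac{(1-2d)y_s^2}{2}$ from expanding $1/\sigma_{(\rp)}$ inside $x_n$ is what turns $\tfrac{(2d-1)(y_s^2-1)}{6}$ into the correct coefficient $\tfrac{(1-2d)(1+2y_s^2)}{6}$ matches exactly the first-order term the paper extracts from $\Phi(x_s(\theta))$.
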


\begin{proof}
  In  what follows,  $C$ denotes  a positive  constant which  depends on
  $\varepsilon_0$       (but        not       on        $n\geq       2$,
  $s\in[-\alpha\sqrt{\log          (n)},\alpha\sqrt{\log         (n)}]$,
  $\delta\in[-1,1]$       and      $d\in       K_0$      such       that
  $d+\frac{s}{\sqrt{n}}\in K_{0}$)  and which  may change from  lines to
  lines.  We will also use, without recalling it, that 
  $\sigma_{(.)}$ is uniformly bounded away from $0$ on $K_0$.

For all $\theta\in(0,1]$ such that $d+s\theta\in(0,1)$, we set 
 \begin{equation}\label{eq:def_x_s}
  x_{s}(\theta)=\frac{-s+\delta \theta}{\sigma_{(d+s\theta)}}\cdot
 \end{equation}

Let $\rp=d+\frac{s}{\sqrt{n}}$ and $X$ be a binomial random variable with parameters $(n,\rp)$.
Because $nd+\delta=n\rp+\sqrt{n}\sigma_{(\rp)}\,x_{s}\left(\frac{1}{\sqrt{n}}\right)$, we can write 
\begin{equation}\label{eq:appli_NCZ}
 \mathcal{H}_{n,d,\delta}\left(d+\frac{s}{\sqrt{n}}\right)=\pro\left(X\le n\rp+\sqrt{n}\,\sigma_{(\rp)}\,x_{s}\left(\frac{1}{\sqrt{n}}\right)\right).
\end{equation}
Recall $S$ is defined  in \eqref{eq:def_S}.  Using \reff{eq:thm_NCZ}, we
get that:
 \begin{multline}
\label{eq:NCZ}
  \mathcal{H}_{n,d,\delta}\left(d+\frac{s}{\sqrt{n}}\right)
  =\Phi\left(x_{s}\left(\frac{1}{\sqrt{n}}\right)\right)+\frac{1}{\sqrt{n}}\,Q^{(1)}_{d,\delta}\left(s,\frac{1}{\sqrt{n}}\right)\\
   +\frac{1}{\sqrt{n}}\,\,Q^{(2)}_{d,\delta}\left(s,\frac{1}{\sqrt{n}}\right)+U_{n}\left(d+\frac{s}{\sqrt{n}},x_{s}\left(\frac{1}{\sqrt{n}}\right)\right)
 \end{multline}
 where for $\theta\in(0,1]$ such that $d+s\theta\in(0,1)$,
 \begin{align*}
  Q^{(1)}_{d,\delta}(s,\theta)
&=\frac{2(d+s\theta)-1}{6\sigma_{(d+s\theta)}}\left(x_{s}(\theta)^2-1\right)\varphi(x_{s}(\theta)), \\
  Q^{(2)}_{d,\delta}(s,\theta)
&=\frac{1}{\sigma_{(d+s\theta)}}S\left(d\theta^{-2}+\delta\right)\varphi(x_{s}(\theta)).
 \end{align*}

\subsection*{Study of the first term of the right hand side of \reff{eq:NCZ}}
Let        $\theta\in (0,    1/{\sqrt{2}}]$, and notice        that
$|\log(\theta)|\ge\log(\sqrt{2})>0$.    Recall    the   definition    of
$x_{s}(\theta)$ given by \reff{eq:def_x_s}.  By simple computations, we
get     that      for     all      $0<\theta\le   1/\sqrt{2}$,
$|s|\le  \alpha\sqrt{2|\log (\theta)|}$,  $|\delta|\le 1$,  and $d\in  K_0$
such that $d+s\theta\in K_{0}$,
\begin{equation}\label{eq:x_s_theta}
 \lvert x_{s}(\theta) \rvert \le C\,|\log(\theta)|^{\frac{1}{2}}, \quad
 \lvert x'_{s}(\theta) \rvert \le C\,|\log(\theta)|
 \quad\text{and}\quad  \lvert x''_{s}(\theta) \rvert \le C\,|\log(\theta)|^{\frac{3}{2}}.
\end{equation}
 We define the function $\Psi_{s}(\theta)=\Phi(x_{s}(\theta))$. Applying
 Taylor-Lagrange inequality for $\Psi$ at $\theta=0$, we have:
 \begin{equation*}
  \Psi_{s}(\theta)=\Psi_{s}(0)+\theta\Psi'_{s}(0)+R^{(1)}_{s}(\theta),
 \end{equation*}
where $R^{(1)}_{s}(\theta)=\int_{0}^{\theta}\Psi''_{s}(t)(\theta-t) dt$.
Recall the definition of $y_s=x_s(0)$ given in \reff{eq:def_y_s}.
Elementary calculus give:
 \begin{align}
  \Phi\left(x_{s}(\theta)\right)
  &=\Phi(x_{s}(0))+\theta \,x'_{s}(0)\varphi(x_{s}(0))+R_{s}^{(1)}(\theta) \nonumber\\
  &=\Phi\left(y_{s}\right)  +\theta\left[\frac{(1-2d)}{2\sigma_{(d)}}y_s^2
+\frac{\delta}{\sigma_{(d)}}\right]\varphi\left(y_{s}\right)+R^{(1)}_{s}(\theta),
 \label{align:CDF_1}
 \end{align}
where $R^{(1)}_{s}(\theta)=\int_{0}^{\theta}\Big(x''_{s}(t)-x'_{s}(t)^2 x_{s}(t)\Big)\varphi(x_{s}(t))\left(\theta-t\right) dt$.
Using \reff{eq:x_s_theta} and that $t\varphi(t)$ is bounded, we have:
\begin{equation*}
 \left|R^{(1)}_{s}(\theta)\right|\le C\theta^{2}(|\log(\theta)|^{\frac{3}{2}}+|\log(\theta)|^{2})
 \le  C\theta^{2}|\log(\theta)|^{2}.
\end{equation*}

\subsection*{Study of the second term of the right hand side of \reff{eq:NCZ}}
We have $Q^{(1)}_{d,\delta}(s,\theta)=G_{s}(\theta)H(x_{s}(\theta))$ where
\begin{equation*}
 G_{s}(\theta)=\frac{2(d+s\theta)-1}{6\sigma_{(d+s\theta)}} \quad\text{ and }\quad  H(x)=\left(x^2-1\right)\varphi(x).
\end{equation*}
For the first term, we have
\begin{equation*}
 G_{s}(\theta)=G_{s}(0)+R^{(2)}_{s}(\theta)
=\frac{2d-1}{6\sigma_{(d)}}+R^{(2)}_{s}(\theta), 
\end{equation*}
where $R^{(2)}_{s}(\theta)=\int_{0}^{\theta}G'_{s}(t)dt$. We compute that:
\begin{equation*}
 G'_{s}(t)=s\left[\frac{1}{3\sigma_{(d+st)}}+\frac{[2(d+st)-1]^2}{12\sigma_{(d+st)}^3}\right].
\end{equation*}
We obtain that
$\left|R^{(2)}_{s}(\theta)\right|\le C\theta|s|\le C\theta |\log( \theta)|^{\frac{1}{2}}$.
For the second term, we have
\begin{equation*}
 H(x_{s}(\theta))=H(x_{s}(0))+R^{(3)}_{s}(\theta)
                 =\left(y_{s}^2-1\right)\varphi\left(y_{s}\right)+R^{(3)}_{s}(\theta),
\end{equation*}
where   $R^{(3)}_{s}(\theta)=\int_{0}^{\theta}x'_{s}(t) H'(x_{s}(t))dt=\int_{0}^{\theta}x'_{s}(t)\left[-x_{s}(t)^3+3x_{s}(t)\right]\varphi(x_{s}(t))dt$.
Using \reff{eq:x_s_theta} and that  $(|t|^3+t)\varphi(t)$ is bounded, we
get  that $\left|R^{(3)}_{s}(\theta)\right|\le  C\theta |\log(\theta)|$.
Finally, we obtain that
\begin{equation}\label{eq:CDF_2}
 Q^{(1)}_{d,\delta}(s,\theta)=\frac{2d-1}{6\sigma_{(d)}}\left(y_{s}^2-1\right)\varphi\left(y_{s}\right)+R^{(4)}_{s}(\theta)
\end{equation}
with $\left|R^{(4)}_{s}(\theta)\right|\le C\theta|\log(\theta)|$.
\subsection*{Study of the last term of the right hand side of \reff{eq:NCZ}}
We have
\begin{equation*}
Q^{(2)}_{d,\delta}(s,\theta)=F_{s}(\theta)S\left(\frac{d}{\theta^2}+\delta\right)\varphi(x_{s}(\theta))
\quad \text{with}\quad F_{s}(\theta)=\frac{1}{\sigma_{(d+s\theta)}}\cdot
\end{equation*}
For the first term of the right hand side, we have
\begin{equation*}
 F_{s}(\theta)=F_{s}(0)+R^{(5)}_{s}(\theta)=\frac{1}{\sigma_{(d)}}+R^{(5)}_{s}(\theta),
\end{equation*}
 where $R^{(5)}_{s}(\theta) =\int_{0}^{\theta}F'_{s}(t)dt
 =\int_{0}^{\theta}\frac{s(2(d+st)-1)}{2\sigma_{(d+st)}^{3}}dt$. We get
 that 
 $\left|R^{(5)}_{s}(\theta)\right|\le C \theta|s|\le
  C\theta|\log
 (\theta)|^{\frac{1}{2}}$. 
 For the last term of the right hand side, we have:
 \begin{equation*}
  \varphi(x_{s}(\theta))=\varphi(x_{s}(0))+R^{(6)}_{s}(\theta)
=\varphi\left(y_{s}\right)+R^{(6)}_{s}(\theta),
 \end{equation*}
where
$R^{(6)}_{s}(\theta) =\int_{0}^{\theta}x'_{s}(t)\varphi'(x_{s}(t))dt
=-\int_{0}^{\theta}x_{s}(t)x'_{s}(t)\varphi(x_{s}(t))dt$.  
So, using \reff{eq:x_s_theta} and that $t\varphi(t)$ is bounded, we get that
$\left|R^{(6)}_{s}(\theta)\right|\le C\theta|\log(\theta)|$.
Finally, we obtain that
\begin{equation}\label{eq:CDF_3}
 Q^{(2)}_{d,\delta}(s,\theta)
=\frac{1}{\sigma_{(d)}}S\left(\frac{d}{\theta^2}+\delta\right)
\varphi\left(y_{s}\right)+R^{(7)}_{s}(\theta),
\end{equation}
where $\left|R^{(7)}_{s}(\theta)\right|\le C\theta\left|\log(\theta)\right|$, since $S$ is bounded.

\subsection*{Conclusion}
We deduce from \reff{align:CDF_1}, \reff{eq:CDF_2} and \reff{eq:CDF_3} that
\begin{equation*}
 \Phi\left(x_{s}(\theta)\right)+\theta \,Q^{(1)}_{d,\delta}(s,\theta)+\theta\, Q^{(2)}_{d,\delta}(s,\theta)
 =\Phi(y_s)+\theta\frac{\varphi(y_s)}{\sigma_{(d)}}\pi\left(s,\frac{1}{\theta^2},d,\delta\right)+R^{(8)}_{s}(\theta),
\end{equation*}
where $\left|R^{(8)}_{s}(\theta)\right|\le C\theta^2|\log(\theta)|^{2}$.
We get the  result by taking $\theta=1/{\sqrt{n}}$ and using
\reff{eq:NCZ} and the obvious bound on $U_n$ given by \reff{eq:bd-Un} so
that $|U_n|\leq C /n$. 
\end{proof}

 We state a Lemma which will be useful for the proof of Corollary \ref{lem:ind10}.
 \begin{lem}\label{lem:maj_int_gauss}
  Let $y\in[0,1]$ and $\alpha>0$. For all $n\geq 2$, we have with
  $d=D(y)$, $A=\alpha\sqrt{\log(n)}$ and $y_s=-s/\sigma_ {(d)}$,
  \begin{equation*}
  \Phi\left(-\frac{A}{\sigma_{(d)}}\right)\le \inv{\alpha\, n^{2\alpha^2}},
\quad
   \int_{A}^{+\infty}s\Phi\left(y_s\right)ds\le \inv{\alpha\, 
     n^{2\alpha^2}}
   \quad \text{and} \quad 
   \int_{A}^{+\infty}s^{2}\varphi(y_s)ds \le \inv{\alpha\, 
     n^{\alpha^2}} \cdot
  \end{equation*}
 \end{lem}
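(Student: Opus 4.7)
The plan is to exploit two elementary facts: first, the variance identity $\sigma_{(d)}^{2} = d(1-d) \le 1/4$, which forces $\sigma_{(d)} \le 1/2$ and therefore $A/\sigma_{(d)} \ge 2A = 2\alpha\sqrt{\log n}$; second, the classical Mill's ratio inequality $\Phi(-x) \le \varphi(x)/x$ for $x>0$, equivalently $\Phi(-x) \le e^{-x^2/2}/(x\sqrt{2\pi})$. Together they yield the decisive exponential decay $e^{-A^{2}/(2\sigma_{(d)}^{2})} \le e^{-2A^{2}} = n^{-2\alpha^{2}}$, which drives all three estimates. Since $\varphi$ is even and $y_s<0$ for $s>0$, I will freely rewrite $\varphi(y_s) = \varphi(s/\sigma_{(d)})$ and $\Phi(y_s) = \Phi(-s/\sigma_{(d)})$.

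For the first inequality, Mill's ratio applied at $x = A/\sigma_{(d)}$ gives
\[
\Phi(-A/\sigma_{(d)}) \le \frac{\sigma_{(d)}}{A\sqrt{2\pi}}\, e^{-A^{2}/(2\sigma_{(d)}^{2})} \le \frac{1}{2\alpha\sqrt{2\pi\log n}}\, n^{-2\alpha^{2}},
\]
and the prefactor is bounded by $1/\alpha$ for $n\ge 2$ since $2\sqrt{2\pi\log 2}>1$.

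For the second inequality, the clean route is to bound the integrand pointwise via Mill's ratio, $s\Phi(-s/\sigma_{(d)}) \le \sigma_{(d)}\varphi(s/\sigma_{(d)})$, and then integrate: with the change of variable $u=s/\sigma_{(d)}$ this gives $\int_A^{+\infty} s\Phi(y_s)\, ds \le \sigma_{(d)}^{2}\, \Phi(-A/\sigma_{(d)})$, reducing the claim to the first inequality with the extra harmless factor $\sigma_{(d)}^{2}\le 1/4$.

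For the third inequality, I would compute the integral in closed form by integration by parts (with $u=s$, $dv = s\, e^{-s^{2}/(2\sigma_{(d)}^{2})}\, ds$), obtaining
\[
\int_{A}^{+\infty} s^{2}\varphi(s/\sigma_{(d)})\, ds = A\sigma_{(d)}^{2}\, \varphi(A/\sigma_{(d)}) + \sigma_{(d)}^{3}\, \Phi(-A/\sigma_{(d)}).
\]
The second summand is controlled by the first inequality (and is already $O(n^{-2\alpha^{2}})$). The first summand is bounded by $(\alpha\sqrt{\log n}/(4\sqrt{2\pi}))\, n^{-2\alpha^{2}}$ using $\sigma_{(d)}\le 1/2$ and $\varphi(A/\sigma_{(d)}) \le n^{-2\alpha^{2}}/\sqrt{2\pi}$. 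The only non-routine step, and the main obstacle, is then the uniform bound in $(\alpha,n)\in(0,\infty)\times[2,\infty)$ on $\alpha^{2}\sqrt{\log n}\, n^{-\alpha^{2}}$: maximising $n\mapsto \sqrt{\log n}\, n^{-\alpha^{2}}$ on $[2,\infty)$ shows that the supremum is attained at $n = e^{1/(2\alpha^{2})}$ when this value is $\ge 2$ (yielding the bound $\alpha/\sqrt{2e}$), and at $n=2$ otherwise (yielding $\alpha^{2}\sqrt{\log 2}\, 2^{-\alpha^{2}}$, uniformly bounded in $\alpha$). Both regimes give a finite constant, which after comparison with $1/(\alpha n^{\alpha^{2}})$ concludes the proof.
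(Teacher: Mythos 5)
Your proof is correct, and for the first two inequalities it follows exactly the paper's route: the Mill's-ratio bound $\Phi(-t)\le \varphi(t)/t$ combined with $\sigma_{(d)}\le 1/2$, and then the pointwise bound $s\,\Phi(-s/\sigma_{(d)})\le \sigma_{(d)}\varphi(s/\sigma_{(d)})$ followed by integration, which reduces the second estimate to the first. For the third inequality you genuinely diverge. You compute $\int_A^{+\infty} s^2\varphi(s/\sigma_{(d)})\,ds = A\sigma_{(d)}^2\varphi(A/\sigma_{(d)}) + \sigma_{(d)}^3\Phi(-A/\sigma_{(d)})$ exactly by parts, and you are then forced to control $\alpha^2\sqrt{\log n}\,n^{-\alpha^2}$ uniformly over $\alpha>0$ and $n\ge 2$; your two-regime optimisation does this correctly (the interior critical point $n=e^{1/(2\alpha^2)}$ lies in $[2,+\infty)$ only when $\alpha\le 1/\sqrt{2\log 2}$, in which range the resulting value $\alpha/\sqrt{2e}$ is itself bounded, and otherwise the supremum sits at $n=2$ where $\alpha^2\sqrt{\log 2}\,2^{-\alpha^2}$ is bounded in $\alpha$). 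The paper instead applies the elementary inequality $x e^{-x}\le 2e^{-x/2}$ with $x=s^2/(2\sigma_{(d)}^2)$, which absorbs the polynomial factor into a Gaussian tail with the larger scale $\sqrt{2}\,\sigma_{(d)}\le 1/\sqrt{2}$ and yields $4\sqrt{2}\,\sigma_{(d)}^3\,\Phi(-A/(\sqrt{2}\sigma_{(d)}))\le 1/(\alpha n^{\alpha^2})$ in one line; this is precisely why the stated exponent drops from $2\alpha^2$ to $\alpha^2$ in the third bound. Your version is marginally sharper (it exhibits the true decay $\sqrt{\log n}\,n^{-2\alpha^2}$) at the price of the extra uniformity argument, while the paper's trick is shorter and lands directly on the exponent appearing in the statement.
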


 \begin{proof}
  For all $t\ge 0$, we have 
  \begin{equation}\label{eq:Phi_maj}
   \Phi(-t)=\int_{t}^{+\infty}s\frac{\varphi(s)}{s}ds\le \frac{1}{t}\int_{t}^{+\infty}s\varphi(s)ds= \frac{1}{t}\varphi(t) .
  \end{equation}
Because $\sigma_{(d)}\le 1/2$, we get with $t=\frac{A}{\sigma_{(d)}}$
the following rough upper bound:
\begin{equation}\label{eq:Phi_maj_2}
 \Phi\left(-\frac{A}{\sigma_{(d)}}\right)\le
 \frac{\sigma_{(d)}}{A}\varphi\left(\frac{A}{\sigma_{(d)}}\right)\le
 \inv{\alpha\, n^{2\alpha^2} }\cdot
\end{equation}
Using again \reff{eq:Phi_maj} and \reff{eq:Phi_maj_2}, we get, for the second inequality that:
\begin{equation*}
  \int_{A}^{+\infty}s\Phi\left(y_s\right)ds\le \sigma_{(d)}\int_{A}^{+\infty}\varphi(-y_s)ds=\sigma_{(d)}^{2}\Phi\left(-\frac{A}{\sigma_{(d)}}\right)
  \le  \inv{\alpha\, n^{2\alpha^2} }\cdot
\end{equation*}
For the last inequality, we have:
\begin{align*}
 \int_{A}^{+\infty}s^{2}\varphi(y_s)ds
 =\frac{2\sigma_{(d)}^2}{\sqrt{2\pi}}\int_{A}^{+\infty}\frac{s^2}{2\sigma_{(d)}^2}\re^{-\frac{s^2}{2\sigma_{(d)}^2}}ds
 &\le \frac{4\sigma_{(d)}^2}{\sqrt{2\pi}}\int_{A}^{+\infty}\re^{-\frac{s^2}{4\sigma_{(d)}^2}}ds\\
 &=4\sqrt{2}\sigma_{(d)}^3\Phi\left(-\frac{A}{\sqrt{2}\sigma_{(d)}}\right)\\
 &\le  \inv{\alpha\, n^{\alpha^2} }\cdot
\end{align*}
where we used $x\re^{-x}\le 2\re^{-\frac{x}{2}}$ for the first inequality and an inequality similar to \reff{eq:Phi_maj_2}
with $\sigma_{(d)}$ replaced by $\sqrt{2}\sigma_{(d)}$ for the last one.
 \end{proof}

For $f\in\mathcal{C}^{2}([0,1])$, we set $\lVert f\rVert_{3,\infty}= \lVert f \rVert_{\infty} +\lVert f' \rVert_{\infty}+\lVert f'' \rVert_{\infty}$.
 \begin{lem}
\label{lem:ind10}
 Assume that $W$ satisfies condition \reff{eq:condi_W}. Let $y\in(0,1)$ and $\alpha\geq 1$.
There exists a positive constant $C$ such that for all $H\in\mathcal{C}^{2}([0,1])$, $\delta\in[-1,1]$ and $n\geq 2$ such that 
$\left[d\pm\frac{A}{\sqrt{n}}\right]\subset D\big((0,1)\big)$, with $d=D(y)$ and
$A=\alpha\sqrt{\log(n)}$, we have: 
 \begin{multline*}
\sqrt{n}\int_{-A}^{A}H\left(D^{-1}\left(d+\frac{s}{\sqrt{n}}\right)\right)
\left(\mathcal{H}_{n,d,\delta}\left(d+\frac{s}{\sqrt{n}}\right) 
  -\1{s\le 0}\right)ds  \\ 
  =
  \frac{H'(y)}{D'(y)}\frac{\sigma_{(d)}^{2}}{2}+H(y)\left(\frac{1-2d}{2}+\delta+
    S(nd+\delta)\right) +R_{n}^{\ref{lem:ind10}}(H),
 \end{multline*}
 where 
 \begin{equation}
\label{eq:lem:ind10}
  \left|R_{n}^{\ref{lem:ind10}}(H)\right|\le  C\,\lVert H\rVert_{3,\infty}\,
  n^{-1/2} \log(n)^3.
 \end{equation}
 \end{lem}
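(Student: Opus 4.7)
The plan is to substitute into the integral both the pointwise expansion of $\mathcal{H}_{n,d,\delta}$ given by Proposition \ref{prop:CFD_approxi} and a Taylor expansion of the test function $H \circ D^{-1}$, and then to identify the two leading contributions via parity considerations and standard Gaussian integrals.

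First, I would set $\psi_n(s) = H(D^{-1}(d+s/\sqrt{n}))$. Condition \reff{eq:condi_W} ensures that $D^{-1}$ is $\mathcal{C}^3$ on a neighborhood of $d$ containing the range of $d + s/\sqrt{n}$ for $|s| \le A$, so Taylor's theorem yields
\[
\psi_n(s) = H(y) + \frac{s}{\sqrt{n}}\,\frac{H'(y)}{D'(y)} + \mathrm{rem}_n(s), \qquad |\mathrm{rem}_n(s)| \le C\, \|H\|_{2,\infty}\, \frac{s^2}{n},
\]
uniformly for $|s|\le A$. Using Proposition \ref{prop:CFD_approxi} and setting $f(s) = \Phi(y_s) - \ind_{\{s\le 0\}}$, I would split the integral as $\sqrt{n}\int \psi_n f + \int \psi_n \frac{\varphi(y_s)}{\sigma_{(d)}}\pi + \sqrt{n}\int \psi_n R^{\ref{prop:CFD_approxi}} = (\mathrm{a})+(\mathrm{b})+(\mathrm{c})$.

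The key observation for $(\mathrm{a})$ is that $f$ is odd: for $s\neq 0$, $f(s)+f(-s) = \Phi(-s/\sigma_{(d)}) + \Phi(s/\sigma_{(d)}) - 1 = 0$. Hence the $H(y)$-contribution vanishes and only the linear part of $\psi_n$ matters. Since $s f(s) = s\,\Phi(-s/\sigma_{(d)})$ on $(0,A]$ is even in $s$, the change of variable $u = s/\sigma_{(d)}$ and the identity $\int_0^\infty u\,\Phi(-u)\,du = 1/4$ (a quick Fubini computation) give $(\mathrm{a}) = \frac{H'(y)}{D'(y)}\,\frac{\sigma_{(d)}^2}{2} + O(\|H\|_{2,\infty} n^{-1/2})$, with the tail beyond $\pm A$ absorbed into an $O(n^{-2\alpha^2})$ error via Lemma \ref{lem:maj_int_gauss}.

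For $(\mathrm{b})$, the change of variable $u = y_s$ converts the integral into $\int \varphi(u)\bigl[\frac{1-2d}{6}(1+2u^2)+S(nd+\delta)+\delta\bigr]\,du$; extending to $\mathbb{R}$ (with an $O(n^{-\alpha^2})$ tail error from Lemma \ref{lem:maj_int_gauss}) and using $\int \varphi = 1$, $\int u^2\varphi = 1$ produces the coefficient $\frac{1-2d}{2}+S(nd+\delta)+\delta$. The $H(y)$-part of $\psi_n$ thus delivers the second leading term. The linear part of $\psi_n$ contributes an odd integrand (since $\pi$ is even in $s$ and $s\varphi(y_s)$ is odd) and hence vanishes after integration over $[-A,A]$, while the Taylor remainder contributes $O(\|H\|_{2,\infty}/n)$. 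Finally, $(\mathrm{c})$ is controlled by the uniform bound $|R^{\ref{prop:CFD_approxi}}| \le C \log(n)^2/n$ and the length $2A = 2\alpha\sqrt{\log n}$ of the integration interval, giving $|(\mathrm{c})| \le C \|H\|_\infty n^{-1/2} \log(n)^{5/2}$.

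The main difficulty is the careful book-keeping: one must check that all sub-leading contributions, in particular those obtained by pairing the $s$-independent piece $S(nd+\delta)+\delta$ of $\pi$ with the linear part of $\psi_n$, as well as the tails of the Gaussian integrals beyond $\pm A$, all fit inside the announced $\|H\|_{3,\infty}\, n^{-1/2} \log(n)^3$ remainder. The parity argument does most of the heavy lifting; aside from this, the only nontrivial analytic ingredient is the Fubini computation of $\int_0^\infty u\,\Phi(-u)\,du$ together with the explicit first two even moments of $\varphi$.
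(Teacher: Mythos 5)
Your proposal is correct and follows essentially the same route as the paper's proof: Taylor-expand $H\circ D^{-1}$ to first order, substitute the expansion of Proposition \ref{prop:CFD_approxi}, kill the $H(y)\cdot(\Phi(y_s)-\1{s\le 0})$ term by oddness, obtain $\sigma_{(d)}^2/2$ from $\int_{\R}s(\Phi(y_s)-\1{s\le 0})\,ds$ and the second leading term from the Gaussian moments $\int\varphi(y_s)\,ds=\int y_s^2\varphi(y_s)\,ds=\sigma_{(d)}$, with tails controlled by Lemma \ref{lem:maj_int_gauss}. The only (harmless) variation is that you dispose of the cross term (linear part of the Taylor expansion against $\varphi(y_s)\pi$) by a parity argument, whereas the paper simply bounds it pointwise by $C\,\lVert H\rVert_{3,\infty}\sqrt{\log(n)/n}$ and integrates it into the remainder.
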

 
Because of the assumption $\left[d\pm\frac{A}{\sqrt{n}}\right]\subset
D\big((0,1)\big)$, we need to  rule out the cases $y\in \{0, 1\}$, so
that Lemma \ref{lem:ind10} holds only for $y\in(0,1)$. 
 \begin{proof}
   In  what follows,  $C$ denotes  a positive  constant which  depends on
  $\varepsilon_0$   and $W$    (but   in particular     not       on        $n\geq
  2$, 
  $s\in[-\alpha\sqrt{\log          (n)},\alpha\sqrt{\log         (n)}]$, 
  $\delta\in[-1,1]$       and      $d\in       K_0$      such       that
  $d+\frac{s}{\sqrt{n}}\in K_{0}$)  and which  may change from  lines to
  lines.\\

Let $\theta\in(0,1/\sqrt{2}]$ (we shall take $\theta=1/\sqrt{n}$ later
on) and assume that $|s|\leq  \alpha\sqrt{2|\log (\theta)|}$
and $d+s\theta\in K_0$. 
We set $\Psi(\theta)=H\left(D^{-1}(d+s\theta)\right)$. Notice that
 $\Psi'(\theta)=\frac{s}{D'\circ D^{-1}(d+s\theta)}H'(D^{-1}(d+s\theta))$.
 By Taylor-Lagrange equality we have:
 \begin{equation}
\label{eq:TL-psi1}
  \Psi(\theta)=\Psi(0)+\theta\Psi'(0)+R^{(1)}_{s}(\theta)
              =H(y)+\theta \frac{s}{D'(y)}H'(y)+R^{(1)}_{s}(\theta)
 \end{equation}
where $R^{(1)}_{s}(\theta)=\int_{0}^{\theta}\Psi''(t)(\theta-t)dt$.
We have
\begin{equation*}
 \Psi''(\theta)=s^{2}\left[\frac{H''(D^{-1}(d+s\theta))}{(D'\circ D^{-1}(d+s\theta))^2}-\frac{H'(D^{-1}(d+s\theta))(D''\circ D^{-1}(d+s\theta))}{(D'\circ D^{-1}(d+s\theta))^3}\right].
\end{equation*}
Thus, we get that $\left|R^{(1)}_{s}(\theta)\right|\le C\, \left(\lVert H'\rVert_{\infty} + \lVert H''\rVert_{\infty}\right)\,s^{2}\theta^{2}
\le C\, \lVert H \rVert_{3,\infty}\,\theta^{2}|\log(\theta)| $.
Choosing $\theta=1/{\sqrt{n}}$, we deduce from \reff{eq:TL-psi1} that:
\begin{equation}\label{eq:psi_dev}
 H\left(D^{-1}\left(d+\frac{s}{\sqrt{n}}\right)\right)
 =H(y)+\frac{1}{\sqrt{n}}\frac{s}{D'(y)}H'(y)+R^{(1)}_{s}\left(\frac{1}{\sqrt{n}}\right),
\end{equation}
where $\left|R^{(1)}_{s}\left(1/{\sqrt{n}}\right)\right|\le C\,
\lVert H \rVert_{3,\infty}\, \log(n)/n$.
Recall the definition of $y_s$ and $\pi(s,n,d,\delta)$ given by \reff{eq:def_y_s}.
By Proposition \ref{prop:CFD_approxi} and equation \reff{eq:psi_dev}, we get that:
\begin{multline}
\label{eq:HH=}
\sqrt{n}\,H\left(D^{-1}\left(d+\frac{s}{\sqrt{n}}\right)\right)\left(\mathcal{H}_{n,d,\delta}\left(d+\frac{s}{\sqrt{n}}\right)-\1{s\le 0}\right)\\
\begin{aligned}
    &=\sqrt{n}\left(H(y)+\frac{1}{\sqrt{n}}\frac{s}{D'(y)}H'(y)\right)\,
    \left(\left(\Phi(y_s)-\1{s\le 0}\right)
 +\frac{1}{\sqrt{n}}\frac{\varphi(y_{s})}
 {\sigma_{(d)}}\pi(s,n,d,\delta)\right)+R^{(0)}_{n}(s)\\   
 &=\sqrt{n}\, H(y)\Delta^{(1)}(s)+\frac{H'(y)}{D'(y)}
 \Delta^{(2)}(s)+\frac{H(y)}{\sigma_{(d)}}\Delta^{(3)}(s)+R^{(0)}_{n}(s)+\hat{R}_{n}^{(0)}(s), 
\end{aligned}
\end{multline}
where
\[
 \Delta^{(1)}(s)=\left(\Phi(y_s)-\1{s\le 0}\right), \quad
 \Delta^{(2)}(s)=s\left(\Phi(y_s)-\1{s\le 0}\right),\quad 
 \Delta^{(3)}(s)=\varphi(y_{s})\pi(s,n,d,\delta),
\]
\begin{align}
\nonumber
 \left|R^{(0)}_{n}(s)\right|
 &\le \sqrt{n} \lVert H
   \rVert_{\infty}|R^{\ref{prop:CFD_approxi}}(s,n,d,\delta)|
   + \sqrt{n} |R^{(1)}_{s}\left(1/{\sqrt{n}}\right)| + \sqrt{n}
   |R^{\ref{prop:CFD_approxi}}(s,n,d,\delta)R^{(1)}_{s}\left(1/{\sqrt{n}}\right)|\\
&\leq  C\,\lVert H \rVert_{3,\infty}\frac{\log(n)^2}{\sqrt{n}}
\label{eq:majoR0}
\end{align}
and 
\begin{equation}\label{eq:majo_hat_R0}
 \left|\hat{R}^{(0)}_{n}(s)\right|
 =\left|\frac{1}{\sqrt{n}}\frac{H'(y)}{\sigma_{(d)}D'(y)}s\varphi(y_s)\pi(s,n,d,\delta)\right|
 \le C\,\lVert H \rVert_{3,\infty}\frac{\sqrt{\log(n)}}{\sqrt{n}}\cdot
\end{equation}

\subsection*{Study of $\int_{-A}^{A}\Delta^{(1)}(s)ds$}
Since $\Delta^{(1)}$ is an  odd integrable functions on $\R^{*}$, we get that:
\begin{equation}\label{eq:int_delta_1_3}
 \int_{-A}^{A}\Delta^{(1)}(s)ds=0. 
\end{equation}

\subsection*{Study of $\int_{-A}^{A}\Delta^{(2)}(s)ds$}
Because         $\Delta^{(2)}$         is         integrable         and
$\int_{\R}\Delta^{(2)}(s)ds={\sigma_{(d)}^{2}}/{2}$, we get that
\begin{equation}\label{eq:int_delta_2}
 \int_{-A}^{A}\Delta^{(2)}(s)\,ds=\frac{\sigma_{(d)}^{2}}{2}+R^{(2)}_{n},
 \quad\text{ with }\quad R^{(2)}_{n}=-2\int_{A}^{+\infty}s\Phi(y_s)ds \cdot
\end{equation}
Using Lemma \ref{lem:maj_int_gauss}, we get that
\begin{equation}
\label{eq:majoR2}
 |R^{(2)}_{n}|\le C\, n^{-2\alpha^2}.
\end{equation}

\subsection*{Study of $\int_{-A}^{A}\Delta^{(3)}(s)ds$}
We have, using \reff{eq:def_y_s}, that:
\begin{equation*}
 \Delta^{(3)}(s)=\varphi(y_s)\left(\frac{1-2d}{6}(1+2y_{s}^2)+
   \delta+S(nd+\delta)\right). 
\end{equation*}
By elementary  calculus, we have that:
\begin{equation*}
 \int_{\R}\varphi(y_s)ds=\int_{\R}y_{s}^2\varphi(y_s)ds=\sigma_{(d)}.
\end{equation*}
We get that:
\begin{equation} 
\label{eq:Delta3}
\int_{-A}^{A}\Delta^{(3)}(s)ds
 =\sigma_{(d)}\left[\frac{1-2d}{2}+\delta+S\left(nd+\delta\right)\right]+R_{n}^{(3)},
\end{equation}
where
\begin{equation}
\label{eq:majoR3}
R_{n}^{(3)}=-2\sigma_{(d)}\left(\frac{1-2d}{6}+\delta
+S(nd+\delta)\right)\Phi\left(-\frac{A}{\sigma_{(d)}}\right)
-2\,\frac{1-2d}{3}\int_{A}^{+\infty}y_s^{2}\varphi(y_s)ds.
\end{equation}
Using Lemma \ref{lem:maj_int_gauss} and since $|2d-1|\le 1$, $|\delta|\le 1$ and $S$ is bounded by $1$, we have that:
\begin{equation}\label{eq:rest_3_4}
 |R^{(3)}_{n}|\le C n^{-\alpha^2}. 
\end{equation}

\subsection*{Conclusion}
Using \reff{eq:HH=}, \reff{eq:int_delta_1_3}, \reff{eq:int_delta_2},
\reff{eq:Delta3}, we deduce that
\begin{multline*}
 \sqrt{n}\int_{-A}^{A}H\left(D^{-1}\left(d+\frac{s}
     {\sqrt{n}}\right)\right)\left(\mathcal{H}_{n,d,\delta}
   \left(d+\frac{s}{\sqrt{n}}\right)    -\1{s\le 0}\right)ds \\
= \frac{H'(y)}{D'(y)}\frac{\sigma_{(d)}^{2}}{2}
+H(y)\left[\frac{1-2d}{2} +\delta+S\left(nd+\delta\right)\right]
+R_{n}^{\ref{lem:ind10}}(H), 
\end{multline*}
where $R_{n}^{\ref{lem:ind10}}(H)=\int_{-A}^A (R^{(0)}_{n}(s)+\hat{R}^{(0)}_{n}(s))\, ds
+(H'(y)/D'(y)) R^{(2)}_{n}+(H(y)/\sigma_{(d)})R^{(3)}_{n}$.
Using the upper bounds \reff{eq:majoR0} and \reff{eq:majo_hat_R0} (to be integrated
over $[-A, A]$), \reff{eq:majoR2} and \reff{eq:rest_3_4} with
$\alpha\geq 1$, we get that 
$|R_{n}^{\ref{lem:ind10}}(H)|\leq  C \,\lVert H
\rVert_{3,\infty}\,{\log(n)^3}/{\sqrt{n}}$. 
 \end{proof}

We give a direct application of the previous lemma. 

\begin{lem}\label{lem:ind20}
Assume that $W$ satisfies condition \reff{eq:condi_W}. Let $y\in(0,1)$ and $\alpha\geq 1$.
There exists a positive constant $C$ such that for all
$G\in\mathcal{C}^{2}([0,1])$, $\delta \in [-1, 1]$, $n\geq 2$ such that 
$\left[d\pm\frac{A}{\sqrt{n}}\right]\subset D((0,1))$, with $d=D(y)$ and
$A=\alpha\sqrt{\log(n)}$, we have: 
\begin{multline*}
n\int_{0}^{1}G(x)\left(\mathcal{H}_{n,d,\delta}
    \left(D(x)\right)-\1{x\le y}\right) \ind_{\{\sqrt{n}|D(x)-d|\le
    A\}}dx\\
= \frac{G'(y)D'(y)-G(y)D''(y)}{D'(y)^3} \frac{\sigma_{(d)}^{2}}{2}
+\frac{G(y)}{D'(y)}\left[\frac{1-2d}{2}+\delta+ 
    S(nd+\delta) \right]+R^{\ref{lem:ind20}}_{n}(G),
 \end{multline*}
 where
 \begin{equation*}
  \left|R^{\ref{lem:ind20}}_{n}(G)\right|\le   C\,\lVert G\rVert_{3,\infty}\,
  n^{-1/2} \log(n)^3.
 \end{equation*}
 \end{lem}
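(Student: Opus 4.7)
The plan is to reduce this result to Lemma \ref{lem:ind10} by an explicit change of variables. Since $W$ satisfies condition \reff{eq:condi_W}, the degree function $D$ is of class $\cc^3$ with $D'>0$, hence $D$ is a $\cc^3$-diffeomorphism from $(0,1)$ onto $D((0,1))$. Under the hypothesis $[d\pm A/\sqrt n]\subset D((0,1))$, the set $\{x\in[0,1]:\sqrt{n}|D(x)-d|\le A\}$ is mapped bijectively onto $[-A,A]$ by $x\mapsto s=\sqrt n(D(x)-d)$, with $dx= ds/(\sqrt n\,D'(x))$ and $x=D^{-1}(d+s/\sqrt n)$. Moreover, because $D$ is strictly increasing, the indicator $\ind_{\{x\le y\}}$ equals $\ind_{\{D(x)\le d\}}=\ind_{\{s\le 0\}}$.

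First I would introduce the auxiliary function
\[
H(x)=\frac{G(x)}{D'(x)},
\]
which belongs to $\cc^2([0,1])$ since $G\in\cc^2([0,1])$ and $D'$ is $\cc^2$ and bounded away from $0$. A direct computation, using the chain rule and the uniform bounds on $D$, $D'$, $D''$, $D'''$ coming from \reff{eq:condi_W}, yields a constant $C_W$, depending only on $W$, such that
\[
\lVert H\rVert_{3,\infty}\le C_W\,\lVert G\rVert_{3,\infty}.
\]
Applying the change of variables described above then transforms the integral on the left-hand side into
\[
\sqrt n\int_{-A}^{A} H\!\left(D^{-1}\!\left(d+\tfrac{s}{\sqrt n}\right)\right)\left(\mathcal{H}_{n,d,\delta}\!\left(d+\tfrac{s}{\sqrt n}\right)-\ind_{\{s\le 0\}}\right)ds.
\]

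Next I would apply Lemma \ref{lem:ind10} to the above display with this $H$. This gives
\[
\frac{H'(y)}{D'(y)}\frac{\sigma_{(d)}^2}{2}+H(y)\!\left[\frac{1-2d}{2}+\delta+S(nd+\delta)\right]+R_n^{\ref{lem:ind10}}(H),
\]
with $|R_n^{\ref{lem:ind10}}(H)|\le C\lVert H\rVert_{3,\infty}\,n^{-1/2}\log(n)^3$. It remains to identify the main term: we have $H(y)=G(y)/D'(y)$, and differentiating $H=G/D'$ yields
\[
H'(y)=\frac{G'(y)}{D'(y)}-\frac{G(y)D''(y)}{D'(y)^2},
\]
so that
\[
\frac{H'(y)}{D'(y)}=\frac{G'(y)D'(y)-G(y)D''(y)}{D'(y)^3},
\]
which matches the announced leading term. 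Setting $R_n^{\ref{lem:ind20}}(G)=R_n^{\ref{lem:ind10}}(H)$ and combining the above estimate with $\lVert H\rVert_{3,\infty}\le C_W\lVert G\rVert_{3,\infty}$ produces the announced bound on the remainder.

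There is essentially no obstacle: Lemma \ref{lem:ind10} carries all the analytic weight (the uniform Edgeworth-type expansion from Proposition \ref{prop:CFD_approxi} and the Gaussian tail estimates of Lemma \ref{lem:maj_int_gauss}), and the present statement is obtained from it by a purely algebraic reparametrisation. The only mild point requiring care is to justify that $\lVert G/D'\rVert_{3,\infty}$ is controlled by $\lVert G\rVert_{3,\infty}$ with a constant depending only on $W$, which follows from the Leibniz rule applied to $H=G/D'$ together with the $\cc^3$-regularity of $W$ and the strict positivity of $D'$ guaranteed by \reff{eq:condi_W}.
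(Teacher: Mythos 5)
Your proposal is correct and follows essentially the same route as the paper: introduce $H=G/D'$, perform the change of variables $s=\sqrt{n}(D(x)-d)$ to reduce the integral to the one treated in Lemma \ref{lem:ind10}, and identify the leading term via $H'(y)/D'(y)=(G'(y)D'(y)-G(y)D''(y))/D'(y)^3$. Your explicit remark that $\lVert G/D'\rVert_{3,\infty}\le C_W\lVert G\rVert_{3,\infty}$ (needed to convert the remainder bound from $H$ to $G$) is a point the paper leaves implicit, and it is correctly justified by the $\cc^3$-regularity of $W$ and the strict positivity of $D'$.
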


\begin{proof}
 Let $G$ be a function in $\mathcal{C}^2([0,1])$. Define the function $H$ on $[0,1]$ by $H(z)=\frac{G(z)}{D'(z)}$ for all $z\in[0,1]$.
Use the change of variables $s=\sqrt{n}\left(D(x)-d\right)$ to get that:
\begin{multline*}
 \int_{0}^{1}G(x)\left(\mathcal{H}_{n,d,\delta}(D(x))-\ind_{\{x\le
     y\}}\right) \ind_{\{\sqrt{n}\left|D(x)-d\right|\le A\}}dx\\
 =\frac{1}{\sqrt{n}}\int_{-A}^{A}H\left(D^{-1}\left(d+\frac{s}{\sqrt{n}}\right)\right)
 \left(\mathcal{H}_{n,d,\delta}\left(d+\frac{s}{\sqrt{n}}\right)-\1{s\le 0}\right)ds.
\end{multline*}
By Lemma \ref{lem:ind10}, we obtain that:
\begin{multline*}
n\int_{0}^{1}G(x)\left(\mathcal{H}_{n,d,\delta}\left(D(x)\right)-\ind_{\{x\le
    y\}}\right)\1{\sqrt{n}|D(x)-d|\le
  A}dx\\
\begin{aligned}
 &=\frac{H'(y)}{D'(y)}\frac{\sigma_{(d)}^{2}}{2}+H(y)\left[\frac{1-2d}{2}+\delta+ 
    S(nd+\delta)\right]+R_{n}^{\ref{lem:ind10}}(H)\\
 &=\frac{G'(y)D'(y)-G(y)D''(y)}{D'(y)^3}
 \frac{\sigma_{(d)}^{2}}{2}+\frac{G(y)}{D'(y)}\left[\frac{1-2d}{2}+\delta+  
    S(nd+\delta)\right]+R_{n}^{\ref{lem:ind10}}(G/D').
\end{aligned}
\end{multline*}
Set  $R^{\ref{lem:ind20}}_{n}(G)=R_{n}^{\ref{lem:ind10}}\left(G/D'\right)$ 
and use \reff{eq:lem:ind10} to conclude. 
\end{proof}

\begin{lem}\label{lem:binom_Tchebychev}
Let $y\in(0,1)$ and $\alpha>0$.
 For all $u\in(0,1)$,  $\delta \in [-1, 1]$ and $n\in\Ne$ such that
 $\sqrt{n}|u-d|\ge A$ with $d=D(y)$ and $A=\alpha\sqrt{\log(n)}$, we
 have 
 \begin{equation*}
  \left|\mathcal{H}_{n,d,\delta}(u)-\1{u\le d}\right| \le n^{-\alpha+2}.
 \end{equation*}
\end{lem}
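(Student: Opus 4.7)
My approach is to rewrite the left-hand side as a tail probability for a centered binomial random variable and then apply a Hoeffding-type concentration inequality.

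First I would let $X \sim \mathcal{B}(n,u)$ so that $\mathcal{H}_{n,d,\delta}(u) = \mathbb{P}(X \leq nd + \delta)$, and split into the cases $u > d$ and $u < d$ (the equality case is excluded since $A > 0$). If $u > d$, then $\mathbf{1}_{\{u \leq d\}} = 0$ and the left-hand side equals $\mathbb{P}(nu - X \geq n(u-d) - \delta)$; if $u < d$, then $\mathbf{1}_{\{u \leq d\}} = 1$ and the left-hand side equals $\mathbb{P}(X - nu > n(d-u) + \delta)$. Using the hypothesis $n|u-d| \geq \sqrt{n}A$ together with $|\delta| \leq 1$, both cases reduce to the single estimate
\[
|\mathcal{H}_{n,d,\delta}(u) - \mathbf{1}_{\{u \leq d\}}| \leq \mathbb{P}(|X - nu| \geq \sqrt{n}A - 1).
\]

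Next I would dispose of the trivial regime $\alpha \leq 2$, where $n^{-\alpha+2} \geq 1$ dominates any probability and there is nothing to prove. For $\alpha > 2$, the hypothesis combined with $|u-d| \leq 1$ forces $\sqrt{n} \geq A$, hence $\sqrt{n}A \geq A^2 = \alpha^2 \log n > 4 \log 2 > 2$, which in turn gives $\sqrt{n}A - 1 \geq \sqrt{n}A/2$. I would then invoke Hoeffding's inequality $\mathbb{P}(|X - nu| \geq t) \leq 2 \exp(-2t^2/n)$ (valid since $X$ is a sum of $n$ i.i.d.\ $[0,1]$-valued random variables) with $t = \sqrt{n}A/2$, obtaining
\[
\mathbb{P}(|X - nu| \geq \sqrt{n}A - 1) \leq 2 \exp(-A^2/2) = 2 n^{-\alpha^2/2}.
\]

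To conclude I would verify $2 n^{-\alpha^2/2} \leq n^{-\alpha + 2}$ for $\alpha > 2$ and $n \geq 2$; this reduces to $n^{\alpha^2/2 - \alpha + 2} \geq 2$, which follows from $\alpha^2/2 - \alpha + 2 = \tfrac12 (\alpha - 1)^2 + \tfrac32 \geq \tfrac32$. The main (mild) obstacle is bookkeeping: one must track the Hoeffding prefactor $2$ and the additive $-1$ in $\sqrt{n}A - 1$, both of which are absorbed by the strict slack between the sharp Hoeffding exponent $\alpha^2/2$ and the much weaker stated exponent $\alpha - 2$. The extra slack in the exponent is presumably why the lemma is stated with the loose bound $n^{-\alpha+2}$, which is already more than sufficient for the intended tail truncation applications in Section~\ref{sec:preliminaries_CDF_bin}.
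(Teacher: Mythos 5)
Your proof is correct. It belongs to the same family as the paper's argument --- an exponential (Chernoff-type) bound on the binomial tail --- but the execution is genuinely different in its key ingredient: the paper carries out the Chernoff computation by hand with the deliberately non-optimized choice $\lambda=\sqrt{\log(n)/n}$, Taylor-expands the log-moment generating function with an explicit $O(\log n)$ error, and thereby lands exactly on the exponent $-\alpha+2$ (the lower tail being handled by the substitution $X\mapsto n-X$, which is binomial with parameter $1-u$); you instead reduce both tails to a single two-sided event and invoke Hoeffding's inequality off the shelf, obtaining the much stronger exponent $-\alpha^2/2$ and then absorbing the prefactor $2$, the shift coming from $\delta$, and the trivial regime $\alpha\le 2$ into the slack $\alpha^2/2-\alpha+2\ge 3/2$. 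Both routes are complete; yours is shorter and makes transparent why the stated bound is far from sharp, while the paper's tuned $\lambda$ avoids any end-game bookkeeping. One microscopic remark: your quantitative steps assume $n\ge 2$ (you use $\log n>0$), but for $n=1$ the right-hand side equals $1$ and the claim is vacuous, so nothing is lost.
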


\begin{proof}
Let $X$ be a binomial random variable with parameters $(n,u)$. Assume
first that 
$u\ge d+\frac{A}{\sqrt{n}}$. Let $\lambda\ge 0$. Using Chernov 
inequality, we get:
\begin{equation}
 \label{eq:ind5}
 \mathcal{H}_{n,d,\delta}(u)-\1{u\le d}
 =\pro\left(X\le nd+\delta\right) 
\le \re^{\lambda(nd+\delta)}\,\E\left[\re^{-\lambda X}\right]
=\exp\left[\lambda(nd+\delta)+n\Psi(\lambda)\right],
\end{equation}
with  $\Psi(\lambda)=\log(1+u(\re^{-\lambda}-1))$. By Taylor-Lagrange equality, we have
\begin{equation}\label{eq:TL1}
 \Psi(\lambda)=\Psi(0)+\lambda\Psi'(0)+R(\lambda)
              =0-u\lambda+R(\lambda),
\end{equation}
where $R(\lambda)=\int_{0}^{\lambda}(\lambda-t)\Psi''(t)dt$.
Because $\Psi''(t)\ge 0$ and 
$\Psi''(t)=\frac{(1-u)u\re^{-\lambda}}{\left(1+u(\re^{-\lambda}-1)\right)^2}\le \frac{1}{4}$ 
(applying the following inequality $\frac{xy}{(x+y)^2}\le\frac{1}{4}$ with 
$x=1-u$ and $y=u\re^{-\lambda}$), we get that $|R(\lambda)|\le
\frac{\lambda^{2}}{8}\leq  \lambda^2$.
Finally, applying \reff{eq:TL1} with $\lambda=\sqrt{\frac{\log(n)}{n}}$, we get that
\begin{equation}\label{eq:TL2}
 n\Psi(\lambda) =-u\sqrt{n\log(n)}+R^{(2)}(n),
\end{equation}
with $|R^{(2)}(n)|\le \log(n)$. Using \reff{eq:ind5} and \reff{eq:TL2}, we get that
\begin{align*}
  \mathcal{H}_{n,d,\delta}(u)-\1{u\le d}
  &\le \exp\left[\sqrt{\frac{\log(n)}{n}}(nd+\delta)-u\sqrt{n\log(n)}+R^{(2)}(n)\right]\\
  &=\exp\left[\sqrt{n\log(n)}(d-u)+R^{(3)}(n)\right],
\end{align*}
where $|R^{(3)}(n)|\le 2\log(n)$, since $|\delta|\le 1$.
Because $d-u\le\frac{-A}{\sqrt{n}}$ with $A=\alpha\sqrt{\log(n)}$, we have that
\begin{equation*}
 \mathcal{H}_{n,d,\delta}(u)-\1{u\le d}\le
 \re^{-\alpha\log(n)+R^{(3)}(n)}\le
 \re^{(-\alpha+2)\log(n)}=n^{-\alpha+2}. 
\end{equation*}

In the case where $u\le d-\frac{A}{\sqrt{n}}$, we have that
\begin{align*}
 0\geq \mathcal{H}_{n,d,\delta}(u)-\1{u\le d}
 &=\pro\left(X\le nd+\delta\right)-1\\
 &\ge -\pro\left(X\ge  nd+\delta\right)\\
 &=-\pro\left(n-X \le n(1-d)-\delta \right). 
\end{align*}
Since $n-X$ is  a binomial random variable with parameters $(n,1-u)$,
using similar argument as in the first part of the proof (with $u$ and $X$
replaced by $1-u$ and $n-X$),  we get that, for $u\le d-\frac{A}{\sqrt{n}}$:
\begin{equation*}
 \mathcal{H}_{n,d,\delta}(u)-\1{u\le d} \ge -n^{-\alpha+2}.
\end{equation*}

We deduce that $\left|\mathcal{H}_{n,d,\delta}(u)-\1{u\le d}\right|\le
n^{-\alpha+2}$.
\end{proof}

The following lemma  is a direct application of Lemma
\ref{lem:binom_Tchebychev} with $u=D(x)$. 
\begin{lem}\label{lem:ind40}
Assume that $W$ satisfies condition \reff{eq:condi_W}. Let $y\in(0,1)$ and $\alpha\geq 1$.
For all $G\in\mathcal{B}([0,1])$,  $\delta \in [-1, 1]$ and $n\in\Ne$, we have with
 $d=D(y)$ and $A=\alpha\sqrt{\log(n)}$:
 \begin{equation*}
  n\int_{0}^{1}G(x)\left|\mathcal{H}_{n,d,\delta}\left(D(x)\right)-\ind_{\{x\le y\}}\right|\ind_{\{\sqrt{n}|D(x)-d|\ge A\}}dx=R_{n}^{\ref{lem:ind40}}(G),
 \end{equation*}
 where 
 \begin{equation*}
  \left|R_{n}^{\ref{lem:ind40}}(G)\right|\le \lVert
  G\rVert_{\infty}\,n^{-\alpha+3}. 
 \end{equation*}
\end{lem}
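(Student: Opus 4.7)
The plan is to recognize that this is essentially a pointwise application of Lemma \ref{lem:binom_Tchebychev}, integrated against $G$ over the prescribed tail region. Fix $n$, $y\in(0,1)$, $\delta\in[-1,1]$, $G\in\mathcal{B}([0,1])$, and set $d=D(y)$, $A=\alpha\sqrt{\log n}$. For any $x\in[0,1]$ satisfying $\sqrt{n}|D(x)-d|\geq A$, I would apply Lemma \ref{lem:binom_Tchebychev} with $u=D(x)\in(0,1)$ to obtain the pointwise bound
\[
 \left|\mathcal{H}_{n,d,\delta}(D(x))-\ind_{\{D(x)\le d\}}\right|\le n^{-\alpha+2}.
\]

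The next step is to replace $\ind_{\{D(x)\leq d\}}$ by $\ind_{\{x\leq y\}}$. This is where Condition \reff{eq:condi_W} enters: because $D'>0$ on $[0,1]$, the function $D$ is strictly increasing, so $D(x)\leq D(y)$ if and only if $x\leq y$, and the two indicators coincide everywhere on $[0,1]$.

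Finally, I would insert the pointwise bound into the integral and bound $|G|$ by $\|G\|_\infty$, yielding
\[
n\int_{0}^{1}|G(x)|\left|\mathcal{H}_{n,d,\delta}(D(x))-\ind_{\{x\le y\}}\right|\ind_{\{\sqrt{n}|D(x)-d|\ge A\}}\,dx
\le n\cdot \|G\|_\infty \cdot n^{-\alpha+2}=\|G\|_\infty\,n^{-\alpha+3},
\]
which is precisely the stated estimate on $R_n^{\ref{lem:ind40}}(G)$. There is no genuine obstacle here: the only subtlety worth flagging is the identification of the two indicator functions via strict monotonicity of $D$, which is why Condition \reff{eq:condi_W} (specifically $D'>0$) is invoked.
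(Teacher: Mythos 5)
Your proof is correct and is exactly the paper's argument: the paper states that this lemma is a direct application of Lemma \ref{lem:binom_Tchebychev} with $u=D(x)$, and your identification of $\ind_{\{D(x)\le d\}}$ with $\ind_{\{x\le y\}}$ via the strict monotonicity $D'>0$ from \reff{eq:condi_W} is the right (implicit) step. Nothing is missing.
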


 Combining Lemma \ref{lem:ind20} with Lemma \ref{lem:ind40} for
 $\alpha=3$, we deduce the following proposition. 
 \begin{prop}\label{prop:ind8}
  Assume that $W$ satisfies condition \reff{eq:condi_W}. Let $y\in(0,1)$.
 There exists a positive constant $C$ such that for all
 $G\in\mathcal{C}^{2}([0,1])$,  $\delta \in [-1, 1]$ and $n\in \N^*$ such that 
$\left[d\pm\frac{A}{\sqrt{n}}\right]\subset D((0,1))$, with $d=D(y)$ and
$A=4\sqrt{\log(n)}$, we have: 
\begin{multline*}
 n\int_{0}^{1}G(x)\left(\mathcal{H}_{n,d,\delta}\left(D(x)\right)-\1{x\le y}\right)dx\\
  =\frac{G'(y)D'(y)-G(y)D''(y)}{D'(y)^3}
  \frac{\sigma_{(d)}^{2}}{2}+\frac{G(y)}{D'(y)}\left[\frac{1-2d}{2}+\delta
    + S(nd+\delta)  \right] +R_{n}^{\ref{prop:ind8}}(G),  
\end{multline*}
 with
 \begin{equation}
\label{eq:prop:ind8}
  \left|R_{n}^{\ref{prop:ind8}}(G)\right|\le C\,  \rVert
  G\rVert_{3,\infty}\, n^{-\frac{1}{4}}. 
 \end{equation}
 \end{prop}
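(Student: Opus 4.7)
The proposition is a direct combination of the two preceding lemmas, so the plan is to split the domain of integration and apply them. Write
\[
I_n(G)=n\int_0^1 G(x)\bigl(\mathcal{H}_{n,d,\delta}(D(x))-\ind_{\{x\le y\}}\bigr)\,dx
=I_n^{<}(G)+I_n^{\geq}(G),
\]
where $I_n^{<}(G)$ (resp.\ $I_n^{\geq}(G)$) is the part of the integral on $\{\sqrt{n}|D(x)-d|\le A\}$ (resp.\ $\{\sqrt{n}|D(x)-d|\ge A\}$), with $A=4\sqrt{\log(n)}$, that is $\alpha=4$.

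For $I_n^{<}(G)$, I would invoke Lemma~\ref{lem:ind20} (whose hypothesis $[d\pm A/\sqrt{n}]\subset D((0,1))$ is assumed to hold), which yields precisely the main term of the proposition together with a remainder bounded by $C\,\|G\|_{3,\infty}\,n^{-1/2}\log(n)^3$. For $I_n^{\geq}(G)$, I would apply Lemma~\ref{lem:ind40} with $\alpha=4$, giving a remainder bounded by $\|G\|_{\infty}\,n^{-1}$.

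Summing the two error terms, the total remainder $R_n^{\ref{prop:ind8}}(G)$ satisfies
\[
|R_n^{\ref{prop:ind8}}(G)|\le C\,\|G\|_{3,\infty}\,\frac{\log(n)^3}{\sqrt{n}}+\|G\|_{\infty}\,n^{-1}
\le C'\,\|G\|_{3,\infty}\,n^{-1/4},
\]
where the last inequality uses that $\log(n)^3/\sqrt{n}=o(n^{-1/4})$ (indeed $\log(n)^3\le C\,n^{1/4}$ for all $n\ge 2$ and some absolute constant $C$), and that $n^{-1}\le n^{-1/4}$. This gives \reff{eq:prop:ind8}.

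There is essentially no obstacle: the two lemmas do all the work, and only a bookkeeping step on error sizes remains. The only point to keep in mind is the choice $\alpha=4$, dictated by the exponent $-\alpha+3$ in Lemma~\ref{lem:ind40}: one needs $\alpha>3$ so that the tail contribution $I_n^{\geq}(G)$ is negligible relative to $n^{-1/4}$, and taking $\alpha=4$ is a convenient choice that still keeps $\log(n)^3/\sqrt{n}$ as the dominant error.
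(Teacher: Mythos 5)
Your proof is correct and is exactly the paper's argument: the proposition is obtained by splitting the integral over $\{\sqrt{n}|D(x)-d|\le A\}$ and its complement, applying Lemma \ref{lem:ind20} to the former and Lemma \ref{lem:ind40} to the latter, and absorbing both remainders into $C\,\lVert G\rVert_{3,\infty}\,n^{-1/4}$. Your insistence on $\alpha=4$ (forced by $A=4\sqrt{\log(n)}$ and needed so that the tail bound $n^{-\alpha+3}$ is actually small) is the right reading; the paper's own one-line proof announces ``$\alpha=3$'', which is inconsistent with its stated $A$ and appears to be a typo.
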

 
\section{Appendix B: Proof of Proposition  \ref{prop:stein}}
\label{sec:proof-prop-stein}

We first state a preliminary lemma in Section \ref{sec:prel-prel} and
then provide the proof of Proposition  \ref{prop:stein} in Section
\ref{sec:proof-proof}. 

\subsection{A preliminary result}
\label{sec:prel-prel}
For $\ry=(y_1, y_2)\in [0, 1]^2$, let  $M(\ry)$
 be the  covariance matrix of a couple $(Y_1, Y_2)$ of Bernoulli random
 variables such that 
 $\P(Y_i=1)=D(y_i)$ for $i\in \{1,2\}$ and $\P(Y_1=Y_2=1)=\int_{[0, 1]} W(y_1,z)
 W(y_2, z)\, dz$. 

\begin{lem}
   \label{lem:minor-det}
 Assume that $W$ satisfies condition \reff{eq:condi_W}. There exists
 $\varepsilon'>0$ such that for all $\ry\in
 [0, 1]^2$, we have $\det(M(\ry))>\varepsilon'$.
\end{lem}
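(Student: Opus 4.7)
The plan is to compute $\det(M(\ry))$ explicitly as the difference of the product of diagonal entries and the square of the off-diagonal entry, and then to use the uniform bound $W\le 1-\varepsilon_0$ from \reff{eq:condi_W} to keep this difference bounded away from zero. Writing the entries of $M(\ry)$ gives diagonal terms $D(y_i)(1-D(y_i))$ for $i\in\{1,2\}$, while the off-diagonal entry can be rewritten as
\[
A(\ry)=\int_{0}^{1} W(y_1,z)W(y_2,z)\,dz-D(y_1)D(y_2)
=\int_{0}^{1}\bigl(W(y_1,z)-D(y_1)\bigr)\bigl(W(y_2,z)-D(y_2)\bigr)\,dz,
\]
so that $\det(M(\ry))=\prod_{i=1,2} D(y_i)(1-D(y_i))-A(\ry)^{2}$.

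Next, I would apply Cauchy--Schwarz to $A(\ry)$ to get $A(\ry)^{2}\le \prod_{i=1,2}\int_{0}^{1}\bigl(W(y_i,z)-D(y_i)\bigr)^{2}\,dz$. To bound each factor, the key observation is that $0\le W\le 1-\varepsilon_0$ implies $W(y_i,z)^{2}\le (1-\varepsilon_0)W(y_i,z)$, hence
\[
\int_{0}^{1}\bigl(W(y_i,z)-D(y_i)\bigr)^{2}\,dz
=\int_{0}^{1} W(y_i,z)^{2}\,dz-D(y_i)^{2}
\le (1-\varepsilon_0)D(y_i)-D(y_i)^{2}
\le (1-\varepsilon_0)\,D(y_i)(1-D(y_i)),
\]
where the last inequality reduces to the trivial $0\le \varepsilon_0 D(y_i)$. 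Combining these estimates yields
\[
\det(M(\ry))\ge \bigl(1-(1-\varepsilon_0)^{2}\bigr)\prod_{i=1,2}D(y_i)(1-D(y_i)).
\]

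Finally, the condition $W\le 1-\varepsilon_0$ forces $D(y)\le 1-\varepsilon_0$ (by integration), and combined with $D\ge \varepsilon_0$ from \reff{eq:condi_W} we get $D(y_i)\in [\varepsilon_0,1-\varepsilon_0]$, so $D(y_i)(1-D(y_i))\ge \varepsilon_0(1-\varepsilon_0)$. Using $1-(1-\varepsilon_0)^{2}=\varepsilon_0(2-\varepsilon_0)$ this gives
\[
\det(M(\ry))\ge \varepsilon_0(2-\varepsilon_0)\cdot \varepsilon_0^{2}(1-\varepsilon_0)^{2},
\]
so any $\varepsilon'$ strictly smaller than the right-hand side works. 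Notice that the regularity of $W$ and the monotonicity condition $D'>0$ play no role here; only the uniform bounds on $W$ and $D$ from \reff{eq:condi_W} are used, and these provide the quantitative slack that Cauchy--Schwarz alone would fail to deliver. There is no substantive obstacle in this argument beyond spotting this use of $\varepsilon_0$.
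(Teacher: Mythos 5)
Your proof is correct, and it takes a genuinely different route from the paper's. The paper does not compute the determinant directly: it introduces the set $\cf$ of all singular covariance matrices of couples of Bernoulli random variables, shows by a three-case analysis (using the bounds $\varepsilon_0\leq D\leq 1-\varepsilon_0$, the elementary inequalities $(p+p'-1)_+\leq \alpha\leq (1-\varepsilon_0)p$, and the auxiliary extremal Lemma \ref{lem:majo-intfg} on $\int_{[0,1]}fg$) that $M(\ry)$ stays at matrix-norm distance at least $\varepsilon_0^2/4$ from $\cf$, and then concludes by continuity and compactness that the determinant is uniformly positive. You instead expand $\det(M(\ry))$ explicitly, rewrite the off-diagonal entry as $\int_0^1(W(y_1,z)-D(y_1))(W(y_2,z)-D(y_2))\,dz$, and combine Cauchy--Schwarz with the pointwise bound $W^2\leq (1-\varepsilon_0)W$ to get the strict improvement $A(\ry)^2\leq (1-\varepsilon_0)^2\prod_i D(y_i)(1-D(y_i))$; all the steps check out (the "trivial" inequality at the end is $0\leq \varepsilon_0 D(y_i)^2$ rather than $0\leq\varepsilon_0 D(y_i)$, which is immaterial). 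Your argument is shorter, avoids the compactness step and the auxiliary lemma entirely, and produces an explicit constant $\varepsilon'=\varepsilon_0^3(2-\varepsilon_0)(1-\varepsilon_0)^2$ (up to taking anything strictly smaller), whereas the paper's bound is left implicit; the paper's approach, on the other hand, isolates the structural fact that degeneracy can only occur near perfectly correlated or anticorrelated Bernoulli pairs. You are also right that only the bounds $W\leq 1-\varepsilon_0$ and $D\geq\varepsilon_0$ from \reff{eq:condi_W} are used, exactly as in the paper's proof.
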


\begin{proof}
Let $\mathbb{M}_2$ be the set of matrices of size $2\times 2$, and
$\norm{\cdot}_\infty $ be the norm on $\mathbb{M}_2$ defined in
\reff{eq:normMi}.  We consider the 
closed set on $\mathbb{M}_2$:
\[
\cf=\cf_+\bigcup \cf_- \quad\text{where}
\quad
\cf_\pm=\left\{r(I_2 \pm \begin{pmatrix}
  0 &1\\1&0\end{pmatrix}); \, r\in [0, 1/4]\right\}
\]
where $I_2\in \mathbb{M}_2$ is the  identity matrix. Notice $\cf$ is the
set of all covariance matrices  of couples of Bernoulli random variables
having determinant  equal to  0. Since the  determinant is  a continuous
real-valued    function    on    $\mathbb{M}_2$,    to    prove    Lemma
\ref{lem:minor-det}, it is enough to prove that for all $\ry=(y,y')\in [0, 1]^2$
and all $M_0\in \cf$:
\begin{equation}
   \label{eq:norm-mat}
\norm{M(\ry) -M_0}_\infty \geq \varepsilon^2/4.
\end{equation}

We set  $p=D(y)$, $p'=D(y')$ and $\alpha=\int_{[0, 1]} W(y,z)W(y',z) \,
dz$ so that:
\[
M(\ry)=\begin{pmatrix}
   p(1-p) & \alpha -pp'\\ \alpha -pp' & p'(1-p')
\end{pmatrix}. 
\]
And the elements $M_0\in \cf$ are of the form, with $r\in [0, 1/4]$:
\[
M_0=\begin{pmatrix}
   r & \pm r\\ \pm r & r
\end{pmatrix}. 
\]

The proof of \reff{eq:norm-mat} is divided
in three  cases. Recall that $W$ satisfies condition \reff{eq:condi_W}. Without
loss of generality, we can assume that $p\leq p'$ and thus:
\begin{equation}
   \label{eq:majo-pp}
\varepsilon\leq p\leq p'\leq 1-\varepsilon.
\end{equation}
Since $(1-W(y,z))(1-W(y',z))$ is non negative, by integrating with
respect to $z$ over $[0, 1]$, we get that $\alpha\geq  p+p'-1$. Using
that $W\leq 1-\varepsilon$, we deduce, denoting by $x^+=\max(x,0)$ the positive
par of $x\in \R$,  that:
\begin{equation}
   \label{eq:majo-alpha}
(p+p'-1)_+\leq  \alpha\leq (1-\varepsilon) p.
\end{equation}

\subsection*{The case  $M_0\in \cf_+$}
Recall  that $p\leq p'$. 
If $|r-p(1-p)|\geq  \varepsilon^2/4$, then, by considering  the first term on the
diagonal,  we have $\norm{M(\ry) -M_0}_\infty \ge \varepsilon^2/4$. 

If $|r-p(1-p)|\le  \varepsilon^2/4$, then, by considering the term out the
diagonal,  we have:
\[
\norm{M(\ry) -M_0}_\infty \ge |\alpha- pp' -r|.
\]
For $\delta'=r-p(1-p)\in [-\varepsilon^2/4, \varepsilon^2/4]$, we get, using that
$\alpha\leq  (1-\varepsilon)p$ and $p\leq p'$:
\begin{align*}
   \alpha -pp' -r 
&\leq (1-\varepsilon) p -p^2  - p(1-p) - \delta'\\
&\leq  -\varepsilon^2 + \varepsilon^2/4 = -3\varepsilon^2/4.
\end{align*}

We deduce that \reff{eq:norm-mat} holds if $M_0\in \cf_+$. 

\subsection*{The case $|1-p-p'|>  \varepsilon/2$ and $M_0\in
  \cf_-$} 
If $|r-p(1-p)|\geq  \varepsilon^2/4$,  then, by considering  the first term on the
diagonal,  we have $\norm{M(\ry) -M_0}_\infty \ge \varepsilon^2/4$.

If $|r-p(1-p)|\le  \varepsilon^2/4$, then, by considering the term out the
diagonal,   we have:
\[ 
\norm{M(\ry) -M_0}_\infty \ge |\alpha- pp' +r|.
\]
Assume first that  $1-p-p'>\varepsilon/2$. 
For       $\delta'=r-p(1-p)\in        [-\varepsilon^2/4,  \varepsilon^2/4]$,     we
get,      using   $\alpha\geq 0$, that: 
\begin{align*}
   \alpha -pp' +r 
&\geq     p(1-p-p') + \delta'\\
&\geq  \varepsilon^2/2  -  \varepsilon^2/4 = \varepsilon^2/4.
\end{align*}
Assume then that $1-p-p'<-\varepsilon/2$. 
For       $\delta'=r-p(1-p)\in        [-\varepsilon^2/4,  \varepsilon^2/4]$,     we
get,      using    the lower bound $\alpha\geq p+p'-1$ from \reff{eq:majo-alpha}, that:
\begin{align*}
   \alpha -pp' +r 
&\geq   (1-  p)(p+p'-1) + \delta'\\
&\geq  \varepsilon^2/2  -  \varepsilon^2/4 = \varepsilon^2/4.
\end{align*}
We get $\norm{M(\ry) -M_0}_\infty \ge \varepsilon^2/4$.

We deduce that \reff{eq:norm-mat} holds if  $|1-p-p'|>  \varepsilon/2$ and $M_0\in
  \cf_-$.

\subsection*{The case $|1-p-p'|\leq  \varepsilon/2$ and $M_0\in \cf_-$}
Applying Lemma \ref{lem:majo-intfg} below, with $f=W(y, \cdot)$,  $g=W(y',
\cdot)$ and $\delta=1-p-p'$, we get  that:
\begin{equation}
   \label{eq:mino-alpha-fg}
\alpha\geq (1-\varepsilon)(\varepsilon-\delta). 
\end{equation}

If $|r-p(1-p)|\geq  \varepsilon^2/4$,  then, by considering  the first term on the
diagonal,  we have $\norm{M(\ry) -M_0}_\infty \ge \varepsilon^2/4$.

If $|r-p(1-p)|\le  \varepsilon^2/4$, then, by considering the term out the
diagonal,   we have:
\[ 
\norm{M(\ry) -M_0}_\infty \ge |\alpha- pp' +r|.
\]
For $\delta'=r-p(1-p)\in [-\varepsilon^2/4, \varepsilon^2/4]$, using
\reff{eq:mino-alpha-fg}, we get that:
\begin{align*}
  \alpha -pp' +r 
&=\alpha - p(1-p-\delta) + p(1-p) + \delta'\\
&\geq (1-\varepsilon) \varepsilon -\delta(1-\varepsilon-p)  + \delta'\\
&\geq  (1-\varepsilon) \varepsilon - (1-2\varepsilon)\varepsilon/2- \varepsilon^2/4
\geq \varepsilon^2/4.
\end{align*}
We deduce that \reff{eq:norm-mat} holds if $|1-p-p'|\leq  \varepsilon/2$
and $M_0\in \cf_-$.

\subsection*{Conclusion}
Since \reff{eq:norm-mat} holds when $M_0\in \cf_+$, when 
$M_0\in \cf_-$ and either 
$|1-p-p'|>  \varepsilon/2$ or $|1-p-p'|\leq   \varepsilon/2$,  we
deduce that \reff{eq:norm-mat} holds under the condition of Lemma
\ref{lem:minor-det}. 
\end{proof}

\begin{lem}
   \label{lem:majo-intfg}
Let $\varepsilon\in (0, 1/2)$, $\delta\in [-\varepsilon/2,\varepsilon/2]$, $f, g\in \cb([0, 1])$ such that $0\leq
f,g\leq 1-\varepsilon$ and $\int_{[0, 1]} (f+g)=1-\delta$. Then we have
$\int_{[0, 1]} fg\geq (1-\varepsilon)(\varepsilon-\delta)$, and this
lower bound is sharp.
\end{lem}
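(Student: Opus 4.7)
The plan is to prove the inequality by first establishing a pointwise lower bound on $fg$ in terms of $h=f+g$, then integrating and applying Jensen's inequality; sharpness will follow from an explicit pair of constants.

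For the pointwise bound, I would start from the observation that $(1-\varepsilon-f)(1-\varepsilon-g)\geq 0$ since $0\leq f,g\leq 1-\varepsilon$, which expands into
\[
 fg\geq (1-\varepsilon)(f+g)-(1-\varepsilon)^2.
\]
Combining this with the trivial bound $fg\geq 0$ yields, with $h=f+g$ and $t_+=\max(t,0)$,
\[
 f(x)g(x)\geq (1-\varepsilon)\bigl(h(x)-(1-\varepsilon)\bigr)_+ \quad\text{for a.e. } x\in[0,1].
\]
I would then integrate over $[0,1]$ and apply Jensen's inequality to the convex function $t\mapsto (t-(1-\varepsilon))_+$, using that the Lebesgue measure on $[0,1]$ is a probability measure, to obtain
\[
 \int_{[0,1]} fg\,dx \geq (1-\varepsilon)\int_{[0,1]} (h-(1-\varepsilon))_+\,dx \geq (1-\varepsilon)\left(\int_{[0,1]} h\,dx-(1-\varepsilon)\right)_+.
\]
Since $\int_{[0,1]} h\,dx=1-\delta$ and $\delta\leq \varepsilon/2<\varepsilon$, the right-hand side simplifies to $(1-\varepsilon)(\varepsilon-\delta)$, giving the desired bound.

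For sharpness, I would propose the constant pair $f\equiv 1-\varepsilon$ and $g\equiv \varepsilon-\delta$. The hypothesis $\delta\leq \varepsilon/2$ gives $g\geq \varepsilon/2>0$, and by construction $\int(f+g)=1-\delta$ together with $\int fg=(1-\varepsilon)(\varepsilon-\delta)$ matching the lower bound. The only remaining constraint to check is $g\leq 1-\varepsilon$, which amounts to $\delta\geq 2\varepsilon-1$. The subtle point, and the main thing worth flagging, is that when this last inequality fails one automatically has $1-\delta>2(1-\varepsilon)$, so the assumptions $f+g\leq 2(1-\varepsilon)$ and $\int(f+g)=1-\delta$ are themselves inconsistent and the statement is vacuous; hence the construction realizes equality in every non-vacuous regime.
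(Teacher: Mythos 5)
Your proof is correct, and it takes a genuinely different route from the paper's. The paper argues by a mass-transfer (rearrangement) construction: it sets $f_1=\min(f,g)$, $g_1=\max(f,g)$, moves the amount $h=\min(f_1,\,1-\varepsilon-g_1)$ from $f_1$ to $g_1$, checks that $\int f g$ can only decrease under this operation, and then exploits that at each point the transformed pair satisfies $f_2(x)=0$ or $g_2(x)=1-\varepsilon$ to conclude $\int f_2 g_2\geq (1-\varepsilon)\int f_2=(1-\varepsilon)(1-\delta-\int g_2)\geq (1-\varepsilon)(\varepsilon-\delta)$. You instead derive the pointwise bound $fg\geq (1-\varepsilon)\bigl(f+g-(1-\varepsilon)\bigr)_+$ from $(1-\varepsilon-f)(1-\varepsilon-g)\geq 0$ together with $fg\geq 0$, and then apply Jensen's inequality to the convex function $t\mapsto (t-(1-\varepsilon))_+$. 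Your argument is shorter and avoids any construction; the paper's argument makes the extremal configuration (one function vanishing wherever the other is not saturated at $1-\varepsilon$) more visible. Both use essentially the same sharpness example, constants $1-\varepsilon$ and $\varepsilon-\delta$; your additional remark that the constraint $\varepsilon-\delta\leq 1-\varepsilon$ can only fail when the hypotheses themselves are vacuous (since $f+g\leq 2(1-\varepsilon)$ forces $1-\delta\leq 2(1-\varepsilon)$) is a point the paper passes over silently, and it is worth having on record.
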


\begin{proof}
Set $f_1=\min(f, g)$ and $g_1=\max(f, g)$ so that $0\leq f_1\leq  g_1\leq
1-\varepsilon$ and  $\int_{[0, 1]} (f_1+g_1)=1-\delta$ and 
$\int_{[0, 1]} f_1\, g_1=\int_{[0, 1]} fg$.   Set $h=\min(f_1,
(1-\varepsilon-g_1))$ as well as $f_2=f_1-h$ and $g_2=g_1+h$ so that $0\leq
f_2\leq  g_2\leq  1-\varepsilon$, $\int_{[0, 1]} (f_2+ g_2)= 1-\delta$ and 
\[
 \int_{[0, 1]} f_2 \,g_2
= \int_{[0, 1]} (f_1-h) \,(g_1+h)
= \int_{[0, 1]} f_1 \, g_1 - \int_{[0,
  1]} (h(g_1-f_1) +h^2) \leq \int_{[0, 1]} f_1 \, g_1=\int_{[0, 1]} f g.
\]
Since by construction either $f_2(x)=0$ or $g_2(x)=1-\varepsilon$, we
deduce that:
\[
\int_{[0, 1]} f g\geq \int_{[0, 1]} f_2\,  g_2 \geq (1-\varepsilon) \int_{[0, 1]} f_2
=(1-\varepsilon) \left(1- \delta - \int_{[0, 1]} g_2\right) 
\geq  (1-\varepsilon)(\varepsilon-\delta).
\]
To see this lower bound is sharp, consider $g=1-\varepsilon$ and
$f=\varepsilon-\delta$.
\end{proof}

\subsection{Proof of Proposition  \ref{prop:stein}}
\label{sec:proof-proof}
We set:
\[
\hat Z_n= (n-1)^{-1/2}M(\rx)^{-1/2}(\hat D^{(n+1)} -\mu(\rx)),
\]
which  is,   conditionally  on   $\{X_{[2]}=\rx\}$,  distributed   as  the
normalized  and  centered  sum  of  $n-1$  independent  random  variables
distributed as  $Y=(Y_1, Y_2)$, with  $Y_1$ and $Y_2$  Bernoulli random
variables   such   that $\E[Y]=\mu(\rx)/(n-1)$ and $\Cov(Y, Y)=M(\rx)$.

Using Theorem $3.5$ from \cite{Chen_Fang_2015_article} or
Theorem $1.1$ from  \cite{Bentkus_2003_article}, we get that:
\[
 \sup_{\conv \in\convset}\left|\P\left(\hat Z_n \in \conv\big|\,  X_{[2]}=\rx\right)
 - \P\left(Z\in \conv \right)\right|\leq 115\, \sqrt{2} \, \gamma,
\]
 where 
\[
\gamma=(n-1) \E\left[\left|(n-1)^{-1/2} M(\rx)^{-1/2}(Y- \E[Y])\right|^3\right]. 
\]
Let $\norm{\cdot}_\infty $ denote the matrix norm on the set $\mathbb{M}_2$ of
real matrices of dimension $2\times 2$ induced by the maximum vector norm on $\R^2$, 
which is the maximum absolute line sum:
\begin{equation}
   \label{eq:normMi}
\norm{M}_\infty=\max_{1\le i\le2}\sum_{j=1}^{2}|M(i,j)|, \quad \text{for
  all } M\in\mathbb{M}_2.
\end{equation}
Recall that  $\norm{\cdot}_\infty $ is an induced norm (that is
$\norm{AB}_\infty \leq \norm{A}_\infty \norm{B}_\infty $). For $M\in \mathbb{M}_2$ and $\rx\in\R^2$, 
we have $|M\rx|\leq \sqrt{2}\,   \norm{M}_\infty \, |\rx|$.
If $M\in \mathbb{M}_2$ is symmetric positive definite (which is only used for the second
inequality and the equality), we get:
\begin{equation}
\label{eq:norm_infinity}
  \norm{M}_\infty ^{1/2} \leq \norm{M ^{1/2}}_\infty \leq
\sqrt{2}\norm{M}_\infty ^{1/2}
\quad\text{and}\quad
\norm{M ^{-1}}_\infty=
\frac{\norm{M }_\infty}{|\det(M)|}\cdot
\end{equation}
We deduce that if $M\in \mathbb{M}_2$ is symmetric positive definite, then 
\[
\norm{M^{-1/2} }_\infty \le \sqrt{2}|\det(M)|^{-1/2}\norm{M
}_\infty^{1/2}.
\]
We obtain  that for $n\geq 2$:
\begin{align*}
   \gamma
&\leq  2^{3}(n-1)^{-1/2} \norm{M (\rx)}_\infty ^{3/2}\det(M(\rx))^{-3/2} \,\E\left[\left|Y-
  \E[Y]\right|^3\right]\\
& \leq 2^{5/2} n^{-1/2} \det(M(\rx))^{-3/2} \,\E\left[|Y_1-\E[Y_1]|^3 +
  |Y_2-\E[Y_2]|^3\right]\\
&\leq 2^{3/2} n^{-1/2}\det(M(\rx))^{-3/2} ,
\end{align*}
where we used that $\norm{M(\rx)}_\infty \leq  1/2$ for the second
inequality and  the convex inequality $(x+y)^{p}\le
2^{p-1}(x^{p}+y^{p})$ for the third and that $|Y_i-\E[Y_i]|\leq 1$ so
that $\E[|Y_i-\E[Y_i]|^3]\leq \Var(Y_i)\leq 1/4$. 
We deduce from Lemma \ref{lem:minor-det}, there
exists $C_0>0$ such that for all $\rx=(x_1, x_2)\in [0, 1]^2$ with
$x_1\neq x_2$ and all $n\geq 2$: 
\[
 \sup_{\conv \in\convset}\left|\P\left(\hat Z_n \in \conv\big|\, X_{[2]}=\rx\right)
 - \P\left(Z\in \conv \right)\right|\leq C_0\, n^{-1/2}.
\]
To conclude, replace the convex set $K$ in this formula by the convex
set $\frac{M(\rx)^{-\frac{1}{2}}}{\sqrt{n-1}}(\conv-\mu(\rx))$.

\end{document}